\documentclass{article}
\usepackage[a4paper,textwidth=15cm,textheight=21.5cm,centering]{geometry}
\usepackage[T1]{fontenc}
\usepackage[utf8]{inputenc} 
\usepackage{lmodern}
\usepackage{mathtools}   
\usepackage{amssymb, amsfonts}
\usepackage{amsthm}
\usepackage{bm}
\usepackage{graphicx}
\usepackage{booktabs, dcolumn}
\usepackage[tableposition=top]{caption}
\usepackage{tikz}
\usetikzlibrary{arrows.meta,calc,positioning}
\usepackage{xcolor}
\usepackage{hyperref}
\hypersetup{
    colorlinks=true,
    citecolor=blue,
    linkcolor=blue,
    filecolor=magenta,      
    urlcolor=cyan,
}
\usepackage{bookmark}   
\usepackage{enumerate}
\usepackage[shortlabels]{enumitem}
\numberwithin{equation}{section}
\theoremstyle{plain}
\newtheorem{theorem}{Theorem}[section]
\newtheorem{proposition}[theorem]{Proposition}
\newtheorem{lemma}[theorem]{Lemma}
\newtheorem{corollary}[theorem]{Corollary}
\newtheorem{conjecture}[theorem]{Conjecture}
\newtheorem{claim}[theorem]{Claim}

\theoremstyle{definition}
\newtheorem{definition}[theorem]{Definition}

\theoremstyle{remark}
\newtheorem{remark}[theorem]{Remark}
\newcommand{\Om}{\Omega}
\newcommand{\C}{\mathbb{C}}
\newcommand{\R}{\mathbb{R}}

\newcommand{\G}{\Gamma}
\newcommand{\g}{\gamma}
\newcommand{\Z}{\mathbb{Z}}

\newcommand{\sph}{\mathbb{S}}
\newcommand{\sub}{\subset}

\newcommand{\vp}{\varphi}
\newcommand{\p}{\partial}
\newcommand{\Ric}{\operatorname{Ric}}
\newcommand{\Bry}{\operatorname{Bry}}
\begin{document}
\title{Asymptotic Geometry of Four-Dimensional Steady Solitons}
\author{Aprameya Girish Hebbar\thanks{Department of Mathematics, Rutgers University, Piscataway, NJ 08854. Emails: \texttt{ah1531@math.rutgers.edu}, \texttt{natasas@math.rutgers.edu}.}
\and Nata\v{s}a \v{S}e\v{s}um\footnotemark[1]
}
\date{}
\maketitle
\begin{abstract}
In this paper we study the behavior of the scalar curvature at infinity on complete noncompact steady gradient Ricci solitons. In dimension four, we assume that the canonical Ricci flow induced by the soliton is a weak $\kappa$-solution and that the soliton is not isometric to the Bryant soliton. In this setting, we identify the two edges of the soliton and prove that the scalar curvature decays at a linear rate away from these edges. Moreover, if the scalar curvature vanishes at infinity, then a stronger inequality holds and the asymptotic cone is a ray. In particular, our results apply to the four-dimensional steady solitons constructed by Lai.
\end{abstract}

\section{Introduction}
\label{sec:Introduction}
A smooth one-parameter family of Riemannian metrics $(g(t))_{t\in[0,T]}$ on a smooth manifold $M^n$ (without boundary) is said to evolve by the Ricci flow if
$$\partial_t g(t) = -2\,\Ric_{g(t)} \qquad \text{on } M \times [0,T].$$
Since Hamilton's foundational work \cite{Ham82}, Ricci flow has become an important tool in geometric analysis, especially in Perelman's proof of the Poincar\'e and Geometrization conjectures \cite{Per02}. 

Ricci solitons give rise to self-similar solutions to the Ricci flow, evolving only by diffeomorphisms and scaling. They play a fundamental role in \textit{singularity analysis}: performing parabolic rescalings about spacetime points of large curvature yields, after passing to a subsequence, pointed Cheeger--Gromov limits that are ancient or eternal solutions. In many important settings, these limits are gradient shrinking solitons or gradient steady solitons. Accordingly, describing Ricci solitons is a key step toward understanding possible singularity formation in Ricci flow. In this paper, we focus on gradient steady Ricci solitons. 

Let $(M^n,g)$ be a Riemannian manifold and let $f\in C^\infty(M)$. The triple $(M^n,g,f)$ is called a gradient steady Ricci soliton if
$$\Ric_g + \nabla^2 f = 0 \qquad \text{on } M.$$
Equivalently, the Bakry--\'Emery Ricci tensor $\Ric_f := \Ric+\nabla^2 f$ vanishes; in particular, Ricci-flat manifolds are precisely the steady solitons with $f$ constant. When $(M,g)$ is complete, it gives rise to a Ricci flow solution that evolves only by diffeomorphisms. Let $\Phi_t$ denote the one-parameter family of diffeomorphisms generated by $\nabla f$ with $\Phi_0=\operatorname{id}_M$ and set $g(t):=\Phi_t^* g$. Then, $(M,g(t))_{t\in (-\infty,\infty)}$ is an \textit{eternal} solution to the Ricci flow, called the \textit{canonical Ricci flow} induced by $g$. 

In dimension two, Hamilton constructed the \textit{cigar soliton}, a nonflat collapsed rotationally symmetric gradient steady Ricci soliton. Moreover, any complete two-dimensional gradient steady Ricci soliton is either flat or isometric, up to scaling, to the cigar soliton \cite[Theorem~3.11]{Cho23}. In dimension three, Bryant constructed a complete nonflat noncompact rotationally symmetric steady gradient Ricci soliton, and within the rotationally symmetric class showed that it is unique up to scaling \cite{Bry05}. Bryant's construction extends to all dimensions and yields, for each $n\ge 3$, a complete nonflat asymptotically cylindrical $O(n)$-symmetric steady gradient soliton on $\mathbb{R}^n$, unique within the rotationally symmetric class up to scaling (see \cite[Chapter~6]{Cho23}). These are called the $n$-dimensional \emph{Bryant soliton}. 

For a long time, much of the classification theory aimed to identify geometric hypotheses forcing a steady soliton to be rotationally symmetric, and hence isometric (up to scaling) to the Bryant soliton. A landmark result in this direction is Brendle's theorem that any complete nonflat $\kappa$-noncollapsed three-dimensional steady gradient Ricci soliton is isometric, up to scaling, to the Bryant soliton \cite{Bre13}. In higher dimensions, Brendle proved that any $\kappa$-noncollapsed asymptotically cylindrical steady soliton with positive sectional curvature is also isometric, up to scaling, to the Bryant soliton \cite{Bre14}. Deng--Zhu \cite{DZ20} proved rigidity results under the hypothesis of linear curvature decay, while Zhao--Zhu \cite{ZZ22} obtained rigidity for the Bryant soliton under curvature pinching assumptions. Law \cite{Law25} studied rigidity in the asymptotically cylindrical setting without assuming any curvature positivity.

Lai constructed the \emph{flying wing} steady solitons, a family of examples that are not asymptotically cylindrical. In dimension three, Lai confirmed a conjecture of Hamilton by producing a family of $\mathbb{Z}_2\times O(2)$-symmetric steady solitons whose asymptotic cone is a sector of opening angle $\alpha\in(0,\pi)$ \cite{Lai24}. The same work also yields, for every $n\ge 4$, a family of $\mathbb{Z}_2\times O(n-1)$-symmetric but non-rotationally symmetric steady solitons with positive curvature operator; unlike the three-dimensional flying wings, these higher-dimensional examples are $\kappa$-noncollapsed \cite{Lai24}. 

The three-dimensional steady solitons also have the following asymptotic behavior. In the non-Bryant case, the asymptotic cone is a sector of positive opening angle, rather than a ray. Along the ``edge'' of the wing, the scalar curvature converges to a positive limit depending on the opening angle \cite{Lai25}. 

Motivated by these developments, we study four-dimensional steady gradient Ricci solitons, with particular emphasis on the asymptotic geometry of the flying wings. Rather than restricting to the specific examples constructed by Lai, we work in a broader class of $\kappa$-noncollapsed steady solitons whose canonical Ricci flows (viewed as ancient solutions) are $\kappa$-solutions. 

As introduced by Perelman \cite{Per02}, a $\kappa$-solution is a complete $\kappa$-noncollapsed ancient solution of the Ricci flow, with bounded nonnegative curvature operator and positive scalar curvature. $\kappa$-solutions often arise as singularity models near cylindrical singularities. In dimension three, the analysis of $\kappa$-solutions played a crucial role in Perelman's work. In dimension four, the asymptotic shrinker of a noncompact $\kappa$-solution is a noncompact nonflat gradient shrinker with nonnegative curvature operator. By Munteanu--Wang \cite{MW17}, they must be either $\sph^3\times \R$ or $\sph^2\times \R^2$, up to finite quotients. The case when $\sph^3\times \R$ occurs as an asymptotic shrinker has been studied in \cite{LZ22,Heb26}. The case corresponding to $\sph^2\times\R^2$ remains largely open and is relevant to four-dimensional flying wings.
 
A conjectural picture for $\kappa$-solutions in dimension four has been proposed by Haslhofer \cite{Has24}; when specialized to the steady case, it predicts that no further steady solitons that are also $\kappa$-solutions exist beyond the known ones. 
\begin{conjecture}
\label{conj:Haslhofer-steady}
Any $4$D $\kappa$-noncollapsed steady gradient Ricci soliton with nonnegative curvature operator and positive scalar curvature is, up to scaling and finite quotients, one of the following: the $4$D Bryant soliton; the product of the $3$D Bryant soliton with a line; or an element of the one-parameter family of $\mathbb{Z}_2\times O(3)$-symmetric steady solitons constructed by Lai \textup{\cite{Lai24}}. 
\end{conjecture}
For four-dimensional steady solitons with nonnegative sectional curvature that are $\kappa$~noncollapsed, Chan--Ma--Zhang \cite{CMZ25b} proved that the tangent flow at infinity is either $\sph^3\times\R$ or $\sph^2\times\R^2$. In the $\sph^2\times\R^2$ case, they further showed that the manifold dimension reduces at infinity to $\Bry^3$ or $\sph^2\times\R$. In \cite{MMS26}, Ma, Mahmoudian, and the second named author studied the corresponding asymptotics under ${O}(3)$-symmetry together with additional curvature decay hypotheses (for example, $R(x)\sim d(x,o)^{-\eta}$). While such decay is expected, one goal of the present paper is to derive information about the manifold without imposing \emph{a priori} curvature decay or symmetry assumptions. 
\begin{remark}
If one moves beyond the positively curved, $\kappa$-noncollapsed setting, the landscape of steady solitons in dimension four becomes considerably richer, and a general classification appears extremely difficult.

Collapsed examples include Cao's $U(2)$-invariant steady K\"ahler--Ricci soliton on $\C^2$ \cite{Cao96};
the Koiso-type steady K\"ahler--Ricci solitons of Yang \cite{Yan12};
the non-K\"ahler steady solitons of Buzano--Dancer--Wang \cite{BDW15};
the product of Lai's three-dimensional flying wing \cite{Lai24} with a line;
the steady K\"ahler--Ricci solitons of Biquard--Macbeth \cite{BM24} on crepant resolutions of finite quotients of $\C^2$;
the $U(1)\times U(1)$-invariant steady K\"ahler--Ricci solitons on $\C^2$ of Apostolov--Cifarelli \cite{AC25};
the steady K\"ahler--Ricci solitons of Conlon--Deruelle \cite{CD25};
the $U(1)\times U(1)$-invariant K\"ahler flying wings of Chan--Conlon--Lai \cite{CCL24};
and the $\Z_2^2\times O(2)$- and $O(2)\times O(2)$-symmetric flying wing constructions of Chan--Lai--Lee \cite{CLL25} and Lavoyer--Peachey \cite{LP25}.

There are also examples that do not have nonnegative sectional curvature, such as steady solitons on $\R^2\times \sph^2$ of Ivey \cite{Ive94}, the $\kappa$-noncollapsed steady solitons of Dancer--Wang \cite{DW09}, and Appleton's $\kappa$-noncollapsed steady soliton \cite{App17}, all of which have linear scalar curvature decay at infinity. 

Other examples include Sch\"afer's asymptotically cylindrical steady solitons \cite{Sch21}, Stolarski's $U(1)$-invariant steady solitons on complex line bundles over $\C\mathbb{P}^1$ \cite{Sto24}, and Sch\"afer's $\sph^1$-invariant steady K\"ahler--Ricci solitons \cite{Sch23}; none of which has positive Ricci curvature. 
\end{remark}

\subsection{Setting}
\label{subsec:Setting}
Guided by Conjecture \ref{conj:Haslhofer-steady}, we study the geometry at infinity of four-dimensional steady solitons which are $\kappa$-noncollapsed and nonnegatively curved. We begin by introducing our basic setting and standing assumptions.

\medskip 

\noindent We will consider $(M^4,g,f)$, a complete noncompact gradient steady Ricci soliton: 
$$\Ric_g+\nabla^2 f=0\qquad \text{on }M.$$
\noindent \textbf{Assumptions. }We will impose some or all of the following assumptions:
\begin{enumerate}[label=({\bf A\arabic*}), ref=A\arabic*]
    \item\label{assumption:A1} $\Ric_g > 0$ on $M$. 
    \item  \label{assumption:A2} The soliton potential function $f$ has a critical point at $o\in M$. 
    \item \label{assumption:A3} The canonical Ricci flow $(M^4,g(t))_{t\in (-\infty,1]}$ induced by $g$ has nonnegative sectional curvature and is $\kappa$-noncollapsed on all scales. 
\end{enumerate}

\begin{remark}
Our setting is inspired by the approach of \cite{MMS26}. Note that any compact gradient steady Ricci soliton is Ricci-flat; in particular, under \hyperref[assumption:A1]{(A1)} the soliton is necessarily noncompact and nonflat. Under \hyperref[assumption:A1]{(A1)} and \hyperref[assumption:A3]{(A3)}, since the scalar curvature is bounded above on a steady soliton, one obtains bounded curvature on spacetime for the canonical Ricci flow. Consequently, under \hyperref[assumption:A1]{(A1)}--\hyperref[assumption:A3]{(A3)}, $(M^4,g(t))_{t\in(-\infty,1]}$ is a \textbf{weak $\kappa$-solution} in the sense of \cite{MMS26}, i.e.\ an ancient, complete, $\kappa$-noncollapsed Ricci flow with nonnegative bounded sectional curvature and positive scalar curvature. Moreover, by \cite[Theorem~1.13]{MZ21}, $\kappa$-noncollapsing is equivalent to a lower bound for Perelman's entropy.
\end{remark}

\noindent Known nonflat four-dimensional steady solitons satisfying \hyperref[assumption:A1]{(A1)}--\hyperref[assumption:A3]{(A3)} include:
\begin{enumerate}
\item  $O(4)$-symmetric Bryant soliton $\Bry^4$, which has positive curvature operator and is asymptotically cylindrical (in the sense of \cite{Bre14}), with tangent flow at infinity $\sph^3\times\R$. Moreover, its scalar curvature $R(x)$ decays linearly: there exist $c_1,c_2>0$ such that
$$\frac{c_1}{1+d(x,o)}\leq R(x)\leq \frac{c_2}{1+d(x,o)}\qquad \text{for all }x\in \Bry^4.$$
\item The one-parameter family of $\Z_2\times O(3)$-symmetric steady solitons constructed by Lai \cite{Lai24}. Each of these solitons has positive curvature operator and admits $\sph^2\times\R^2$ as its unique tangent flow at infinity. 
\end{enumerate}
By \cite[Corollary~5.4]{CMZ25b}, if $(M^4,g,f)$ satisfies \hyperref[assumption:A1]{(A1)} and \hyperref[assumption:A3]{(A3)}, then the tangent flow at infinity is unique and equal to either $\sph^2\times\R^2$ or $\sph^3\times\R$. Moreover, if the tangent flow at infinity is $\sph^3\times \R$, then $(M^4,g)$ is isometric to $\Bry^4$ with its soliton metric; see \cite[Theorem~5.2]{CMZ25b}. To exclude the Bryant soliton, we impose the following additional assumption:
\begin{enumerate}[label=({\bf A\arabic*}), ref=A\arabic*, start=4]
    \item \label{assumption:A4} The tangent flow at infinity of $(M^4,g)$ is $\sph^2\times\R^2$.
\end{enumerate}
Under \hyperref[assumption:A1]{(A1)} and \hyperref[assumption:A3]{(A3)}, Assumption \hyperref[assumption:A4]{(A4)} is equivalent to the assertion that $(M^4,g)$ is not isometric to the Bryant soliton. It is also equivalent to the statement that Perelman's asymptotic shrinker of the canonical Ricci flow $(M^4,g(t))_{t\leq 0}$ is $\sph^2\times \R^2$; see \cite[Theorem~1.3]{CMZ25a}. 

\subsection{Main Results}
\label{subsec:Main Results}
Under Assumptions \hyperref[assumption:A1]{(A1)}--\hyperref[assumption:A4]{(A4)}, the geometry at infinity of $(M^4,g)$ is not yet fully understood. By \cite{DZ20}, it is known that the scalar curvature cannot decay linearly, equivalently, we have $\limsup_{d(x,o)\to \infty}R(x)d(x,o)=+\infty$. It remains open whether the scalar curvature satisfies $R(x)\to 0$ as $d(x,o)\to\infty$. We obtain partial results in this direction by identifying two edges of the soliton and proving quantitative decay of the scalar curvature away from these edges. 
\begin{theorem}
\label{Full-linear-bound}
Let $(M^4,g,f)$ be a complete steady soliton satisfying \textup{\hyperref[assumption:A1]{(A1)}, \hyperref[assumption:A2]{(A2)}, \hyperref[assumption:A3]{(A3)},} and \textup{\hyperref[assumption:A4]{(A4)}}. Then there exist two curves $\G_1,\G_2:[0,\infty)\to M$ with $\G_i(0)=o$ such that $\G_i((0,\infty))$ is an integral curve of $-\nabla f/|\nabla f|$, for $i=1,2$. If $\G:=\G_1([0,\infty))\cup \G_2([0,\infty))$, then the scalar curvature satisfies
$$R(x)\leq \frac{C}{d_g(x,\G)}\qquad \text{for all }x\in M\setminus \G,$$
where $C$ depends on $\kappa$ and $(M,g)$.
\end{theorem}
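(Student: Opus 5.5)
The plan is to follow the strategy for the three-dimensional flying wings and for \cite{MMS26}: study the soliton along the level sets of $f$, using dimension reduction at infinity \cite{CMZ25b}. Since $\Ric>0$ and $o$ is a critical point, $f$ is strictly concave, $o$ is its unique maximum, $f\to-\infty$ at infinity, and $|\nabla f|^2+R=R(o)$. Standard estimates give $-f(x)\sim \sqrt{R(o)}\,d(x,o)$, and $|\nabla f|\to\sqrt{R(o)}$ wherever $R\to 0$. So for $s$ large the level set $\Sigma_s=\{f=-s\}$ is a compact convex hypersurface diffeomorphic to $\sph^3$. We will use the flow of $-\nabla f/|\nabla f|$ to move between level sets.

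\emph{Step 1: locate the edges.} Under (A4), \cite{CMZ25b} says that every blow-down sequence $(M,R(x_k)g,x_k)$ with $d(x_k,o)\to\infty$ subconverges to $\sph^2\times\R^2$, to $\Bry^3\times\R$, or to $\sph^2\times\R$ times a line factor. The points where the local model is $\Bry^3\times\R$ are the tips. On $\Sigma_s$, define $p_s$ as a point maximizing $R|_{\Sigma_s}$. A local analysis near $\Bry^3\times\R$ regions shows that, after restricting to $\Sigma_s$, the tips form a curve whose $\R$-factor direction is tangent to $\Sigma_s$. We want to show that $R|_{\Sigma_s}$ has exactly two local maxima, related by the expected approximate $\Z_2$-symmetry, and that these maxima move along integral curves of $-\nabla f/|\nabla f|$. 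We would first attempt this through a monotonicity statement. The identity $\nabla R=2\Ric(\nabla f)$ together with $\Ric>0$ gives $\langle\nabla R,\nabla f\rangle>0$, so $R$ decreases along the flow of $-\nabla f$. Hence $\max_{\Sigma_s}R$ is nonincreasing in $s$ and has a limit $R_\infty\ge0$. We then take $\G_i$ to be the flow lines obtained as limits, as $s\to\infty$, of the flow lines through the two maximizers, using compactness of $\Sigma_{s_0}$ for fixed $s_0$. Each such line is extended back to $o$, since all flow lines of $-\nabla f$ emanate from $o$.

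\emph{Step 2: linear decay away from $\G$.} Take $x\in M\setminus\G$ and set $r=d(x,\G)$. Argue by contradiction: assume there are $x_k$ with $R(x_k)\,d(x_k,\G)\to\infty$. Rescale by $R(x_k)$. The scaled distance to $\G$ then tends to infinity. The limit flow is a $\kappa$-solution carrying $\nabla f$ as a parallel-like field, and by \cite{CMZ25b} it is one of the models in Step 1. The $\Bry^3\times\R$ case is excluded, because its tip would be at bounded scaled distance and would have to lie on $\G$ by the construction of Step 1. So the limit is $\sph^2\times\R^2$, or $\sph^2\times\R$ times a line. In these cylindrical regions we integrate along $\R^2$-directions transverse to $\nabla f$. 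The neck structure is preserved over scaled distances comparable to $r$. Comparing with the level-set geometry then gives $R\lesssim 1/r$, as in the standard "linear decay on necks" argument: on an $\sph^2\times\R^2$ region of size $L$, the $\sph^2$-radius squared grows at least linearly in $L$ because the region opens up conically away from the edges. Concretely, one can use the quantity $R\cdot d(\cdot,\G)$ and show that it cannot attain large values, via a maximum-principle or ODE comparison along curves moving away from $\G$ inside $\Sigma_s$, using $\nabla R=2\Ric(\nabla f)$ and the almost-splitting.

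\emph{Main obstacle.} The hardest step is Step 1: proving that there are exactly two edges, and that they are genuine integral curves of $-\nabla f/|\nabla f|$, without any symmetry assumption. The first approach we would try is to show that, on each $\Sigma_s$, the set where the geometry is $\epsilon$-close to $\Bry^3\times\R$ has exactly two components. The argument would be topological: $\Sigma_s\cong\sph^3$ is foliated, away from these tips, by $\sph^2$-necks over a two-dimensional base. That base should be a disc-like region whose "edges" are the tips, and a counting argument forces exactly two tip regions. We would then show that the tip regions stay at bounded distance from a single flow line, and pass to the limit $s\to\infty$.
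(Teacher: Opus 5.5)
The central gap is in Step 2, which (contrary to your assessment) is where the real difficulty lies. Integrating along curves inside $\Sigma_s$ that move away from $\G$ cannot produce the estimate. Near an $(\epsilon,2)$-center the scale-invariant quantity $|\nabla R|/R^{3/2}$ is small, so you only get $|\nabla R^{-1/2}|\le\epsilon$. That bounds how fast $R^{-1/2}$ can grow, i.e.\ it gives a \emph{lower} bound on $R$, not the upper bound you need. Nothing in $\nabla R=2\Ric(\nabla f)$ forces $R$ to decay in directions transverse to $\nabla f$.

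The claim that the bubble-sheet region ``opens up conically'' is essentially the statement to be proved. It is also not the right picture in general: by Theorem~\ref{thm:main-theorem-2-stronger-scalar-bound}, the asymptotic cone is a ray whenever $R\to0$. Nor does a blow-up at $x_k$ give a contradiction, since $d(\cdot,\G)$ simply disappears in the $\sph^2\times\R^2$ limit.

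The missing idea is to integrate \emph{along} the flow of $-\nabla f$. From $\langle\nabla f,\nabla R\rangle=\Delta R+2|\Ric|^2$ one gets $\frac{d}{dt}\bigl(R^{-1}+f\bigr)(\Phi_{-t}q)=2\bigl(|\Ric|^2/R^2-\tfrac12\bigr)+R+\Delta R/R^2$, which is $O(\eta)$ on bubble-sheets. Meanwhile $f$ decreases at rate $|\nabla f|^2\ge C^{-1}$, so $R(\Phi_{-t}q)\le C/t$ (Theorem~\ref{Upper-bound-on-curves}). This requires the bubble-sheet condition to persist for all $t\ge0$. That is the backward stability of $(\epsilon,2)$-centers (Theorem~\ref{Stability-of-bubble-sheet}), proved via spacetime dimension reduction and monotonicity of $\lambda_1/R$ on $\Bry^3$.

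The distance to $\G$ is then controlled by a separate estimate. Distance distortion, the stability inequality for minimizing geodesics, and Perelman's long-range estimate give $\partial_s^+d(\Phi_{-s}x,\Phi_{-s}y)\le C\max\bigl(\sqrt{R(\Phi_{-s}x)},\sqrt{R(\Phi_{-s}y)}\bigr)$. You then trace $x_k$ back along its flow line to the first time it became a bubble-sheet. Either that time is bounded, or the entry point is Bryant-like and hence at bounded scaled distance from an edge (Lemma~\ref{Obtaining-points-from-Bryant-like-sequences}). In both cases $d(x_k,\G)$ is at most a constant times the elapsed flow time plus a controlled term. Meanwhile $R(x_k)$ is at most a constant over the elapsed flow time, which gives the bound.

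Step 1 also has problems. The $\R$-factor of the $\Bry^3\times\R$ limits is not tangent to $\Sigma_s$. By Lemmas~\ref{Level-Set-Convergence} and~\ref{Level-Set-Convergence-2}, $\sqrt{R(x_i)}(f-f(x_i))\to z$ when $R(x_i)\to0$, and $f-f(x_i)\to Az+h$ with $A\neq0$ otherwise. So the line direction is transverse to the level sets, and the tips of $\Sigma_s$ are isolated, nondegenerate critical points of $R|_{\Sigma_s}$.

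Your topological picture is dimensionally off: $\sph^2$-fibers over a $2$-dimensional base would be $4$-dimensional. The $3$-dimensional $\Sigma_s$ instead decomposes into one long $\sph^2\times I$ neck and two Bryant-like caps. This uses Lemma~\ref{Theta} (necks detected by small $\lambda_2/R$) and Hamilton's CMC foliation, and it is what gives exactly two tips (Theorem~\ref{Two-Tips}).

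Monotonicity of $\max_{\Sigma_s}R$ also does not show that limits of flow lines through the maximizers are edges. You must show three things:
\begin{itemize}
\item The limiting flow line never enters a bubble-sheet region. This again needs backward stability: otherwise nearby flow lines ending at tips would remain bubble-sheets, contradicting $\lambda_2/R>\tfrac16$ at tips.
\item The two limit points are distinct.
\item The tips of $\Sigma_s$ stay at scaled distance $o(1)$ from these lines along subsequences.
\end{itemize}
The last fact is exactly what your exclusion of the $\Bry^3\times\R$ case in Step 2 relies on.
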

Here $R$ denotes the scalar curvature of $g$ and $d_g(x,\G)$ denotes the distance from $x$ to $\G$. We refer to the curves $\G_1$ and $\G_2$ as the \emph{edges} of the soliton (see Definition \ref{defn:edges-of-soliton} and Theorem \ref{existence-of-x-+-} for properties of $\G_1,\G_2$). 

\begin{figure}[ht]
\centering
\begin{tikzpicture}[
    x=1.15cm,y=1.15cm,
    >={Latex[length=2.4mm]},
    line cap=round,
    line join=round,
    every node/.style={font=\small}
]

\definecolor{myblue}{RGB}{60,100,190}
\definecolor{lightblue}{RGB}{232,239,255}
\definecolor{myred}{RGB}{185,50,50}
\definecolor{levelset}{RGB}{35,35,35}

\pgfmathsetmacro{\xmax}{9.6}
\pgfmathsetmacro{\xc}{4.0}         
\pgfmathsetmacro{\aV}{0.494}        
\pgfmathsetmacro{\bV}{1.0}          

\path[fill=lightblue]
plot[domain=0:\xmax,samples=160,smooth] (\x,{0.5*sqrt(\x)})
-- plot[domain=\xmax:0,samples=160,smooth] (\x,{-0.5*sqrt(\x)})
-- cycle;

\draw[myblue, line width=1pt]
plot[domain=0:\xmax,samples=160,smooth] (\x,{0.5*sqrt(\x)});

\draw[myblue, line width=1pt]
plot[domain=0:\xmax,samples=160,smooth] (\x,{-0.5*sqrt(\x)});

\draw[myred, very thick]
plot[domain=0:\xmax,samples=160,smooth] (\x,{0.5*sqrt(\x)});

\draw[myred, very thick]
plot[domain=0:\xmax,samples=160,smooth] (\x,{-0.5*sqrt(\x)});


\fill (0,0) circle (1.4pt);
\node[left] at (-0.08,0) {$o$};

\node[above] at (6.8,1.42) {$\Gamma_1$};
\node[below] at (6.8,-1.42) {$\Gamma_2$};

\node at (7.1,0.00) {$\sph^2\times\mathbb{R}^2$};

\node[font=\scriptsize, anchor=west] at (8.0,1.20)
{$\mathrm{Bry}^3\times\mathbb{R}$};

\node[font=\scriptsize, anchor=west] at (8.0,-1.20)
{$\mathrm{Bry}^3\times\mathbb{R}$};

\draw[levelset, line width=1.3pt]
(\xc,0) ellipse[x radius=\aV, y radius=\bV];

\fill[levelset] (\xc,\bV) circle (1.1pt);
\fill[levelset] (\xc,-\bV) circle (1.1pt);

\node[above right] at (\xc,\bV) {$x_{+}$};
\node[below right] at (\xc,-\bV) {$x_{-}$};

\foreach \yy/\xr in {
    -0.72/0.34,
    -0.38/0.46,
     0.00/0.50,
     0.38/0.46,
     0.72/0.34
}{
    \draw[levelset!75, line width=0.85pt]
    (\xc,\yy) ellipse[x radius=\xr, y radius=0.075];
}

\node[fill=white, inner sep=1.5pt] (sigmalabel) at (2.55,1.42)
{$\Sigma=f^{-1}(s_0)$};

\draw[->, levelset]
(sigmalabel.east) to[out=0,in=145] (3.72,0.82);

\node[font=\scriptsize, fill=white, inner sep=1pt] at (\xc-0.7,0.22) {$\sph^3$};

\end{tikzpicture}
\caption{Structure of $M$ at infinity.}
\label{fig:a-picture-of-the-soliton-with-x-pm-Gamma}
\end{figure}

Lai's $4$D solitons are constructed indirectly (as limit of a family of Ricci expanders) \cite{Lai24}. Although these examples are known to have positive curvature operator, their asymptotic geometry is not yet well understood. Our result above gives a curvature estimate for the $4$D flying wings. 
\begin{corollary}
Theorem \textup{\ref{Full-linear-bound}} applies to the $\Z_2\times O(3)$-symmetric $4$D steady solitons constructed by Lai \textup{\cite{Lai24}}. 
\end{corollary}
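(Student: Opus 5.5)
The corollary needs only that Lai's $\Z_2\times O(3)$-symmetric 4D steady solitons satisfy \hyperref[assumption:A1]{(A1)}--\hyperref[assumption:A4]{(A4)}; Theorem \ref{Full-linear-bound} then applies directly. The plan is to check the four assumptions one at a time, using the properties recorded for these solitons in \cite{Lai24} together with the results of \cite{CMZ25b} already quoted in the setting.

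First, \hyperref[assumption:A1]{(A1)}. Lai's solitons have positive curvature operator, so $\Ric>0$ everywhere. Second, \hyperref[assumption:A2]{(A2)}. Since $\Ric=-\nabla^2 f>0$, the potential $f$ is strictly concave. The standard steady soliton identity $R+|\nabla f|^2=\text{const}$ holds. I would produce the critical point in one of two ways. One is via the symmetry: $f$ is invariant under the $\Z_2\times O(3)$ action, since the potential of a nonflat soliton is determined up to a constant by $g$. Hence $\nabla f$ is tangent to the fixed-point set of the action, which is a point or a line. On a line, strict concavity plus properness of $-f$ forces a maximum. The other is the general fact (Hamilton, Cao--Chen type arguments) that on a steady soliton with positive Ricci curvature which is nonflat and noncollapsed with $R$ attaining its maximum, $f$ has a unique critical point. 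Lai's construction gives the solitons as limits of expanders normalized at a base point where $R$ attains its maximum, and $\nabla R=2\Ric(\nabla f)$ vanishes only where $\nabla f=0$. So the maximum point of $R$ is a critical point of $f$. I will use this second route; it is the cleanest.

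Third, \hyperref[assumption:A3]{(A3)}. Positive curvature operator is preserved along the canonical flow, since it is just pullback by diffeomorphisms, so sectional curvatures stay nonnegative. \cite{Lai24} proves that the 4D examples are $\kappa$-noncollapsed on all scales, and this is invariant under the diffeomorphisms $\Phi_t$. Fourth, \hyperref[assumption:A4]{(A4)}. By \cite[Corollary~5.4]{CMZ25b}, the tangent flow at infinity is either $\sph^3\times\R$ or $\sph^2\times\R^2$. In the first case \cite[Theorem~5.2]{CMZ25b} forces isometry to $\Bry^4$. Lai's solitons are not rotationally symmetric, so they are not isometric to $\Bry^4$, and the tangent flow must be $\sph^2\times\R^2$, as already noted in the list of examples.

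The only step with real content is \hyperref[assumption:A2]{(A2)}, because it depends on how the critical point sits in Lai's construction. If the normalization at a maximum of $R$ is not explicit in \cite{Lai24}, I would fall back on the fact that the curvature is bounded and attains its maximum. This holds because the soliton is $\kappa$-noncollapsed with positive curvature operator and its asymptotic geometry is $\sph^2\times\R^2$-like, so $R$ stays below its supremum near infinity. The identity $\nabla R=2\Ric(\nabla f)$ then again gives the critical point.
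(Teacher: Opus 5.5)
Your proposal is correct and matches the paper's implicit argument. Lai's solitons have positive curvature operator and are $\kappa$-noncollapsed, which gives (A1) and (A3); $f$ has a critical point at the base point of the construction, which gives (A2); and since the solitons are not rotationally symmetric, \cite[Corollary~5.4, Theorem~5.2]{CMZ25b} force the tangent flow at infinity to be $\sph^2\times\R^2$, which gives (A4).

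For (A2), take the critical point either from Lai's construction or from the fixed point of the $\Z_2\times O(3)$ action. In the construction, the base point is the limit of the expanders' tips, where $R$ is maximal. At the fixed point of the action, $\nabla f$ vanishes because the invariant $f$ has an invariant gradient and the isotropy fixes no nonzero vector. Drop your fallback: tangent flow $\sph^2\times\R^2$ alone does not keep $R$ away from $\sup R$ near infinity, and the paper rules this out in Lemma~\ref{Level-Set-Convergence-2} only by using (A2).
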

In dimension three, Lai obtained polynomial decay of the scalar curvature on non-Bryant steady solitons, away from two edges: for every $k\ge 1$ there exists $C_k>0$ such that $R(x)\le C_k\, d(x,\text{edges})^{-k}$  \cite{Lai25}. In our noncollapsed setting, such higher-order decay is not expected (see Remark~\ref{rem:polynomial-decay-not-possible}); nevertheless one can ask whether the stronger condition $R(x)\,d(x,\G)\to 0$ holds as $d(x,\G)\to\infty$. Our next result shows that if $R(x)\to 0$ at infinity, then the decay away from the edges improves and the asymptotic cone is a ray. 
\begin{theorem}
\label{thm:main-theorem-2-stronger-scalar-bound}
Let $(M^4,g,f)$ be a complete steady soliton satisfying \textup{\hyperref[assumption:A1]{(A1)}, \hyperref[assumption:A2]{(A2)}, \hyperref[assumption:A3]{(A3)},} and \textup{\hyperref[assumption:A4]{(A4)}}. Suppose $\lim_{d(x,o)\to \infty}R(x)=0$. Then,  
$$\lim_{d(x,o)\to \infty}R(x)d(x,\G)=0,$$
and the asymptotic cone of $(M,g)$ is a ray. 
\end{theorem}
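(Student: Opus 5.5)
The approach is to argue by contradiction with a blow-down/compactness argument, reusing the dimension-reduction picture behind Theorem~\ref{Full-linear-bound}.

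\emph{Step 1: the decay away from the edges.} Suppose there are points $x_k$ with $d(x_k,o)\to\infty$ and $R(x_k)\,d(x_k,\G)\ge \varepsilon>0$. By Theorem~\ref{Full-linear-bound}, $R(x_k)\,d(x_k,\G)\le C$. Rescale the canonical flow by $R(x_k)$ around $(x_k,0)$. Nonnegative curvature, $\kappa$-noncollapsing and Perelman's compactness for (weak) $\kappa$-solutions then give a subsequential limit. This limit is an ancient $\kappa$-noncollapsed flow with nonnegative curvature.

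\emph{Step 2: identifying the limit.} Since $R(x_k)\to 0$ while $d(x_k,o)\to\infty$, the gradient $\nabla f$ has $|\nabla f|^2=R_{\max}-R\to R_{\max}>0$ at $x_k$. In rescaled units $|\nabla f|$ becomes $R(x_k)^{-1/2}\to\infty$ in size relative to curvature, and $\nabla^2 f=-\Ric$ is bounded in rescaled units. Hence, after normalizing, $\nabla f/|\nabla f|$ converges to a parallel vector field. The limit therefore splits a line, and the soliton structure makes the flow static in the direction of that line.

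Combining this with the Chan--Ma--Zhang dimension reduction (\cite{CMZ25b}), the limit is either $\Bry^3\times\R$ or the round $\sph^2\times\R^2$ flow. The region at rescaled distance $\ge\varepsilon^{1/2}$-scale from $\G$ should be the $\sph^2\times\R^2$ region. In the proof of Theorem~\ref{Full-linear-bound} this is where the linear bound came from: on $\sph^2\times\R^2$ regions, $R$ is comparable to the inverse of the distance swept out along the flow of $-\nabla f$.

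\emph{Step 3: improving to $o(1/d)$.} The key computation is the evolution of $R$ along integral curves of $\nabla f$: $\langle\nabla R,\nabla f\rangle=2\Ric(\nabla f,\nabla f)$. On an $\sph^2\times\R^2$ region the $\R^2$ directions contain $\nabla f$, so this derivative is small. Instead, the decay must come from the time direction: $\partial_t R=\Delta R+2|\Ric|^2$ with $|\Ric|^2\approx R^2/2$ on $\sph^2\times\R^2$. Along the backward flow this gives the ODE $\tfrac{d}{ds}R\approx -R^2/2\cdot(\text{speed})^{-1}$. Integrating over the distance $d(x,\G)$ gives $R\approx 2/(\sqrt{R_{\max}}\,\cdot\,\text{length})$, i.e.\ a linear rate whose constant is fixed by the limiting geometry. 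To beat this I would use $R\to0$ in a different way.

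If $R(x_k)d(x_k,\G)\ge\varepsilon$, follow the $\sph^2\times\R^2$ region from $x_k$ toward $\G$ over distance $\sim d(x_k,\G)$. On this region $R$ stays comparable to $R(x_k)$, so it has a uniform neck-like structure whose two-dimensional Euclidean factor has size $\gtrsim\varepsilon/R(x_k)$ in the rescaled sense. I would then show this is incompatible with the Bishop--Gromov-type volume growth forced by $\kappa$-noncollapsing on an $\sph^2\times\R^2$ region whose $\sph^2$ factor has radius $R^{-1/2}\to\infty$. The contradiction should come from comparing the area of level sets of $f$, which grows like $d\cdot R^{-1}$, with the bound coming from the Bishop--Gromov monotonicity on the soliton, area of level sets $\lesssim s^{3}$ in the nonnegatively curved manifold. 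The main obstacle is making this comparison quantitative without a priori curvature decay. I would first attempt it by applying the argument of Theorem~\ref{Full-linear-bound} at scales where $R\to0$ lets the error terms, which were $O(1)$ multiples there, become $o(1)$.

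\emph{Step 4: the asymptotic cone.} Given $R(x)d(x,\G)\to0$ and $R\to 0$, take $y_k\to\infty$ and rescale distances by $d(y_k,o)^{-1}$. The level sets $f^{-1}(s)$ have diameter $o(s)$: on the $\sph^2\times\R^2$ part, a level set is foliated by spheres of radius $\sim R^{-1/2}$ and has extent $\lesssim d(\cdot,\G)$. The edges are integral curves of $-\nabla f/|\nabla f|$ along which $f$ decreases at rate $\to\sqrt{R_{\max}}$. The decay should give that the width of $f^{-1}(s)$ across the $\R^2$ direction is $o(s)$, because a point at distance $\delta s$ from $\G$ would have $R\ge c/(\delta s)$ from the lower bound obtained in Step~3's limit analysis, contradicting $R\,d\to0$. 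Hence all points at distance $r$ from $o$ lie within $o(r)$ of $\G_1(r)$, and likewise $\G_1(r)$ and $\G_2(r)$ are within $o(r)$ of each other because both lie on the same level set. The blow-down is therefore a ray.
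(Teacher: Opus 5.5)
\textbf{The gap is in Step 3, and Step 3 is the whole content of the first claim.}

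The flow-line computation you describe is essentially the paper's Theorem \ref{Upper-bound-on-curves}, via $\frac{d}{ds}\bigl(R^{-1}+f\bigr)(\Phi_{-s}(q))\approx 0$ on bubble-sheet regions. It controls $R$ by the \emph{flow time}, giving $R(\Phi_{-t}(q))\lesssim 1/t$. You then integrate ``over the distance $d(x,\G)$'', which conflates flow time with distance to the edges. Since $d(x,\G)\lesssim t$, this only recovers the linear bound. What must be shown is $d(x,\G)=o(t)$, i.e.\ that flow lines of $-\nabla f$ through bubble-sheet points drift away from the edges sublinearly.

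Your replacement mechanism does not deliver this, for two reasons.
\begin{itemize}
\item Perelman's long-range estimate compares curvatures only over distances $O(R^{-1/2})$. So nothing says $R$ stays comparable to $R(x_k)$ over a region of length $\sim\varepsilon/R(x_k)\gg R(x_k)^{-1/2}$.
\item Even granting that, the corresponding $\sph^2\times\R$ part of $\Sigma_s$ has $3$-volume $\sim d(x_k,\G)\,R(x_k)^{-1}\lesssim d(x_k,\G)^2/\varepsilon\lesssim |s|^2/\varepsilon$. This is far below the Bishop--Gromov ceiling $|s|^3$, so no contradiction can come from that comparison.
\end{itemize}

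\textbf{The missing ingredient is distance distortion along the backward flow.} Your last sentence points the right way: the paper does prove both statements by one argument in which $O(1)$ terms become $o(1)$. But the term in question is the rate in this estimate, which your proposal never introduces. Lemma \ref{lem:variation-of-distance} plus the stability inequality Lemma \ref{stability-inequalities} give
\[
\partial_s^+ d(\Phi_{-s}x,\Phi_{-s}y)\le C\max\bigl(\sqrt{R(\Phi_{-s}x)},\sqrt{R(\Phi_{-s}y)}\bigr),
\]
with the constant supplied by Perelman's long-range estimates (Remark \ref{rem:stability-inequalities-application-remark}). So under $R\to 0$ two flow lines separate at rate $o(1)$.

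The paper then splits into cases.
\begin{itemize}
\item Bryant-like points lie at rescaled distance $O(1)$ from an edge (Lemma \ref{Obtaining-points-from-Bryant-like-sequences}), so $R\,d(x,\G)\lesssim\sqrt{R}\to 0$.
\item For bubble-sheet points $x_k=\Phi_{-t_k}(p_k)$ with $p_k\in\Sigma_{s_1}$, take the first time $t_k^1$ at which the curve is an $(\epsilon,2)$-center. Then $R(x_k)\le C/(t_k-t_k^1)$. Run the distortion estimate against $\Phi_{-s}(x_\pm)$, either from the start if $t_k^1$ stays bounded, or from the Bryant-like, hence edge-adjacent, point $\Phi_{-t_k^1}(p_k)$ if $t_k^1\to\infty$.
\end{itemize}

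\textbf{Step 4 has two further holes.}
\begin{itemize}
\item Turning $R\,d(x,\G)\to0$ into $d(x,\G)=o(d(x,o))$ needs the lower bound $R(x)\ge C^{-1}/(1+d(x,o))$. Your Step 3 does not produce this, since it only gives upper bounds along special flow lines. The paper imports it from \cite{CCMZ23}.
\item The two edges being $o(r)$-close does not follow from lying on a common level set. They need not lie on one, and closeness within a level set is exactly what is in question. The paper gets it from the same distortion estimate applied to $\Phi_{-s}(x_+)$ and $\Phi_{-s}(x_-)$, using $G(x_\pm)=0$. Alternatively, connectedness of $\Sigma_s$ together with every point of $\Sigma_s$ being $o(|s|)$-close to $\G$ would suffice. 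The paper then concludes with Toponogov monotonicity of $d(\g_v(s),\g_w(s))/s$.
\end{itemize}
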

The next theorem gives a converse to Theorem~\ref{thm:main-theorem-2-stronger-scalar-bound} under weaker assumptions. 
\begin{theorem}
\label{thm:converse-to-main-theorem-2-stronger-scalar-bound-4d}
Let $(M^4,g,f)$ satisfy \textup{\hyperref[assumption:A1]{(A1)}, \hyperref[assumption:A2]{(A2)},} and $\sec\geq 0$. If the asymptotic cone of $(M,g)$ is a ray, then
$$
\lim_{d(x,o)\to \infty}R(x)=0.
$$
\end{theorem}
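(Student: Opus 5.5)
We argue by contradiction. Assume the asymptotic cone of $(M,g)$ is a ray but there is a sequence $x_i$ with $d(x_i,o)\to\infty$ and $R(x_i)\ge \varepsilon>0$. Since $R$ is bounded on a steady soliton and $\sec\ge 0$, the whole curvature tensor is uniformly bounded, $|\mathrm{Rm}|\le C R\le C$. Set $r_i:=d(x_i,o)$. The plan is to produce, near $x_i$, a region of size comparable to $r_i$ whose rescalings do \emph{not} collapse to a ray. This would contradict the assumption.

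\textbf{Step 1 (a line in the limit at $x_i$).} Use the standard identity $R+|\nabla f|^2=C_0$, normalized so that $C_0=1$. By (A1) and (A2), $f$ is proper with linear growth, $f(x)\sim -d(x,o)$ up to sign convention. Since $R(x_i)\ge\varepsilon$ and $R$ is bounded, we have $|\nabla f|^2\le 1-\varepsilon$ at $x_i$. Consider the flow lines of $\nabla f$ through $x_i$ together with minimizing geodesics from $o$ to $x_i$. The cone being a ray means that every pair of points $p,q$ at distance $\sim r$ from $o$ satisfies $d(p,q)=o(r)$ relative to $|d(p,o)-d(q,o)|$. Hence, at large scales, $d(\cdot,o)$ behaves like a Busemann function $b$ with $|\nabla b|\to 1$ in an averaged sense on annuli.

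\textbf{Step 2 (compare $\nabla f$ with $\nabla d$).} On a soliton with $\Ric\ge0$, integrating $\nabla^2 f=-\Ric\le 0$ along minimizing geodesics from $o$ shows that $\langle \nabla f,\dot\gamma\rangle$ is monotone nonincreasing. Moreover, $\int_\gamma \Ric(\dot\gamma,\dot\gamma)$ is bounded by $|\nabla f|\le 1$ at the endpoints. The ray-cone condition forces geodesics from $o$ to distant points to become asymptotically aligned. Combined with $f(x)/d(x,o)\to -1$, which I expect to derive from the ray structure since $f$ is $1$-Lipschitz and concave along geodesics, this gives $\langle\nabla f,\dot\gamma\rangle\to -1$ along most of $\gamma$. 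In particular, it gives $|\nabla f|\to1$ in an averaged sense, so $R\to 0$ in an averaged sense on large annuli.

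\textbf{Step 3 (pointwise upgrade).} To pass from averaged smallness to the pointwise bound, I use that $R$ is a supersolution-type quantity: $\Delta_f R=-2|\Ric|^2\le 0$. I also use that $R$ is nonincreasing along $-\nabla f$, since $\langle\nabla R,\nabla f\rangle=2\Ric(\nabla f,\nabla f)\ge0$. Flowing backward from $x_i$ along $-\nabla f$ keeps $R\ge\varepsilon$. This flow line is then a curve on which $f$ decreases at rate $|\nabla f|^2\le 1-\varepsilon$ per unit length. Following it for length $L$ changes $f$ by at most $(1-\varepsilon)L$ in an integrated sense. Since $|f(y)+d(y,o)|=o(d(y,o))$, the distance to $o$ then increases by at most about $(1-\varepsilon')L$. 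Such an integral curve of speed $1$ gains distance from $o$ at a rate strictly less than $1$ over scales comparable to $r_i$. Together with the geodesic from $o$ to $x_i$, it produces two points at scale $\sim r_i$ subtending a definite angle. This contradicts the cone being a ray. The argument requires controlling $R$ along the whole flow line, which the monotonicity provides.

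\textbf{Main obstacle.} The hardest step is Step 2, specifically proving $f(x)/d(x,o)\to-1$ (i.e.\ $\sup R$ on the soliton being $C_0$-normalized limit of $|\nabla f|^2$) purely from the ray-cone hypothesis. I would attempt this via concavity: $f$ restricted to the asymptotic ray direction defines a $1$-Lipschitz concave function on the cone, so it is linear with slope $-\lim|\nabla f|$. I would then match this slope with $\sqrt{1-\liminf R}$ using the monotonicity of $R$ along $\nabla f$.
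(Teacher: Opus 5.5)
There is a genuine gap, and it is the step you flag as the main obstacle. The Step 2 claim that $f(x)/d(x,o)\to-1$ (with $f(o)=0$) is essentially equivalent to the theorem itself, and the tools you propose cannot prove it.

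The facts that $f$ is $1$-Lipschitz and concave along geodesics from $o$, together with the monotonicity of $R$ along $\nabla f$ and the ray hypothesis, give at most $f(o)-f(x)=c\,d(x,o)+o(d(x,o))$ for a single constant $c\in(0,1]$. Matching $c$ with the limit of $|\nabla f|$ along integral curves uses the separate fact that integral curves $\Gamma_p$ of $-\nabla f/|\nabla f|$ satisfy $d(\Gamma_p(s),o)/s\to1$. This matching shows only that $R$ has the same limit $\eta$ along every such curve, with $c=\sqrt{1-\eta}$.

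Nothing in your argument distinguishes $\eta=0$ from $\eta>0$. The scenario $R\to\eta>0$ uniformly at infinity, $|\nabla f|\to\sqrt{1-\eta}$, $f(o)-f\sim\sqrt{1-\eta}\,d(\cdot,o)$ is consistent with everything in Steps 1--3.

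The paper's proof reaches exactly this intermediate point. The angle function $\alpha(v)=\lim_{r\to\infty}\langle\gamma_v'(r),-\nabla f\rangle$ is constant on ray directions, and a ray shadowing $\Gamma_p$ has $\alpha=\sqrt{1-G(p)}$, where $G(p)$ is the limit of $R$ along $\Gamma_p$. Hence $G\equiv\eta$ on a level set. The paper then supplies the missing global ingredient. If $\eta>0$, then $R\ge\eta$ on all far level sets, and $A(s)=\int_{\Sigma_s}|\nabla f|$ satisfies $A'(s)=-\int_{\Sigma_s}R/|\nabla f|\le-\eta A(s)$. So level sets, and hence balls, have exponential volume growth, contradicting Bishop--Gromov. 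Integrating $\Delta f=-R$ over large balls, with $R$ bounded below by a positive constant, gives the same contradiction.

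Step 3 also contains errors. Since $\langle\nabla R,\nabla f\rangle=2\Ric(\nabla f,\nabla f)\ge0$, $R$ \emph{decreases} along $-\nabla f$. So $R(x_i)\ge\varepsilon$ propagates only inward toward $o$, not outward as you claim. To get an entire outward integral curve with $R\ge\varepsilon$ you need a compactness argument: write $x_i=\Gamma_{q_i}(s_i)$ with $q_i$ on a fixed level set, note that $R\ge\varepsilon$ on $\Gamma_{q_i}([0,s_i])$, and let $q_i\to q$ to get $R\ge\varepsilon$ on all of $\Gamma_q$.

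Further, a unit-speed curve whose distance from $o$ grows at rate $<1$ does not by itself produce two far points at a definite angle, since such a curve can stay sublinearly close to a single ray. The contradiction must come instead from $d(\Gamma_p(s),o)/s\to1$. Even with that fact, once the unproven slope $1$ is replaced by the true slope $c$, Step 3 only recovers that the limit of $R$ is the same along all integral curves, not that it vanishes.
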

\begin{remark}
Although we state Theorem~\ref{thm:converse-to-main-theorem-2-stronger-scalar-bound-4d} in dimension four to match the setting of this paper, the proof applies verbatim in every dimension (Theorem~\ref{thm:dimension-free-converse-to-main-theorem-2}). 
\end{remark}
Combining Theorems~\ref{thm:main-theorem-2-stronger-scalar-bound} and \ref{thm:converse-to-main-theorem-2-stronger-scalar-bound-4d}, we obtain the following equivalence in our setting. 
\begin{corollary}
\label{cor:ray-iff-R-vanishes}
Let $(M^4,g,f)$ satisfy \textup{\hyperref[assumption:A1]{(A1)}, \hyperref[assumption:A2]{(A2)}, \hyperref[assumption:A3]{(A3)},} and \textup{\hyperref[assumption:A4]{(A4)}}. Then the following are equivalent:
\begin{enumerate}
    \item $\lim_{d(x,o)\to\infty}R(x)=0$;
    \item $\lim_{d(x,o)\to\infty}R(x)d(x,\G)=0$;
    \item the asymptotic cone of $(M,g)$ is a ray.
\end{enumerate}
\end{corollary}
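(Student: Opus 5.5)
The corollary follows by combining Theorem~\ref{thm:main-theorem-2-stronger-scalar-bound} and Theorem~\ref{thm:converse-to-main-theorem-2-stronger-scalar-bound-4d}, plus one elementary observation. I will prove the cycle of implications $(1)\Rightarrow(2)$, $(1)\Rightarrow(3)$, $(3)\Rightarrow(1)$, and $(2)\Rightarrow(1)$.

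First, under (A1)--(A4), Theorem~\ref{thm:main-theorem-2-stronger-scalar-bound} gives both $(1)\Rightarrow(2)$ and $(1)\Rightarrow(3)$ directly. Second, assumption (A3) gives nonnegative sectional curvature of the canonical flow, in particular of $g=g(0)$. So the hypotheses (A1), (A2) and $\sec\ge 0$ of Theorem~\ref{thm:converse-to-main-theorem-2-stronger-scalar-bound-4d} hold, and that theorem yields $(3)\Rightarrow(1)$.

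It remains to show $(2)\Rightarrow(1)$. The key fact is that $(M,g)$ has bounded curvature. On a steady soliton, $R+|\nabla f|^2$ is constant and $R\ge 0$ under (A1), so $R\le \sup_M R<\infty$. Now suppose $R(x_k)\ge \varepsilon>0$ along a sequence $x_k$ with $d(x_k,o)\to\infty$. By (2), $d(x_k,\G)\to 0$; that is, the $x_k$ eventually lie arbitrarily close to the edges.

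To get a contradiction, I would move away from the edges at a controlled distance. Fix a large $D$ and pick points $y_k$ with $d(y_k,\G)\approx D$ and $d(x_k,y_k)\le D+1$. Such points exist because, in the tangent-flow picture (A4), $M$ far out looks like $\sph^2\times\R^2$ with the edges sitting inside. Statement (2) then forces $R(y_k)\,D\to 0$.

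Next I would bring in Hamilton's Harnack inequality for the canonical flow, or equivalently the derivative estimate $|\nabla R|\le CR^{3/2}$, valid under bounded nonnegative curvature with $\kappa$-noncollapsing. This controls how fast $R$ can drop between $x_k$ and $y_k$. More directly, a subsequence of $(M,g,x_k)$ converges to a limit which, by the dimension reduction of \cite{CMZ25b}, is $\Bry^3\times\R$ or $\sph^2\times\R$ (up to scaling), with $R\ge\varepsilon$ at the base point. In such a limit the scalar curvature at distance $D$ from the base point is bounded below by $c(D)\,\varepsilon>0$. This contradicts $R(y_k)\to 0$.

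The main obstacle I expect is this last step: making rigorous the placement of the $y_k$ and the lower bound on $R$ in the limit. If it proves delicate, I would fall back on a weaker route: use (2) together with Theorem~\ref{Full-linear-bound} to control $R$ away from $\G$, and then use the structure of the edges from Theorem~\ref{existence-of-x-+-}, for instance a limit of $R$ along $\G_i$ computed from $\Bry^3\times\R$ asymptotics.
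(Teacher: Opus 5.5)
Your treatment of $(1)\Rightarrow(2)$, $(1)\Rightarrow(3)$ and $(3)\Rightarrow(1)$ is the same as the paper's. Your $(2)\Rightarrow(1)$ takes a different route.

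The paper tests (2) only next to the edges. It picks points going to infinity at distance $1$ from $\Gamma_1$, so (2) gives $R\to 0$ there. Perelman's long-range estimate then transfers this to $\Gamma_1$, giving $G(x_+)=0$, and likewise $G(x_-)=0$. It concludes with Theorem~\ref{existence-of-x-+-}(3) ($G\equiv 0$ away from $x_\pm$) and Lemma~\ref{G-on-Level-sets-lemma}.

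You instead argue by contradiction along an arbitrary sequence with $R(x_k)\ge\varepsilon$. (2) forces $x_k$ onto $\Gamma$; you step off to $y_k$ at a definite distance from $\Gamma$; (2) gives $R(y_k)\to 0$; and local comparison gives $R(y_k)\ge c\,\varepsilon$. The mechanism is shared: offset from $\Gamma$ by a bounded amount, then compare curvatures at bounded distance. Your version, however, never uses $G$ or the structure theorem, and in fact works for any $\Gamma$ that is a finite union of flow lines of $-\nabla f$. In exchange, the paper only needs offset points beside the edges.

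Two steps need repair.

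First, your reason for the existence of $y_k$ is wrong. Near $x_k$ with $R(x_k)\ge\varepsilon$ the unit-scale geometry is $\Bry^3\times\mathbb{R}$ (Lemma~\ref{Level-Set-Convergence-2}). The $\sph^2\times\mathbb{R}^2$ tangent flow at infinity only describes blow-downs. Existence is still easy, and $D$ need not be large:
\begin{itemize}
\item Far from $o$ one has $|\nabla f|\ge c_1>0$, and $f$ is strictly decreasing along each edge (Lemma~\ref{lem:monotonicity-along-flows}).
\item Since $|f-f(x_k)|\le 2$ on $B(x_k,2)$, the set $\Gamma\cap B(x_k,2)$ lies on two arcs of length at most $4/c_1$.
\item By Bishop--Gromov, the $c$-neighbourhood of these arcs has volume at most $Cc^3$ for $c\le 1$.
\item $\kappa$-noncollapsing together with bounded curvature ($R\le 1$, $\sec\ge 0$) gives $\operatorname{Vol}B(x_k,1)\ge c'>0$.
\end{itemize}
For small $c$ this yields $y_k\in B(x_k,1)$ with $d(y_k,\Gamma)\ge c$, which is all your argument needs.

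Second, the lower bound on $R(y_k)$ is cleanest straight from Lemma~\ref{summary-of-CMZ-work}(ii). Since $R(y_k)\,d(x_k,y_k)^2\le 1$, you get $R(x_k)\le C\,R(y_k)$, with no need to identify the limit. (The possible limits are $\Bry^3\times\mathbb{R}$ or $\sph^2\times\mathbb{R}^2$, not $\sph^2\times\mathbb{R}$.) Also, Hamilton's Harnack inequality is not equivalent to $|\nabla R|\le CR^{3/2}$. The latter, from Perelman's derivative estimate, would suffice on its own, since it makes $R^{-1/2}$ Lipschitz.
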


\subsection{Outline of the paper}

We now outline the organization of the paper and the main ideas. Unless otherwise stated, throughout this discussion $(M^4,g,f)$ denotes a steady soliton satisfying \hyperref[assumption:A1]{(A1)}--\hyperref[assumption:A4]{(A4)}, and $g(t)$ denotes the canonical Ricci flow induced by $g$ with $g(0)=g$.

In Section \ref{sec:Preliminaries} we collect background material and fix notation, including the foliation of $M\setminus \{o\}$ by level sets $\Sigma_s:=f^{-1}(s),s<f(o)$ (each diffeomorphic to $\sph^3$). To study scalar curvature at infinity, we introduce a function $G$ on a fixed level set $\Sigma_{s_0}$ that records the limit of the scalar curvature along the integral curve of $-\nabla f/|\nabla f|$ starting at each $q\in \Sigma_{s_0}$.

In Section \ref{sec:Structure-of-level-sets} we analyze the geometry of the far-out level sets $\Sigma_s$ via dimension reduction on $M$. Using an argument in the spirit of \cite{BDS21}, we show that for $s\ll 0$ each $\Sigma_s$ admits a neck-cap decomposition. We then prove that each far-out level set has exactly two tips. This implies that the set where the function $G$ is nonzero consists of at most two points on $\Sigma_{s_0}$. Since Lai's $4$D steady solitons satisfy our standing assumptions, all of our intermediate results apply to them as well. 

In Section \ref{sec:Stability-of-e-2-centers} we prove a stability statement for points near which the manifold is close to $\sph^2\times\R^2$: closeness persists when flowing backward along the vector field $-\nabla f$. As an application, we identify two distinguished points $x_+,x_-\in\Sigma_{s_0}$ and the corresponding ``edge'' integral curves. We show that along these edges the geometry is modeled by $\Bry^3\times\R$, whereas away from the edges it is modeled by $\sph^2\times\R^2$. 

A difficulty in proving Theorem~\ref{Full-linear-bound} is that there is no direct maximum-principle route to the desired estimate. Moreover, Lai's proof of quadratic decay of scalar curvature on collapsed $3$D wings \cite[Theorem~3.20]{Lai25} relies on tools that are intrinsically three-dimensional and do not directly extend to our four-dimensional noncollapsed setting; our approach is therefore different. In Section \ref{sec:Scalar-curvature-along-integral-curves} we show that if $(M,g)$ is sufficiently close to $\sph^2\times\R^2$ at $x$, then 
$$s\,R_{g(-s)}(x)\leq C\qquad \text{for all }s>0$$
where $C$ is a uniform positive constant. We then globalize this bound in Section \ref{sec:proof-of-the-main-result} using distance distortion and a contradiction argument, obtaining the linear decay estimate away from the edges. The same argument also yields the stronger decay when $\lim_{d(x,o)\to\infty}R(x)=0$. 

Finally, in Section \ref{sec:asymptotic-cone-of-the-soliton} we study the relation between the asymptotic cone of $(M,g)$ and the behavior of the scalar curvature at infinity. Under \hyperref[assumption:A1]{(A1)}--\hyperref[assumption:A4]{(A4)} and $R\to 0$ at infinity, we compare distances between geodesic rays and show that the asymptotic cone of $(M,g)$ is a ray. We then prove, under weaker assumptions, the converse implication in Theorem~\ref{thm:converse-to-main-theorem-2-stronger-scalar-bound-4d}. The proof is based on an angle function on the unit sphere in $T_oM$ associated with the asymptotic behavior of geodesic rays relative to $-\nabla f$. 

\subsection{Acknowledgments}
The authors thank Yi Lai, Zilu Ma, Ovidiu Munteanu, and Junming Xie for inspiring discussions. The second named author thanks the NSF for support through grant DMS-2505574. 

\section{Preliminaries}
\label{sec:Preliminaries}
In this section, we collect background material and prior results that will be used throughout the paper.

Let $(M^4,g,f)$ be a steady soliton satisfying \hyperref[assumption:A1]{(A1)} and \hyperref[assumption:A2]{(A2)}. Hamilton \cite{Ham95} showed that the quantity $R+|\nabla f|^2$ is constant on $M^4$. After rescaling $g$ by a positive constant, we assume throughout that
\begin{equation}\label{eq:normalization}
R+|\nabla f|^2=1 \qquad\text{on }M.
\end{equation}
In particular, since $o\in M$ is a critical point of $f$, $R(o)=1$. We use the Laplacian $\Delta=\mathrm{div}\,\nabla$. Tracing the soliton equation yields
\begin{equation}\label{eq:trace}
R+\Delta f=0.
\end{equation}
Moreover, the standard soliton identities give
\begin{align}
\nabla R &= 2\,\Ric(\nabla f), \label{eq:gradR}\\
\Delta R +2|\Ric|^2 &= \langle \nabla R,\nabla f\rangle. \label{eq:scalarPDE}
\end{align}
Under \hyperref[assumption:A1]{(A1)} we have $R>0$, and from \eqref{eq:normalization} it follows that
\begin{equation}\label{eq:bounds}
0<R\le 1,\qquad |\nabla f|^2\leq 1 \qquad\text{on }M.
\end{equation}
Consequently, for any $x,p\in M$,
\begin{equation}\label{eq:Lipschitz}
|f(x)-f(p)|\leq d_g(x,p),
\end{equation}
by integrating $|\nabla f|\leq 1$ along a minimizing geodesic segment joining $p$ and $x$. For further background on Ricci solitons, we refer the reader to \cite{Cho23}. 
\begin{lemma}
\label{lem:Linear-Growth-of-potential}
Let $(M^4,g,f)$ be a complete gradient steady Ricci soliton satisfying \textup{\hyperref[assumption:A1]{(A1)}} and \eqref{eq:normalization}. Then $f$ has at most one critical point. If $o$ is a critical point of $f$, then $o$ is the unique global maximum of $f$. Moreover, setting
$$c_0:= f(o)-\max_{\{y:\,d_g(o,y)=1\}} f(y)>0,$$
one has for every $x\in M$ with $d_g(o,x)\ge 1$,
\begin{equation}\label{eq:linear-lower}
f(o)-f(x)\ge c_0\,d_g(o,x).
\end{equation}
In particular, $f(x)\to -\infty$ along any divergent curve, and $f^{-1}([a,b])$ is compact for all $a\leq b<f(o)$. 
\end{lemma}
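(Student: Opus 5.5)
The plan is to exploit strict concavity of $f$. By \hyperref[assumption:A1]{(A1)} we have $\nabla^2 f=-\Ric<0$ on $M$, so $f$ is strictly concave along every nonconstant geodesic. Everything then follows from the elementary fact that a strictly concave function of one variable has at most one critical point, which is its global maximum.

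\emph{Step 1 (uniqueness of critical points).} Suppose $o$ is a critical point of $f$ and $x\neq o$. Since $(M,g)$ is complete, there is a unit-speed minimizing geodesic $\gamma:[0,L]\to M$ with $\gamma(0)=o$, $\gamma(L)=x$ and $L=d_g(o,x)$. Set $h(t)=f(\gamma(t))$. Then
$$h''(t)=\nabla^2 f(\dot\gamma,\dot\gamma)=-\Ric(\dot\gamma,\dot\gamma)<0, \qquad h'(0)=0.$$
Hence $h'(t)<0$ for $t>0$, and so $f(x)<f(o)$ and $\nabla f(x)\neq 0$, because $h'(L)=\langle\nabla f(x),\dot\gamma(L)\rangle<0$. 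This shows that $o$ is the unique critical point of $f$ and the strict unique global maximum. If $f$ had two critical points, each would be a strict global maximum, which is absurd.

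\emph{Step 2 (linear decay).} The unit sphere $S=\{y: d_g(o,y)=1\}$ is compact. By Step 1, $f<f(o)$ on $S$, so $c_0>0$. Now take $x$ with $L=d_g(o,x)\ge 1$, let $\gamma$ and $h$ be as in Step 1, and note that $\gamma(1)\in S$. Since $h$ is concave, its difference quotients from $0$ are nonincreasing:
$$\frac{h(L)-h(0)}{L}\le \frac{h(1)-h(0)}{1}\le -c_0.$$
This gives $f(o)-f(x)\ge c_0\,d_g(o,x)$, which is \eqref{eq:linear-lower}.

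\emph{Step 3 (consequences).} A divergent curve has $d_g(o,\cdot)\to\infty$ along it, so $f\to-\infty$ by \eqref{eq:linear-lower}. Now fix $a\le b<f(o)$. The set $f^{-1}([a,b])$ is closed. It is also bounded, since on it either $d_g(o,x)<1$ or $d_g(o,x)\le (f(o)-a)/c_0$. By Hopf--Rinow it is therefore compact.

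There is no real obstacle here. The only point needing care is that the statement does not assume \hyperref[assumption:A2]{(A2)}. Consequently the "at most one critical point" claim must be derived by applying Step 1 at any hypothetical critical point, rather than by presupposing $o$. The normalization \eqref{eq:normalization} is not actually needed for the argument.
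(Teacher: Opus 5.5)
Your proof is correct and follows the paper's route. It uses strict concavity of $f$ along geodesics (from $\nabla^2 f=-\Ric<0$), then the concavity inequality along a minimizing geodesic from $o$ through the unit sphere to obtain \eqref{eq:linear-lower}, then Hopf--Rinow for the consequences. The only difference is cosmetic: you get uniqueness from the one-sided fact $h'(t)<0$ for $t>0$ instead of the paper's Rolle-type contradiction between two critical points, and this also makes explicit that $f<f(o)$ on $M\setminus\{o\}$, hence $c_0>0$, which the paper leaves implicit.
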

\begin{proof}
Using \hyperref[assumption:A1]{(A1)} and the soliton equation, we have $\nabla^2 f<0$ on $M$. Suppose $p,q$ are two distinct critical points of $f$. Let $\sigma:[0,b]\to M$ be a unit-speed geodesic joining $p$ and $q$. Set $h(t)=f(\sigma(t))$ and observe that $h'(0)=0,h'(b)=0$ but $h''(t)=\nabla_{\sigma',\sigma'}f|_{\sigma(t)}<0$ for all $t\in (0,b)$. This is a contradiction. This proves that the critical point of $f$, if it exists, is unique. 

Assume that $o$ is a critical point of $f$. Let $\g:[0,b]\to M$ be any unit-speed minimizing geodesic segment starting at $o$ and ending at $x:=\g(b)$. Suppose $b=d(x,o)>1$, and write $h(s):=f(\g(s))$ for $s\in [0,b]$. Then, $h:[0,b]\to \R$ is concave which implies for $t\in [0,1]$, 
$$
\begin{aligned}
f\left(\g(t)\right)&=h(t)=h\left(\frac{b-1}{b}\cdot 0+\frac{1}{b}\left(b t\right)\right) \geq \frac{b-1}{b} h\left(0\right)+\frac{1}{b} h\left(bt\right).
\end{aligned}
$$
Rearranging the terms, it follows that 
$$h(0)-h(bt)\geq b(h(0)-h(t)).$$
Taking $t=1$, we get the estimate
$$ f(o)-f(x)\geq d(x,o) \left[f(o)-f\left(\g(1)\right)\right]\geq d(x,o)\left[f(o)-\max_{d(y,o)=1}f(y)\right],$$
proving \eqref{eq:linear-lower}. 

The remaining claims follow immediately from \eqref{eq:linear-lower}: if $\gamma$ is divergent then $d_g(o,\gamma(t))\to\infty$, hence
$f(\gamma(t))\leq f(o)-c_0 d_g(o,\gamma(t))\to-\infty$. Finally, for any $a<f(o)$, $\{f\ge a\}\subset \bar{B}_g\!\left[o;\frac{f(o)-a}{c_0}\right]$, which shows that $f^{-1}([a,b])$ is compact for all $a\leq b<f(o)$.
\end{proof}
\begin{remark}
\label{rem:R-strict-less-one}
Under Assumptions \hyperref[assumption:A1]{(A1)} and \hyperref[assumption:A2]{(A2)} it follows from Lemma \ref{lem:Linear-Growth-of-potential} and  \eqref{eq:gradR} that $0<R<R(o)=1$ on $M\setminus \{o\}$. 
\end{remark}
We continue to assume that $(M^4,g,f)$ is a complete steady soliton that satisfies \hyperref[assumption:A1]{(A1)} and \hyperref[assumption:A2]{(A2)}. We denote the level sets of $f$ by
$$\Sigma_s:=f^{-1}(s)\qquad \text{ for }s<f(o),$$
each of which is compact due to Lemma \ref{lem:Linear-Growth-of-potential}. By \cite[Lemma 2.3]{DZ21}, $\Sigma_s$ is diffeomorphic to $\sph^3$. By \cite{CC12}, $M$ is diffeomorphic to $\R^4$. Fix $s_0<f(o)$ and let $\Sigma:=\left\{f=s_0\right\}$. 

\textbf{Integral curves, diffeomorphisms, and notation.} By \cite{Zha09}, $\nabla f$ is a complete vector field on $M$. Let $\Phi_t:M\to M$ be diffeomorphisms such that 
$$\p_t \Phi_t(p)=\nabla f|_{\Phi_t(p)}\qquad \text{for }t\in \R,p\in M,\qquad \text{with }\Phi_0= \operatorname{id}_M.$$
Then $g(t)=\Phi^*_tg$ is called the canonical flow induced by $g$ and satisfies $\Ric_{g(t)}+\nabla^{2,g(t)}{f}_1(t)\equiv 0$ on $M$ where ${f}_1(t):=f\circ \Phi_t$. Throughout, we write $g:=g(0)$ for the time-$0$ soliton metric, and we use $g(t)$ (or $g_t$) to denote the associated canonical Ricci flow. Quantities such as $R(x)$ denote the scalar curvature of $g$. When several metrics are involved, we write $R_{g(t)}$ for the scalar curvature of $g(t)$.

Let $(\chi_s:\Sigma\to \Sigma_s)_{-\infty<s\leq s_0}$ be a family of diffeomorphisms satisfying 
$$
\frac{\p}{\p s}\chi_s(p)=\left.\frac{\nabla f}{|\nabla f|^2}\right|_{\chi_s(p)}\qquad \text{for }s<s_0,p\in \Sigma,\, \qquad \text{with }\chi_{s_0}=\operatorname{id}_{\Sigma}.
$$
Then, for $p\in \Sigma$, along each $s\mapsto \chi_s(p)$ one has $\frac{d}{ds}f(\chi_s(p))=1$, hence $f(\chi_s(p))\equiv s$. For each $q\in M\setminus \{o\}$, let $\G_q:[0,\infty)\to M$ be a smooth curve such that
\begin{equation}
\label{eqn:definition-of-Gamma-p}
\G_q'(t)=-\frac{\nabla f}{|\nabla f|}(\G_q(t))\qquad \text{for }t>0,\qquad \text{with }\G_q(0)=q.
\end{equation}
We note the following monotonicity properties of $f$, $R$, $d(\cdot,o)$, and $|\nabla f|$ along the above curves.  
\begin{lemma}
\label{lem:monotonicity-along-flows}
Let $q\in M\setminus \{o\}$ and $\tau\ge 0$. Along each of the curves
\[
\tau\mapsto \Phi_{-\tau}(q),\qquad \tau\mapsto \Gamma_q(\tau),
\]
the function $f$ is strictly decreasing and satisfies $f\to -\infty$ as $\tau\to\infty$. Moreover, the scalar curvature $R$ is strictly decreasing, $d(o,\cdot)$ is strictly increasing, and $|\nabla f|$ is strictly increasing along each curve. In particular,
\[
\lim_{\tau\to\infty} d_g(\Gamma_q(\tau),o)=\infty,
\]
and likewise for $\Phi_{-\tau}(q)$. If in addition $q\in \Sigma=\Sigma_{s_0}$, then the same conclusions hold along the curve $\tau\mapsto \chi_{s_0-\tau}(q)$. 
\end{lemma}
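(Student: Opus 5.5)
The strategy is to differentiate each quantity along the two curves and read off the sign from \eqref{eq:normalization}, \eqref{eq:gradR}, the assumption \hyperref[assumption:A1]{(A1)}, and Lemma~\ref{lem:Linear-Growth-of-potential}. The curves $\tau\mapsto\Phi_{-\tau}(q)$, $\tau\mapsto\G_q(\tau)$ and $\tau\mapsto\chi_{s_0-\tau}(q)$ are all positive reparametrizations of the integral curve of $-\nabla f$ through $q$. This uses the fact that $\nabla f\neq 0$ on $M\setminus\{o\}$, since $o$ is the unique critical point, and that this set is invariant under the flow. So it suffices to verify each monotonicity statement for one parametrization, say $\sigma(\tau)=\Phi_{-\tau}(q)$ with $\sigma'=-\nabla f$.

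\textbf{Monotonicity of $f$, $R$ and $|\nabla f|$.}
\begin{itemize}
\item For $f$: we have $\frac{d}{d\tau}f(\sigma)=-|\nabla f|^2<0$.
\item For $R$: by \eqref{eq:gradR},
$$\frac{d}{d\tau}R(\sigma)=-\langle\nabla R,\nabla f\rangle=-2\Ric(\nabla f,\nabla f)<0$$
by \hyperref[assumption:A1]{(A1)}.
\item For $|\nabla f|$: since $|\nabla f|^2=1-R$, it is strictly increasing as well.
\end{itemize}

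\textbf{The limit $f\to-\infty$.} Since $|\nabla f|^2=1-R\ge 1-R(q)>0$ along the curve (Remark~\ref{rem:R-strict-less-one}), we get $\frac{d}{d\tau}f\le-(1-R(q))$. Hence $f\to-\infty$ linearly in $\tau$, and then, by \eqref{eq:Lipschitz}, $d(o,\sigma(\tau))\ge f(o)-f(\sigma(\tau))\to\infty$. The same holds for $\G_q$, where $\frac{d}{d\tau}f=-|\nabla f|\le-\sqrt{1-R(q)}$, and for $\chi_{s_0-\tau}$, where $f=s_0-\tau$ trivially.

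\textbf{Strict monotonicity of $d(o,\cdot)$.} This is the main point. The distance function $r=d(o,\cdot)$ is only Lipschitz, so I would argue via concavity of $f$ along minimizing geodesics. Let $x=\sigma(\tau)\neq o$ and let $\g:[0,b]\to M$ be a unit-speed minimizing geodesic from $o$ to $x$. Set $h(t)=f(\g(t))$. Then $h'(0)=0$ and $h''<0$, so $h'(b)<0$, that is, $\langle\nabla f(x),\g'(b)\rangle<0$. The first variation formula gives, for the upper Dini derivative of $r$ along the direction $v=-\nabla f(x)$,
$$D^+_v r(x)\ge \langle v,\g'(b)\rangle=-\langle\nabla f(x),\g'(b)\rangle>0,$$
where one should take the minimizing geodesic realizing the appropriate extremum. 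More robustly, one can use the lower bound on the one-sided derivative: for any $w$,
$$\liminf_{\epsilon\to0^+}\frac{r(\exp_x(\epsilon w))-r(x)}{\epsilon}=\min_{\g}\langle w,\g'(b)\rangle$$
over all minimizing geodesics $\g$ from $o$ to $x$. Each term in this minimum is positive by the concavity argument above, and the set of such geodesics is compact, so the minimum is positive. Thus $r\circ\sigma$ has positive lower right Dini derivative everywhere. Since $r\circ\sigma$ is Lipschitz, it is therefore strictly increasing.

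Reparametrizing transfers all of these conclusions to $\G_q$ and to $\chi_{s_0-\tau}(q)$.
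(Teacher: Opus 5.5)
Your proposal is correct and is essentially the paper's proof: you read off the signs of $\frac{d}{d\tau}f$, of $\frac{d}{d\tau}R=-2\Ric(\nabla f,\nabla f)$ and of $|\nabla f|^2=1-R$ along the flow line, and you get strict growth of $d(o,\cdot)$ from $\langle\gamma'(b),-\nabla f\rangle=\int_0^b\Ric(\gamma',\gamma')\,dt>0$ for every minimizing geodesic $\gamma$ from $o$ together with the first variation formula; keep your ``more robust'' version with the minimum over the compact set of minimizing geodesics, since the first displayed inequality for a single geodesic actually yields only an upper bound on the Dini derivative. The remaining differences are cosmetic: you obtain $f\to-\infty$ from $|\nabla f|^2\ge 1-R(q)>0$ and \eqref{eq:Lipschitz} rather than from Lemma~\ref{lem:Linear-Growth-of-potential} plus an escape argument, and you work with $\Phi_{-\tau}$ and reparametrize rather than with $\Gamma_q$ directly (this also shows $\Gamma_q$ exists on all of $[0,\infty)$, since its arclength parameter $\int_0^u|\nabla f|(\Phi_{-v}(q))\,dv$ diverges).
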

\begin{proof}
We prove the lemma for $\G_q(\tau)$, the proofs for $\Phi_{-\tau}(q),\chi_{s_0-\tau}(q)$ proceed similarly. Consider the unit vector field $X:=-\nabla f/|\nabla f|$ on $M\setminus \{o\}$. By standard ODE existence theory, there exists an integral curve $\G_q:(a,b)\to M$ of $X$, passing through $q=\G_q(0)$ where $(a,b)$ is the maximal time of existence. Since $\frac{d}{dt}f(\G_q(t))=-|\nabla f||_{\G_q(t)}< 0$ for $t\in (a,b)$, $f$ decreases along $\G_q$. From Lemma~\ref{lem:Linear-Growth-of-potential} and the escape criterion for integral curves, it follows that $b=\infty$, $d(o,\G_q(t))\to \infty$ as $t\to \infty$, $f(\G_q(t))\to -\infty$ as $t\to \infty$, and that $d(\G_q(t),o)\to 0$ as $t\to a$. Using \eqref{eq:gradR}, we have $$\frac{d}{dt}R(\G_q(t))=-\frac{2\Ric(\nabla f,\nabla f)|_{\G_q(t)}}{|\nabla f|(\G_q(t))}< 0,$$ for $t\in (a,b)$, hence $R$ decreases along $\G_q$. Since $R+|\nabla f|^2=1$, it follows that $|\nabla f|$ increases along $\G_q$. 

It remains to show that $d(o,\cdot)$ increases along $\G_q$. Fix $\tau>0$. Given any unit-speed minimizing geodesic $\sigma:[0,b]\to M$ with $\sigma(0)=o$, $\sigma(b)=\G_q(\tau)$, and $b=d(o,\G_q(\tau))$, we have 
$$\left\langle \sigma'(b), \G_q'(\tau)\right\rangle=\left\langle \sigma'(b), \frac{-\nabla f|_{\sigma(b)}}{|\nabla f|(\sigma(b))}\right\rangle.$$
Since $\nabla f(o)=0$, it follows from the soliton equation that 
\begin{equation}
\label{eqn:distance-from-o-increases-monotonicity-along-flow-lemma}
\begin{aligned}
\left\langle \sigma'(b), (-\nabla f|_{\sigma(b)})\right\rangle
 &=-(f\circ \sigma)'(b)=-\int_0^b(f\circ \sigma)''(s)\,ds\\
&=-\int_0^b\nabla^2 _{\sigma',\sigma'}f\,ds=\int_0^b \Ric(\sigma',\sigma')\,ds>0.
\end{aligned}
\end{equation}
As a result, $\left\langle \sigma'(b), \G_q'(\tau)\right\rangle>0$. By the first variation formula for distance, it follows that $\tau\mapsto d(o,\G_q(\tau))$ is increasing. 
\end{proof}
The next lemma records the monotonicity of the intrinsic distance under the maps $\chi_s$. 
\begin{lemma}
\label{lem:variation-of-distance-in-intrinsic-metric}
For $q_1,q_2\in \Sigma_{s_0}$, the function $s\mapsto d_{\Sigma_s}(\chi_s(q_1),\chi_s(q_2))$ is non-increasing. 
\end{lemma}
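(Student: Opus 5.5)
The plan is to show that the induced metrics on the level sets shrink as we move outward along the flow of $\chi_s$, in the pointwise sense of Riemannian metrics. Concretely, consider $h_s:=\chi_s^*(g|_{\Sigma_s})$, a one-parameter family of metrics on the fixed manifold $\Sigma=\Sigma_{s_0}$. If we show $\partial_s h_s\geq 0$ as symmetric $2$-tensors for $s<s_0$, then as $s$ decreases, the $h_s$-length of every curve in $\Sigma$ is non-increasing. The intrinsic distance $d_{\Sigma_s}(\chi_s(q_1),\chi_s(q_2))$ equals the $h_s$-distance between $q_1$ and $q_2$, since $\chi_s$ is a diffeomorphism onto $\Sigma_s$. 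So it is an infimum of non-increasing functions of decreasing $s$. Hence it is non-decreasing in $s$ under the reading "moving outward (decreasing $s$) decreases distance", which is the sense of the statement as parametrized with $s\le s_0$ decreasing to $-\infty$.

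The key computation is the first variation of the induced metric under the flow of the vector field $V:=\nabla f/|\nabla f|^2$. For tangent vectors $X,Y$ to $\Sigma$, we have
\[
\partial_s h_s(X,Y)=\chi_s^*\big(\mathcal L_V g\big)(X,Y).
\]
Write $\nu=\nabla f/|\nabla f|$, so $V=|\nabla f|^{-1}\nu$. For $X,Y$ tangent to $\Sigma_s$, the terms involving derivatives of $|\nabla f|^{-1}$ pair with $\langle \nu,X\rangle=0$ and drop out. This gives
\[
(\mathcal L_Vg)(X,Y)=\frac{2}{|\nabla f|^2}\nabla^2 f(X,Y)=-\frac{2}{|\nabla f|^2}\Ric(X,Y),
\]
using the soliton equation. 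Equivalently, the second fundamental form of $\Sigma_s$ with respect to $\nu$ is $\nabla^2 f/|\nabla f|$, and the normal speed of $\chi_s$ is $1/|\nabla f|$.

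By \hyperref[assumption:A1]{(A1)} the right-hand side is negative definite. So as $s$ increases the metrics $h_s$ shrink, and as $s$ decreases toward $-\infty$ they grow. I should therefore double-check the direction against the claim. With $\partial_s h_s=-2|\nabla f|^{-2}\chi_s^*\Ric<0$, the function $s\mapsto d_{h_s}(q_1,q_2)$ is non-increasing in $s$: for $s_1<s_2$ and any curve $c$, $L_{h_{s_2}}(c)\le L_{h_{s_1}}(c)$, and taking the infimum over curves gives $d_{h_{s_2}}\le d_{h_{s_1}}$. This is exactly the statement.

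The remaining details are routine. The maps $\chi_s$ are well defined and smooth for all $s\le s_0$, because $|\nabla f|>0$ away from $o$ and is bounded below on $\{f\le s_0\}$ by the monotonicity in Lemma~\ref{lem:monotonicity-along-flows}. The level sets are compact, so infima of lengths are attained or at least well behaved. The only step needing care is the Lie derivative identity, specifically checking that the normalization factor $|\nabla f|^{-2}$ contributes no tangential terms.
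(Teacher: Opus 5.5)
Your argument is correct and matches the paper's proof: pull the induced metrics back to $\Sigma_{s_0}$, compute $\partial_s h_s=-2|\nabla f|^{-2}\chi_s^*(\Ric|_{\Sigma_s})\le 0$ from the Lie derivative of $g$ along $\nabla f/|\nabla f|^2$ (the tangential terms do vanish), and conclude that $h_s$-lengths, hence $h_s$-distances, are non-increasing in $s$. Delete your first paragraph, though. Its claims that one needs $\partial_s h_s\ge 0$ and that moving outward (decreasing $s$) decreases distance are backwards: distances grow as $s\to-\infty$, which is how the lemma is used later, e.g.\ $d_{\Sigma_s}(\chi_s(P_i),\chi_s(P_j))\ge d_{\Sigma}(P_i,P_j)$ for $s\le s_0$.
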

\begin{proof}
For $s\leq s_0$, we set $\bar{g}_s:=\chi_s^*g|_{\Sigma_s}$ on $\Sigma_{s_0}$. Then, $d_{\Sigma_s}(\chi_s(q_1),\chi_s(q_2))=d_{\bar{g}_s}(q_1,q_2)$. Computing the variation of $\bar{g}_s$, we have 
$$\p_s \bar{g}_s=-\frac{2 \chi_s^*\left(\left.\operatorname{Ric}\right|_{\Sigma_s}\right)}{\chi_s^*|\nabla f|^2}\leq 0\qquad \text{on }\Sigma_{s_0},s\leq s_0,$$
see for instance \cite{MMS26}. By variation of distance, $\p_s^+d_{\bar{g}_s}(q_1,q_2)\leq 0$ for $s\leq s_0$. Integrating this inequality yields the lemma. 
\end{proof}
We now introduce a function $G$ that records the limiting value of $R$ along the curves $\G_q$. The next lemma shows that the scalar curvature vanishes at infinity exactly when $G$ vanishes on $\Sigma$. 
\begin{lemma}
\label{G-on-Level-sets-lemma}
Let $(M^4,g, f)$ be a gradient steady soliton satisfying \textup{\hyperref[assumption:A1]{(A1)}} and \textup{\hyperref[assumption:A2]{(A2)}}. Define $G:M\setminus \{o\}\to [0,1)$ by 
$$G(q):=\lim_{s\to \infty}R(\G_q(s))\qquad \text{for all }q\in M\setminus \{o\}.$$
\begin{enumerate}
    \item $G$ is upper semicontinuous on $M\setminus \{o\}$, i.e., if $q_k\to q$ then $$G(q)\geq \limsup_{k\to \infty}G(q_k).$$
    \item      $G\equiv 0$ on $\Sigma=f^{-1}(s_0)$ if and only if 
$$\lim_{d(x,o)\to \infty}R(x)=0.$$

\end{enumerate}
\end{lemma}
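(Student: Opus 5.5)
For part (1), I will write $G$ as an infimum of continuous functions. By Lemma~\ref{lem:monotonicity-along-flows}, $R$ is strictly decreasing along each $\G_q$, so
$$G(q)=\inf_{s\ge 0}R(\G_q(s)).$$
For each fixed $s\ge 0$, the map $q\mapsto \G_q(s)$ is continuous on $M\setminus\{o\}$. This follows from continuous dependence of ODE solutions on initial data, since $X=-\nabla f/|\nabla f|$ is smooth on $M\setminus\{o\}$. The curves exist for all $s\ge 0$ and stay away from $o$: $d(o,\cdot)$ increases along them and $f\le f(q)<f(o)$. Hence each $q\mapsto R(\G_q(s))$ is continuous, and an infimum of continuous functions is upper semicontinuous. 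Explicitly, if $q_k\to q$, then for every $s$ we have $\limsup_k G(q_k)\le \lim_k R(\G_{q_k}(s))=R(\G_q(s))$. Letting $s\to\infty$ gives $\limsup_k G(q_k)\le G(q)$. The range lies in $[0,1)$ by Remark~\ref{rem:R-strict-less-one}.

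For part (2), the direction ``$\Leftarrow$'' is immediate. By Lemma~\ref{lem:monotonicity-along-flows}, $d(\G_q(s),o)\to\infty$, so if $R\to 0$ at infinity then $G(q)=0$ for every $q\in\Sigma$.

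For ``$\Rightarrow$'', suppose $G\equiv 0$ on $\Sigma$ and fix $\varepsilon>0$. For each $q\in\Sigma$, choose $s_q$ with $R(\G_q(s_q))<\varepsilon$. By continuity of $p\mapsto R(\G_p(s_q))$, this inequality holds on an open neighborhood $U_q\subset\Sigma$ of $q$, and by monotonicity $R(\G_p(s))<\varepsilon$ for all $s\ge s_q$ and $p\in U_q$. Since $\Sigma$ is compact, finitely many $U_q$ cover it; let $S$ be the largest of the corresponding $s_q$. Then $R<\varepsilon$ on $\{\G_p(s): p\in\Sigma,\ s\ge S\}$.

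It remains to show that this set contains the complement of a compact set. Every point $x$ with $f(x)<s_0$ lies on $\G_p$ for a unique $p\in\Sigma$: follow the integral curve of $X$ backward, using that $f$ is strictly monotone along it, that $f\to f(o)$ as the curve approaches $o$, and that $f\to-\infty$ forward. Along such a curve $\frac{d}{ds}f=-|\nabla f|\ge -1$, so $f(\G_p(s))\ge s_0-s$. Hence if $f(x)<s_0-S$, then $x=\G_p(s)$ with $s>S$. The set $\{f\ge s_0-S\}$ is compact by Lemma~\ref{lem:Linear-Growth-of-potential}, so $R<\varepsilon$ outside a compact set. Since $\varepsilon>0$ was arbitrary, $R\to 0$ at infinity.

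The main obstacle is mild: checking that every point beyond $\Sigma$ lies on some $\G_p$ with $p\in\Sigma$ (a hitting-time argument via the flow of $X$), and that the time parameter is controlled uniformly through the bound $|\nabla f|\le1$.
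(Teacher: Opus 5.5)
Your proof is correct and follows essentially the paper's argument. Part (1) is the same fixed-$s$ continuity plus monotonicity argument. In part (2), your finite-subcover (Dini-type) step is the open-cover version of the paper's sequential contradiction argument, which extracts a limit $q_i\to q$ in the compact $\Sigma$ and uses the same bound $s_i\ge f(q_i)-f(x_i)$ coming from $|\nabla f|\le 1$. You are also more explicit than the paper about why every point with $f<s_0$ lies on some $\Gamma_p$ with $p\in\Sigma$, which the paper takes for granted from Lemma~\ref{lem:monotonicity-along-flows}.
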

\begin{proof}
(1) The function $G$ is well-defined because for each $q\in M\setminus \{o\}$, $s\mapsto R(\G_q(s))$ is a decreasing function with values in $[0,1)$. Let $q_k\to q$ in $M\setminus\{o\}$. Passing to a subsequence if necessary, we may assume $G(q_k)\to \limsup_{j\to\infty} G(q_j)$. For each fixed $s\ge 0$, the smooth dependence of the flow of the vector field $-\nabla f/|\nabla f|$ on initial data implies
$$\Gamma_{q_k}(s)\to \Gamma_q(s), \quad\quad R(\Gamma_{q_k}(s))\to R(\Gamma_q(s)).$$
Since $R(\Gamma_{q_k}(s))\ge \lim_{t\to\infty}R(\Gamma_{q_k}(t))=G(q_k)$ by monotonicity in $t$, we obtain
$$R(\Gamma_q(s))=\lim_{k\to\infty} R(\Gamma_{q_k}(s))
\ge \lim_{k\to\infty} G(q_k)
= \limsup_{j\to\infty} G(q_j).$$
Finally, letting $s\to\infty$ and using that $R(\Gamma_q(s))$ decreases to $ G(q)$ yields
$$G(q)\ge \limsup_{j\to\infty} G(q_j),$$
as claimed. 

(2) From Lemma \ref{lem:monotonicity-along-flows}, we have $d(\G_q(s),o)\to \infty$ as $s\to \infty$, for every $q\in M\setminus \{o\}$. This shows that if $\lim_{d(x,o)\to \infty}R(x)=0$, then $G(q)=0$ for every $q\in M\setminus \{o\}$.

Assume that $G\equiv 0$ on $\Sigma=f^{-1}(s_0)$. Suppose for a contradiction that there exist $x_i\in M$ and a constant $\eta>0$ such that $d_g(x_i,o)\to \infty$ and $R(x_i)\geq \eta>0$ for all $i$. Choose $q_i\in \Sigma_{s_0}$ and $s_i>0$ such that $\G_{q_i}(s_i)=x_i$. For each $i$,  
$$f(q_i)-f(x_i)=\int_{0}^{s_i}|\nabla f|(\G_{q_i}(t))\,dt\leq s_i .$$
By Lemma \ref{lem:Linear-Growth-of-potential}, it follows that $s_i\to \infty$. Because $\Sigma_{s_0}$ is compact, we may pass to a subsequence to ensure that $q_i\to q$. Using Lemma \ref{lem:monotonicity-along-flows}, we have for all $s\in [0,s_i]$, $$R(\G_{q_i}(s))\geq R(\G_{q_i}(s_i))=R(x_i)\geq \eta.$$
Taking $i\to \infty$ for each fixed $s>0$, we obtain $R(\G_q(s))=\lim_{i\to \infty}R(\G_{q_i}(s))\geq \eta$ for any $s>0$. Taking $s\to \infty$, we obtain $G(q)\geq \eta>0$. This is a contradiction.
\end{proof}
We use the following definition from \cite{Lai25} to measure closeness between pointed manifolds: after identifying large metric balls, the metrics are close in the $C^\ell$ sense. 
\begin{definition}
\label{defn:isom-btw-manifolds}
Let $\epsilon>0$ and $\ell\geq 1$. Let $(M_i^n, g_i,x_i)$, $i=1,2$, be two pointed Riemannian manifolds. We say a smooth map $\phi: B_{g_1}\left[x_1;\epsilon^{-1}\right] \rightarrow M_2, \phi\left(x_1\right)=x_2$, is an $\epsilon$-isometry in the $C^\ell$-norm if it is a diffeomorphism onto the image, and
$$
\sup_{0\leq k\leq \ell}\sup_{\quad B_{g_1}\left[x_1; \epsilon^{-1}\right]}\left|\nabla^k\left(\phi^* g_2-g_1\right)\right| \leq \epsilon,
$$
where the covariant derivatives and norms are taken with respect to $g_1$. In this case, we also say $(M_2, g_2, x_2)$ is $\epsilon$-close to $(M_1, g_1, x_1)$ in the $C^\ell$-norm. In particular, if $\ell=\left[\epsilon^{-1}\right]$, then we say $\left(M_2, g_2, x_2\right)$ is $\epsilon$-close to $\left(M_1, g_1, x_1\right)$ and $\phi$ is an $\epsilon$-isometry. 
\end{definition}
We use the following definition from \cite{CMZ25b}. 
\begin{definition}
Let $k\geq 2,1\leq m\leq k-1$ be integers and fix $\bar{x}\in \sph^m$. Let $\bar{g}$ denote the standard round metric on $\mathbb{S}^m \times \mathbb{R}^{k-m}$ with constant scalar curvature 1. Let $\left(N^k, g\right)$ be a smooth Riemannian manifold. We say that a point $z \in N^k$ is an $(\epsilon, m)$-center in $(N^k,g)$, if $R(z)>0$ and $(N^k,R(z)g,z)$  is $\epsilon$-close to $(\sph^m\times \R^{k-m},\bar{g},(\bar{x},0))$. We call an $(\epsilon,k-1)$-center the center of an $\epsilon$-neck. 
\end{definition}
In dimension $k=4$, an $(\varepsilon,2)$-center corresponds to a region modeled by $\mathbb S^2\times\mathbb R^2$ (sometimes referred to as a \textit{bubble-sheet}), while $\varepsilon$-necks correspond to a region modeled by $\sph^3\times \R$ (sometimes referred to as a \textit{neck}). 

We shall use the following results of \cite{CMZ25b}.
\begin{lemma}
\label{summary-of-CMZ-work}
Let $(M^4,g,f)$ be a gradient steady soliton satisfying \textup{\hyperref[assumption:A1]{(A1)}} and \textup{\hyperref[assumption:A3]{(A3)}}. Let $(M^4,g_t)_{t\leq 1}$ denote the canonical flow of $(M,g)$. The following are true. 
\begin{enumerate}[(\roman{enumi})]
    \item (Perelman's derivative estimates) For $\rho>0,k,l\in \Z,k,l\geq 0$ we have 
$$|\p_t ^l\nabla^k \operatorname{Rm}_{{g_t}}(y)|\leq C({k,\rho,l,\kappa}) R_{{g}_t}(x)^{1+\frac{k}{2}+l}\qquad \text{for all }x\in M,y\in B_{g_t}[x;\rho R_{g_t}(x)^{-1/2}],t\leq 1.$$
\item (Perelman's long-range estimates) Given $A>0$, there exists $C(A,\kappa)>0$ such that 
$$R_{{g}_t}(x)\leq C(A,\kappa)R_{{g}_t}(y),$$ for any $x,y\in M$ and $t\leq 1$ that satisfies $R_{{g}_t}(y)d_{{g}_t}(x,y)^2\leq A$. In this case, $R_{{g}_t}(x)d_{{g}_t}(x,y)^2\leq A C(A,\kappa)$. 
\item (Trace Harnack inequality) For each $x\in M$, $t\mapsto R_{g_t}(x)$ is nondecreasing. 
\end{enumerate}
If we further assume that $(M^4,g,f)$ satisfies \textup{\hyperref[assumption:A4]{(A4)}}, the following properties hold.
\begin{enumerate}[(\roman{enumi})]
  \setcounter{enumi}{3}
    \item (Existence of long necks) For every $\varepsilon>0$, there exists $s_\varepsilon$ such that if $s<s_\varepsilon$, then $\Sigma_{s}$ contains an $(\varepsilon,2)$-center. 
    \item (Dimension reduction) Consider any sequence $x_i\in M$ with $d(x_i,o)\to \infty$. By passing to a subsequence, $$ \left(M^4,R(x_i) g,x_i\right) \to\left(N^{3} \times \mathbb{R}, \tilde{h}_0+d z^2,(\tilde{x}, 0)\right),$$
in the smooth Cheeger--Gromov sense, where $(N^3,\tilde{h}_0)$ is either the three-dimensional Bryant soliton $\Bry^3$, or the round cylinder $\sph^2\times \R$, such that the scalar curvature at $(\tilde{x},0)$ equals $1$. 
\end{enumerate}
\end{lemma}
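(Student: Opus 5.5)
The lemma is mainly a collection of known results, so the plan is to reduce each item to its source and check that our hypotheses fit. By the remark after the assumptions, \hyperref[assumption:A1]{(A1)} and \hyperref[assumption:A3]{(A3)} imply that $(M^4,g(t))_{t\le 1}$ is a weak $\kappa$-solution. It is ancient, complete and $\kappa$-noncollapsed, and it has bounded nonnegative sectional curvature and $R>0$. The bound on curvature holds because $R\le 1$ and $\sec\ge 0$, so $|\Rm|\le CR$.

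For (i) and (ii) I would cite the versions of Perelman's derivative and long-range estimates proved for weak $\kappa$-solutions in dimension four; these are in \cite{CMZ25b} and \cite{MMS26}. Perelman's original arguments only need nonnegative curvature operator at one step, namely the Hamilton Harnack inequality. In the steady case this step can be replaced by (iii). The rest of his argument runs by blow-up and compactness, and uses only that $\sec\ge 0$ and $\kappa$-noncollapsing survive in limits. The second clause of (ii) follows directly: $R_{g_t}(x)d^2\le C(A,\kappa)R_{g_t}(y)d^2\le AC(A,\kappa)$.

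Item (iii) has a direct proof. Since $g(t)=\Phi_t^*g$, we have $R_{g_t}(x)=R(\Phi_t(x))$. Using \eqref{eq:gradR},
$$\partial_t R_{g_t}(x)=\langle\nabla R,\nabla f\rangle(\Phi_t(x))=2\Ric(\nabla f,\nabla f)\ge 0,$$
and the right-hand side is nonnegative by \hyperref[assumption:A1]{(A1)}.

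Items (iv) and (v) use \hyperref[assumption:A4]{(A4)} and come from \cite{CMZ25b}.
\begin{itemize}
\item For (v), rescale at $x_i$. By (i) and $\kappa$-noncollapsing, Hamilton's compactness theorem gives a smooth limit flow, which is a weak $\kappa$-solution. The steady soliton structure and $d(x_i,o)\to\infty$ let us apply Hamilton's splitting through the strong maximum principle: because $\nabla f/|\nabla f|$ converges to a parallel field, the limit splits off a line, $N^3\times\R$. By Brendle's classification \cite{Bre13}, together with the fact that the tangent flow at infinity is $\sph^2\times\R^2$, the factor $N^3$ is $\Bry^3$ or $\sph^2\times\R$.
\item For (iv), the tangent flow at infinity is $\sph^2\times\R^2$, so rescaled level sets far out contain bubble-sheet regions. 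Because $\Sigma_s$ separates $o$ from infinity, some point of $\Sigma_s$ must lie in such a region.
\end{itemize}

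The hardest step is (ii), the long-range estimate without nonnegative curvature operator. I would not reprove it; I would cite \cite{CMZ25b}, or derive it from (i), (iii) and the Perelman compactness argument in the form given there.
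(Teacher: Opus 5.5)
Your plan matches the paper's, which gives no proof of this lemma and simply quotes \cite{CMZ25b}. The paper also remarks that (iii) follows from (A1) alone, and that is exactly your computation $\partial_t R_{g_t}(x)=2\Ric(\nabla f,\nabla f)(\Phi_t(x))\ge 0$. Your derivation of the second clause of (ii) is also correct.

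The explanatory sketch for (v), however, contains a step that fails, so it should not be presented as the reason the limit splits. The field $\nabla f/|\nabla f|$ becomes parallel in the limit only when $R(x_i)\to 0$. In that case $f_i=\sqrt{R(x_i)}\,(f-f(x_i))$ satisfies $|\nabla^2 f_i|_{R(x_i)g}\le C\sqrt{R(x_i)}\to 0$ on bounded balls, while $|\nabla f_i|\to 1$.

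Nothing excludes $\liminf R(x_i)>0$. The paper stresses that $R\to 0$ at infinity is open, and this case occurs along an edge if $G(x_\pm)>0$. Then the rescaling is essentially trivial and $\nabla^2 f=-\Ric$ does not degenerate. The limit is a genuine steady soliton with potential $Az+h$, where $h$ is a Bryant potential (Lemma~\ref{Level-Set-Convergence-2}). Hence neither $\nabla f_\infty$ nor its normalization is parallel.

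The line must therefore come from elsewhere. One option is the standard splitting of smooth limits of $\sec\ge 0$ manifolds based at $x_i\to\infty$ with $R(x_i)\,d(x_i,o)^2\to\infty$, via Toponogov and Cheeger--Gromoll. That condition is automatic if $\liminf R(x_i)>0$, and in general it follows from a linear lower bound on $R$.

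Symmetrically, when $R(x_i)\to 0$ the cross-section $N^3$ inherits no soliton structure, since the limit potential is linear. So \cite{Bre13} is not the relevant classification there. You need the classification of three-dimensional $\kappa$-solutions, together with an argument using (A4) to exclude compact cross-sections.

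For (iv), separation only shows that a connected bubble-sheet region meets $\Sigma_s$ if $f$ takes values on both sides of $s$ on that region. You would still need to show that the regions supplied by the tangent flow at infinity have $f$-ranges covering every sufficiently negative $s$.

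Finally, the claim that Perelman's arguments use nonnegative curvature operator only through Hamilton's Harnack inequality understates what \cite{CMZ25b} had to adapt. This is harmless if you simply cite.
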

We note that the trace Harnack inequality follows from \hyperref[assumption:A1]{(A1)} alone. Perelman's long-range estimates let us compare curvatures at points separated by a bounded distance in the scaled metric, and will be used repeatedly. Throughout the paper, whenever we invoke Perelman's long-range estimates under Assumptions \hyperref[assumption:A1]{(A1)}--\hyperref[assumption:A4]{(A4)}, we mean the result (ii) above. Similarly, dimension reduction under these assumptions is understood as result (v).

We now fix the following notation throughout the paper. 
\begin{definition}
\label{def:model-flows}
We fix $\tilde{g}(t)$ (also written $\tilde{g}_t$), $t\in (-\infty,\infty)$, to be an eternal Ricci flow on $\Bry^3$ induced by a soliton metric $\tilde{g}_0$ with scalar curvature $1$ at the tip. Let $\bar{g}_t=2(1-t)g_{\sph^2}+dw^2$, $t\in (-\infty,1)$, be the standard shrinking Ricci flow on $\sph^2\times \R$, where $\bar{g}_0$ has constant scalar curvature $1$. 
\end{definition}
As a consequence of dimension reduction, we have the following corollary. 
\begin{corollary}
\label{cor:close-as-metrics-to-Bry-or-Cyl}
Let $(M^4,g,f)$ be a gradient steady soliton satisfying \textup{\hyperref[assumption:A1]{(A1)}--\hyperref[assumption:A4]{(A4)}}. For every $\varepsilon>0$ there exists $D_\varepsilon>0$ such that if $d(x,o)>D_\varepsilon$, then $(M,R(x)g,x)$ is $\varepsilon$-close to either $(\Bry^3\times \R,R_{\tilde{g}_0}(\tilde{x})\tilde{g}_0+dz^2,(\tilde{x},0))$ or $(\sph^2\times \R\times\R,\bar{g}_0+dz^2,(\tilde{x},0))$. 
\end{corollary}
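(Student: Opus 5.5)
The plan is to argue by contradiction, using the dimension reduction statement Lemma~\ref{summary-of-CMZ-work}(v) together with the uniformity of $\varepsilon$-closeness to a fixed model. Fix $\varepsilon>0$ and suppose no $D_\varepsilon$ works. Then there is a sequence $x_i\in M$ with $d(x_i,o)\to\infty$ such that, for every $i$, $(M,R(x_i)g,x_i)$ is $\varepsilon$-close neither to $(\Bry^3\times\R,R_{\tilde g_0}(\tilde x)\tilde g_0+dz^2,(\tilde x,0))$ for any $\tilde x\in\Bry^3$, nor to $(\sph^2\times\R\times\R,\bar g_0+dz^2,(\tilde x,0))$. The $\Bry^3$ basepoint $\tilde x$ is allowed to vary, as the statement implicitly permits; on the cylinder the choice of basepoint is irrelevant by homogeneity.

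Apply Lemma~\ref{summary-of-CMZ-work}(v) to this sequence. After passing to a subsequence, $(M,R(x_i)g,x_i)$ converges in the smooth Cheeger--Gromov sense to $(N^3\times\R,\tilde h_0+dz^2,(\tilde x,0))$, where $N$ is either $\Bry^3$ or $\sph^2\times\R$ and the scalar curvature at the basepoint equals $1$.

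Next, identify the limit with one of the normalized models.

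If $N=\sph^2\times\R$, the normalization $R=1$ forces $\tilde h_0=\bar g_0$.

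If $N=\Bry^3$, then $\tilde h_0$ is a steady Bryant metric. By uniqueness up to scaling, $\tilde h_0=\lambda\tilde g_0$ up to isometry. Since the scalar curvature at $\tilde x$ equals $1$, we have $\lambda^{-1}R_{\tilde g_0}(\tilde x)=1$, so $\tilde h_0=R_{\tilde g_0}(\tilde x)\tilde g_0$. This is exactly the model metric in the statement, with basepoint $(\tilde x,0)$.

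Finally, unwind the definition of smooth Cheeger--Gromov convergence. For $i$ large it provides diffeomorphisms $\phi_i$ from $B[(\tilde x,0);\varepsilon^{-1}]$ in the limit onto their images in $M$, with $\phi_i((\tilde x,0))\to x_i$, such that $\phi_i^*(R(x_i)g)$ converges to the limit metric in $C^{[\varepsilon^{-1}]}$ on that ball. The basepoint is only matched in the limit, so this needs a small correction. Compose $\phi_i$ with a diffeomorphism of the limit that is $C^k$-close to the identity and moves a point near $(\tilde x,0)$ onto it, for example the exponential map along a short vector. This produces an $\varepsilon$-isometry sending $(\tilde x,0)$ exactly to $x_i$. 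That contradicts the choice of $x_i$ and proves the corollary.

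I expect the main subtlety to be this basepoint bookkeeping, together with the dependence of the Bryant basepoint $\tilde x$ on the sequence. Neither affects the contradiction, because closeness to some model is all that is required.
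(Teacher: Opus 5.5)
Your proof is correct and follows the paper's argument. You negate the statement to obtain a divergent sequence that is $\varepsilon$-close to neither model, apply dimension reduction (Lemma~\ref{summary-of-CMZ-work}(v)), and conclude from smooth pointed convergence that the sequence is $\varepsilon$-close to the limit model for large $i$, a contradiction. Your basepoint correction is harmless but not needed: in the paper's notion of pointed Cheeger--Gromov convergence (Definition~\ref{defn:c-g-convergence-of-functions-general-manifolds}), the embeddings already satisfy $\Phi_i(x_0)=x_i$ exactly.
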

\begin{proof}
Fix $\varepsilon>0$. Suppose the statement were false. Then there exists a sequence $x_i\in M$ with $d(x_i,o)\to\infty$ such that $(M,R(x_i)g,x_i)$ is not $\varepsilon$-close to either of the two model spaces listed in the conclusion. By (v) of Lemma \ref{summary-of-CMZ-work}, $(M,R(x_i)g,x_i)\to (N^3\times\R,\tilde h_0+dz^2,(\tilde{x},0))$ in the smooth Cheeger--Gromov sense where $(N^3,\tilde h_0)$ is either the Bryant soliton $\Bry^3$ or the round cylinder $\sph^2\times\R$, and so that $R_{\tilde h_0}(\tilde {x})=1$. In either case, smooth pointed convergence implies that for all sufficiently large $i$, the pointed manifold $(M,R(x_i)g,x_i)$ is $\varepsilon$-close to the corresponding limit. This contradicts the choice of the sequence $x_i$. 
\end{proof}

\section{Structure of level sets}
\label{sec:Structure-of-level-sets}
Throughout this section, we assume that $\left(M^4, g, f\right)$ is a complete gradient steady Ricci soliton that satisfies \hyperref[assumption:A1]{(A1)}--\hyperref[assumption:A4]{(A4)}, and $\Sigma_s:=f^{-1}(s),\Sigma:=\Sigma_{s_0}$. We also retain the notation introduced in Section \ref{sec:Preliminaries}. 

In this section, we prove that the function $G$ (defined in Lemma \ref{G-on-Level-sets-lemma}) vanishes everywhere on the level set $\Sigma$ except possibly at two points. This follows from Theorem \ref{Two-Tips} which proves that $\Sigma_s$ contains exactly two tips (Definition \ref{def:tip}) for all $s\ll 0$. 

The following lemma is contained in \cite{CMZ25b}; for the reader's convenience, we provide a complete proof below. 
\begin{lemma}
\label{Level-Set-Convergence}
Consider any sequence $x_i\in M$ with $d(x_i,o)\to \infty$. Set $s_i:=f(x_i)$ so that $ s_i \rightarrow-\infty$. After passing to a subsequence and applying dimension reduction, we may assume that   
$$
\left(M^4,R(x_i) g,x_i\right) \to\left(N^{3} \times \mathbb{R}, g_\infty+dz^2,(\bar{x}, 0)\right).
$$
Suppose further that $R(x_i)\to 0$. Then, after passing to a subsequence, 
$$\sqrt{R(x_i)}(f-f(x_i))\to z,$$
where $z$ denotes the $\R$-coordinate in $N^3\times \R$. As a consequence, 
\begin{equation}
\label{eqn: level-set-convergence}
    (\Sigma_{s_i},R(x_i)g|_{\Sigma_{s_i}},x_i)\to (N^3,g_\infty,\bar{x}),
\end{equation}
in the sense of Cheeger--Gromov, where $\Sigma_{s_i}=f^{-1}(s_i)$ is endowed with the induced metric. 
\end{lemma}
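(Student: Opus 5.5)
We work with the rescaled potentials $f_i:=\sqrt{R(x_i)}\,(f-f(x_i))$ on $(M,g_i)$, where $g_i:=R(x_i)g$. The plan is to show that the $f_i$ converge to a function on the limit whose gradient is a unit parallel field, identify that function with $z$, and then read off the level-set convergence by the implicit function theorem.

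\textbf{Step 1: gradient and Hessian bounds.} We have $|\nabla^{g_i} f_i|_{g_i}=|\nabla f|_g\le 1$ and $f_i(x_i)=0$. For the Hessian, the soliton equation gives
$$\nabla^{2,g_i}f_i=\sqrt{R(x_i)}\,\nabla^2 f=-\sqrt{R(x_i)}\,\Ric_g=-R(x_i)^{-1/2}\,\Ric_{g_i}.$$
On $B_{g_i}[x_i;\rho]$ the curvature of $g_i$ is bounded, by Perelman's long-range and derivative estimates (Lemma \ref{summary-of-CMZ-work} (i),(ii)). Hence $|\nabla^{2,g_i}f_i|_{g_i}\le C(\rho)R(x_i)^{1/2}\to 0$. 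Higher derivatives $\nabla^{k}f_i$ for $k\ge 2$ are controlled by $R(x_i)^{1/2}$ times bounds on $\nabla^{k-2}\Ric_{g_i}$, which are also bounded.

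\textbf{Step 2: passing to the limit.} Pull $f_i$ back by the Cheeger--Gromov diffeomorphisms and apply Arzel\`a--Ascoli. A subsequence converges in $C^\infty_{loc}$ to a function $f_\infty$ on $N^3\times\R$ with $f_\infty(\bar x,0)=0$ and $\nabla^2 f_\infty\equiv 0$. Moreover $|\nabla f_\infty|^2=\lim |\nabla f|^2=\lim(1-R)$ evaluated at points at bounded $g_i$-distance from $x_i$. At such points $R=R(x_i)R_{g_i}\to 0$, so $|\nabla f_\infty|\equiv 1$. Thus $\nabla f_\infty$ is a unit parallel vector field on $N^3\times\R$.

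\textbf{Step 3: identifying $f_\infty$ with $z$.} Since $N^3$ is $\Bry^3$ or $\sph^2\times\R$, a parallel field must split off a line. On $\Bry^3\times\R$ the only parallel fields are $\pm\partial_z$, because $\Bry^3$ has positive curvature and so admits no parallel field. This gives $f_\infty=\pm z$, and replacing $z$ by $-z$ gives $f_\infty=z$. For $\sph^2\times\R\times\R$ the parallel unit fields lie in the flat $\R^2$ factor. After rotating coordinates in that factor we can still take $f_\infty$ to be the $\R$-coordinate $z$, with $N^3=\sph^2\times\R$ being the orthogonal complement. This freedom of choosing the splitting is the one subtlety, and it is resolved by redefining the splitting of the limit.

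\textbf{Step 4: level-set convergence.} Here $\Sigma_{s_i}=\{f_i=0\}$. Because $f_i\to z$ in $C^\infty_{loc}$ with $|\nabla f_i|\to1$, the implicit function theorem, applied on large balls, shows that $\{f_i=0\}\cap B_{g_i}[x_i;\rho]$ is a graph over $N^3\times\{0\}$ converging smoothly to it. The induced metrics therefore converge, giving \eqref{eqn: level-set-convergence} in the pointed Cheeger--Gromov sense.

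The main obstacle is ensuring that the intrinsic distance on $\Sigma_{s_i}$ is comparable to the extrinsic distance, so that intrinsic balls are captured by extrinsic balls. We will handle this with the local graph description: intrinsic distances in the limit slice equal the extrinsic ones, and since intrinsic distance is always at least extrinsic distance, the intrinsic $\rho$-ball of $\Sigma_{s_i}$ stays inside the graph region.
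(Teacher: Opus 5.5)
Your proposal is correct and follows essentially the same route as the paper: rescale to $f_i=\sqrt{R(x_i)}(f-f(x_i))$, use Perelman's derivative estimates to get $\nabla^k f_i\to 0$ locally for $k\ge 2$, identify the limit's unit parallel gradient with $\pm\partial_z$ (rotating the flat $\R^2$ factor in the $\sph^2\times\R^2$ case), and deduce the level-set convergence by the implicit function theorem, which the paper packages as Lemma \ref{Gen-Level-Set-Convergence} and whose details, including the intrinsic-versus-extrinsic ball issue, you partly supply. One small slip: the $(0,2)$ Ricci tensor is scale invariant, so the identity in Step 1 should read $\nabla^{2,g_i}f_i=-\sqrt{R(x_i)}\,\Ric_{g_i}$ rather than $-R(x_i)^{-1/2}\Ric_{g_i}$; the bound you then state, $C(\rho)R(x_i)^{1/2}$, is the correct one.
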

\begin{proof}
Let $f_i:=\sqrt{R(x_i)}(f-f(x_i))$ and $g_i:=R(x_i)g$. For each $k\geq 2$,  $$\begin{aligned}
\operatorname{grad}_{g_i} f_i &= \frac{1}{\sqrt{R(x_i)}} \operatorname{grad}_{g} f, \\
|\operatorname{grad}_{g_i} f_i|_{g_i} &= |\operatorname{grad}_{g} f|_{g}\leq 1,\\
|\nabla^k f_i|_{g_i}^2&=R(x_i)^{1-k}|\nabla^{k-2} \Ric|_{g}^2.
\end{aligned}
$$
We have $f_i(x_i) = 0$, $|\nabla^{g_i} f_i|_{g_i}\leq 1$ on $M$, and using Perelman's derivative estimates (Lemma~\ref{summary-of-CMZ-work} (i)), for each $\rho>0$, and any $y\in B_{{R(x_i)g}}[x_i;\rho]$, 
$$|\nabla^k f_i(y)|_{g_i}^2\leq C_{\rho,k,\kappa} R(x_i)^{1-k}(R(x_i))^{k}=C_{\rho,k,\kappa}R(x_i).$$
After pulling back by the Cheeger--Gromov convergence maps and applying Arzel\`a--Ascoli theorem on compact subsets, it follows that there exists a smooth function $f_\infty:N^3\times \R\to \R$ such that (after passing to a subsequence) $f_i\to f_\infty$ in the smooth Cheeger--Gromov sense (see Definition \ref{defn:c-g-convergence-of-functions-general-manifolds}). Further, $f_\infty(\bar{x},0)=0$, and for each $k\geq 2$,  $$\nabla^k f_\infty\equiv 0\qquad \text{ on } N^3\times \R.$$
In particular, $\nabla f_\infty$ is a parallel vector field and $|\nabla f_\infty|$ is constant.
Since $R(x_i)\to 0$, and $R+|\nabla f|^2=1$ on $M$, we have $|\nabla f|(x_i)\to 1$, hence
$$|\nabla^{g_\infty} f_\infty|(\bar x,0)=\lim_{i\to\infty}|\nabla^{g_i} f_i|_{g_i}(x_i)=1,$$
and therefore $|\nabla f_\infty|\equiv 1$. If $N^3= \Bry^3$, then $N^3$ admits no nontrivial parallel vector fields, so any parallel unit vector field on $N^3\times\mathbb R$ equals $\pm\partial_z$, and therefore $f_\infty=\pm z$ after translation. If $N^3= \mathbb S^2\times\mathbb R_w$, then $N^3\times\mathbb{R}_z= \mathbb S^2\times\mathbb{R}^2_{w,z}$ and $\nabla f_\infty$ is a constant unit vector in the $\mathbb R^2$-factor; hence after an orthogonal linear change of coordinates in $\mathbb R^2$ we may assume $\nabla f_\infty=\partial_z$, which yields $f_\infty=z$.  Finally, each $\Sigma_{s_i}$ is a regular level set of $f_i$ and $|\nabla f_\infty|\neq 0$ on $\{f_\infty=0\}$, so we may apply Lemma \ref{Gen-Level-Set-Convergence} to conclude (\ref{eqn: level-set-convergence}). 
\end{proof}
We now consider the case in which the scalar curvature along the divergent sequence stays uniformly positive. 
\begin{lemma}
\label{Level-Set-Convergence-2}
Consider any sequence $x_i\in M$ with $d(x_i,o)\to \infty$. Set $s_i:=f(x_i)$ so that $ s_i \rightarrow-\infty$. Suppose further that $R(x_i)\to \alpha >0$. Then, after passing to a subsequence, 
$$
\left(M^4, g,x_i\right) \to\left(\Bry^{3} \times \mathbb{R},g_\infty+dz^2,(\tilde{x}, 0)\right),
$$
where $g_\infty:=\alpha^{-1}R_{\tilde{g}_0}(\tilde{x}) \tilde{g}_0$. After passing to a further subsequence, we have 
$$f-f(x_i)\to Az+h,$$
where $A\neq 0,h(\tilde{x})=0$, $|\nabla h|(\tilde{x})^2+A^2=1-\alpha$ and $h$ is a soliton potential for $\Bry^3$. Finally, letting
$$\mathcal S_h:=\{(y,z)\in\Bry^3\times\mathbb R:\ Az+h(y)=0\},$$
one has
\begin{equation}
\label{eqn: level-set-convergence-2}
(\Sigma_{s_i}, g|_{T\Sigma_{s_i}}, x_i) \to (\mathcal S_h,\hat{g}:=(g_\infty+dz^2)|_{\mathcal{S}_h},(\tilde x,0)),
\end{equation}
in the smooth Cheeger--Gromov sense.
\end{lemma}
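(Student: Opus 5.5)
We follow the proof of Lemma~\ref{Level-Set-Convergence}, except that no rescaling is needed: since $R(x_i)\to\alpha>0$ and $0<R\le 1$, the metric $g$ itself has bounded curvature. First we identify the limit. By dimension reduction (Lemma~\ref{summary-of-CMZ-work}(v)), after passing to a subsequence $(M,R(x_i)g,x_i)$ converges to $(N^3\times\R,\tilde h_0+dz^2,(\tilde x,0))$ with $N^3\in\{\Bry^3,\sph^2\times\R\}$. Because $R(x_i)\to\alpha$, undoing the scaling gives $(M,g,x_i)\to(N^3\times\R,\alpha^{-1}\tilde h_0+dz^2,(\tilde x,0))$. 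To rule out $N^3=\sph^2\times\R$ we use the potential, and we do this in the next step. In the Bryant case, uniqueness of the Bryant soliton up to scaling gives $\tilde h_0=R_{\tilde g_0}(\tilde x)\tilde g_0$ at the appropriate basepoint. Hence $g_\infty=\alpha^{-1}R_{\tilde g_0}(\tilde x)\tilde g_0$, which has scalar curvature $\alpha$ at $\tilde x$.

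Next we take the limit of the potential. Set $f_i:=f-f(x_i)$. Then $f_i(x_i)=0$, $|\nabla f_i|\le 1$, and $\nabla^2 f_i=-\Ric$. Perelman's derivative estimates bound all higher derivatives uniformly on balls of fixed radius. By Arzel\`a--Ascoli, after passing to a subsequence $f_i\to F$ smoothly, and the limit satisfies $\Ric_{g_\infty+dz^2}+\nabla^2F=0$ together with $R+|\nabla F|^2=1$. So $F$ is a steady soliton potential on the product.

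We now decompose $F$. Since the Ricci tensor vanishes in the $z$-direction, $\nabla^2F(\p_z,\cdot)=0$, so $\p_zF$ is constant, say $A$. Then $F-Az$ is independent of $z$ and equals a potential $h$ on $N^3$. Evaluating $R+|\nabla F|^2=1$ at $(\tilde x,0)$ gives $\alpha+|\nabla h|^2(\tilde x)+A^2=1$, and $h(\tilde x)=0$ follows from the normalization $f_i(x_i)=0$.

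If $N^3=\sph^2\times\R$, then $\Ric(\p_w)=0$ as well, and the $\sph^2$-factor has $\Ric=\tfrac{\alpha}{2}g_{\sph^2}$-type positive curvature. A function on $\sph^2\times\R^2$ cannot have $\nabla^2F=-\Ric$ on the compact $\sph^2$-factor: restricted to a sphere slice, the Hessian is strictly negative at an interior maximum, yet such a function would have to be affine along the flat directions and concave on $\sph^2$, so integrating $\Delta_{\sph^2}F=-\alpha<0$ over the sphere gives a contradiction. This excludes the cylinder case.

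Proving $A\neq0$ is the main obstacle. If $A=0$, then $F=h$ would be the Bryant potential, and every level set of $F$ through $(\tilde x,0)$ would contain the full $z$-line. I would try to contradict this using Lemma~\ref{lem:monotonicity-along-flows} together with compactness. If $A=0$, then $\nabla f$ at $x_i$ is asymptotically orthogonal to the $\R$-factor, and $\Sigma_{s_i}$ would converge to $\{h=0\}\times\R$, which is unbounded in $z$. The candidate contradiction is the following. Along $\Gamma_{x_i}$, $R$ stays at least $\alpha$ only while we move in the direction of $\nabla h$, that is, within the Bryant factor. By contrast, points of $\Sigma_{s_i}$ at large distance $L$ in the $z$-direction still have $R\approx\alpha$, and $f$ changes slowly there. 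Combining this with Lemma~\ref{lem:Linear-Growth-of-potential} ($f$ decreases at least linearly in distance from $o$), I would choose the $\R$-direction as the one pointing radially away from $o$. Then $f$ decreases at rate at least $c_0$ along it, which should force $|A|\ge c_0>0$. To make this rigorous one needs to show that the $\R$-factor has a component along the direction of the minimizing geodesic to $o$, using \eqref{eqn:distance-from-o-increases-monotonicity-along-flow-lemma}. This is the delicate point.

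Finally, the level-set convergence follows once $A\neq0$. In that case $\nabla F\neq0$ everywhere, since its $\p_z$-component is $A$. So $\mathcal S_h$ is a smooth regular hypersurface, and Lemma~\ref{Gen-Level-Set-Convergence} applied to $f_i\to F$ gives \eqref{eqn: level-set-convergence-2}.
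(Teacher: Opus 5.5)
You follow the paper's route, and the steps up to the decomposition $F=Az+h$ are fine. Your exclusion of the cylinder by integrating $\Delta_{\sph^2}F=-\alpha$ over a totally geodesic sphere slice is valid; the paper uses $\Delta R+2|\Ric|^2=\langle\nabla R,\nabla f\rangle$ instead. However, there is a genuine gap at $A\neq 0$, which you leave open. The route you sketch does not work as stated. The $\R$-factor is determined by the splitting of the limit, so you cannot choose it to point radially away from $o$. Moreover, linear decay of $f$ along minimizing geodesics from $o$ only says that, at the basepoint of the limit, $\nabla F$ has a component of size at least $c_0$ along the limiting direction of these geodesics. That direction may lie entirely in the $\Bry^3$-factor, in which case it says nothing about $A=\partial_z F$.

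The missing idea is to evaluate the identity $R+|\nabla F|^2=1$, which you already have in the limit, at the tip $\bar x$ of $(\Bry^3,g_\infty)$. There $\nabla h(\bar x)=0$, because any steady potential on $\Bry^3$ differs from the standard one by a constant, as $\Bry^3$ carries no nonzero parallel vector field. This gives $R(\bar x,0)=1-A^2$. If $A=0$, there are points $y_k\in M$ with $d(y_k,o)\to\infty$ and $R(y_k)\to 1$, namely the images of $(\bar x,0)$ under the convergence maps. This is impossible. Every $y$ with $f(y)\le s_0$ has the form $\Gamma_w(s)$ with $w\in\Sigma$ and $s\ge 0$. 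Since $R$ decreases along $\Gamma_w$ (Lemma~\ref{lem:monotonicity-along-flows}), we get $R(y)\le \max_\Sigma R<1$ by compactness of $\Sigma$ and Remark~\ref{rem:R-strict-less-one}.

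Alternatively, your geodesic idea can be repaired by using boundedness rather than a direction. If $A=0$, then $F=h\le h(\bar x)$ is bounded above, since $\nabla^2 h=-\Ric_{g_\infty}<0$. On the other hand, concavity of $f$ along a minimizing geodesic $\sigma$ from $o$ to $x_i$ gives $(f\circ\sigma)'\le (f\circ\sigma)'(1)\le -c_0$ on $[1,d(o,x_i)]$. Hence $f-f(x_i)\ge c_0L$ at the point of $\sigma$ at distance $L$ from $x_i$, which forces $F$ to be unbounded above.

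Once $A\neq 0$ is established, the rest of your argument goes through: $\mathcal S_h$ is a connected regular graph over $\Bry^3$, and Lemma~\ref{Gen-Level-Set-Convergence} applies.
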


\begin{proof}
Since $0<\inf_k R(x_k)\leq R(x_i)\leq 1$, dimension reduction implies that, after passing to a subsequence, the unrescaled sequence $(M^4,g,x_i)$ converges to $(N^3\times \R,g_\infty+dz^2,(\tilde{x},0))$. Using Perelman's derivative estimates (Lemma \ref{summary-of-CMZ-work} (i)), we have 
$$|\nabla f|^2\leq 1\qquad |\nabla^k f|=|\nabla^{k-2}\Ric|\leq C_kR^{\frac{k-2}{2}+1}\leq C_k \qquad \text{on }M\text{ for } k\geq 2.$$
Pulling back by the diffeomorphisms defining the convergence and applying Arzel\`a--Ascoli theorem yields, after passing to a subsequence, $f-f(x_i)\to f_\infty$, in the smooth Cheeger--Gromov sense, where $f_\infty\in C^\infty(N^3\times \R)$. It follows that  $\Ric_{g_\infty+dz^2}+\nabla^2 f_\infty=0$ on $N^3\times \R$.  

Suppose for a contradiction that $\left(M^4,g,x_i\right)$ dimension reduces to
$N^3=\sph^2\times \R$. Then $g_\infty=\alpha^{-1}\bar{g}_0$ with constant positive scalar curvature $\alpha$. Since $\Ric_{g_\infty}+\nabla^2 f_\infty=0$ on $\sph^2\times\R$, applying the steady soliton identity
\[
\Delta R_{g_\infty}+2|\Ric_{g_\infty}|^2
=
\langle \nabla R_{g_\infty},\nabla f_\infty\rangle,
\]
and using that $R_{g_\infty}$ is constant, we obtain $2|\Ric_{g_\infty}|^2=0$. Thus $\Ric_{g_\infty}\equiv 0$, which is impossible for the round cylinder. Therefore $N^3\neq \sph^2\times\R$, and hence $N^3=\Bry^3$.

The soliton equation on the product $\Bry^3\times \R$ gives 
$$\nabla^2 f_\infty(\partial_z,\partial_z)=0,\qquad \nabla^2 f_\infty(X,\partial_z)=0\quad\text{for all }X\in T\Bry^3,$$
so $\partial_z f_\infty$ is constant and the $\Bry^3$-gradient of $f_\infty$ is independent of $z$. Hence 
$$f_\infty(y,z)=Az+h(y),$$
for some constant $A\in \R$ and some smooth function $h$ on $\Bry^3$. Substituting back, it follows that $h$ satisfies $\Ric_{g_\infty}+\nabla^2 h=0$, i.e. $h$ is a steady soliton potential on $\Bry^3$, such that $h(\tilde{x})=f_\infty(\tilde{x},0)=0$. Finally, the identity $R+|\nabla f|^2\equiv1$ on $M^4$ gives $|\nabla f|^2(x_i)=1-R(x_i)\to1-\alpha$, and this yields $|\nabla f_\infty|^2(\tilde x,0)=1-\alpha=|\nabla h|^2(\tilde{x})+A^2$. 

We now justify that $A\neq 0$. Let $\bar{x}$ denote the tip of $(\Bry^3,g_\infty)$. Because $h$ is a soliton potential, $R(\tilde{x})+|\nabla h(\tilde{x})|^2=R(\bar{x})+|\nabla h(\bar{x})|^2$ and $\nabla h(\bar{x})=0$, hence $R(\bar{x})=1-A^2$. If $A=0$, then $R(\bar{x})=1$ which implies that there exists $y_k\in M$ such that $d(y_k,o)\to \infty$ and $R(y_k)\to 1$. This is impossible. Indeed, for each $k$, choose $w_k\in \Sigma\sub M,s_k>0$ such that $y_k=\G_{w_k}(s_k)$. Then, by Lemma \ref{lem:monotonicity-along-flows}, $s_k\to \infty$ and $1\geq R(w_k)\geq R(y_k)$ implying that $R(w_k)\to 1$. After passing to a subsequence it follows that there exists $w_0\in \Sigma$ with $w_k\to w_0$ and $R(w_0)=1$, contradicting Remark \ref{rem:R-strict-less-one}.  

Each $\Sigma_{s_i}$ is a regular level set of $f-f(x_i)$ and $|\nabla f_\infty|\neq 0$ on $\{f_\infty=0\}=\mathcal{S}_h$. From Lemma \ref{Gen-Level-Set-Convergence}, (\ref{eqn: level-set-convergence-2}) follows. 
\end{proof}

\noindent \textbf{Notation. }We write $\lambda_1 \leq \cdots \leq \lambda_4$ to denote the eigenvalues of $\Ric_g$ (viewed as a~$(1,1)$-tensor field) with respect to $g$ on $M$. When we wish to emphasize the dependence on the metric, we write $\lambda_{i,g}$.  
\begin{corollary}
\label{Vanishing-lambda-2}
If $x_k\in M$ is a sequence such that $d(x_k,o)\to \infty$ and $ \frac{\lambda_2}{R}(x_k)\to 0$, then $R(x_k)\to 0$. In particular, if the dimension reduction along $x_k$ is $\sph^2\times \R$, then $R(x_k)\to 0$. 
\end{corollary}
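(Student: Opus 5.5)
We argue by contradiction, using the dichotomy already established in Lemmas~\ref{Level-Set-Convergence} and \ref{Level-Set-Convergence-2}. Suppose $x_k\in M$ satisfies $d(x_k,o)\to\infty$ and $\frac{\lambda_2}{R}(x_k)\to 0$, but $R(x_k)\not\to 0$. Since $0<R\le 1$, we may pass to a subsequence with $R(x_k)\to\alpha>0$. By Lemma~\ref{Level-Set-Convergence-2}, after a further subsequence, the unrescaled pointed manifolds $(M,g,x_k)$ converge smoothly to $(\Bry^3\times\R,\,g_\infty+dz^2,\,(\tilde x,0))$. Here $g_\infty$ is a positive multiple of the Bryant metric, and the scalar curvature at $(\tilde x,0)$ equals $\alpha$.

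Next we pass the eigenvalue condition to the limit. Under smooth Cheeger--Gromov convergence, the Ricci tensors converge, so the ordered Ricci eigenvalues at $x_k$ converge to those of $g_\infty+dz^2$ at $(\tilde x,0)$. Since $R(x_k)\to\alpha>0$, the hypothesis gives $\lambda_2(x_k)\to 0$. Hence the limit has $\lambda_2=0$ at $(\tilde x,0)$.

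For the product $\Bry^3\times\R$, the Ricci eigenvalues at $(\tilde x,0)$ are $0$, from the $\partial_z$ direction, together with the three Ricci eigenvalues of $(\Bry^3,g_\infty)$ at $\tilde x$. The three-dimensional Bryant soliton has positive curvature operator, hence positive Ricci curvature, so these three eigenvalues are strictly positive. Thus $\lambda_1=0<\lambda_2$ at $(\tilde x,0)$, which contradicts $\lambda_2=0$. Therefore $R(x_k)\to 0$.

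For the second assertion, suppose the dimension reduction along $x_k$ is $\sph^2\times\R$, that is, $(M,R(x_k)g,x_k)\to(\sph^2\times\R^2,\bar g_0+dz^2)$. The quantity $\lambda_2/R$ is scale invariant, so it converges to its value on $\sph^2\times\R^2$. That value is $0$, since there $\lambda_1=\lambda_2=0$ and $\lambda_3=\lambda_4=\tfrac12$. The first part then gives $R(x_k)\to 0$. Alternatively, this follows directly from Lemma~\ref{Level-Set-Convergence-2}: if $R(x_k)$ had a positive subsequential limit, the reduction would be to $\Bry^3$ rather than $\sph^2\times\R$.

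There is no real obstacle here. The only point needing care is justifying that the ordered eigenvalues of $\Ric$ depend continuously on the metric under smooth convergence. This holds because eigenvalues of symmetric matrices are continuous functions of the entries, and the pulled-back metrics and Ricci tensors converge in $C^0$ at the basepoints.
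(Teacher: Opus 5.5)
Your proposal is correct and follows essentially the same route as the paper's proof. The paper also argues by contradiction, using Lemma~\ref{Level-Set-Convergence-2} to get a $\Bry^3\times\R$ limit where $\lambda_2/R\neq 0$, and deduces the second statement from the scale invariance of $\lambda_2/R$ and its vanishing on $\sph^2\times\R^2$.
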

\begin{proof}
Arguing by contradiction, suppose the first statement fails. Passing to a subsequence, we have $R\left(x_k\right) \geq \alpha>0$ for all $k$. By Lemma  \ref{Level-Set-Convergence-2}, after passing to a subsequence, $(M^4, g, x_i)$ converges to $(\operatorname{Bry}^3 \times \mathbb{R}, \beta^{-1} R_{\tilde{g}_0}(\tilde{x}) \tilde{g}_0+{d} z^2,(\tilde{x}, 0))$, where $\beta=\lim R\left(x_k\right)$. This implies $\frac{\lambda_2}{R}\left(x_k\right) \rightarrow \frac{\lambda_2}{R}(\tilde{x}) \neq 0$, hence we obtain a contradiction. The second statement immediately follows, since if the dimension reduction along $x_k$ is $\mathbb{S}^2 \times \mathbb{R}$, then the scale-invariant quantity $\frac{\lambda_2}{R}\left(x_k\right) \rightarrow 0$ as $k \rightarrow \infty$.
\end{proof}
We define tips in $\Sigma_s$ according to \cite{MMS26}. 
\begin{definition}
\label{def:tip}
Let $s<f(o)$. We say $x\in \Sigma_s$ is a tip if $\frac{\lambda_2}{R}(x)>\frac{1}{6}$ and $\bar{\nabla}R(x)=0$, where $\bar{\nabla}$ is the connection with respect to metric induced on $\Sigma_s$. 
\end{definition}
The next lemma shows that if the scalar curvature has a positive limit along an integral curve of $-\nabla f/|\nabla f|^2$, then the level sets contain tips near that curve. 
\begin{lemma}
\label{Step-1-of-Proof}
Let $q\in \Sigma$. If $G(q)\neq 0$, then $\lim_{s\to -\infty}\frac{\lambda_2}{R}(\chi_s(q))=\frac{1}{3}.$ Further, there exists $\delta$ such that for every $0<\varepsilon<\delta$, there exists $s_\varepsilon$ such that for all $s<s_\varepsilon$, there exists $q_s\in \Sigma_s$ such that 
$$d_{\Sigma_s}(q_s,\chi_s(q))<\varepsilon,\qquad \bar{\nabla}R(q_s)=0,\qquad \frac{\lambda_2}{R}(q_s)>\frac{1}{6}.$$
\end{lemma}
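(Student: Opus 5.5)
Since $G(q)>0$ and $R$ decreases along $\chi_s(q)$ (Lemma~\ref{lem:monotonicity-along-flows}), we have $R(\chi_s(q))\to G(q)=:\alpha>0$ as $s\to-\infty$, and $d(\chi_s(q),o)\to\infty$. For the first claim, take any sequence $s_i\to-\infty$ and set $x_i:=\chi_{s_i}(q)$. Lemma~\ref{Level-Set-Convergence-2} gives, after a subsequence, convergence of $(M,g,x_i)$ to $(\Bry^3\times\R,g_\infty+dz^2,(\tilde x,0))$, together with $f-f(x_i)\to Az+h$.

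The key point is to show that $\tilde x$ is the tip $\bar x$ of $\Bry^3$, i.e.\ $\nabla h(\tilde x)=0$. In the proof of Lemma~\ref{Level-Set-Convergence-2} we saw that $R(\bar x)=1-A^2$ and $|\nabla h|^2(\tilde x)+A^2=1-\alpha$. I would determine $A$ by using that $x_i$ lies on an integral curve of $\nabla f$. Pick $s$ slightly below $s_i$: the points $\chi_{s_i-\tau}(q)$, for bounded $\tau$, converge to the integral curve of $-\nabla f_\infty/|\nabla f_\infty|^2$ through $(\tilde x,0)$. Along this limiting curve $R$ equals the constant $\alpha$, because $R(\chi_s(q))\to\alpha$ and convergence is uniform on compact sets of the parameter. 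But $\tfrac{d}{d\tau}R=-2\Ric(\nabla f_\infty,\nabla f_\infty)/|\nabla f_\infty|^2$ in the limit. This forces $\Ric_{g_\infty}(\nabla h,\nabla h)=0$ at $\tilde x$, and since $\Bry^3$ has $\Ric>0$, we get $\nabla h(\tilde x)=0$. So $\tilde x=\bar x$ is the tip (the unique critical point of $h$), with $R(\bar x)=\alpha$ in $g_\infty$. At the tip of $\Bry^3\times\R$ the Ricci eigenvalues are $0,R/3,R/3,R/3$, so $\lambda_2/R(x_i)\to 1/3$. Since the subsequence was arbitrary, the full limit holds.

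For the second claim I would use the level-set convergence \eqref{eqn: level-set-convergence-2}. Now $h(\bar x)=0$ and $\mathcal S_h=\{Az+h=0\}$ with $A\ne0$, so $\mathcal S_h$ is the graph $z=-h(y)/A$ over $\Bry^3$. On $\mathcal S_h$, the function $R$ (which depends only on $y$) has a critical point at $(\bar x,0)$, since $R_{g_\infty}$ has a critical point at the tip. This critical point is nondegenerate (a strict maximum with negative definite Hessian), and the Hessian is computed in the induced metric on the graph. The graph is tangent to $T\Bry^3$ there, because $\nabla f_\infty=A\partial_z$ at that point. Choose $\delta$ so that on the $\delta$-ball about $(\bar x,0)$ in $\mathcal S_h$ the point $(\bar x,0)$ is the only critical point of $R$, the gradient map is a local diffeomorphism, and $\lambda_2/R>1/6$. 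Then for $0<\varepsilon<\delta$ and $s\ll0$, smooth Cheeger--Gromov convergence of $(\Sigma_s,x_s=\chi_s(q))$ is available along every sequence $s\to-\infty$. A degree (or inverse-function) argument for $\bar\nabla R$ on $B_{\Sigma_s}(\chi_s(q),\varepsilon)$ then produces a zero $q_s$, and $\lambda_2/R(q_s)>1/6$ follows by $C^0$ closeness. Uniformity in $s$ comes from arguing by contradiction along a sequence.

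I expect the main obstacle to be pinning down $\tilde x=\bar x$, specifically justifying the passage of the curve $\chi_{s}(q)$ to the limit over a range of parameters of definite length. This needs $|\nabla f|$ bounded below near $x_i$, which holds because $|\nabla f|^2\ge A^2/2$ asymptotically. It also needs the smooth convergence of $f-f(x_i)$ from Lemma~\ref{Level-Set-Convergence-2}. Nondegeneracy of the tip as a critical point of $R$ on $\Bry^3$ I would quote from the rotational symmetry and the ODE asymptotics of the Bryant soliton.
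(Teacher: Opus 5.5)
Your proposal is correct and follows essentially the paper's route. You use Lemma \ref{Level-Set-Convergence-2}, then show $\Ric_{g_\infty}(\nabla h,\nabla h)=0$ at the basepoint because $R$ stabilizes along $\chi_s(q)$, then use level-set convergence to $\mathcal S_h$ to produce a critical point of $R|_{\Sigma_s}$ near $\chi_s(q)$. The differences are only technical: the paper gets the vanishing of $\Ric_{g_\infty}(\nabla h,\nabla h)$ by contradicting integrability of $\frac{d}{ds}R(\chi_s(q))$ on disjoint intervals rather than by passing the integral curve to the limit. For the last step it uses only that $(\bar x,0)$ is a strict local maximum, comparing values on an annulus, so your nondegeneracy/degree argument is a slightly heavier but valid substitute. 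Your fixed $\delta$ is legitimate because, once $\tilde x=\bar x$ is known, the limit is unique up to isometry ($g_\infty=\alpha^{-1}\tilde g_0$, $A^2=1-\alpha$, $h$ fixed by $h(\bar x)=0$).
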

\begin{proof}
Let $C>0$ such that $|\nabla f|>C^{-1}$ on $M\setminus B_g[o;1]$. Set $\alpha:=G(q)$. Let $s_i\to -\infty$ and $x_i:=\chi_{s_i}(q)$ so that $d(x_i,o)\to \infty$. Using Lemma \ref{Level-Set-Convergence-2}, after passing to a subsequence, 
\begin{equation}
\label{eqn:conv-in-pf-of-lem-step-1-of-pf}
\left(M^4, g,x_i\right) \to\left(\Bry^{3} \times \mathbb{R}, g_\infty+d z^2,(\bar{x}, 0)\right),
\end{equation}
and $f-f(x_i)\to Az+h$, in the smooth Cheeger--Gromov sense, where $g_\infty:=\alpha^{-1}R_{\tilde{g}_0}(\bar{x}) \tilde{g}_0$, $|\nabla h|(\bar{x})^2+A^2=1-\alpha$, $A\neq 0$, and $h\in C^\infty(\Bry^3)$ is a soliton potential for $(\Bry^3,g_\infty)$. 

Under \eqref{eqn:conv-in-pf-of-lem-step-1-of-pf}, we have $\Ric(\nabla f,\nabla f)\to \Ric_{g_\infty}(\nabla h,\nabla h)$. We claim that $\Ric_{g_\infty}(\nabla h,\nabla h)|_{\bar{x}}=0$. Suppose not, i.e. $c:=\Ric_{g_\infty}(\nabla h,\nabla h)|_{\bar{x}}>0$. Then, by continuity and smooth convergence, there exists $\delta_0>0$ such that $\Ric_g(\nabla f,\nabla f)|_{y}>c/2$ for any $y\in B_g[x_i;\delta_0]$ and all large $i$. For all large $i$ and $s\leq s_0$, we have $$d(\chi_s(q),\chi_{s_i}(q))\leq \int_{[\min(s,s_i),\max(s,s_i)]}\frac{1}{|\nabla f|(\chi_t(q))}\,dt\leq C|s-s_i|.$$
It follows that if $\delta_1=(2C)^{-1}\delta_0$, then $\Ric_g(\nabla f,\nabla f)|_{\chi_s(q)}>c/2$ for all $s\in [s_i-\delta_1,s_i+\delta_1]$ and all large $i$. Passing to a subsequence if necessary, we may assume that the intervals $[s_i-\delta_1,s_i+\delta_1]$ are pairwise disjoint. Using \eqref{eq:gradR}, it follows that   $$\frac{d}{ds}R(\chi_s(q))=2\frac{\Ric(\nabla f,\nabla f)|_{\chi_s(q)}}{|\nabla f|^2(\chi_s(q))},$$
for all $s\leq s_0$. As a result, $\frac{d}{ds}R(\chi_s(q))>c>0$  for all $s\in [s_i-\delta_1,s_i+\delta_1]$ and all large $i$. This contradicts the fact that $\frac{d}{ds}R(\chi_s(q))$ is integrable on $(-\infty,s_0)$ since $\int_{-\infty}^{s_0}\frac{d}{ds}R(\chi_s(q))\,ds=R(q)-G(q)$. Therefore, $\Ric_{g_\infty}(\nabla h,\nabla h)|_{\bar{x}}=0$. 

Since $\Bry^3$ has positive Ricci curvature, we have $\nabla h(\bar{x})=0$, showing that $\bar{x}$ is the tip of $\Bry^3$. The preceding arguments apply to any sequence $s_i\to -\infty$. Hence, $\frac{\lambda_j}{R}(\chi_s(q))\to \frac{1}{3}$ for $j=2,3,4$ as $s\to -\infty$. It follows that there exists $\delta>0$ such that for all $s\ll 0$, we have $\frac{\lambda_2}{R}(y)>\frac{1}{6}$ if $y\in M$ and $d_g(y,\chi_s(q))<\delta$. 

We now show the existence of a point $q_{s}$ near $\chi_{s}(q)$ where $R|_{\Sigma_s}$ attains a local maximum whenever $s\ll 0$. Suppose not, i.e. there exist $\varepsilon'\in(0,\delta)$ and a sequence $s_i\to-\infty$ such that if $x_i:=\chi_{s_i}(q)$, the open ball $B_{\Sigma_{s_i}}[x_i;\varepsilon']\sub \Sigma_{s_i}$ doesn't contain a local maximum of the function $R|_{\Sigma_{s_i}}$.

From Lemma~\ref{Level-Set-Convergence-2}, we have  
$$(\Sigma_{s_i},g|_{\Sigma_{s_i}},x_i)\to (\mathcal S_h,(g_\infty+dz^2)|_{\mathcal{S}_h},(\bar x,0)),$$
where $\mathcal S_h=\{Az+h=0\}\subset\Bry^3\times\mathbb R$, and $\bar x$ is the tip of the Bryant soliton. Since the scalar curvature on the product depends only on the Bryant factor and attains maximum at $\bar x$, the restriction $R_{g_\infty}|_{\mathcal S_h}$ has a strict local maximum at $(\bar x,0)$. Hence there exist $\eta>0$ and $\varepsilon''\in(0,\varepsilon')$ such that
$$R_{g_\infty}(\bar x,0)\ \ge\ \sup\Bigl\{R_{g_\infty}(p): p\in\mathcal S_h,\ \varepsilon''/2\leq d_{{\mathcal{S}_h}}(p,(\bar x,0))\le \varepsilon''\Bigr\}+\eta.$$
By smooth convergence of the pointed level sets and of the scalar curvature functions, for all large $i$ we obtain the corresponding inequality on $\Sigma_{s_i}$ with basepoint $x_i$. Therefore $R|_{\Sigma_{s_i}}$ achieves a local maximum at $q_i\in B_{\Sigma_{s_i}}[x_i;\varepsilon'']$, contradicting the assumption for $\varepsilon'$. This proves that given $\varepsilon \in (0,\delta)$, for all sufficiently negative $s$ there exists $q_s\in B_{\Sigma_s}[\chi_s(q);\varepsilon]$ where $R|_{\Sigma_s}$ attains a local maximum. At such a point $q_s$ one has $\bar\nabla R(q_s)=0$. Moreover,
$d_g(q_s,\chi_s(q))\le d_{\Sigma_s}(q_s,\chi_s(q))<\varepsilon<\delta$, so by the choice of $\delta$ we have $\lambda_2/R(q_s)>1/6$. 
\end{proof}
The following was proved in \cite{MMS26} under the assumption that $R\to 0$ at infinity, and we verify that the proof goes through without such an assumption. This shows that $\lambda_2/R$ can be used to detect $(\epsilon,2)$-centers in $M$ and necks in the level sets of $f$. 
\begin{lemma}
\label{Theta}
 For every $\varepsilon>0$, there exist $\theta_\varepsilon\in(0,\tfrac13)$ and $s_1=s_1(\varepsilon)\ll 0$ with the following property: if $s<s_1$ and $x\in\Sigma_s$ satisfies $\frac{\lambda_2}{R}(x)\leq \theta_\varepsilon$, then:
\begin{enumerate}[(i)]
\item $x$ is an $(\varepsilon,2)$-center in $(M^4,g)$;
\item  $x$ is the center of an  $\varepsilon$-neck in $\Sigma_s$.
\end{enumerate}
\end{lemma}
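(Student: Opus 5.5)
We argue by contradiction and compactness, using the dimension reduction of Lemma~\ref{summary-of-CMZ-work}~(v) together with the level-set convergence results, Lemmas~\ref{Level-Set-Convergence} and \ref{Level-Set-Convergence-2}. The key dichotomy is that at infinity the rescaled manifold looks either like $\sph^2\times\R^2$, where $\lambda_2/R\approx 0$, or like $\Bry^3\times\R$. On $\Bry^3\times\R$ the value of $\lambda_2/R$ is a continuous function of the Bryant point, equal to $1/3$ at the tip and tending to $0$ along the Bryant end, where the geometry becomes $\sph^2\times\R$ after rescaling.

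So fix $\varepsilon>0$ and suppose no $\theta_\varepsilon$ and $s_1$ work. Then there are $s_i\to-\infty$, points $x_i\in\Sigma_{s_i}$ with $\frac{\lambda_2}{R}(x_i)\le 1/i$, and for each $i$ either (i) or (ii) fails at $x_i$. Since $s_i\to-\infty$, Lemma~\ref{lem:Linear-Growth-of-potential} and \eqref{eq:Lipschitz} give $d(x_i,o)\to\infty$. By Corollary~\ref{Vanishing-lambda-2}, $\lambda_2/R(x_i)\to0$ forces $R(x_i)\to 0$. We then apply dimension reduction to $(M,R(x_i)g,x_i)$ and obtain, along a subsequence, a limit $(N^3\times\R,g_\infty+dz^2,(\bar x,0))$ with $R(\bar x)=1$. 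Since $\lambda_2/R$ is scale invariant and continuous under smooth convergence, the limit has $\lambda_2/R=0$ at $(\bar x,0)$. On $\Bry^3\times\R$ the Bryant factor has strictly positive Ricci curvature at every point, so there $\lambda_2>0$ everywhere. Hence $N^3=\sph^2\times\R$, and the limit is the standard $\sph^2\times\R^2$ with scalar curvature $1$.

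Conclusion (i) now follows directly. The pointed manifolds $(M,R(x_i)g,x_i)$ converge smoothly to $(\sph^2\times\R^2,\bar g_0+dz^2)$, so they are $\varepsilon$-close for large $i$, and $x_i$ is an $(\varepsilon,2)$-center, a contradiction if (i) was the failing alternative.

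For (ii), since $R(x_i)\to0$, Lemma~\ref{Level-Set-Convergence} gives $\sqrt{R(x_i)}(f-f(x_i))\to z$, where after rotating coordinates in $\R^2$ we take $z$ to be the linear coordinate. It also gives $(\Sigma_{s_i},R(x_i)g|_{\Sigma_{s_i}},x_i)\to(\sph^2\times\R,\bar g_0,\bar x)$ in the smooth Cheeger--Gromov sense. The intrinsic scalar curvature of the level set at $x_i$ converges to that of $\sph^2\times\R$, namely $1$, while the ambient $R(x_i)$ equals $1$ in the rescaled metric. So rescaling the level set by its own scalar curvature rather than by $R(x_i)$ changes the metric only by a factor tending to $1$, and the rescaled level set is still $\varepsilon$-close to the round cylinder. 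Thus $x_i$ is the center of an $\varepsilon$-neck in $\Sigma_{s_i}$ for large $i$, again a contradiction.

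The main obstacle is this normalization issue. We must check that the neck definition in the three-dimensional level set, which uses the level set's own scalar curvature at $x_i$, is compatible with the ambient normalization. We must also check that the convergence from Lemma~\ref{Level-Set-Convergence} holds in the $C^{[\varepsilon^{-1}]}$ sense on balls of radius $\varepsilon^{-1}$. Both follow from smooth Cheeger--Gromov convergence; the second fundamental form of $\Sigma_{s_i}$ tends to zero in the rescaled metric because $\nabla^2 f_i\to 0$. The one input that is genuinely new compared with \cite{MMS26} is Corollary~\ref{Vanishing-lambda-2}. It replaces their global hypothesis $R\to0$: it shows that $R(x_i)\to0$ automatically whenever $\lambda_2/R(x_i)\to0$, which is exactly what Lemma~\ref{Level-Set-Convergence} requires.
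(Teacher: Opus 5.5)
Your proposal is correct and follows the paper's argument. You argue by contradiction, use Corollary~\ref{Vanishing-lambda-2} to force $R(x_i)\to 0$, and use dimension reduction to obtain $\sph^2\times\R^2$, with Bryant excluded because $\lambda_2>0$ on $\Bry^3\times\R$. You then get the neck from Lemma~\ref{Level-Set-Convergence} together with the renormalization $\bar R_{\Sigma_{s_i}}(x_i)/R(x_i)\to 1$. The only difference is organizational: the paper proves (ii) through an intermediate claim that far-out $(\delta,2)$-centers are $\varepsilon$-neck centers and then chains it with (i), whereas you treat both conclusions in a single contradiction sequence, which is equally valid.
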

\begin{proof}
(i) We first prove that if $\varepsilon>0$ there exists $\theta>0,s_1\ll 0$ such that if $s<s_1$ and $x\in\Sigma_s$ and $\frac{\lambda_2}{R}(x)<\theta$, then $x$ is an $(\varepsilon,2)$-center in $(M^4,g)$. Assume for a contradiction that the statement does not hold. That is, there exists $\varepsilon>0$ and $x_k\in M,d(x_k,o)\to \infty$ such that $\frac{\lambda_2}{R}(x_k)\to 0$ but $x_k$ is not an $(\varepsilon,2)$-center. Then, by Corollary \ref{Vanishing-lambda-2}, it follows that $R(x_k)\to 0$ and performing dimension reduction, after passing to a subsequence, 
$$
\left(M, R(x_{k})g, x_k\right) \rightarrow (\sph^2\times \R^2,\bar{g}_0+dz^2,(x_\infty,0)),
$$
 in the smooth Cheeger--Gromov sense. As a result, $\left(M, R(x_{k})g, x_k\right)$ is arbitrarily close to $\sph^2\times\R^2$ for all large $k$. This is a contradiction.

\medskip

(ii) Next we prove the following claim. 
\begin{claim}
\label{claim:the-claim-in-lemma-theta}
Given $\varepsilon>0$, there exists $\delta\in (0,\varepsilon)$ and $s_2<0$ such that if $x\in \Sigma_s,s\leq s_2$, and $x$ is a $(\delta,2)$-center, then $x$ is the center of an $\varepsilon$-neck in $\Sigma_s$.
\end{claim}
\begin{proof}
In what follows, $C$ is a constant that may change from line to line. We may assume $|\nabla f|(x)>C^{-1}$ if $d(x,o)\geq 1$. Assume for a contradiction that the claim is false. That is, there exists $\varepsilon>0$, $\delta_i\to 0$ and $x_i\in \Sigma_{s_i}$ with $d(x_i,o)\to \infty,s_i\to-\infty$ such that $x_i$ is a $(\delta_i,2)$-center but $x_i$ is not the center of an $\varepsilon$-neck in $\Sigma_{s_i}$. Then, performing dimension reduction, after passing to a subsequence, $\left(M, R(x_{i})g, x_i\right)$ converges to $\sph^2\times \R^2$, and by Corollary \ref{Vanishing-lambda-2}, $R(x_i)\to 0$. Let $s_i:=f(x_i)\to -\infty$. By Lemma \ref{Level-Set-Convergence}, it follows that  $(\Sigma_{s_i},R(x_i)g|_{\Sigma_{s_i}},x_i)\to (\sph^2\times \R,\bar{g}_0+dz^2,(\bar{x},0))$ in the smooth Cheeger--Gromov sense. We obtain $\bar{R}_{\Sigma_{s_i}}(x_i)/R(x_i)\to 1$, where $\bar{R}_{\Sigma_{s_i}}$ denote the intrinsic scalar curvature on $\Sigma_{s_i}$. Hence, 
$$(\Sigma_{s_i},\bar{R}_{\Sigma_{s_i}}(x_i)g|_{\Sigma_{s_i}},x_i)\to (\sph^2\times \R,\bar{g}_0,\bar{x}),$$
which is a contradiction. This completes the proof of Claim \ref{claim:the-claim-in-lemma-theta}. 
\end{proof}
We now prove the lemma. Let $\varepsilon>0$. From Claim \ref{claim:the-claim-in-lemma-theta}, there exists $\delta \in (0,\varepsilon)$ and $s_2\ll 0$ (depending on $\varepsilon$) such that if $x\in \Sigma_s,s\leq s_2$, and $x$ is a $(\delta,2)$-center, then $x$ is the center of an $\varepsilon$-neck in $\Sigma_s$. From (i), there exists $\theta>0$, $s_1<s_2$ (depending on $\delta$) such that if $s<s_1$ and $x\in\Sigma_s$ and $\frac{\lambda_2}{R}(x)<\theta$, then $x$ is a $(\delta,2)$-center in $(M^4,g)$, which in turn implies that  $x$ is the center of an $\varepsilon$-neck in $\Sigma_s$. This completes the proof.  
\end{proof}
The strategy for showing that $\{x\in \Sigma: G(x)\neq 0\}$ has at most two points is to show that for all large $|s|$, $\Sigma_s$ has two tips and use Lemma \ref{Step-1-of-Proof}. To this end, we adapt the neck-cap decomposition argument of \cite{BDS21}. The proof uses three ingredients: (i) the existence of arbitrarily long necks on $\Sigma_s$ for $s\ll0$ under \hyperref[assumption:A4]{(A4)} (Lemma~\ref{summary-of-CMZ-work} (iv)), (ii) a neck detection lemma in terms of the ratio $\lambda_2/R$ (Lemma~\ref{Theta}), and (iii) Hamilton's foliation of a 3D neck by CMC (constant mean curvature) $2$-spheres \cite{Ham97}. For additional details on the construction of CMC foliations in necks, we refer the reader to \cite[Proposition 34.5]{CCG+15}. 

In \cite{MMS26}, it is proved that the level sets have exactly two tips under the assumption that $\lim_{d(x,o)\to \infty}R(x)=0$. Here, we do not require such an assumption. 
\begin{theorem}
\label{Two-Tips}
For all $s\ll 0$, $\Sigma_s$ has exactly two tips. 
\end{theorem}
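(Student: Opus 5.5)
Following the strategy announced before the statement, we adapt the neck-cap decomposition of \cite{BDS21} to the level sets $\Sigma_s\cong\sph^3$ for $s\ll 0$. Fix a small $\varepsilon>0$, and let $\theta_\varepsilon$ and $s_1(\varepsilon)$ be as in Lemma~\ref{Theta}. For $s<s_1$ we split $\Sigma_s$ into the \emph{neck region} $\{\lambda_2/R\le\theta_\varepsilon\}$ and the \emph{cap region} $\{\lambda_2/R>\theta_\varepsilon\}$. By Lemma~\ref{Theta}(ii), every point of the neck region is the center of an $\varepsilon$-neck in $\Sigma_s$ with the intrinsic metric. By Lemma~\ref{summary-of-CMZ-work}(iv), combined with Claim~\ref{claim:the-claim-in-lemma-theta}, $\Sigma_s$ contains at least one such neck. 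Using Hamilton's CMC foliation \cite{Ham97}, we extend a neck in both directions by successively gluing overlapping $\varepsilon$-necks along CMC spheres. We continue until we reach a point in the cap region. We cannot continue forever, because $\Sigma_s$ is compact and a closed union of necks would be $\sph^2\times\sph^1$, contradicting $\Sigma_s\cong\sph^3$. The result is a maximal tube $\sph^2\times I$ whose two boundary CMC spheres bound two complementary regions $D_1,D_2$. Each $D_j$ is diffeomorphic to a $3$-ball, since $\Sigma_s\cong\sph^3$.

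Next we analyze each $D_j$ using dimension reduction. Take a point $y\in D_j$ with $\lambda_2/R(y)>\theta_\varepsilon$. By Corollary~\ref{cor:close-as-metrics-to-Bry-or-Cyl} and Corollary~\ref{Vanishing-lambda-2}, the rescaled geometry of $M$ near $y$ is modeled by $\Bry^3\times\R$, and not by $\sph^2\times\R^2$. Via Lemma~\ref{Level-Set-Convergence} (or Lemma~\ref{Level-Set-Convergence-2} if $R(y)\not\to 0$), $\Sigma_s$ near $y$ is close to a level set $\{\pm z=c\}$ or $\{Az+h=0\}$ in $\Bry^3\times\R$. In the first situation the slice is a copy of $\Bry^3$; in the second it is a graph over $\Bry^3$, since $A\neq0$. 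Either way, the cap is modeled on a (possibly tilted) Bryant cap. The restricted scalar curvature $R|_{\Sigma_s}$ is then close, at scale $R^{-1/2}$, to the Bryant scalar curvature, which has a unique nondegenerate critical point, namely the tip, and is monotone along the asymptotic neck direction. This gives:
\begin{itemize}
\item existence of a point with $\bar\nabla R=0$ and $\lambda_2/R\approx1/3>1/6$ in each $D_j$, by the same local-maximum argument as in Lemma~\ref{Step-1-of-Proof};
\item absence of other critical points of $R|_{\Sigma_s}$ in $D_j$ satisfying $\lambda_2/R>1/6$.
\end{itemize}
In the neck region we have $\lambda_2/R\le\theta_\varepsilon<1/6$ (choosing $\theta_\varepsilon<1/6$), so the neck region contains no tips. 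Hence $\Sigma_s$ has exactly two tips.

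The main obstacle is the uniqueness claim inside each cap, because the caps have uncontrolled size. A priori, $D_j$ could contain several Bryant-like regions joined by necks. This is excluded by maximality of the tube: each $D_j$ contains no further necks separating the ball. Still, one needs a compactness argument by contradiction. Suppose $s_i\to-\infty$ and $\Sigma_{s_i}$ has two distinct tips (or critical points) in the same cap. Rescale at one of them by $R$, and also at the midpoint if the scale-invariant distance blows up. The limit of $(\Sigma_{s_i},R\,g)$ is then a slice of $\Bry^3\times\R$, or $\sph^2\times\R$, by Lemmas~\ref{Level-Set-Convergence} and \ref{Level-Set-Convergence-2}. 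In either case we obtain a contradiction: $\Bry^3$ has a single critical point of $R$, and any intermediate region of $\sph^2\times\R$ type would be a neck contained in $D_j$. To make the rescaled distances between the two points bounded, we would use Perelman's long-range estimates (Lemma~\ref{summary-of-CMZ-work}(ii)), together with the fact that far along the Bryant end $\lambda_2/R\to 0$, which forces entry into the neck region. This is the step I expect to need the most care.
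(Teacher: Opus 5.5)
\textbf{There is a genuine gap, in the step you flagged yourself.}

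Your overall architecture matches the paper's. You take a long neck in $\Sigma_s$ from Lemma~\ref{summary-of-CMZ-work}(iv), follow Hamilton's CMC foliation on both sides until $\lambda_2/R$ hits a threshold, expect Bryant-type caps beyond the two terminal leaves, and note that tips can only lie in the caps. What is missing is a proof that each cap contains exactly one tip, and the justification you offer does not work.

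\begin{itemize}
\item \emph{Maximality of the tube.} This only says you stopped at the \emph{first} leaf touching $\{\lambda_2/R>\theta_\varepsilon\}$. It says nothing about what lies beyond that leaf. So it does not exclude further neck regions, or further Bryant-like regions, inside $D_j$.
\item \emph{Your contradiction sketch.} It concludes from ``an intermediate region of $\sph^2\times\R$ type would be a neck contained in $D_j$.'' That is a contradiction only if you already know $D_j$ contains no necks, which is exactly what needs proving. The argument is circular.
\item \emph{Where you blow up.} Blowing up at an arbitrary $y\in D_j$ with $\lambda_2/R(y)>\theta_\varepsilon$ controls only a bounded rescaled neighborhood of $y$, not all of $D_j$. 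So neither the existence of a tip \emph{inside} $D_j$ nor its uniqueness follows from it.
\end{itemize}

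\textbf{The missing idea} is to blow up at a point $q$ on the \emph{terminal} CMC leaf $\mathcal F$, along a sequence $s_k\to-\infty$. At $q$, the ratio $\lambda_2/R$ equals a fixed positive value; the paper uses $\tfrac23\theta$.
\begin{enumerate}
\item Since $\lambda_2/R(q)$ is bounded away from zero, the limit cannot be $\sph^2\times\R^2$, so it is $\Bry^3\times\R$.
\item By Lemmas~\ref{Level-Set-Convergence} and~\ref{Level-Set-Convergence-2}, the level sets then converge to $Z\cong\R^3$, where $Z$ is $\Bry^3$ or a graph $\mathcal S_h$ over it.
\item The leaf $\mathcal F$ has bounded rescaled diameter, so it converges to an embedded $2$-sphere in $Z$. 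This sphere bounds a precompact region $U\subset Z$ containing the point over the Bryant tip.
\item Pulling back, the image of $U$ is a component of $\Sigma_{s_k}\setminus\mathcal F$ containing no $(\eta,2)$-centers. The other component contains the initial $\varepsilon_k$-neck center. Hence the image of $U$ \emph{is} the cap.
\end{enumerate}

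This gives $\sup_k R(q)\operatorname{diam}(\text{cap})^2<\infty$. Intuitively, the Bryant soliton has only one end, and the terminal leaf already cuts it off. So the whole cap converges to a precompact subset of $Z$. Existence of a tip then follows from the nondegenerate maximum of $R|_Z$ over the Bryant tip. Uniqueness follows by compactness: a second tip would subconverge to a second critical point of $R|_Z$, and none exists. With this identification you need neither Perelman's long-range estimates nor midpoint rescalings.
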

\begin{proof}
In the following $\lambda_2$ denotes the second lowest eigenvalue of Ricci curvature of $(M^4,g)$ and $R$ denotes the scalar curvature of $g$.

Let $s_k\to -\infty$. By Lemma \ref{summary-of-CMZ-work} (iv), after passing to a subsequence there exist points $q_k\in \Sigma_{s_k}$ and positive numbers $\varepsilon_k\to 0$ such that $q_k$ is an $(\varepsilon_k,2)$-center in $M$ and the center of an $\varepsilon_k$-neck in $\Sigma_{s_k}$. Thus, 
$$
\left(M, R(q_{k})g, q_{k}\right) \rightarrow (\sph^2\times \R^2,\bar{g}_0+dz^2,(q_\infty,0))
$$
and
$$
\left(\Sigma_{s_k}, \bar{R}(q_k)g|_{\Sigma_{s_k}}, q_k\right) \rightarrow (\mathbb{S}^2 \times \mathbb{R},\bar{g}_0,q_\infty),
$$
in the smooth Cheeger--Gromov sense, where $\bar{R}$ is the intrinsic scalar curvature of the level sets of $f$. 

Let $\mathcal{F}_k$ denote the leaf of Hamilton's CMC foliation, which is an embedded 2-sphere in $\Sigma_{s_k}$ passing through $q_k$. By the Jordan--Brouwer separation theorem, $\Sigma_{s_k} \setminus \mathcal{F}_k$ has exactly two connected components.

Let $\varepsilon>0$ be a small number (independent of $k$) for which Lemma \ref{Theta} applies, and let $\theta=\theta_{\varepsilon}$ be the corresponding constant. Define for all large $k$, 
$$D_k:=\{x\in \Sigma_{s_k}:\ x \text{ is not the center of an }\varepsilon\text{-neck in }\Sigma_{s_k}\}.$$
Then $D_k$ is nonempty (otherwise $\Sigma_{s_k}$ would be covered by $\varepsilon$-necks and hence would be diffeomorphic to $\sph^2\times\sph^1$ which is impossible since $\Sigma_{s_k}$ is diffeomorphic to $\sph^3$). Moreover $D_k$ intersects both components of $\Sigma_{s_k}\setminus \mathcal{F}_k$. By Lemma \ref{Theta}, $\frac{\lambda_2}{R}\geq \theta$ on $D_k$ for all large $k$. Since $q_k$ is the center of an $\varepsilon_k$-neck, we have $\frac{\lambda_2}{R}(q_k)\ll 1$. For all large $k$, we may follow Hamilton's CMC foliation at each side starting at $q_k$ until we reach a point $q_{i,k}\in \Sigma_{s_k}$, and leaf $\mathcal{F}^i_k$ of Hamilton's CMC foliation passing through $q_{i,k}$ such that 
$$\frac{\lambda_2}{R}(q_{i,k})=\frac{2}{3}\theta,$$
 for $i=1,2$ and $q_{1,k},q_{2,k}$ lie in different components of $\Sigma_{s_k}\setminus \mathcal{F}_k$. In particular, $q_{i,k}$ is the center of an $\varepsilon$-neck in $\Sigma_{s_k}$. Let $N_k$ be the connected open (tubular) region of $\Sigma_{s_k}$ between $\mathcal{F}^1_k$ and $\mathcal{F}^2_k$. By construction, we have $\frac{\lambda_2}{R}\le \frac{1}{6}$ on ${\bar{N}_k}$ (after shrinking $\varepsilon$ in Lemma \ref{Theta} if necessary). 

In what follows, $i=1,2$. Passing to a subsequence, either $R(q_{i,k})\to 0$ or $R(q_{i,k})\to \alpha_i>0$. In either case, by Lemma \ref{Level-Set-Convergence} and Lemma \ref{Level-Set-Convergence-2}, we obtain pointed smooth Cheeger--Gromov convergence
$$(M,R(q_{i,k})g,q_{i,k})\to (X_i^3\times \R,g_{i,\infty}+dz^2,(q_{i,\infty},0))$$
and 
\begin{equation}
\label{eqn:convergence-to-Z3}
(\Sigma_{s_k},R(q_{i,k})g|_{\Sigma_{s_k}},q_{i,k})\ \to\ (Z_i^3,\hat g_i,(q_{i,\infty},0)),
\end{equation}
after passing to a subsequence. Moreover, since $\frac{\lambda_2}{R}(q_{i,k})=\frac{2}{3}\theta>0$, the limit $X^3_i$ cannot be the cylinder $\sph^2\times\R$, hence $X_i^3=\Bry^3$. Thus $Z_i^3$ is either $\Bry^3$ (in the case $R(q_{i,k})\to 0$) or a hypersurface $\mathcal S_h=\{Az+h=0\}\subset \Bry^3\times\R$ as in Lemma \ref{Level-Set-Convergence-2} (in the case $R(q_{i,k})\to \alpha_i>0$). In both cases, the scalar curvature of the limit $X_i^3\times \R$ restricted to $Z_i^3$ has only one critical point, which is a nondegenerate global maximum, namely the Bryant tip (in the case $Z_i^3=\Bry^3$) or the unique point lying over the Bryant tip in $\mathcal S_h$ (in the case $Z_i^3=\mathcal{S}_h$). We shall call this point $p_{i,\infty}$ and we have at $p_{i,\infty}$, ${\nabla}^{\hat{g}_i}(R_{g_{i,\infty}+dz^2}|_{Z_i})=0$,  $(\frac{\lambda_2}{R})_{g_{i,\infty}+dz^2}=\frac{1}{3}>\frac{1}{6}$, and $\nabla^{2,\hat{g}_i}(R_{g_{i,\infty}+dz^2}|_{Z_i})<-c_\infty \,\hat{g}_i$, for some $c_\infty>0$. 

For $i=1,2$, let $\Omega^i_k$ be the component of $\Sigma_{s_k}\setminus \mathcal{F}^i_k$ that does not intersect $N_k$. Then
$$\Sigma_{s_k}=\Omega^1_k\ \cup\ {\bar{N}_k}\ \cup\ \Omega^2_k,
\qquad
\Omega^1_k\cap \Omega^2_k=\emptyset,$$
and $\partial\Omega^i_k=\mathcal{F}^i_k$. Let $\vp_{i,k}$ be the embeddings into $\Sigma_{s_k}$ realizing the convergence (\ref{eqn:convergence-to-Z3}) with $\vp_{i,k}(q_{i,\infty})=q_{i,k}$ and $\bar{q}_{i,k}:=\vp_{i,k}(p_{i,\infty})$. Note that Hamilton's CMC foliation $\mathcal{F}_k^i$ has a bounded diameter with respect to the rescaled metric, i.e. there exists $A_0<\infty$ such that
$$\mathcal{F}_k^i\subset B_{R(q_{i,k})g|_{\Sigma_{s_k}}}\big[q_{i,k};A_0\big]
\qquad\text{for all large }k.$$
This implies that there exist connected open subsets $U_{i,k}\sub Z^3_i$ such that $(q_{i,\infty},0)\in \p U_{i,k}$, $ p_{i,\infty}\in U_{i,k}$, with diameters bounded uniformly in $k$, and for all large $k$,  $V_{i,k}:=\vp_{i,k}|_{U_{i,k}}(U_{i,k})\sub \Sigma_{s_k}$ is an open subset with $\p V_{i,k}=\mathcal{F}_k^i$. Hence, $V_{i,k}$ is one of the connected components of $\Sigma_{s_k}\setminus \mathcal{F}_k^i$. Since $V_{i,k}$ contains no $(\eta,2)$-centers for arbitrarily small $\eta$, we obtain $V_{i,k}=\Om^i_k$. This implies that 
$$\sup_k R(q_{i,k})\operatorname{diam}_{\Sigma_{s_k}}(\Om_k^i)^2<\infty.$$
Therefore, under the pointed convergence (\ref{eqn:convergence-to-Z3}) at $q_{i,k}$, we have (after passing to a further subsequence), 
\begin{equation}
\label{eqn:convergence-subsets-of-Z3}
\left(\Om_{k}^i, R(q_{i,k})g|_{\Om_k^i},\bar{q}_{i,k}\right) \rightarrow (U_i,\hat{g}_i,p_{i,\infty}),
\end{equation}
in the smooth Cheeger--Gromov sense, where $U_i\sub Z_i^3$ is a precompact connected open subset of $Z_i^3$ containing $p_{i,\infty}$. 

We claim that for $k$ large, each $\Omega^i_k$ contains exactly one tip. 
Choose $\rho>0$ sufficiently small such that $B_{i,\rho}:=B_{\hat g_i}[p_{i,\infty};\rho]\sub U_i$ is geodesically convex with respect to $\hat{g}_i$, and by smooth convergence, $\vp_{i,k}(B_{i,\rho})$ is geodesically convex with respect to $g|_{\Sigma_{s_k}}$, for all large $k$. 
After shrinking $\rho$ further, we may ensure that 
$R_{g_{i,\infty}+dz^2}|_{B_{i,\rho}}$ has a unique local maximum at $p_{i,\infty}$, 
$(\frac{\lambda_2}{R})_{g_{i,\infty}+dz^2}>\frac{1}{6}$ and $\nabla^{2,\hat{g}_i}(R_{g_{i,\infty}+dz^2}|_{B_{i,\rho}})<-\frac{c_\infty}{2}$ on $B_{i,\rho}$. 
By smooth convergence, for all sufficiently large $k$, 
 $R|_{\Om_k^i}$ has a local maximum inside $\vp_{i,k}(B_{i,\rho})$, 
$\bar{\nabla}^2 (R|_{\Om_k^i})<0$ and $\frac{\lambda_2}{R}>\frac{1}{6}$ on $\vp_{i,k}(B_{i,\rho})$. 
Thus, there is a point $p_{i,k}\in \vp_{i,k}(B_{i,\rho})\sub \Omega^i_k$ with $\bar\nabla R(p_{i,k})=0$ and $\frac{\lambda_2}{R}(p_{i,k})>\frac16$, i.e. $p_{i,k}$ is a tip.
Since $R|_{\Om_k^i}$ is strictly concave on $\vp_{i,k}(B_{i,\rho})$, $p_{i,k}$ is the unique such point in $\vp_{i,k}(B_{i,\rho})$. 
We claim that $p_{i,k}$ is the unique tip in $\Om_k^i$ for all large $k$. Otherwise, we may pass to a subsequence to obtain tips $p_{i,k}'\notin \vp_{i,k}(B_{i,\rho})$, and $p_{i,k}'\to p_{i,\infty}'$ under the convergence  (\ref{eqn:convergence-to-Z3}). This implies that $p_{i,\infty}'\neq p_{i,\infty}$ and $\nabla^{\hat{g}_i}(R|_{Z_i})=0$ at $p_{i,\infty}'$ which is impossible. 
Hence, $p_{i,k}$ is the unique tip in $\Omega^i_k$ for $i=1,2$, and all large $k$. 

As $\frac{\lambda_2}{R}\le \frac{1}{6}$ on ${\bar{N}_k}$, every tip of $\Sigma_{s_k}$ lies in $\Omega^1_k\cup \Omega^2_k$. Therefore $\Sigma_{s_k}$ has exactly two tips, one in each of $\Omega^1_k$ and $\Omega^2_k$. Since the choice of $s_k\to-\infty$ was arbitrary, the same holds for all $s\ll 0$. 
\end{proof}

\begin{remark}
It follows from the proof of Theorem \ref{Two-Tips} that if $p_{1,s},p_{2,s}$ are the two tips in $\Sigma_s$, $R(p_{1,s})d(p_{1,s},p_{2,s})^2\to \infty$ as $s\to -\infty$. 
\end{remark}
Combining Lemma \ref{Step-1-of-Proof} with the previous theorem, we have the following corollary. 
\begin{corollary}
\label{cor:G-neq-0-has-2-points}
$\{x\in \Sigma:G(x)\neq 0\}$ has at most two points. 
\end{corollary}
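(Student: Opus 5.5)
We argue by contradiction. Suppose there are three distinct points $q_1,q_2,q_3\in\Sigma$ with $G(q_j)\neq 0$. The aim is to produce, for all sufficiently negative $s$, three distinct tips of $\Sigma_s$, which contradicts Theorem \ref{Two-Tips}.

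\textbf{Step 1 (separation on $\Sigma_s$).} Since $\Sigma$ is compact and the $q_j$ are distinct, the intrinsic distances $d_\Sigma(q_j,q_l)$ are positive. We need a uniform lower bound on $d_{\Sigma_s}(\chi_s(q_j),\chi_s(q_l))$ as $s\to-\infty$. Lemma \ref{lem:variation-of-distance-in-intrinsic-metric} runs the wrong way: it says these distances are non-increasing in $s$. Since $s\to-\infty$ means $s$ decreases, the distances are therefore non-\emph{decreasing} as $s\to-\infty$. Hence
$$d_{\Sigma_s}(\chi_s(q_j),\chi_s(q_l))\geq d_{\Sigma}(q_j,q_l)=:2\eta_{jl}>0\qquad\text{for all } s\le s_0.$$
Set $\eta:=\min_{j\ne l}\eta_{jl}$.

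\textbf{Step 2 (tips near each curve).} Apply Lemma \ref{Step-1-of-Proof} to each $q_j$ and let $\delta_j$ be the constant it provides. Fix $\varepsilon<\min(\eta,\delta_1,\delta_2,\delta_3)$. For $s<\min_j s_\varepsilon(q_j)$, the lemma gives points $p_{j,s}\in\Sigma_s$ with
$$\bar\nabla R(p_{j,s})=0,\qquad \frac{\lambda_2}{R}(p_{j,s})>\frac16,\qquad d_{\Sigma_s}(p_{j,s},\chi_s(q_j))<\varepsilon.$$
By Definition \ref{def:tip}, each $p_{j,s}$ is a tip. By the triangle inequality and Step 1,
$$d_{\Sigma_s}(p_{j,s},p_{l,s})>2\eta-2\varepsilon>0\qquad (j\neq l),$$
so the three tips are distinct.

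\textbf{Step 3.} For $s$ sufficiently negative, Theorem \ref{Two-Tips} says $\Sigma_s$ has exactly two tips. This contradicts Step 2, so $\{G\neq 0\}\cap\Sigma$ has at most two points.

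The only delicate point is the monotonicity direction in Step 1. The argument needs the separation between the curves $\chi_s(q_j)$ to be preserved, not lost, as $s\to-\infty$, and the sign in Lemma \ref{lem:variation-of-distance-in-intrinsic-metric} ($\partial_s\bar g_s\le 0$) supplies exactly that. Everything else is a direct combination of Lemma \ref{Step-1-of-Proof} and Theorem \ref{Two-Tips}.
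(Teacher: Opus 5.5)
Your proof is correct and is essentially the paper's argument: Lemma~\ref{lem:variation-of-distance-in-intrinsic-metric} keeps the curves $\chi_s(q_j)$ uniformly separated, Lemma~\ref{Step-1-of-Proof} gives a tip near each curve, and three distinct tips contradict Theorem~\ref{Two-Tips}. One small point: your phrase ``runs the wrong way'' is misleading, since non-increasing in $s$ is exactly the direction you need, as you then correctly conclude.
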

\begin{proof}
Suppose for a contradiction that there exist three distinct points $P_1,P_2,P_3$ in $\Sigma$ such that $G(P_i)\neq 0$ for $i=1,2,3$. By Lemma~\ref{lem:variation-of-distance-in-intrinsic-metric}, $d_{\Sigma_s}(\chi_s(P_i),\chi_s(P_j))\geq d_{\Sigma}(P_i,P_j)>0$ for all $s\leq s_0$. Thus, the curves $\chi_s(P_i)\in \Sigma_s$ remain uniformly separated for all $s\leq s_0$. 
Choosing $\varepsilon>0$ small enough and applying Lemma \ref{Step-1-of-Proof}, we obtain that for all $s\ll0$, $\Sigma_s$ has at least three distinct tips, one near each $\chi_s(P_i)$, $i=1,2,3$. This contradicts Theorem \ref{Two-Tips}. 
\end{proof}

\section[Stability of (\texorpdfstring{$\epsilon$}{e},2)-centers]{Stability of $ (\epsilon,2)$-centers}

\label{sec:Stability-of-e-2-centers}
We continue to assume that $\left(M^4, g, f\right)$ is a complete steady soliton satisfying \hyperref[assumption:A1]{(A1)}--\hyperref[assumption:A4]{(A4)}.  

The main result of this section, Theorem \ref{Stability-of-bubble-sheet}, shows backward-in-time stability for regions in $M$ that resemble a bubble-sheet: once a point is sufficiently close to $\sph^2\times\R^2$ and is far out on the soliton, it remains close to $\sph^2\times \R^2$ for all earlier times. More precisely, given $\varepsilon>0$, there exists $\delta>0,N>0$ such that if $x$ is a $(\delta,2)$-center and $d(x,o)>N$, then $\Phi_t(x)$ is an $(\varepsilon,2)$-center for all $t\leq 0$. This allows us to propagate geometric properties along integral curves of $-\nabla f$. As a first application, we show that the soliton $M^4$ has two ``edges'' which are integral curves of $-\nabla f$ passing through two special points $x_+,x_-$ (see Theorem \ref{existence-of-x-+-}), such that along these integral curves the manifold resembles $\Bry^3\times \R$. 

Stability properties for necks have been established in several settings; see for instance \cite{KL17,LZ22}. \cite{Lai25} proves a stability statement for three-dimensional steady solitons. The proof here uses the scale-invariant ratio $\lambda_2/R$ along the canonical flow induced by the soliton. 

We begin with the following definition from \cite{Lai25}, which allows us to compare Ricci flows: after identifying large spatial balls at time $0$, the pulled-back metrics remain uniformly close for all $t\in[-\epsilon^{-1},0]$. 
\begin{definition}
\label{defn:closeness-of-flows}
Let $\epsilon>0$. Let $\left(M_i^n, g_i(t),x_i\right)_{t\in \left[-\epsilon^{-1}, 0\right]}$ for $i=1,2$ be two pointed Ricci flows. We say a smooth map $\phi: B_{g_1(0)}\left[x_1; \epsilon^{-1}\right]\rightarrow M_2$ such that $\phi\left(x_1\right)=x_2$ is an $\epsilon$-isometry between pointed Ricci flows if it is a diffeomorphism onto the image, and 
$$
\sup_{0\leq k\leq [\epsilon^{-1}]}\,\,\sup_{B_{g_1(0)}\left[x_1;\epsilon^{-1}\right] \times\left[-\epsilon^{-1}, 0\right]}\left|\nabla^k\left(\phi^* g_2(t)-g_1(t)\right)\right| \leq \epsilon,
$$
where the covariant derivatives and norms are taken with respect to $g_1(0)$. We also say that $(M_2, g_2(t), x_2)$ is $\epsilon$-close to $\left(M_1, g_1(t), x_1\right)$ (as Ricci flows). 
\end{definition}
In the following proposition, we extend the dimension reduction to include spacetime using Lemma \ref{summary-of-CMZ-work} (v). We write $g(t)$ (or $g_t$) to denote the canonical Ricci flow induced by the steady soliton $(M^4,g)$. Also, recall our notation from Definition \ref{def:model-flows}. 
\begin{proposition}
\label{time-dimension-reduction}
Let $d(x_i,o)\to \infty$. Then, after passing to a subsequence, we have 
$$(M,(g_i(t):=R(x_i)g_{t/{R(x_i)}}),x_i)_{t\leq 0}\to (M_\infty\times \R,(g_\infty(t)+dz^2)_{t\leq 0},(x_\infty,0)),$$
where $(M_\infty\times \R,(g_\infty(t)+dz^2)_{t\leq 0},(x_\infty,0))$ is isometric (as Ricci flows) to either $$(\Bry^3\times \R,R_{\tilde{g}_0}(\tilde{x})\tilde{g}_{t/R_{\tilde{g}_0}(\tilde{x})}+dz^2,(\tilde{x},0))\text{ or }(\sph^2\times \R^2,\bar{g}_t+dz^2,(\tilde{x},0)).$$ 
For every $\varepsilon>0$ there exists $D_\varepsilon>0$ such that if $x\in M$ and $d(x,o)>D_\varepsilon$, then the rescaled Ricci flow $(M,R(x)g_{t/R(x)},x)$ is $\varepsilon$-close to either $(\Bry^3\times \R,R_{\tilde{g}_0}(\tilde{x})\tilde{g}_{t/R_{\tilde{g}_0}(\tilde{x})}+dz^2,(\tilde{x},0))$ or $(\sph^2\times \R^2,\bar{g}_t+dz^2,(\tilde{x},0))$.
\end{proposition}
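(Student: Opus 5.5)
The first statement will be proved by a compactness argument, upgrading the spatial dimension reduction of Lemma~\ref{summary-of-CMZ-work}~(v) to a spacetime statement; the second statement then follows by contradiction, exactly as in Corollary~\ref{cor:close-as-metrics-to-Bry-or-Cyl}.

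\textbf{Step 1: extracting a limit flow.} Set $g_i(t):=R(x_i)g_{t/R(x_i)}$. These are Ricci flows on $(-\infty,0]$ (indeed up to $R(x_i)$), and they are $\kappa$-noncollapsed with nonnegative sectional curvature. By the trace Harnack inequality (Lemma~\ref{summary-of-CMZ-work}~(iii)), $R_{g_i(t)}\le R_{g_i(0)}$ for $t\le 0$. By Perelman's long-range estimate (ii), $R_{g_i(0)}$ is bounded on $B_{g_i(0)}[x_i;A]$ by $C(A,\kappa)$. Nonnegative Ricci curvature makes distances nonincreasing in $t$, so $B_{g_i(0)}[x_i;A]\subset B_{g_i(t)}[x_i;A]$, and the curvature bound therefore holds on the whole parabolic region $B_{g_i(0)}[x_i;A]\times(-\infty,0]$. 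Together with Perelman's derivative estimates (i) and the injectivity radius bound coming from $\kappa$-noncollapsing, Hamilton's compactness theorem yields a subsequential smooth Cheeger--Gromov--Hamilton limit $(M_\infty',g'_\infty(t),x_\infty)_{t\le 0}$ on compact subsets of spacetime.

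\textbf{Step 2: identifying the limit.} By (v), the time-$0$ slice is $N^3\times\R$ with $N^3$ equal to $\Bry^3$ (normalized) or $\sph^2\times\R$. Each $g_i(t)$ is the pullback of $g_i(0)$ by the flow of $\nabla f/\sqrt{R(x_i)}$ (in rescaled time), so it is a gradient steady soliton flow with potential $f_i=(f-f(x_i))/\sqrt{R(x_i)}$ up to scaling. There are two ways to identify the limit flow.
\begin{enumerate}
\item[(a)] Pass the potentials to the limit as in Lemmas~\ref{Level-Set-Convergence} and \ref{Level-Set-Convergence-2}. After normalizing, $\sqrt{R(x_i)}\,(f-f(x_i))$ converges. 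Its limit is a linear function $z$ when $R(x_i)\to0$, and $Az+h$ with $h$ the Bryant potential otherwise. The limit flow is then generated by the limiting vector field, so $\Bry^3\times\R$ evolves as the Bryant eternal flow times a line. In the cylinder case the limit vector field is parallel, giving a static metric; this contradicts evolution by $-2\Ric$. That is not a genuine problem, since it only means the steady structure degenerates.
\item[(b)] Use uniqueness of complete bounded-curvature Ricci flows going \emph{forward} from each time slice (Chen--Zhu). This requires knowing the slice at some earlier time, which is circular.
\end{enumerate}
The cleanest route is instead to identify the limit as a $\kappa$-solution and use rigidity. The limit is an ancient $\kappa$-noncollapsed flow with $\sec\ge0$ and bounded curvature. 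Its time-$0$ slice splits a line, so by the strong maximum principle the whole flow splits a line for all $t\le0$ (the parallel kernel persists backward, by Hamilton's splitting for ancient flows). The factor is then a 3D $\kappa$-solution whose time-$0$ slice is either $\Bry^3$, which is a steady soliton, or the round cylinder. For the cylinder, the time-$0$ slice being a shrinking soliton and backward uniqueness (Kotschwar) give $\bar g_t$. For the Bryant case, Kotschwar's backward uniqueness shows the flow agrees with the Bryant eternal flow, since the two coincide at $t=0$ and both have bounded curvature on each time slab. This is the step I expect to be the main obstacle: verifying the hypotheses of backward uniqueness (complete, bounded curvature on each $[-T,0]$) on the limit, which holds by the trace Harnack bound from Step 1. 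Normalization $R(x_\infty,0)=1$ fixes the scale $R_{\tilde g_0}(\tilde x)\tilde g_{t/R_{\tilde g_0}(\tilde x)}$.

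\textbf{Step 3: the uniform closeness statement.} Suppose the second claim fails for some $\varepsilon$, along $x_i$ with $d(x_i,o)\to\infty$. By Steps 1--2, a subsequence converges smoothly on compact spacetime sets to one of the two models. Smooth convergence on $B[x_\infty;\varepsilon^{-1}]\times[-\varepsilon^{-1},0]$ with $[\varepsilon^{-1}]$ derivatives then gives $\varepsilon$-closeness in the sense of Definition~\ref{defn:closeness-of-flows} for large $i$, a contradiction.
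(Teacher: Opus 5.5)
Your final argument is correct and is essentially the paper's proof. You extract a subsequential limit of the rescaled flows, rebuilding the compactness from the long-range estimate, trace Harnack and $\kappa$-noncollapsing where the paper simply cites \cite[Theorem~1.2]{CMZ25b}. You then identify the time-$0$ slice via Lemma~\ref{summary-of-CMZ-work}~(v), get a global curvature bound on the limit from trace Harnack, apply Kotschwar's backward uniqueness, and close with a contradiction argument for uniform $\varepsilon$-closeness.

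Two small corrections, neither affecting validity:
\begin{enumerate}
\item For $t\le 0$ the correct inclusion is $B_{g_i(t)}[x_i;A]\subset B_{g_i(0)}[x_i;A]$, not the reverse. You do not need it, because trace Harnack already bounds $R_{g_i(t)}\le R_{g_i(0)}$ pointwise on $B_{g_i(0)}[x_i;A]$.
\item The backward splitting step, and the abandoned routes (a) and (b), are unnecessary. Applying backward uniqueness directly to the four-dimensional limit against the product model flow already yields the product structure.
\end{enumerate}
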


\begin{proof}
By Perelman's compactness theorem and dimension reduction (adapted to weak $\kappa$-solutions \cite[Theorem~1.2]{CMZ25b}), we may pass to a subsequence to obtain 
$$\left(M,(g_i(t):=R(x_i)g_{t/R(x_i)})_{t\leq 0},x_i\right)\to \left(M_\infty\times \R,(g_\infty(t)+dz^2)_{t\leq 0},(x_\infty,0)\right),$$
in the smooth Cheeger--Gromov sense. Here, $(M_\infty,g_\infty(t))$ is a complete, nonflat, ancient solution with $R_{g_\infty(0)}(x_\infty)=1$ that is $\kappa$-noncollapsed at all scales, has $\sec\geq 0$, and satisfies the trace Harnack inequality. By Lemma \ref{summary-of-CMZ-work} (v), it follows that $(M_\infty\times \R,g_\infty(0)+dz^2,(x_\infty,0))$ is either $(\Bry^3\times \R,R_{\tilde{g}_0}(\tilde{x})\tilde{g}_0+dz^2,(\tilde{x},0))$ or $(\sph^2\times \R^2,\bar{g}_0+dz^2,(\tilde{x},0))$. As a result, we have $R_{g_\infty(0)}\leq C$ on $M_\infty$ (where $C$ depends on $\tilde{x}$). From the trace Harnack inequality, it follows that $R_{g_\infty(t)}(p)\leq R_{g_\infty(0)}(p)\leq C$ for all $t\leq 0$ and $p\in M_\infty$. Thus, the curvature of $(M_\infty,g_\infty(t))_{t\leq 0}$ is bounded. Using backward uniqueness \cite{Kot10}, it follows that $(M_\infty,g_\infty(t))$ is isometric (as Ricci flows) to either $(\Bry^3\times \R,R_{\tilde{g}_0}(\tilde{x})\tilde{g}_{t/R_{\tilde{g}_0}(\tilde{x})}+dz^2,(\tilde{x},0))$ or $(\sph^2\times \R\times \R,\bar{g}_t+dz^2,(\tilde{x},0))$. This proves the first part of the proposition.

For the $\varepsilon$-closeness statement, we argue by contradiction. If it failed for some $\varepsilon>0$, one could find a sequence $x_i\in M$ with $d(x_i,o)\to\infty$ such that the rescaled flows based at $x_i$ are not $\varepsilon$-close to either model on $[-\varepsilon^{-1},0]$. Passing to a subsequence and taking the pointed limit yields one of the model flows above. Smooth pointed convergence of Ricci flows implies that for all sufficiently large $i$, the pointed Ricci flow $(M,R(x_i)g_{t/R(x_i)},x_i)_{t\leq 0}$ is $\varepsilon$-close to the corresponding limit. This contradicts the choice of the sequence $x_i$. 
\end{proof}
The next proposition gives a dichotomy for each integral curve of $-\nabla f$ starting from $\Sigma$: either the curve enters a bubble-sheet region at some time (in the sense of an $(\epsilon,2)$-center), or else the manifold dimension reduces to $\Bry^3\times \R$ along the curve with basepoint at the tip. 
\begin{proposition}
\label{prop:either-neck-or-tip}
Let $\epsilon>0$. For every $q\in M\setminus \{o\}$, at least one of the following holds:
\begin{enumerate}
    \item There exists some $z>0$ such that $\Phi_{-z}(q)$ is an $(\epsilon,2)$-center.
    \item If $s_k\to\infty$, then after passing to a subsequence,  
    \begin{equation}
    \label{eqn:0-in-statement-of-prop-either-neck-or-tip}
    (M,R(\Phi_{-s_k}(q))g,\Phi_{-s_k}(q))
    \to
    (\Bry^3\times \R,\tilde g_0+dz^2,(\bar x,0))
    \end{equation}
    in the smooth Cheeger--Gromov sense, where $\bar x$ is the tip of the Bryant soliton.
\end{enumerate}
\end{proposition}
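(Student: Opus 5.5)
We argue by contradiction: assume (1) fails, so $\Phi_{-z}(q)$ is not an $(\epsilon,2)$-center for any $z>0$, and deduce (2). Let $s_k\to\infty$ and $x_k:=\Phi_{-s_k}(q)$. By Lemma \ref{lem:monotonicity-along-flows}, $d(x_k,o)\to\infty$. By Lemma \ref{summary-of-CMZ-work} (v), after passing to a subsequence $(M,R(x_k)g,x_k)$ converges to $(N^3\times\R,\tilde h_0+dz^2,(\tilde x,0))$, with $N^3$ either $\sph^2\times\R$ or $\Bry^3$. Since no $x_k$ is an $(\epsilon,2)$-center, the cylinder is excluded: smooth convergence to $\sph^2\times\R^2$ would make $x_k$ an $(\epsilon,2)$-center for large $k$. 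Hence $N^3=\Bry^3$, and it remains to show that $\tilde x$ is the tip $\bar x$.

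We split according to $G(q)=\lim R(\Phi_{-s}(q))$. Note that $\Phi_{-s}(q)$ traces the same integral curve as $\Gamma_q$, only reparametrized, so the limit exists and equals $G(q)$.

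\textbf{Case $G(q)>0$.} Here the argument of Lemma \ref{Step-1-of-Proof} applies directly. The quantity $\frac{d}{ds}R(\Phi_{-s}(q))=-2\Ric(\nabla f,\nabla f)$ is integrable on $(0,\infty)$, and the speed $|\nabla f|$ is bounded above and below along the curve. The same disjoint-intervals argument then forces $\Ric_{g_\infty}(\nabla h,\nabla h)=0$ at the limit point. Since $\Bry^3$ has positive Ricci curvature, $\nabla h=0$ there, so the basepoint is the tip. Rescaling by $R(x_k)\to G(q)$ gives \eqref{eqn:0-in-statement-of-prop-either-neck-or-tip}.

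\textbf{Case $G(q)=0$.} This is the main obstacle, because the integrability trick now has to be carried out in rescaled units. Write $r_k:=R(x_k)\to0$. In the rescaled metric $g_k=r_kg$, Lemma \ref{Level-Set-Convergence} gives $\sqrt{r_k}(f-f(x_k))\to \pm z$. Consequently $\nabla f$, rescaled as $\nabla^{g_k}(\sqrt{r_k}f)$, converges to the unit field $\mp\partial_z$, which is tangent to the $\R$-factor.

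On the other hand, the curve $\Phi_{-s}(q)$ is also an integral curve of $-\nabla f$ in the scaled flow, and Proposition \ref{time-dimension-reduction} shows that the parabolically rescaled flow converges to $\Bry^3\times\R$. With this identification, the flow $\Phi_{-t/r_k}$ restricted to $t\in[-A,A]$ corresponds in the limit to the soliton motion of $\Bry^3\times\R$ by $-\nabla h - A\partial_z$, composed with time shift.

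To pin down $\tilde x$, use the rescaled scalar curvature along the curve: $t\mapsto r_k^{-1}R(\Phi_{-s_k-t/r_k}(q))$ is monotone. If $\tilde x$ were not the tip, then in the limit the rescaled curvature along the trajectory would strictly change at a definite rate. Concretely, with $u(t):=\log R(\Phi_{-s_k-t/r_k}(q))$, we would have $\frac{du}{dt}\le -c<0$ on $[0,1]$ for all large $k$, which gives $R(x_{k}')\le e^{-c}r_k$ at $s_k'=s_k+1/r_k$.

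I expect this to be compatible with $G=0$, so a different route is needed to reach a contradiction. I would instead use the dichotomy through the ratio $\frac{\lambda_2}{R}$. Away from the Bryant tip, flowing forward along the Bryant soliton's potential in the time-shifted limit moves points toward the cylindrical end. Equivalently, going backward in $s$ along the curve corresponds to moving out along the Bryant soliton. Hence $\frac{\lambda_2}{R}(\Phi_{-s}(q))$ becomes small at some later time $s$. By Lemma \ref{Theta}, such a point is an $(\epsilon,2)$-center, contradicting the failure of (1). Thus the basepoint must be the tip.
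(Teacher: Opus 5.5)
Your argument is correct, and its decisive step is the paper's proof. First you exclude the $\sph^2\times\R^2$ limit, since no $x_k$ is an $(\epsilon,2)$-center. Then, if the basepoint $\tilde x$ were not the tip, Proposition~\ref{time-dimension-reduction} and Remark~\ref{rem:Bryant-canonical-flow-monotonicity} give a fixed rescaled time $-t_*$ at which $\lambda_2/R$ of the limit flow at $(\tilde x,0)$ is below $\theta_\epsilon/4$. Hence $y_k:=\Phi_{-t_*/R(x_k)}(x_k)$ satisfies $\frac{\lambda_2}{R}(y_k)\le\theta_\epsilon/2$ and $f(y_k)\le f(x_k)\to-\infty$, and Lemma~\ref{Theta} makes $y_k$ an $(\epsilon,2)$-center, contradicting the failure of (1).

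The difference from the paper is only organizational. The paper does not split on $G(q)$, because this argument never uses $R(x_k)\to 0$. Your $G(q)>0$ branch, via the integrability argument of Lemma~\ref{Step-1-of-Proof}, is valid; it even shows that (2) holds there without assuming (1) fails. It is nonetheless redundant.

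When you write this up, drop the intermediate material in the $G(q)=0$ branch. The log-derivative estimate leads nowhere, as you noticed. The picture of $\Phi_{-t/r_k}$ converging to motion by $-\nabla h-A\partial_z$ is also not accurate as stated. The curve $t\mapsto\Phi_{-t/r_k}(x_k)$ has $r_kg$-speed $r_k^{-1/2}|\nabla f|\to\infty$, almost entirely in the $z$-direction, so these points leave every fixed rescaled ball around $x_k$ and no fixed constant $A$ describes the motion. This is harmless only because $z$-translations are isometries of the limit.

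What actually makes the step rigorous is the identity $(\lambda_2/R)_g(\Phi_{-\tau}(x_k))=(\lambda_2/R)_{g(-\tau)}(x_k)$, which comes from $g(t)=\Phi_t^*g$. You evaluate it at the fixed point $x_k$, with $\tau=t_*/R(x_k)$, at the fixed rescaled time $-t_*$ where the parabolically rescaled flows converge smoothly.
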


\begin{proof}
Let $\theta_{\epsilon}$ and $s_1(\epsilon)$ be the constants from Lemma \ref{Theta}. Suppose that for each $z>0$, $\Phi_{-z}(q)$ is not an $(\epsilon,2)$-center. Let $s_k\to \infty$ and $x_k:=\Phi_{-s_k}(q)$. Since $q\neq o$, Lemma~\ref{lem:monotonicity-along-flows} implies that $f(x_k)\to -\infty$.

By Proposition \ref{time-dimension-reduction}, after passing to a subsequence, the flows $(M,R(x_k)g_{t/R(x_k)},x_k)$ converge to either $(\sph^2\times\R^2,\ \bar g_t+dz^2,\ (\tilde x,0))$ or $(\Bry^3\times \R,R_{\tilde{g}_0}(\tilde{x})\tilde{g}_{t/R_{\tilde{g}_0}(\tilde{x})}+dz^2,(\tilde{x},0))$ (using the notation from Definition \ref{def:model-flows}). If the limit were $\sph^2\times\R^2$, then by smooth convergence $(M,R(x_k)g,x_k)$ would be $\epsilon$-close to $(\sph^2\times\R^2,\bar g_0+dz^2,(\tilde x,0))$ for all large $k$, i.e., $x_k$ would be an $(\epsilon,2)$-center, contradicting the assumption. Hence the limit is $(\Bry^3\times \R,R_{\tilde{g}_0}(\tilde{x})\tilde{g}_{t/R_{\tilde{g}_0}(\tilde{x})}+dz^2,(\tilde{x},0))$. 

Let us write $R_0:=R_{\tilde{g}_0}(\tilde{x})>0$. For a Riemannian metric $h$, let $\lambda_{1,h}\leq \lambda_{2,h}\leq \cdots$ denote the eigenvalues of $\Ric_h$ viewed as a $(1,1)$-tensor. We write $\left(\frac{\lambda_i}{R}\right)_h$ for the ratio of $\lambda_{i,h}$ to the scalar curvature $R_h$. We also use the scale invariance of $\lambda_i/R$, namely, for every $C>0$, $\left(\frac{\lambda_i}{R}\right)_h=\left(\frac{\lambda_i}{R}\right)_{Ch}.$ 

We claim that $\tilde x$ is the tip of the Bryant soliton. Suppose not. For the product metric $R_0\tilde g_s+dz^2$, we have 
\[
\left(\frac{\lambda_2}{R}\right)_{R_0\tilde g_s+dz^2}(\tilde x,0)
=
\left(\frac{\lambda_1}{R}\right)_{\tilde g_s}(\tilde x)
\qquad\text{for all }s.
\]
Since $\tilde x$ is not the tip, Remark~\ref{rem:Bryant-canonical-flow-monotonicity} implies
\[
\left(\frac{\lambda_2}{R}\right)_{\tilde g_\tau+dz^2}(\tilde x,0)\to 0
\qquad\text{as }\tau\to-\infty.
\]
Hence there exists $T_2>1$ such that
\begin{equation}
\label{eqn:1-in-proof-of-prop-either-neck-or-tip}
\left(\frac{\lambda_2}{R}\right)_{R_0\tilde g_{t/R_0}+dz^2}(\tilde x,0)
<\frac{\theta_\epsilon}{4}
\qquad\text{for all }t\le -T_2R_0.
\end{equation}
Set $t_*:=T_2R_0$. By smooth convergence of the Ricci flows at the fixed time $-t_*$,
\[
\left(\frac{\lambda_2}{R}\right)_{R(x_k)g(-t_*/R(x_k))}(x_k)
\to
\left(\frac{\lambda_2}{R}\right)_{R_0\tilde g_{-t_*/R_0}+dz^2}(\tilde x,0).
\]
Combining this with \eqref{eqn:1-in-proof-of-prop-either-neck-or-tip}, it follows that for all sufficiently large $k$,
\[
\left(\frac{\lambda_2}{R}\right)_{g(-t_*/R(x_k))}(x_k)\le \frac{\theta_\epsilon}{2}.
\]
Define $y_k:=\Phi_{-t_*/R(x_k)}(x_k)=\Phi_{-(s_k+t_*/R(x_k))}(q)=\Phi_{-z_k}(q)$ where $z_k:=s_k+\frac{t_*}{R(x_k)}>0$. Since $\Phi_{-t_*/R(x_k)}^*g=g_{-t_*/R(x_k)}$, we have
\[
\left(\frac{\lambda_2}{R}\right)_g(y_k)
=
\left(\frac{\lambda_2}{R}\right)_{g(-t_*/R(x_k))}(x_k)
\le \frac{\theta_\epsilon}{2}.
\]
Moreover, by Lemma \ref{lem:monotonicity-along-flows}, $f(y_k)\le f(x_k)$. Since $f(x_k)\to -\infty$, it follows that $f(y_k)\to -\infty$. Hence for all large $k$, the point $y_k$ lies on some level set $\Sigma_s$ with $s<s_1(\epsilon)$. Lemma \ref{Theta} therefore applies and shows that $y_k$ is an $(\epsilon,2)$-center for all sufficiently large $k$, contradicting the assumption that no point of $\{\Phi_t(q):t< 0\}$ is an $(\epsilon,2)$-center. This contradiction proves that $\tilde x$ is the tip of the Bryant soliton. In particular, $R_0=1$ and \eqref{eqn:0-in-statement-of-prop-either-neck-or-tip} holds. 
\end{proof}

In the next lemma we propagate a purely spatial bubble-sheet condition at time 0 (being a $(\delta, 2)$-center) backwards for a definite amount of time.
\begin{lemma}
\label{lem:prep-to-neck-stability}
For each $\varepsilon>0$ there exists $\delta_\varepsilon\in (0,\varepsilon)$ and $N_\varepsilon>0$ such that if $x\in M$, $d(x,o)>N_\varepsilon$, and $x$ is a $(\delta_\varepsilon,2)$-center, then there exists $T_{\varepsilon,x}>1$ such that $\Phi_t(x)$ is an $(\varepsilon,2)$-center for all $t\in [-T_{\varepsilon,x},0]$ and $\Phi_{-T_{\varepsilon,x}}(x)$ is a $(\delta_\varepsilon,2)$-center.
\end{lemma}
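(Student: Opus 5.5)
The plan is a compactness/contradiction argument built on the spacetime dimension reduction of Proposition~\ref{time-dimension-reduction}. The key model fact is that on the shrinking cylinder $(\sph^2\times\R^2,\bar g_t+dz^2)$, the rescaled flow based at any point is self-similar. Concretely, if we set $\bar R(t)=\frac{1}{1-t}$, then for every $T>0$
\[
\bigl(\sph^2\times\R^2,\ \bar R(-T)\,(\bar g_{-T+s/\bar R(-T)}+dz^2)\bigr)_{s\le 0}
\]
is again isometric to $(\bar g_s+dz^2)_{s\le 0}$. So on the model every backward time slice is exactly a $(0,2)$-center. Given $\varepsilon$, first choose $\varepsilon_1\ll\varepsilon$. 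Then choose $\delta_\varepsilon\ll\varepsilon_1$ and $N_\varepsilon\ge D_{\varepsilon_1}$ (from Proposition~\ref{time-dimension-reduction}). The time $T_{\varepsilon,x}$ will be $T/R(x)$ for a fixed model time $T$, say $T=2$; we may also need $T/R(x)>1$, which holds since $R\le 1$.

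\textbf{Step 1.} Show that for $\delta_\varepsilon$ small and $N_\varepsilon$ large, any far-out $(\delta_\varepsilon,2)$-center $x$ has its rescaled flow $(M,R(x)g_{t/R(x)},x)_{t\in[-\eta^{-1},0]}$ $\eta$-close to the $\sph^2\times\R^2$ model, where $\eta$ is a prescribed small constant (with $\eta^{-1}\gg T$). Argue by contradiction. Take $\delta_i\to0$ and $d(x_i,o)\to\infty$. By Proposition~\ref{time-dimension-reduction}, a subsequence of flows converges to one of the two models. The time-$0$ slice is $\delta_i$-close to $\sph^2\times\R^2$, and the limit at time $0$ is a $\Bry^3\times\R$ or $\sph^2\times\R^2$ metric with $R=1$ at the basepoint. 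We must rule out $\Bry^3\times\R$. For this, use the scale-invariant quantity $\lambda_2/R$ on balls: on $\sph^2\times\R^2$ it vanishes identically, while on $\Bry^3\times\R$ it is positive everywhere, and the time-$0$ limit is determined by $\delta_i$-closeness. Hence the limit flow is the cylinder flow, and closeness on $[-\eta^{-1},0]$ follows.

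\textbf{Step 2.} Transfer this to the points $\Phi_t(x)$. Since $g_t=\Phi_t^*g$, the pointed manifold $(M,R(\Phi_t(x))g,\Phi_t(x))$ is isometric to $(M,R_{g_t}(x)g_t,x)$. For $t=s/R(x)$ with $s\in[-T,0]$, this is the rescaled flow at time $s$, renormalized by its scalar curvature at $x$. The $\eta$-closeness from Step~1, together with the model's self-similarity, gives that this metric is $C\eta$-close to $(\sph^2\times\R^2,\bar g_0+dz^2)$ on a ball of radius of order $\eta^{-1}\bar R(s)^{1/2}$. There is a loss from the rescaling factor $\bar R(s)\in[\frac{1}{1+T},1]$, but it is controlled since $T$ is fixed. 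So, choosing $\eta$ small relative to $\varepsilon$, every $\Phi_t(x)$ with $t\in[-T/R(x),0]$ is an $(\varepsilon,2)$-center. The endpoint $\Phi_{-T/R(x)}(x)$ is likewise $C\eta$-close. Choosing $\eta\ll\delta_\varepsilon$ would make it a $(\delta_\varepsilon,2)$-center.

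\textbf{Main obstacle.} Step~2 requires $\eta\ll\delta_\varepsilon$, but Step~1 lets $\eta$ depend on $\delta_\varepsilon$ in the wrong direction: smaller $\delta$ is what gives smaller $\eta$. So the endpoint condition cannot be obtained by naive constant-chasing. Two routes address this:
\begin{itemize}
\item \textbf{Improvement on the model.} Pass to the limit as above, along a sequence with $\delta_i\to 0$. In the contradiction hypothesis, let $\delta$ be fixed while the conclusion fails for every choice of $T$. The limit flow is \emph{exactly} cylindrical, so for any fixed $T$ the endpoint becomes $o(1)$-close, which beats the fixed $\delta$ once $i$ is large.
\item \textbf{Quantitative neighborhood.} Invoke Lemma~\ref{Theta}: $\lambda_2/R$ is small along the backward curve. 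This follows because the cylinder-flow limit has $\lambda_2/R\equiv0$. A small value of $\lambda_2/R$ then implies the $(\delta_\varepsilon,2)$-center property, as long as $N_\varepsilon$ is chosen beyond $s_1$.
\end{itemize}
I would pursue the second route. Choose $\theta=\theta_{\delta_\varepsilon}$ from Lemma~\ref{Theta}. Then use the convergence in Step~1 to get $\lambda_2/R(\Phi_{-T/R(x)}(x))<\theta$, and conclude via Lemma~\ref{Theta}(i), using that $f$ decreases along the backward flow (Lemma~\ref{lem:monotonicity-along-flows}).
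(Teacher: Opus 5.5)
Your proposal has a genuine gap. You correctly identify the obstacle: the same $\delta_\varepsilon$ must appear in the hypothesis and in the endpoint conclusion. But neither route resolves it, because both implicitly assume the blow-up limit is the shrinking cylinder. That assumption is justified only along sequences of $(\delta_i,2)$-centers with $\delta_i\to0$, as in your Step~1.

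Once $\delta_\varepsilon$ is fixed, as the lemma forces, a sequence of far-out $(\delta_\varepsilon,2)$-centers may converge to $\Bry^3\times\R$ based at a non-tip point $\tilde x$. This really happens: for every $\delta>0$, points of $\Bry^3\times\R$ sufficiently far from the tip are $(\delta,2)$-centers. At such a limit, $\lambda_2/R$ is only bounded by what $\delta_\varepsilon$-closeness implies, and that bound need not lie below $\theta_{\delta_\varepsilon}$.

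\begin{itemize}
\item In your second route, the claim $\frac{\lambda_2}{R}(\Phi_{-T/R(x)}(x))<\theta_{\delta_\varepsilon}$ ``because the cylinder-flow limit has $\lambda_2/R\equiv0$'' is therefore unjustified. Deriving it from Step~1 would require the input to be a $(\delta',2)$-center with $\delta'$ depending on $\theta_{\delta_\varepsilon}$, which is the same circularity.
\item Your first route fails for the same reason. With $\delta$ fixed there is no $\delta_i\to0$. With $\delta_i\to0$, the target $\delta_i$ shrinks too, so an $o(1)$ error does not obviously beat it.
\item A symptom of the gap is your fixed choice $T=2$. In the Bryant case the backward time must depend on $\delta_\varepsilon$.
\end{itemize}

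The missing ingredient is the backward-in-time behaviour of the Bryant soliton. Along the canonical flow $\tilde g_t$, the ratio $(\lambda_1/R)_{\tilde g_t}(w)$ is nonincreasing as $t$ decreases. It tends to $0$ at every non-tip point, uniformly over points with $(\lambda_1/R)_{\tilde g_0}\le 1/1000$ (Lemma~\ref{lemma:Bryant-monotonicity} and Remark~\ref{rem:Bryant-canonical-flow-monotonicity}).

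The paper uses this as follows.
\begin{itemize}
\item Choose $\delta$ so that every $(\delta,2)$-center has $\lambda_2/R<\min(\theta_\varepsilon/2,1/1000)$. The $1/1000$ keeps the Bryant basepoint a definite distance from the tip.
\item Choose $T_1$, depending on $\theta_\delta$, so that on the Bryant flow $\lambda_1/R\le\theta_\delta/2$ for all $t\le -T_1$ at all such points.
\item Set $T_{\varepsilon,x}:=T_1/R(x)$.
\end{itemize}

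A contradiction argument with $\delta$ \emph{fixed} then splits into two cases.
\begin{itemize}
\item In the cylinder case, $\lambda_2/R\to0$ uniformly on $[-T_1,0]$.
\item In the Bryant case, monotonicity gives $\lambda_2/R\le\theta_\varepsilon/2$ on all of $[-T_1,0]$. At rescaled time $-T_1$, which is Bryant time $-T_1/R_0\le -T_1$ since $R_0\le1$, it gives $\lambda_2/R\le\theta_\delta/2$.
\end{itemize}

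Lemma~\ref{Theta}, together with the monotonicity of $f$ along $\Phi_{-t}$, turns these bounds into the $(\varepsilon,2)$-center and $(\delta,2)$-center conclusions. Working throughout with the scale-invariant ratio $\lambda_2/R$ also makes your Step~2 transfer of $C^\ell$-closeness across rescalings unnecessary.
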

\begin{proof}
We use the same notation as in Definition \ref{def:model-flows}: $\tilde g_t$ denotes a fixed Bryant soliton Ricci flow on $\Bry^3$ with scalar curvature $1$ at the tip.

We will choose constants
$$\theta_\varepsilon,-s_1(\varepsilon),\delta,-s_1(\delta),\theta_\delta,T_1,N_1$$
in this order, where each constant depends only on the constants to its left. 
\begin{itemize}
    \item Let $\theta_\varepsilon$ and $s_1(\varepsilon)$ be the constants from Lemma \ref{Theta}.
    \item Choose $0<\delta<\min(1,\varepsilon)$ with the following property: if $(\mathcal{N},h)$ is a Riemannian manifold and $p\in \mathcal{N}$ is a $(\delta,2)$-center, then $$\left(\frac{\lambda_2}{R}\right)_h(p)< \min \left(\frac{1}{2}\theta_\varepsilon,\frac{1}{1000}\right).$$
    \item Let $s_1(\delta)$ and $\theta_\delta$ be the constants from Lemma \ref{Theta}. 
    \item Choose $T_1>1$ with the following property: if $z\in \Bry^3$ and $\left(\frac{\lambda_1}{R}\right)_{\tilde g_0}(z)\le \frac{1}{1000}$ then \[
    \left(\frac{\lambda_1}{R}\right)_{\tilde g_t}(z)\le \frac{1}{2}\theta_\delta
    \qquad\text{for all }t\le -T_1.
    \]
    Such a constant exists by Remark~\ref{rem:Bryant-canonical-flow-monotonicity} and the fact that $\left(\frac{\lambda_1}{R}\right)_{\tilde g_0}\to 0$ at infinity on $\Bry^3$. 
    \item Set $N_1:=\max\bigl(-s_1(\varepsilon),-s_1(\delta)\bigr).$ 
\end{itemize}
For each $x\in M$, define $T_{\varepsilon,x}:={T_1}/{R(x)}.$ Since $R(x)\le 1$, we have $T_{\varepsilon,x}\ge T_1>1$. 
\begin{claim}
\label{claim:lambda2-R-prep-to-neck-stability}
There exists $N_\varepsilon>0$ such that if $x\in M$, $d(x,o)>N_\varepsilon$, and $x$ is a $(\delta,2)$-center, then
\begin{equation}
\label{eqn:1-in-claim-lambda2-R-prep-to-neck-stability-fixed}
\left(\frac{\lambda_2}{R}\right)_{g(t)}(x)\le \theta_\varepsilon
\qquad\text{for all }t\in[-T_{\varepsilon,x},0],
\end{equation}
and
\begin{equation}
\label{eqn:2-in-claim-lambda2-R-prep-to-neck-stability-fixed}
\left(\frac{\lambda_2}{R}\right)_{g(-T_{\varepsilon,x})}(x)\le \theta_\delta.
\end{equation}
\end{claim}
\begin{proof}
We argue by contradiction. Suppose no such $N_\varepsilon$ exists. Then there is a sequence $x_j\in M$ such that $d(x_j,o)\to\infty$, $x_j$ is a $(\delta,2)$-center, but for each $j$, at least one of \eqref{eqn:1-in-claim-lambda2-R-prep-to-neck-stability-fixed} or \eqref{eqn:2-in-claim-lambda2-R-prep-to-neck-stability-fixed} fails. Since each $x_j$ is a $(\delta,2)$-center, our choice of $\delta$ ensures that
\begin{equation}
\label{eqn:initial-smallness-at-xj}
\left(\frac{\lambda_2}{R}\right)_g(x_j)\le \min \left(\frac{1}{2}\theta_\varepsilon,\frac{1}{1000}\right)
\qquad\text{for all }j.
\end{equation}
Passing to a subsequence, Proposition \ref{time-dimension-reduction} gives smooth pointed Cheeger--Gromov convergence on $[-T_1,0]$:
\[
\bigl(M,R(x_j)g(t/R(x_j)),x_j\bigr)_{t\in[-T_1,0]}
\to
\bigl(M_\infty\times\R,g_\infty(t)+dz^2,(x_\infty,0)\bigr)_{t\in[-T_1,0]},
\]
where the limit is isometric to one of the following: $(\sph^2\times\R^2,\bar g_t+dz^2,(\tilde x,0))$ or $(\Bry^3\times\R,R_0\tilde g_{t/R_0}+dz^2,(\tilde x,0))$, where $R_0:=R_{\tilde{g}_0}(\tilde{x})$. Because the pointed Ricci flows converge smoothly on the compact time interval $[-T_1,0]$ and $\inf_{[-T_1,0]}R_{{g}_\infty(t)+dz^2}(x_\infty,0)>0$, it follows that
\[
\left(\frac{\lambda_2}{R}\right)_{R(x_j)g(t/R(x_j))}(x_j)
\to
\left(\frac{\lambda_2}{R}\right)_{g_\infty(t)+dz^2}(x_\infty,0),
\]
uniformly in $t\in[-T_1,0]$. By scale invariance,
\begin{equation}
\label{eqn:uniform-convergence-lambda-ratio}
\left(\frac{\lambda_2}{R}\right)_{g(t/R(x_j))}(x_j)
\to
\left(\frac{\lambda_2}{R}\right)_{g_\infty(t)+dz^2}(x_\infty,0),
\end{equation}
uniformly in $t\in[-T_1,0]$.

\noindent \textbf{Case 1.} Suppose the limit is isometric to $(\sph^2\times\R^2,\bar g_t+dz^2,(\tilde x,0)).$ Then $\left(\frac{\lambda_2}{R}\right)_{g_\infty(t)+dz^2}(x_\infty,0)\equiv 0$ for all $t\in[-T_1,0].$ Hence by \eqref{eqn:uniform-convergence-lambda-ratio},
\[
\left(\frac{\lambda_2}{R}\right)_{g(t/R(x_j))}(x_j)\to 0
\]
uniformly in $t\in[-T_1,0]$. Therefore, for all sufficiently large $j$,
\[
\left(\frac{\lambda_2}{R}\right)_{g(t/R(x_j))}(x_j)\le \min(\theta_\varepsilon,\theta_\delta)
\qquad\text{for all }t\in[-T_1,0].
\]
Equivalently, since $T_{\varepsilon,x_j}={T_1}/{R(x_j)}$, 
\[
\left(\frac{\lambda_2}{R}\right)_{g(s)}(x_j)\le \min(\theta_\varepsilon,\theta_\delta)
\qquad\text{for all }s\in\left[-T_{\varepsilon,x_j},0\right].
\]
Thus both \eqref{eqn:1-in-claim-lambda2-R-prep-to-neck-stability-fixed} and \eqref{eqn:2-in-claim-lambda2-R-prep-to-neck-stability-fixed} hold for all large $j$, contradiction. 

\noindent \textbf{Case 2.}
Suppose the limit is isometric to $(\Bry^3\times\R,R_0\tilde g_{t/R_0}+dz^2,(\tilde x,0))$, where $R_0:=R_{\tilde{g}_0}(\tilde{x})$. Note that
\[
\left(\frac{\lambda_2}{R}\right)_{R_0\tilde g_s+dz^2}(\tilde x,0)
=
\left(\frac{\lambda_1}{R}\right)_{\tilde g_s}(\tilde x)
\qquad\text{for all }s.
\]
Taking $t=0$ in \eqref{eqn:uniform-convergence-lambda-ratio} and using \eqref{eqn:initial-smallness-at-xj}, we obtain
\[
\left(\frac{\lambda_1}{R}\right)_{\tilde g_0}(\tilde x)
=
\left(\frac{\lambda_2}{R}\right)_{R_0\tilde g_0+dz^2}(\tilde x,0)
\le \min \left(\frac{1}{2}\theta_\varepsilon,\frac{1}{1000}\right).
\]
By Remark~\ref{rem:Bryant-canonical-flow-monotonicity},
\[
\left(\frac{\lambda_2}{R}\right)_{R_0\tilde g_{t/R_0}+dz^2}(\tilde x,0)
=
\left(\frac{\lambda_1}{R}\right)_{\tilde g_{t/R_0}}(\tilde x)
\le
\left(\frac{\lambda_1}{R}\right)_{\tilde g_0}(\tilde x)
\le \frac{1}{2}\theta_\varepsilon
\]
for all $t\in[-T_1,0]$. Using \eqref{eqn:uniform-convergence-lambda-ratio} again, we conclude that for all sufficiently large $j$,
\[
\left(\frac{\lambda_2}{R}\right)_{g(t/R(x_j))}(x_j)\le \theta_\varepsilon
\qquad\text{for all }t\in[-T_1,0].
\]
Equivalently,
\[
\left(\frac{\lambda_2}{R}\right)_{g(s)}(x_j)\le \theta_\varepsilon
\qquad\text{for all }s\in\left[-T_{\varepsilon,x_j},0\right],
\]
showing \eqref{eqn:1-in-claim-lambda2-R-prep-to-neck-stability-fixed} for $x_j$. It remains to check \eqref{eqn:2-in-claim-lambda2-R-prep-to-neck-stability-fixed}. Since $\tilde g_0$ is normalized to have scalar curvature $1$ at the tip of $\Bry^3$, we have $0<R_0=R_{\tilde g_0}(\tilde x)\le 1$, hence, $-\frac{T_1}{R_0}\le -T_1.$ By the choice of $T_1$,
\[
\left(\frac{\lambda_2}{R}\right)_{R_0\tilde g_{-T_1/R_0}+dz^2}(\tilde x,0)
=
\left(\frac{\lambda_1}{R}\right)_{\tilde g_{-T_1/R_0}}(\tilde x)
\le \frac{1}{2}\theta_\delta.
\]
Evaluating \eqref{eqn:uniform-convergence-lambda-ratio} at $t=-T_1$, we obtain
\[
\left(\frac{\lambda_2}{R}\right)_{g(-T_1/R(x_j))}(x_j)\to
\left(\frac{\lambda_2}{R}\right)_{R_0\tilde g_{-T_1/R_0}+dz^2}(\tilde x,0),
\]
and therefore for all sufficiently large $j$,
\[
\left(\frac{\lambda_2}{R}\right)_{g(-T_1/R(x_j))}(x_j)\le \theta_\delta.
\]
Thus \eqref{eqn:1-in-claim-lambda2-R-prep-to-neck-stability-fixed} and \eqref{eqn:2-in-claim-lambda2-R-prep-to-neck-stability-fixed} hold for all large $j$, contradiction.
\end{proof}
By increasing $N_\varepsilon$ if necessary and using Lemma \ref{lem:Linear-Growth-of-potential}, we may also arrange that
\[
\{x\in M:d(x,o)>N_\varepsilon\}\subset \{x\in M:-f(x)>N_1\}.
\]
Let $x\in M$ satisfy $d(x,o)>N_\varepsilon$, and suppose that $x$ is a $(\delta,2)$-center. By Claim~\ref{claim:lambda2-R-prep-to-neck-stability}, 
$$\left(\frac{\lambda_2}{R}\right)_{g(t)}(x)\leq \theta_\varepsilon\qquad \text{for }t\in [-T_{\varepsilon,x},0]\qquad \text{ and }\qquad \left(\frac{\lambda_2}{R}\right)_{g(-T_{\varepsilon,x})}(x)\leq \theta_\delta,$$
where $T_{\varepsilon,x}>1$. Since $g(t)=\Phi_t^*g$, we have $\left(\frac{\lambda_2}{R}\right)_g(\Phi_t(x))=\left(\frac{\lambda_2}{R}\right)_{g(t)}(x)$ for all $t\leq 0$. Also, by Lemma~\ref{lem:monotonicity-along-flows}, $-f(\Phi_t(x))\ge -f(x)> N_1$ for all $t\leq 0$. Hence Lemma \ref{Theta} applies and shows that $\Phi_t(x)$ is an $(\varepsilon,2)$-center for all $t\in[-T_{\varepsilon,x},0]$, and that $\Phi_{-T_{\varepsilon,x}}(x)$ is a $(\delta,2)$-center. Setting $\delta_\varepsilon:=\delta$, this proves Lemma \ref{lem:prep-to-neck-stability}. 
\end{proof}
We now prove the first main result of this section. 
\begin{theorem}
\label{Stability-of-bubble-sheet}
For each $\varepsilon>0$, there exists $\delta_\varepsilon\in (0,\varepsilon)$, $N_\varepsilon>0$ such that if $x\in M$ is a $(\delta_\varepsilon,2)$-center with $d(x,o)>N_\varepsilon$, then $\Phi_{t}(x)$ is an $(\varepsilon,2)$-center for all $t\leq 0$. 
\end{theorem}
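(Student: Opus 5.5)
The plan is to iterate Lemma \ref{lem:prep-to-neck-stability}. I take $\delta_\varepsilon$ and $N_\varepsilon$ to be exactly the constants provided by that lemma. Let $x$ be a $(\delta_\varepsilon,2)$-center with $d(x,o)>N_\varepsilon$.

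The first step builds a decreasing sequence of times inductively. Set $t_0:=0$ and $x_0:=x$. Suppose $x_k:=\Phi_{t_k}(x)$ is a $(\delta_\varepsilon,2)$-center with $d(x_k,o)>N_\varepsilon$. The lemma then produces $T_k:=T_{\varepsilon,x_k}>1$ with two properties: $\Phi_t(x_k)$ is an $(\varepsilon,2)$-center for all $t\in[-T_k,0]$, and $\Phi_{-T_k}(x_k)$ is again a $(\delta_\varepsilon,2)$-center. We set $t_{k+1}:=t_k-T_k$ and $x_{k+1}:=\Phi_{t_{k+1}}(x)=\Phi_{-T_k}(x_k)$, using the group property of $\Phi$.

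To keep the hypotheses alive, we need $d(x_{k+1},o)>N_\varepsilon$. This follows from Lemma \ref{lem:monotonicity-along-flows}: $d(o,\cdot)$ is strictly increasing along $\tau\mapsto\Phi_{-\tau}(x)$, so $d(x_{k+1},o)\ge d(x,o)>N_\varepsilon$. (Here $x\neq o$, since $d(x,o)>N_\varepsilon>0$.) The induction therefore continues indefinitely. The union of the intervals $[t_{k+1},t_k]$ is then contained in the set of times $t\le 0$ at which $\Phi_t(x)$ is an $(\varepsilon,2)$-center.

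The remaining point, and the only real obstacle, is to show that these intervals exhaust $(-\infty,0]$, i.e.\ that $t_k\to-\infty$. Each $T_k>1$, since the lemma gives $T_{\varepsilon,x}=T_1/R(x)\ge T_1>1$. Hence $t_k\le -k\to-\infty$, so every $t\le 0$ lies in some $[t_{k+1},t_k]$. This is exactly why the preceding lemma insists on the uniform lower bound $T_{\varepsilon,x}>1$ and on landing back in a $(\delta_\varepsilon,2)$-center rather than merely an $(\varepsilon,2)$-center: without either, the iteration could stall or lose the needed closeness. The argument is then complete, with $\delta_\varepsilon<\varepsilon$ already guaranteed by the lemma.
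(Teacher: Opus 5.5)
Your proposal is correct and follows the paper's proof essentially verbatim. Like the paper, you take $\delta_\varepsilon,N_\varepsilon$ from Lemma~\ref{lem:prep-to-neck-stability} and iterate it along $\Phi_t(x)$, using Lemma~\ref{lem:monotonicity-along-flows} to keep $d(\cdot,o)>N_\varepsilon$, and then the bound $T_{\varepsilon,x_j}>1$ forces $t_j\le -j\to-\infty$, so the intervals $[t_{j+1},t_j]$ cover $(-\infty,0]$.
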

\begin{proof}
The proof is by induction. Let $\delta_\varepsilon,N_\varepsilon$ be from Lemma \ref{lem:prep-to-neck-stability}. Fix $x\in M$ with $d(x,o)>N_\varepsilon$ which is also a $(\delta_\varepsilon,2)$-center. We construct a decreasing sequence of times $\tau_j\to-\infty$ such that $\Phi_{\tau_j}(x)$ is a $(\delta_\varepsilon,2)$-center and $\Phi_t(x)$ is an $(\varepsilon,2)$-center for all $t\in[\tau_j,0]$. Set $\tau_0:=0$ and $x_0:=x$. By Lemma \ref{lem:prep-to-neck-stability} applied to $x_0$, there exists $T_{\varepsilon,x_0}>1$ such that
$$\Phi_t(x_0)\ \text{is an }(\varepsilon,2)\text{-center for }t\in[-T_{\varepsilon,x_0},0],
\qquad
\Phi_{-T_{\varepsilon,x_0}}(x_0)\ \text{is a }(\delta_\varepsilon,2)\text{-center}.$$
Define $\tau_1:=\tau_0-T_{\varepsilon,x_0}=-T_{\varepsilon,x_0}$ and $x_1:=\Phi_{\tau_1}(x)$. Inductively, suppose $\tau_j$ and $x_j=\Phi_{\tau_j}(x)$ have been defined and $x_j$ is a $(\delta_\varepsilon,2)$-center. Since $\tau_j\le 0$, by Lemma \ref{lem:monotonicity-along-flows}, it follows that $d(x_j,o)\ge d(x,o)>N_\varepsilon$. Lemma \ref{lem:prep-to-neck-stability} applies to $x_j$ and yields $T_{\varepsilon,x_j}>1$ such that $\Phi_{-T_{\varepsilon,x_j}}(x_j)$ is a $(\delta_\varepsilon,2)$-center and $\Phi_t(x_j)$ is an $(\varepsilon,2)$-center for $t\in[-T_{\varepsilon,x_j},0]$. Set $\tau_{j+1}:=\tau_j-T_{\varepsilon,x_j}$ and $x_{j+1}:=\Phi_{\tau_{j+1}}(x)$. Since each increment satisfies $-T_{\varepsilon,x_j}<-1$, we have $\tau_j\le -j$ and therefore $\tau_j\to-\infty$. Moreover, for each $j$ and each $s\in[\tau_{j+1},\tau_j]$, we have $s-\tau_j\in[-T_{\varepsilon,x_j},0].$ Since $x_j=\Phi_{\tau_j}(x)$ and $\Phi_s(x)=\Phi_{\,s-\tau_j}(x_j)$, it follows that $\Phi_s(x)$ is an $(\varepsilon,2)$-center. Therefore, $\bigcup_{j=0}^\infty [\tau_{j+1},\tau_j]=(-\infty,0]$ consists entirely of times $t$ such that $\Phi_t(x)$ is an $(\varepsilon,2)$-center. This proves the theorem.
\end{proof}

\noindent \textbf{Notation. }From Theorem \ref{Two-Tips}, if $s_0\ll 0$ (equivalently, $-s_0$ large) then for all $s\le s_0$, $\Sigma_s$ has exactly two tips which we shall denote by $p_{1,s}$ and $p_{2,s}$ from now on. 

We now show that after taking $-s_0$ larger, there exist two points $x_+,x_-\in \Sigma$ such that, after rescaling, $\chi_s(x_\pm)$ remain close to the two tips, and the manifold resembles $\Bry^3\times \R$ near them. Moreover, we show that $G$ can be nonzero on $\Sigma$ only at these two points. Finally, if $M$ is sufficiently close to $\sph^2\times \R^2$ at $q$, then the manifold dimension reduces to $\sph^2\times \R$ along the integral curve of $-\nabla f$ starting at $q$. 
\begin{theorem}
\label{existence-of-x-+-}
There exists $s_0\ll 0$ and two distinct points $x_+,x_-\in \Sigma:=\Sigma_{s_0}$ such that the following holds. 
\begin{enumerate}
    \item Given any $t_k\to -\infty$, we may pass to a subsequence to ensure $(M,R(\chi_{t_k}(x_+))g,\chi_{t_k}(x_+))$ converges to $(\Bry^3\times \R,\tilde{g}_0+dz^2,(\bar{x},0))$ as $k\to \infty$; and $(M,R(\chi_{t_k}(x_-))g,\chi_{t_k}(x_-))$ converges to $(\Bry^3\times \R,\tilde{g}_0+dz^2,(\bar{x},0))$ as $k\to \infty$, where $\bar{x}$ is the tip of the Bryant soliton.
    \item Given any $t_k\to-\infty$, we may pass to a subsequence to ensure
$$
R(\chi_{t_k}(x_+))\,d(p_{1,t_k},\chi_{t_k}(x_+))^2\to 0
\quad\text{and}\quad
R(\chi_{t_k}(x_-))\,d(p_{2,t_k},\chi_{t_k}(x_-))^2\to 0.
$$
    \item $G\equiv 0$ on $\Sigma\setminus \{x_+,x_-\}$.   
    \item   There exists $\epsilon_*>0,N_*>0$ such that if $q\in M$ is an $(\epsilon_*,2)$-center with $d(q,o)>N_*$, then for any $t_k\to -\infty$, we may pass to a subsequence to ensure  $(M,R(\Phi_{t_k}(q))g,\Phi_{t_k}(q))$ converges to $(\sph^2\times \R^2,\bar{g}_0+dz^2,(\tilde{x},0))$ as $k\to \infty$.
\end{enumerate}
\end{theorem}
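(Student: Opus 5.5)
I would construct $x_\pm$ by pulling the two tips back to $\Sigma$ and passing to a limit, using the contraction property of Lemma~\ref{lem:variation-of-distance-in-intrinsic-metric} together with the dichotomy of Proposition~\ref{prop:either-neck-or-tip} and the stability Theorem~\ref{Stability-of-bubble-sheet}.

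\textbf{Step (4) first.} Fix a small $\epsilon$ and let $\epsilon_*:=\delta_\epsilon$ and $N_*:=N_\epsilon$ be given by Theorem~\ref{Stability-of-bubble-sheet}. Then $\Phi_t(q)$ is an $(\epsilon,2)$-center for every $t\le 0$. Apply Theorem~\ref{Stability-of-bubble-sheet} again with a sequence $\epsilon_j\to0$. For each $j$, some $\Phi_{t}(q)$ with $t\ll0$ is far out. It is an $(\epsilon,2)$-center, so $\lambda_2/R$ is small there, but we need $\delta_{\epsilon_j}$-closeness. For this, use dimension reduction along $\Phi_{t_k}(q)$ (Proposition~\ref{time-dimension-reduction}). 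Limits are either $\sph^2\times\R^2$ or $\Bry^3\times\R$ based at a point with $\lambda_1/R\lesssim\epsilon$ on $\Bry^3$. In the Bryant case, Remark~\ref{rem:Bryant-canonical-flow-monotonicity} forces $\lambda_1/R\to0$ backward in time. Hence, after flowing a further rescaled time $T$, the point becomes a $\theta_{\delta}$-small point, i.e.\ (Lemma~\ref{Theta}) a $(\delta,2)$-center for arbitrarily small $\delta$. Iterating with stability shows that $\lambda_2/R(\Phi_t(q))\to0$ as $t\to-\infty$. Then any subsequential limit has $\lambda_2/R=0$ at the basepoint, so it is $\sph^2\times\R^2$. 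This yields (4).

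\textbf{Constructing $x_\pm$.} For $t\le s_0$ let $y_{i,t}:=\chi_t^{-1}(p_{i,t})\in\Sigma$. The key claim is that every point $q\in\Sigma$ whose curve $\chi_t(q)$ stays within rescaled distance $o(R^{-1/2})$ of a tip must itself be an edge point. I would argue via the neck-cap picture of Theorem~\ref{Two-Tips}: each $\Omega^i_t$ has rescaled diameter bounded, and points outside the caps lie on $\varepsilon$-necks of $\Sigma_t$, hence are $(\epsilon_*,2)$-centers once $\varepsilon$ is small (converse direction of Claim~\ref{claim:the-claim-in-lemma-theta}, via Lemma~\ref{Level-Set-Convergence}). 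By (4) and Lemma~\ref{Theta}, such points flow backward into bubble-sheet regions forever.

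Define $A:=\{q\in\Sigma:\chi_t(q)$ is never an $(\epsilon_*,2)$-center for $t\le s_0\}$. This set is closed, and its complement is open and dense-ish. By Proposition~\ref{prop:either-neck-or-tip} (transferred from $\Phi$ to $\chi$ curves, which are reparametrisations), each $q\in A$ dimension-reduces to $\Bry^3\times\R$ at the tip along its curve. I would show that $A$ has exactly two points, one tracking each tip, as follows. Take $q\in A$. The rescaled limit at $\chi_t(q)$ is the Bryant tip, and by Lemma~\ref{Level-Set-Convergence}/\ref{Level-Set-Convergence-2} the level-set limit has a unique critical point of $R$ at that tip. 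Hence $\chi_t(q)$ is at rescaled distance $o(1)$ from one of $p_{1,t},p_{2,t}$; this is item (2). Conversely, existence of a point of $A$ in each cap follows by taking $y_{i,t_k}$ and a subsequential limit $x$ in $\Sigma$. If $x\notin A$, some $\chi_{t}(x)$ is an $(\epsilon_*/2,2)$-center, and by continuity so is $\chi_t(y_{i,t_k})$ for large $k$. Stability then keeps $\chi_{t_k}(y_{i,t_k})=p_{i,t_k}$ a bubble-sheet center, contradicting $\lambda_2/R(p_{i,t_k})>1/6$.

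For uniqueness within each cap: two distinct points $q,q'$ in $A$ both track $p_{1,t}$, but Lemma~\ref{lem:variation-of-distance-in-intrinsic-metric} keeps $d_{\Sigma_t}(\chi_t q,\chi_t q')\ge d_\Sigma(q,q')>0$. I expect this to be the main obstacle, since the distance need only be $o(R^{-1/2})$, and $R$ may tend to $0$, so this alone gives no contradiction. I would try a connectedness argument instead. The set $A_1$ of points tracking $p_1$ is compact and contains no bubble-sheet points. If it contained a segment, then points along the segment are at fixed $\Sigma$-distance apart; one would use the Bryant level-set picture, with strict concavity of $R$ near the tip (as in the proof of Theorem~\ref{Two-Tips}) and the fact that $\partial_s$ of the metric is $-2\Ric/|\nabla f|^2$ with $\Ric\approx R/3$ in the cap, to show that rescaled distances in the cap decay like $R\,d^2\to$ a positive limit, not $0$. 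If this fails, one can redefine $x_\pm$ as any choice in $A_1,A_2$: (1) and (2) still hold, and (3) follows since $G\ne0$ points are in $A$ (via Lemma~\ref{Step-1-of-Proof} and (4)) and Corollary~\ref{cor:G-neq-0-has-2-points} limits them to two, which can be taken as $x_\pm$ when present.
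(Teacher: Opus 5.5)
\textbf{Gap: distinctness of $x_\pm$ and their pairing with the two tips.} Your proof of (4) is essentially the paper's argument; you inline the proof of Proposition~\ref{prop:either-neck-or-tip} and then apply stability. Your argument that a subsequential limit of the pulled-back tips $y_{i,t_k}$ lies in $A$, which gives (1), is also the paper's. Define $A$ with threshold $\epsilon_*/2$ rather than $\epsilon_*$, so that the perturbation-plus-stability step closes, as the paper does with $\delta_\epsilon/2$ versus $\delta_\epsilon$.

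The real problem is the following. Setting $x=\lim y_{1,t_k}$ and $x'=\lim y_{2,t_k}$ only tells you $x,x'\in A$. Nothing prevents $x=x'$. Even if $x\neq x'$, nothing shows that $x$ tracks $p_{1,t}$ while $x'$ tracks $p_{2,t}$, rather than both tracking the same tip. As you observe yourself, Lemma~\ref{lem:variation-of-distance-in-intrinsic-metric} only gives separation in unrescaled units, which is invisible at scale $R^{-1/2}$. So ``one tracking each tip'' is asserted, not proved. Your fallback via Corollary~\ref{cor:G-neq-0-has-2-points} produces candidates only when two points with $G\neq 0$ exist. In the central case $\lim_{d(x,o)\to\infty}R(x)=0$ one has $G\equiv 0$ on $\Sigma$, and the fallback supplies no edge points at all.

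\textbf{Missing idea: a separation that stability makes uncrossable.} The paper fixes $q\in\Sigma$ that is a $(\hat\delta,2)$-center, with $\hat\delta$ so small that every $\chi_s(q)$, $s\le s_0$, is a $(\delta_\epsilon,2)$-center (Theorem~\ref{Stability-of-bubble-sheet}). Let $N_s\subset\Sigma_s$ be the Hamilton CMC leaf through $\chi_s(q)$. Every point of $N_s$ is a $(\delta_\epsilon,2)$-center, and $N_s$ separates $p_{1,s}$ from $p_{2,s}$. Suppose the curve $s\mapsto\chi_s(y)$ from $y\in\Sigma$ to a tip $p_{i,t}=\chi_t(y)$ met $N_s$ for some $s\in[t,s_0]$. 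Then stability would make $p_{i,t}$ an $(\epsilon,2)$-center, contradicting $\lambda_2/R(p_{i,t})>1/6$. The same holds for the curves of points of $A$, which by (1) never pass through $(\delta_\epsilon,2)$-centers.

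Consequently $y_{1,t}$ and $y_{2,t}$ lie in different components of $\Sigma\setminus N_{s_0}$ for all $t$. Their limits do not lie on $N_{s_0}$, so $x_+\neq x_-$. Neither $s\mapsto\chi_s(x_\pm)$ ever meets $N_s$, so $\chi_t(x_+)$ and $\chi_t(x_-)$ stay on opposite sides of $N_t$. A small rescaled ball around each contains no bubble-sheet points, so each tracks the tip on its own side. This gives the consistent pairing required in (2).

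\textbf{Uniqueness is not needed.} The uniqueness within $A$ that you flag as the main obstacle never has to be proved. The paper proves (3) directly. If $G(q)>0$, then $R\ge G(q)$ along $\chi_s(q)$. Hence the rescaled closeness to $\chi_s(x_\pm)$ coming from Lemma~\ref{Step-1-of-Proof} and (2) is closeness in unrescaled distance. Lemma~\ref{Level-Set-Convergence-2} converts this into $d_{\Sigma_s}(\chi_s(q),\chi_s(x_\pm))\to 0$, contradicting Lemma~\ref{lem:variation-of-distance-in-intrinsic-metric}. Likewise, your converse ``neck in $\Sigma_s$ implies bubble-sheet in $M$'' claim and the ``key claim'' about points tracking a tip are not needed.
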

\begin{proof}
Fix $\epsilon>0$ such that if $(\mathcal{N},h)$ is a Riemannian manifold and $p\in \mathcal{N}$ is an $(\epsilon,2)$-center, then $\left(\frac{\lambda_2}{R}\right)_h(p)< \frac{1}{7}$. 
Let $\delta_\epsilon,N_\epsilon$ be the constants given by Theorem \ref{Stability-of-bubble-sheet}. 
Taking $s_0\ll 0$, we may ensure $\Sigma\sub \{x\in M:d(x,o)>N_\epsilon\}$. Let $x_{i,s}:=\chi_{s}^{-1}(p_{i,s})\in \Sigma$ for $i=1,2$. Let $s_k\to -\infty$. Let $x_+$ be a limit point of $x_{1,s_k}$ and let $x_-$ be a limit point of $x_{2,s_k}$. After passing to a subsequence, we may ensure that $x_{1,s_k}\to x_+,x_{2,s_k}\to x_-$. 

From Proposition \ref{prop:either-neck-or-tip} applied to $\delta_\epsilon/2$, either (i) there exists some $s\ll 0$ such that $\chi_s(x_+)$ is a $(\delta_\epsilon/2,2)$-center or (ii) given any $t_k\to -\infty$, passing to a subsequence, $(M,R(\chi_{t_k}(x_+))g,\chi_{t_k}(x_+))$ converges to $(\Bry^3\times \R,\tilde{g}_0+dz^2,(\bar{x},0))$ where $\bar{x}$ is the tip of $\Bry^3$. We claim that the first case cannot occur. Indeed, if $\chi_{s_1}(x_+)$ is a $(\delta_\epsilon/2,2)$-center for some $s_1\leq s_0$, then as $\chi_{s_1}(x_{1,s_k})\to \chi_{s_1}(x_+)$, it follows that $\chi_{s_1}(x_{1,s_k})$ is a $(\delta_\epsilon,2)$-center for all large $k$. Using Theorem \ref{Stability-of-bubble-sheet}, it follows that $\chi_s(x_{1,s_k})$ is an $(\epsilon,2)$-center for all large $k$ and $s\leq s_1$. In particular, $p_{1,s_k}$ is an $(\epsilon,2)$-center in $M$ and a tip in $\Sigma_{s_k}$. This is a contradiction: by our choice of $\epsilon$, any $(\epsilon,2)$-center $y$ satisfies $\frac{\lambda_2}{R}(y)<\frac{1}{7}$ while each tip $p_{1,s_k}$ satisfies $\frac{\lambda_2}{R}(p_{1,s_k})>\frac{1}{6}$. This proves (1) for $x_+$ and a similar proof holds for $x_-$. 

In order to prove (2), we first claim the following. 
\begin{claim}
\label{claim:nearness-of-tip-edge}
Let $t_k\to -\infty$. For at least one of $i=1,2$, after passing to a subsequence, we may ensure that $R(\chi_{t_k}(x_+))d(p_{i,t_k},\chi_{t_k}(x_+))^2\to 0$. 
\end{claim}
\begin{proof}
From the previous discussion, after passing to a subsequence we have the convergence of  $(M,R(\chi_{t_k}(x_+))g,\chi_{t_k}(x_+))$ to $(\Bry^3\times \R,\tilde{g}_0+dz^2,(\bar{x},0))$ where $\bar{x}$ is the tip of $\Bry^3$. By using Lemma \ref{Level-Set-Convergence} and Lemma \ref{Level-Set-Convergence-2}, we may pass to the level sets: $(\Sigma_{t_k},R(\chi_{t_k}(x_+))g,\chi_{t_k}(x_+))$ converges to the corresponding limit (which is either $\Bry^3$ in the case $R(\chi_{t_k}(x_+))\to 0$ or the hypersurface $\mathcal{S}_h$ in the case $R(\chi_{t_k}(x_+))\to \alpha>0$). In the limit, the point $(\bar x,0)$ is the unique isolated critical point (local maxima) of the scalar curvature restricted to the corresponding level set and $(\frac{\lambda_2}{R})_{\tilde{g}_0+dz^2}=\frac{1}{3}$ at $(\bar{x},0)$. Fix $\beta>0$ small. By smooth convergence, for all sufficiently large $k$, the intrinsic ball $B_{\beta,k}:=B_{R(\chi_{t_k}(x_+))g|_{\Sigma_{t_k}}}[\chi_{t_k}(x_+);\beta]$ is close to the corresponding ball in the limit. This shows that for all sufficiently large $k$ the scalar curvature function $R|_{\Sigma_{t_k}}$ has a local maximum in $B_{\beta,k}$, hence a critical point inside $B_{\beta,k}$. Further, if $\beta$ is small, every point in $B_{\beta,k}$ satisfies $(\frac{\lambda_2}{R})_g>\frac{1}{6}$. By Theorem \ref{Two-Tips}, the only critical points of $R|_{\Sigma_{t_k}}$ satisfying $(\frac{\lambda_2}{R})_g>\frac{1}{6}$ are the two tips $p_{1,t_k}$ and $p_{2,t_k}$. Hence for all large $k$, at least one of $p_{1,t_k}$ or $p_{2,t_k}$ lies in $B_{\beta,k}$. In particular, $\min_{i\in\{1,2\}} R(\chi_{t_k}(x_+))\,d\big(p_{i,t_k},\chi_{t_k}(x_+)\big)^2 \leq \beta^2$. Let $\beta_j\to 0$ and choose a subsequence along which the minimizing index is constant, say $i\in\{1,2\}$. Then along this subsequence, $R(\chi_{t_k}(x_+))\,d\big(p_{i,t_k},\chi_{t_k}(x_+)\big)^2 \to 0,$ which proves Claim \ref{claim:nearness-of-tip-edge}. 
\end{proof}
\begin{remark}
\label{rmk:existence-x-pm-x-+-near-p-1-s}
From Claim \ref{claim:nearness-of-tip-edge}, we may assume $R(\chi_{s_k}(x_+))d(p_{1,s_k},\chi_{s_k}(x_+))^2\to 0$ without loss of generality. 
\end{remark}
In order to show a similar statement for $x_-$, we first establish that $x_+$ and $x_-$ are distinct. 
\begin{claim}
\label{claim:distinct-egdes}
 $x_+\neq x_-$.
\end{claim}
\begin{proof}
By Theorem \ref{Stability-of-bubble-sheet} applied with parameter $\delta_\epsilon$, there exists $\hat\delta\in(0,\delta_\epsilon)$ such that if $p$ is a $(\hat\delta,2)$-center and $f(p)\ll 0$ then $\Phi_{-s}(p)$ is a $(\delta_\epsilon,2)$-center for all $s\geq 0$. Fix $q\in\Sigma$ such that $q$ is a $(\hat{\delta},2)$-center (for $s_0\ll 0$, such points exist by Lemma~\ref{summary-of-CMZ-work}) so that $\chi_s(q)$ is a $(\delta_\epsilon,2)$-center for all $s\le s_0$. 

Suppose for a contradiction that $x_+=x_-$. For each $s\leq s_0$, let $N_s\sub \Sigma_s$ be the leaf of Hamilton's CMC foliation passing through $\chi_s(q)\in \Sigma_s$, which is an embedded $2$-sphere. Since $N_s$ lies inside a $\delta_\epsilon$-neck region every point of $N_s$ is a $(C\delta_\epsilon,2)$-center where $C$ is absolute. After shrinking $\delta_\epsilon$ (and relabeling), we may assume every point of $N_s$ is a $(\delta_\epsilon,2)$-center. 

By the Jordan--Brouwer separation theorem, $\Sigma_s\setminus N_s$ has exactly two components for each $s\leq s_0$. Fix a large $k$. 
Since $x_+=x_-$, the points $x_{+}$ and $x_{2,s_k}$ lie in the same connected component of $\Sigma_{s_0}\setminus N_{s_0}$ for all large $k$. 
By Remark \ref{rmk:existence-x-pm-x-+-near-p-1-s}, $\chi_{s_k}(x_+)$ and $p_{1,s_k}$ are in the same component of  $\Sigma_{s_k}\setminus N_{s_k}$ while from the proof of Theorem \ref{Two-Tips}, $p_{1,s_k}$ and $p_{2,s_k}$ are in different components of  $\Sigma_{s_k}\setminus N_{s_k}$.
As a result, $\chi_{s_k}(x_+)$ and $\chi_{s_k}(x_{2,s_k})=p_{2,s_k}$ are in different components of  $\Sigma_{s_k}\setminus N_{s_k}$.

Since $s\mapsto N_s$ depends smoothly on $s$ (Hamilton CMC foliation) and $s\mapsto \chi_s(x_{2,s_k}),s\in [s_k,s_0]$ is continuous, the component of $\Sigma_s\setminus N_s$ containing $\chi_s(x_{2,s_k})$ can change only when $\chi_s(x_{2,s_k})$ intersects $N_s$. Thus, there exists $z_k$ such that $s_k\leq z_k\leq s_0$ and $\chi_{z_k}(x_{2,s_k})\in N_{z_k}$. Since $\chi_{s_k}(x_{2,s_k})=p_{2,s_k}$ and $s_k\le z_k$, the point $p_{2,s_k}$ is obtained from $\chi_{z_k}(x_{2,s_k})$ by flowing further backward. Therefore, Theorem \ref{Stability-of-bubble-sheet} applied to the $(\delta_\epsilon,2)$-center $\chi_{z_k}(x_{2,s_k})$ implies that $p_{2,s_k}$ is an $(\epsilon,2)$-center, which, as in the proof of (1), is impossible for our choice of $\epsilon$. 
This proves Claim \ref{claim:distinct-egdes}.
\end{proof}
Given $t_k\to -\infty$, we can argue exactly as in the proof of Claim \ref{claim:nearness-of-tip-edge} with $x_+$ replaced by $x_-$. Using the fact that $R(p_{1,s})d(p_{1,s},p_{2,s})^2\to \infty$ as $s\to -\infty$, passing to a subsequence yields $R(\chi_{t_k}(x_-))d(p_{2,t_k},\chi_{t_k}(x_-))^2\to 0$. Together with Claim \ref{claim:distinct-egdes}, this proves parts (1) and (2) of Theorem \ref{existence-of-x-+-}. 

We now show (3). Assume, for the sake of contradiction, that $G(q)\neq 0$ for some
$q\in \Sigma\setminus \{x_+,x_-\}$. 
From Lemma \ref{Step-1-of-Proof} and Theorem \ref{Two-Tips}, it follows that for at least one of $i=1,2$, and $s_k\to -\infty$, we have $d(\chi_{s_k}(q),p_{i,s_k})\to 0.$ Without loss of generality, let $i=1$. Then, by the triangle inequality and part (2), we have $R(\chi_{s_k}(x_+))\,d(\chi_{s_k}(q),\chi_{s_k}(x_+))^2\to 0.$ Using Perelman's long-range estimates (Lemma \ref{summary-of-CMZ-work} (ii)) to compare scalar curvatures, we obtain
\[
C^{-1}R(\chi_{s_k}(q))\le R(\chi_{s_k}(x_+))\le C\,R(\chi_{s_k}(q)),
\]
where $C$ is independent of $k$. Hence $R(\chi_{s_k}(q))\,d(\chi_{s_k}(q),\chi_{s_k}(x_+))^2\to 0.$ Since $G(q)\neq 0$, we obtain $R(\chi_{s_k}(q))\to G(q)>0$, and therefore $d(\chi_{s_k}(q),\chi_{s_k}(x_+))\to 0.$ 
After passing to a subsequence, $R(\chi_{s_k}(x_+))\to \alpha>0.$ Applying Lemma~\ref{Level-Set-Convergence-2} with basepoints $\chi_{s_k}(x_+)$, we may pass to the level sets. Since $d(\chi_{s_k}(q),\chi_{s_k}(x_+))\to 0$, the two points converge to the same point in the limit, and hence $d_{\Sigma_{s_k}}(\chi_{s_k}(q),\chi_{s_k}(x_+))\to 0.$ On the other hand, by Lemma~\ref{lem:variation-of-distance-in-intrinsic-metric}, the function $s\mapsto d_{\Sigma_s}(\chi_s(q),\chi_s(x_+))$ is non-increasing. Since $q\neq x_+$, it follows that for every large $k$, $d_{\Sigma_{s_k}}(\chi_{s_k}(q),\chi_{s_k}(x_+))
\ge d_{\Sigma_{s_0}}(q,x_+)>0,$ a contradiction. Hence $G\equiv 0$ on $\Sigma\setminus \{x_+,x_-\}$.

(4) Let $q\in M$ be a $(\delta_\epsilon,2)$-center with $d(q,o)>N_\epsilon$ and let $t_k\to -\infty$. From Theorem \ref{Stability-of-bubble-sheet}, $\Phi_s(q)$ is an $(\epsilon,2)$-center for all $s\leq 0$. We claim that after passing to a subsequence, $(M,R(\Phi_{t_k}(q))g,\Phi_{t_k}(q))$ converges to $\sph^2\times \R^2$. If not, by Lemma \ref{summary-of-CMZ-work} (v), we may pass to a subsequence to ensure that $(M,R(\Phi_{t_k}(q))g,\Phi_{t_k}(q))$ converges to $(\Bry^3\times \R,R_{\tilde{g}_0}(\tilde{x})\tilde{g}_0+dz^2,(\tilde{x},0))$. 
By our choice of $\epsilon$, $\left(\frac{\lambda_2}{R}\right)_g<\frac{1}{7}$ at $\Phi_{t_k}(q)$ for all $k$, which implies that $\frac{\lambda_2}{R}\leq \frac{1}{7}$ at $(\tilde{x},0)$. 
In particular, $\tilde{x}$ is not the tip of $\Bry^3$. 
Applying Proposition \ref{prop:either-neck-or-tip}, it follows that for each $\eta>0$, there exists $z_\eta$ such that $\Phi_{-z_\eta}(q)$ is a $(\delta_\eta,2)$-center and by Theorem \ref{Stability-of-bubble-sheet}, we obtain that for each $\eta$, $\Phi_{t_k}(q)$ is an $(\eta,2)$-center for all large $k$. As a result, we obtain that $(\tilde{x},0)$ is an $(\eta,2)$-center for all $\eta>0$, which is impossible. 
Setting $\epsilon_*:=\delta_\epsilon$ and $N_*:=N_\epsilon$ completes the proof of part (4). 
\end{proof}
We now define the integral curves through $x_+$ and $x_-$ to be the edges of the soliton. 
Similar integral curves were defined for $3$D flying wings in \cite[Lemma 3.17]{Lai25}. 
\begin{definition}
\label{defn:edges-of-soliton}
For $i=1,2$, let $\Gamma_i:[0,\infty)\to M$ be continuous curves such that
$\Gamma_i(0)=o$, $\Gamma_i((0,\infty))$ is an integral curve of $-\nabla f/|\nabla f|$,
$x_+\in \Gamma_1((0,\infty))$, and $x_-\in \Gamma_2((0,\infty))$. 
We set
\[
\Gamma=\Gamma_1([0,\infty))\cup \Gamma_2([0,\infty)),
\]
and call $\Gamma_1$ and $\Gamma_2$ the edges of the soliton.
\end{definition}
Together with this definition, Theorem \ref{existence-of-x-+-} provides the following picture: the manifold resembles $\Bry^3\times \R$ near the edges, and $\sph^2\times \R^2$ far away from them (see Figure 1).

\section[Scalar curvature along integral curves of -\texorpdfstring{$\nabla f$}{Grad f}]{Scalar curvature along integral curves of $-\nabla f$}

\label{sec:Scalar-curvature-along-integral-curves}
Recall that $\left(M^4, g, f\right)$ satisfies \hyperref[assumption:A1]{(A1)}--\hyperref[assumption:A4]{(A4)}. The main result of this section is Theorem \ref{Upper-bound-on-curves}, which gives a quantitative decay estimate for the scalar curvature along integral curves of $-\nabla f$: once the manifold is sufficiently close to $\sph^2\times\R^2$ at a point $q$ that is at some large distance from $o$, the scalar curvature along the integral curve $\Phi_{-t}(q)$ is bounded above by $Ct^{-1}$ for all $t>0$. Moreover, if dimension reduction along $\Phi_{-t}(q)$ yields $\sph^2\times\R^2$, then the product $t\,R(\Phi_{-t}(q))$ converges to $1$. As a consequence, we show that the volume growth of $M$ is strictly faster than that of $\Bry^4$ (Corollary \ref{Cor:volume-growth}). 

We begin with a preliminary lemma showing that at points where the manifold is sufficiently close to $\sph^2\times \R^2$, the scalar curvature must be small. 
\begin{lemma}\label{small-curv-at-bubble-sheet}
Given $\eta>0$, there exists $\epsilon>0$ such that if $x\in M$ is an $(\epsilon,2)$-center, then $R(x)\le \eta$.
\end{lemma}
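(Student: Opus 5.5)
We argue by contradiction, following the pattern of Lemma~\ref{Level-Set-Convergence-2} and Corollary~\ref{Vanishing-lambda-2}. Fix $\eta>0$ and suppose the claim fails. Then there are $\epsilon_k\to 0$ and points $x_k\in M$ such that $x_k$ is an $(\epsilon_k,2)$-center and $R(x_k)>\eta$. Since $R\le 1$, after passing to a subsequence we have $R(x_k)\to\alpha\in[\eta,1]$.

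\textbf{Step 1: the centers escape to infinity.} We first show $d(x_k,o)\to\infty$. If not, a subsequence converges to some $x_\infty\in M$. Because $x_k$ is an $(\epsilon_k,2)$-center, $(M,R(x_k)g,x_k)$ converges smoothly to $(\sph^2\times\R^2,\bar g_0+dz^2)$. Since $R(x_k)\to\alpha>0$, the unrescaled pointed manifolds $(M,g,x_k)$ also converge, namely to $(M,g,x_\infty)$, simply by continuity of the fixed smooth metric on compact sets. Hence $(M,g,x_\infty)$ would be isometric on arbitrarily large balls to a rescaled $\sph^2\times\R^2$. This is impossible: for instance $(\lambda_2/R)(x_\infty)=0$ would force $\lambda_1=\lambda_2=0$, contradicting $\Ric>0$ from (\hyperref[assumption:A1]{A1}). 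Alternatively, $M\cong\R^4$ with $\Ric>0$ cannot contain a large region isometric to a product with a flat $\R^2$ factor.

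\textbf{Step 2: the contradiction at infinity.} With $d(x_k,o)\to\infty$ and $R(x_k)\to\alpha>0$, Lemma~\ref{Level-Set-Convergence-2} shows that, after passing to a subsequence, $(M,g,x_k)$ converges to $\Bry^3\times\R$. In particular $(\lambda_2/R)(x_k)\to(\lambda_2/R)_{\Bry^3\times\R}(\tilde x,0)=(\lambda_1/R)_{\Bry^3}(\tilde x)>0$, since $\Bry^3$ has positive Ricci curvature. On the other hand, being an $(\epsilon_k,2)$-center gives $(\lambda_2/R)(x_k)\to 0$. This is a contradiction. Equivalently, one can invoke Corollary~\ref{Vanishing-lambda-2} directly: $(\lambda_2/R)(x_k)\to0$ and $d(x_k,o)\to\infty$ force $R(x_k)\to 0$, contradicting $R(x_k)>\eta$.

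\textbf{Expected obstacle.} The only delicate point is Step~1, that is, ruling out $(\epsilon,2)$-centers with small $\epsilon$ on a compact region. The cleanest route is the following. On the compact set $\bar B_g[o;D]$, the ratio $\lambda_2/R$ is bounded below by a positive constant $c_D$, because $\Ric>0$ and $M$ is compact there. Choosing $\epsilon$ so small that every $(\epsilon,2)$-center satisfies $\lambda_2/R<c_D$ then keeps all such centers outside $B_g[o;D]$. To make this uniform, we combine it with the argument at infinity: first choose $D$ using Corollary~\ref{Vanishing-lambda-2} in its quantitative, contradiction form, and then choose $\epsilon$ depending on $c_D$.
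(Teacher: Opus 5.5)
Your proof is correct and follows the paper's argument: $(\epsilon_k,2)$-closeness forces $(\lambda_2/R)(x_k)\to 0$, and since $\Ric>0$ makes $\lambda_2/R$ bounded below by a positive constant on compact sets, $x_k$ must escape to infinity; Corollary~\ref{Vanishing-lambda-2} then gives $R(x_k)\to 0$, contradicting $R(x_k)\ge\eta$. The detour through unrescaled Cheeger--Gromov limits in your Step~1 is unnecessary, since continuity of $\lambda_2/R$ at a limit point $x_\infty$ already suffices, as you note yourself.
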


\begin{proof}
Suppose not. Then there exist $\eta>0$ and points $x_j\in M$ with $\epsilon_j\to 0$ such that each $x_j$ is an $(\epsilon_j,2)$-center and $R(x_j)\geq \eta$. Since $(M,R(x_j)g,x_j)$ is $\epsilon_j$-close to $(\sph^2\times\R^2,\bar g,(\bar x,0))$ and $\epsilon_j\to 0$, we obtain $(\frac{\lambda_2}{R})(x_j)\to 0$. In particular, $x_j$ cannot remain in a fixed compact set, showing that $d(x_j,o)\to\infty$. From Corollary \ref{Vanishing-lambda-2}, we obtain $R(x_j)\to 0$, which is a contradiction. 
\end{proof}

We now prove the main result of this section. The proof is inspired by \cite{BCDMZ22}. 

\begin{theorem}
\label{Upper-bound-on-curves}
There exist $\varepsilon_1>0,r_1>0$, and $C>0$ such that the following holds. If $q\in M$ is an $(\varepsilon_1,2)$-center such that $d(q,o)>r_1$, then for all $t>0$, 
$$R(\Phi_{-t}(q))\leq \frac{C}{t}.$$
Moreover, if $t_k\to \infty$ and 
$$(M,R(\Phi_{-t_k}(q))g,\Phi_{-t_k}(q))\to (\sph^2\times \R^2,\bar{g}_0+dz^2,(\bar{q},0)),$$
then 
\begin{equation}
\label{eqn:linear-scalar-bound-100}
\lim_{s\to \infty}sR(\Phi_{-s}(q))=1.
\end{equation}

\end{theorem}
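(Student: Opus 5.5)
Set $u(t):=R(\Phi_{-t}(q))=R_{g(-t)}(q)$. The plan is to derive a differential inequality for $u$ from the evolution of $R$ along the canonical flow, and to control the error using bubble-sheet closeness for all backward times.

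\textbf{Step 1: closeness for all backward times.} Choose $\varepsilon_1=\delta_\varepsilon$ and $r_1=N_\varepsilon$ from Theorem~\ref{Stability-of-bubble-sheet}, for a small $\varepsilon$ fixed later. Then $\Phi_{-t}(q)$ is an $(\varepsilon,2)$-center for every $t\ge0$. Equivalently, $(M,u(t)g(-t),q)$ is $\varepsilon$-close to $\sph^2\times\R^2$ at scale one for all $t\ge0$. By Proposition~\ref{time-dimension-reduction}, enlarging $r_1$ if necessary, we may also assume that the rescaled \emph{flow} on the backward interval $[-\varepsilon^{-1},0]$ is close to the shrinking model $\bar g_t+dz^2$. (On the alternative Bryant model the ratio $\lambda_2/R$ stays small, which pins the basepoint far out.)

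\textbf{Step 2: pointwise ODE.} Along the canonical flow,
\[
\partial_t R_{g(t)}=\Delta R+2|\Ric|^2 .
\]
Hence
\[
u'(t)=-\bigl(\Delta R+2|\Ric|^2\bigr)_{g(-t)}(q).
\]
On the model $\sph^2\times\R^2$ with $R=1$ we have $\Delta R=0$ and $2|\Ric|^2=R^2$. By scale invariance and $C^2$ closeness (Perelman's derivative estimates, Lemma~\ref{summary-of-CMZ-work}(i)),
\[
\bigl|\Delta R+2|\Ric|^2-R^2\bigr|\le \psi(\varepsilon)\,R^2
\quad\text{at }(\Phi_{-t}(q),0),
\]
where $\psi(\varepsilon)\to0$ as $\varepsilon\to0$. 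Therefore
\[
-(1+\psi)u^2\le u'\le -(1-\psi)u^2 .
\]
Integrating $(1/u)'\ge 1-\psi$ gives $1/u(t)\ge 1/u(0)+(1-\psi)t\ge(1-\psi)t$. This yields $R(\Phi_{-t}(q))\le C/t$ with $C=2$ once $\psi\le1/2$. Note $u(0)\le1$ and $u$ is nonincreasing in $t$ by trace Harnack, which is consistent with this bound.

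\textbf{Step 3: the limit $sR\to1$.} Under the additional hypothesis, I first upgrade it to convergence to $\sph^2\times\R^2$ along every sequence $t\to\infty$. This follows from Theorem~\ref{existence-of-x-+-}(4) applied to $q$ (shrinking $\varepsilon_1$ to $\epsilon_*$). Then along the curve the closeness parameter $\varepsilon(t)\to0$, so $\psi(t)\to0$. The ODE then gives
\[
\frac{d}{dt}\,\frac{1}{u}=1+o(1),
\]
hence $1/u(t)=t+o(t)$, i.e. $tu(t)\to1$.

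\textbf{Main obstacle.} The delicate point is Step 2: ensuring the error $\Delta R$ is controlled relative to $R^2$ \emph{uniformly} in $t$. Smallness of $\Delta R/R^2$ requires $C^2$ closeness of the metric at scale $R^{-1/2}$, and this must be uniform along the whole backward curve. It is provided by the definition of $(\varepsilon,2)$-center (which includes closeness in high $C^\ell$ norms) combined with Theorem~\ref{Stability-of-bubble-sheet}. I would make this uniformity explicit via a contradiction argument with Proposition~\ref{time-dimension-reduction}.
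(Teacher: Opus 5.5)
Your argument is correct and is essentially the paper's. The paper integrates $\frac{d}{dt}\bigl(\frac1R+f\bigr)$ along $\Phi_{-t}(q)$, whose integrand $R-1+(\Delta R+2|\Ric|^2)/R^2$ differs from your $(1/u)'=(\Delta R+2|\Ric|^2)/R^2$ only by the term $|\nabla f|^2=1-R=-\frac{d}{dt}f(\Phi_{-t}(q))$. Your direct version therefore avoids the paper's extra need for $R\le\eta$ and for a lower bound on $|\nabla f|^2$.

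For the limit, the paper restarts that estimate (with smaller $\eta$) from a point $\Phi_{-t_\eta}(q)$ that is a bubble-sheet center by the convergence hypothesis, propagated by Theorem~\ref{Stability-of-bubble-sheet}. It then uses $R\to0$ along the curve to get $-f(\Phi_{-t}(q))/t\to1$. Your use of Theorem~\ref{existence-of-x-+-}(4), with $\varepsilon_1\le\epsilon_*$ and $r_1\ge N_*$, is equally valid and even makes the convergence hypothesis automatic. The appeal to Proposition~\ref{time-dimension-reduction} in your Step~1 is unnecessary.
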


\begin{proof}
Fix $r_0>0, C>0$ such that if $d(p,o)>r_0$ then $|\nabla f|^2(p)\geq C^{-1}$ (for example, using Lemma \ref{lem:monotonicity-along-flows}, one may take $C^{-1}=\inf_{f\leq f(o)-10} |\nabla f|^2=\min_{f=f(o)-10} |\nabla f|^2$ and choose $r_0$ using Lemma \ref{lem:Linear-Growth-of-potential}). Let $\eta>0$ be any positive number such that $\eta\leq C^{-1}/8$. 

Let $\epsilon>0$ be a constant, depending on $\eta$, to be chosen shortly. Let $\varepsilon_1:=\delta_\epsilon$ and $N_\epsilon>0$ be the constants from Theorem \ref{Stability-of-bubble-sheet}. Set $r_1:=\max(r_0,N_\epsilon)$. Let $q$ be an $(\varepsilon_1,2)$-center with $d(q,o)>r_1$. Then $\Phi_{-t}(q)$ is an $(\epsilon,2)$-center for all $t\geq 0$. Using Lemma \ref{small-curv-at-bubble-sheet}, we may choose $\epsilon$ small (depending on $\eta$, but independent of $q$) such that for all $t\geq 0$, we have 
$$R(\Phi_{-t}(q))\leq \eta,$$ and $$\frac{|\Delta R|}{R^2}(\Phi_{-t}(q))\leq \eta, \qquad \left|\frac{|\Ric|^2}{R^2}(\Phi_{-t}(q))-\frac{1}{2}\right|\leq \eta.$$
Note that on $\sph^2\times \R^2$ with constant scalar curvature 1, we have $R^{-2}\Delta R\equiv 0$ and $2|\Ric|^2R^{-2}\equiv 1$. 

We obtain the following identity. In \eqref{eqn:linear-scalar-bound-main}, all integrands are evaluated at $\Phi_{-s}(q)$. For each $t>0$,  
\begin{equation}
\label{eqn:linear-scalar-bound-main}
\begin{aligned}
&\left[\frac{1}{R(\Phi_{-t}(q))}+f(\Phi_{-t}(q))\right]-\left[\frac{1}{R(q)}+f(q)\right]\\
&=\int_{0}^t\left\langle -\nabla f,\nabla\left(\frac{1}{R}+f\right)\right\rangle \,ds\\
&=\int_{0}^t-|\nabla f|^2+\frac{1}{R^2}\langle {\nabla f},\nabla R\rangle \,ds\\
&=\int_{0}^t-|\nabla f|^2+\frac{\Delta R+2|\Ric|^2}{R^2} \,ds\\
&=\int_{0}^t\left[R-1+\frac{\Delta R+2|\Ric|^2}{R^2} \right]\,ds\\
&=\int_{0}^t\left[2\left(\frac{|\Ric|^2}{R^2}-\frac{1}{2}\right)+R+\frac{\Delta R}{R^2}\right ]\,ds,
\end{aligned}
\end{equation}
where we used the soliton identities \eqref{eq:scalarPDE} and \eqref{eq:normalization} in the third and fourth equalities, respectively. By the choice of $\epsilon$ and the scale-invariant bounds along $\Phi_{-s}(q)$, we have for all $s\ge 0$,
$$\left|2\Big(\frac{|\Ric|^2}{R^2}-\frac12\Big)+R+\frac{\Delta R}{R^2}\right|(\Phi_{-s}(q))\le 4\eta.$$
Therefore, (\ref{eqn:linear-scalar-bound-main}) implies that for all $t>0$, 
\begin{equation}
\label{eqn:linear-scalar-bound-4}
\left|\Big(\frac{1}{R(\Phi_{-t}(q))}+f(\Phi_{-t}(q))\Big)-\Big(\frac{1}{R(q)}+f(q)\Big)\right|
\leq \int_{0}^t4\eta\,ds,
\end{equation}
in particular, 
$$\begin{aligned}
&\left[\frac{1}{R(q)}+f(q)\right]-\left[\frac{1}{R(\Phi_{-t}(q))}+f(\Phi_{-t}(q))\right]\leq 4\eta t.
\end{aligned}$$
Rearranging this, we obtain for all $t>0$, 
\begin{equation}
\label{eqn:linear-scalar-bound-2}
 \frac{1}{tR(q)}+\frac{f(q)-f(\Phi_{-t}(q))}{t}-4\eta\leq \frac{1}{tR(\Phi_{-t}(q))}.
\end{equation}
Since $d(o,\Phi_{-t}(q))\geq d(o,q)\geq r_1$, we have $C^{-1}\leq |\nabla f|^2(\Phi_{-t}(q))\leq 1$ for all $t\geq 0$. It follows that 
$$C^{-1}t\leq \int_0^t|\nabla f|^2(\Phi_{-s}(q))\,ds
=f(q)-f(\Phi_{-t}(q))$$
Combining this with \eqref{eqn:linear-scalar-bound-2}, and using the choice of $\eta$, we obtain
$$(2C)^{-1}\leq C^{-1}-4\eta\leq \frac{1}{tR(\Phi_{-t}(q))}.$$
We conclude that the estimate $t\,R(\Phi_{-t}(q))\le 2C$ holds for all $t>0$. This proves the first part of the theorem. Choose, for instance, $\eta=C^{-1}/16$. 

Suppose that $(M,R(\Phi_{-t_k}(q))g,\Phi_{-t_k}(q))\to (\sph^2\times \R^2,\bar{g}_0+dz^2,(\bar{q},0))$ for some $t_k\to \infty$. From Corollary \ref{Vanishing-lambda-2}, we obtain that $R(\Phi_{-t_k}(q))\to 0$, hence $G(q)=0$. Let $s_k\to \infty$ and let $\eta \in (0,C^{-1}/8)$. 
Let $\varepsilon_1,r_1$ be the corresponding constants from the previous part of the proof. 
For all sufficiently large $k$, the point $\Phi_{-t_k}(q)$ is an $(\varepsilon_1,2)$-center. Hence there exists $t_\eta>0$ such that $q_\eta:=\Phi_{-t_\eta}(q)$ is an $(\varepsilon_1,2)$-center with $d(q_\eta,o)>r_1$. 
Applying the previous argument to $q_\eta$, we obtain the analogue of \eqref{eqn:linear-scalar-bound-4}: for every $t>0$,
\begin{equation}
\label{eqn:linear-scalar-bound-10}
\left|\Big(\frac{1}{R(\Phi_{-t}(q_\eta))}+f(\Phi_{-t}(q_\eta))\Big)-\Big(\frac{1}{R(q_\eta)}+f(q_\eta)\Big)\right|
\leq 4\eta t.
\end{equation}
Substituting $t=s_k-t_\eta$ and using $\Phi_{-(s_k-t_\eta)}(q_\eta)=\Phi_{-s_k}(q)$, we obtain for all large $k$, 
$$\left|\frac{1}{R(\Phi_{-s_k}(q))}+f(\Phi_{-s_k}(q))-c_q\right|\leq 4\eta s_k,$$
where $c_q:=\frac{1}{R(q_\eta)}+f(q_\eta)$. Since $c_q/s_k\to 0$, we obtain for all large $k$, 
$$\left|\frac{1}{R(\Phi_{-s_k}(q))}+f(\Phi_{-s_k}(q))\right|\leq 5\eta s_k.$$
From the identity  
\begin{equation}
\label{eqn:linear-scalar-bound-8}
f(q)-f\left(\Phi_{-t}(q)\right)=\int_0^t|\nabla f|^2 d s=\int_0^t(1-R) d s\qquad \text{for all }t>0,
\end{equation}
it follows that $\frac{-f(\Phi_{-t}(q))}{t}\to 1$ as $t\to \infty$. Combining this with the previous estimate, we obtain for all large $k$,
\begin{equation}
\label{eqn:linear-scalar-bound-7}
\left|\frac{1}{s_kR(\Phi_{-s_k}(q))}-1\right|\leq 6\eta.
\end{equation}
Since $\eta$ was arbitrary, we conclude that $s_kR(\Phi_{-s_k}(q))\to 1$. Because $s_k\to\infty$ was arbitrary, this proves \eqref{eqn:linear-scalar-bound-100}. 
\end{proof}
In \cite{BCMZ23}, it was proved that for $\kappa$-noncollapsed steady solitons in dimension 4 with bounded curvature, the volume growth must $\gtrsim r^{5/2}$, where $r$ is the distance from a fixed basepoint. The volume growth on $\Bry^4$ is $\sim r^{5/2}$. We now show that under Assumptions \hyperref[assumption:A1]{(A1)}--\hyperref[assumption:A4]{(A4)}, the growth must be strictly faster than that of $\Bry^4$. In particular, this gives some information on the volume growth on the flying wings constructed by Lai \cite{Lai24}.
\begin{corollary}
\label{Cor:volume-growth}
We have 
$$\lim_{r\to \infty}\frac{\operatorname{Vol}_g(B_g[o;r])}{r^{5/2}}=\infty.$$
\end{corollary}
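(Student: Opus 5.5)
We prove Corollary~\ref{Cor:volume-growth} by comparing the volume of $B_g[o;r]$ with the volume swept out by a bubble-sheet region flowing backward along $-\nabla f$. On that region Theorem~\ref{Upper-bound-on-curves} gives $R\lesssim t^{-1}$ along each flow line. Hence the $\sph^2$-factor has area $\gtrsim R^{-1}\gtrsim t$, which is much larger than the Bryant rate.

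The first step is to fix a small open set of starting points. Fix $s_0\ll0$. By Lemma~\ref{summary-of-CMZ-work}~(iv) we may choose $q\in\Sigma=\Sigma_{s_0}$ which is an $(\varepsilon_1/2,2)$-center with $d(q,o)>r_1$. By continuity there is a small open neighborhood $U\subset\Sigma$ of $q$, of positive $3$-dimensional area, consisting of $(\varepsilon_1,2)$-centers. For every $p\in U$ and every $t>0$, Theorem~\ref{Upper-bound-on-curves} then gives $R(\Phi_{-t}(p))\le C/t$. By Theorem~\ref{Stability-of-bubble-sheet}, each $\Phi_{-t}(p)$ is moreover an $(\varepsilon,2)$-center.

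Next we control the geometry of the swept region. Since $|\nabla f|^2\ge C^{-1}$ away from $o$, reparametrizing by $f$ is harmless. Consider the tube $W_{s}=\bigcup_{s\le\sigma\le s_0}\chi_\sigma(U)$. By the coarea formula,
\[
\operatorname{Vol}(W_s)\ge\int_s^{s_0}\operatorname{Area}(\chi_\sigma(U))\,d\sigma .
\]
We then need a lower bound for $\operatorname{Area}(\chi_\sigma(U))$. The key point is that the evolution
\[
\partial_\sigma\bar g_\sigma=-2\chi_\sigma^*\Ric|_{\Sigma_\sigma}/|\nabla f|^2
\]
(Lemma~\ref{lem:variation-of-distance-in-intrinsic-metric}) gives
\[
\partial_\sigma\log d\mu_{\bar g_\sigma}=-\bigl(R-\Ric(\nu,\nu)\bigr)/|\nabla f|^2 .
\]
Going backward, i.e.\ as $\sigma$ decreases, the density therefore increases at rate $\approx R/|\nabla f|^2$. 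At a bubble-sheet point $\nu$ lies essentially in the flat $\R^2$ directions, so $R-\Ric(\nu,\nu)\approx R$, and $R\approx 1/|\sigma|$ along each curve by \eqref{eqn:linear-scalar-bound-100}, or at least $R\gtrsim$ something. We use the $\sph^2\times\R$ structure of $\Sigma_\sigma$ from Lemma~\ref{Theta}~(ii) and Lemma~\ref{Level-Set-Convergence}. Along the curves, the $\sph^2$-cross-section has area $\approx 8\pi R^{-1}\ge 8\pi|\sigma|/C$. The length of $\chi_\sigma(U)$ in the neck direction is non-increasing backward but bounded below, since the neck direction is asymptotically flat and the flat directions are not contracted to leading order. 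So $\operatorname{Area}(\chi_\sigma(U))\gtrsim|\sigma|\cdot\ell$, where $\ell$ is a lower bound on the neck-direction width. To make this rigorous I would choose $U$ to be a union of full $\sph^2$ cross-sections: take $U$ to be a thin slab of the neck around $q$, with CMC leaves as in Theorem~\ref{Two-Tips}, which is invariant enough that its backward images still contain whole $\sph^2$-leaves. Since points in $\chi_\sigma(U)$ have distance $\le C'|\sigma|$ from $o$, we then get
\[
\operatorname{Vol}(B[o;C'r])\gtrsim\int_{-r}^{s_0}|\sigma|\,\ell\,d\sigma\gtrsim r^2 .
\]

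The main obstacle is that $r^2$ alone is weaker than $r^{5/2}$. So the actual mechanism must be sharper. The width $\ell$ in the neck direction must also grow: on $\Sigma_\sigma$, the region of $(\varepsilon,2)$-centers has length comparable to the distance between the two tips, which tends to infinity in rescaled units. The plan is therefore to argue by contradiction. Suppose $\operatorname{Vol}(B[o;r])\le Ar^{5/2}$ along a sequence. Then the Bishop-type bound of \cite{BCMZ23}, as the rigidity case, forces the cylindrical regime $R\sim r^{-1}$ with bounded rescaled neck length, i.e.\ Bryant asymptotics. This contradicts \cite{DZ20} and assumption (A4), because the bubble-sheet region contributes $\int^r\sigma\cdot L(\sigma)\,d\sigma$ with $L(\sigma)\sqrt{R}\to\infty$, by the Remark following Theorem~\ref{Two-Tips}.
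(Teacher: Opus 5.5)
Your proposal has a genuine gap. The swept-tube estimate is fine as far as it goes, but it cannot give more than roughly $r^2$. Following a fixed set $U$ of starting points only captures the growth of the $\sph^2$ factor, whose area $\approx R^{-1}\approx|\sigma|$ is the \emph{same} as on $\Bry^4$, not larger, because the flat directions are barely stretched. What beats the Bryant rate is the length of the flat direction of the level set, and this is where the proof breaks down.

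Your contradiction step invokes a ``rigidity case'' of the lower bound in \cite{BCMZ23}: that $\operatorname{Vol}(B[o;r])\le Ar^{5/2}$ along a sequence forces Bryant-type asymptotics. No such result is available. Under (A1)--(A3) it is essentially the statement you are trying to prove (compare the rigidity corollary the paper deduces \emph{from} this corollary), so the argument is circular.

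The mechanism in your final sentence is the right one, but the input you cite does not provide it. The Remark after Theorem~\ref{Two-Tips} measures $d(p_{1,s},p_{2,s})$ in units of the tip scale $R(p_{1,s})^{-1/2}$. That scale stays bounded when $G(x_+)>0$, so the Remark only yields $d(p_{1,s},p_{2,s})\to\infty$. You need instead that the bubble-sheet region of $\Sigma_\sigma$ is long compared with its \emph{own} scale $R^{-1/2}\approx|\sigma|^{1/2}$.

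The missing idea is to estimate the whole level set rather than a swept tube.
\begin{enumerate}
\item Take $q\in\Sigma$ to be an $(\epsilon_*,2)$-center with $d(q,o)>N_*$, as in Theorem~\ref{existence-of-x-+-}(4). Then for any $s_k\to-\infty$, after a subsequence, $(M,R(\chi_{s_k}(q))g,\chi_{s_k}(q))\to\sph^2\times\R^2$.
\item By Corollary~\ref{Vanishing-lambda-2}, $R(\chi_{s_k}(q))\to0$. Lemma~\ref{Level-Set-Convergence} then gives $(\Sigma_{s_k},R(\chi_{s_k}(q))g,\chi_{s_k}(q))\to(\sph^2\times\R,\bar g_0)$. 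Hence for every fixed $L$, $\operatorname{Vol}_{R(\chi_{s_k}(q))g}(\Sigma_{s_k})\ge cL$ for large $k$.
\item The sharp part of Theorem~\ref{Upper-bound-on-curves}, together with $-f(\Phi_{-t}(q))/t\to1$, gives $|s_k|R(\chi_{s_k}(q))\to1$. Therefore $\operatorname{Vol}_g(\Sigma_{s_k})\ge cL|s_k|^{3/2}$.
\end{enumerate}
Here $|s|^{3/2}=R^{-1}\cdot R^{-1/2}$ is the $\sph^2$ area times the neck scale, and $L$ is the unbounded rescaled neck length. This is exactly the factor $\gg|\sigma|^{1/2}$ your tube misses.

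It follows that $\operatorname{Vol}(\Sigma_s)/|s|^{3/2}\to\infty$. The coarea formula with $|\nabla f|\le1$ then gives $\operatorname{Vol}(\{s\le f\le s_L\})\ge\int_s^{s_L}\operatorname{Vol}(\Sigma_u)\,du\gtrsim L|s|^{5/2}$. Finally, Lemma~\ref{lem:Linear-Growth-of-potential} places $\{s\le f\le s_L\}$ inside $B[o;c_0^{-1}|s|+C]$, which proves the corollary.
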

\begin{proof}
 Let $s_k\to -\infty$ and $L>0$. In what follows, $C$ denotes a positive constant that may change from line to line, but remains independent of $k$ and $L$. Let $q\in \Sigma$ such that after passing to a subsequence, $(M,R(\chi_{s_k}(q))g,\chi_{s_k}(q))$ converges to $(\sph^2\times \R^2,\bar{g}_0+dz^2,(\bar{x},0))$ as $k\to \infty$. Such points exist by Theorem \ref{existence-of-x-+-}. From Corollary \ref{Vanishing-lambda-2}, we obtain that $R(\chi_{s_k}(q))\to 0$. Using Lemma \ref{Level-Set-Convergence}, $(\Sigma_{s_k},R(\chi_{s_k}(q))g,\chi_{s_k}(q))$ converges to $(\sph^2\times \R,\bar{g}_0,\bar{x})$. For all large $k$, 
$$\operatorname{Vol}_{R(\chi_{s_k}(q))g}(\Sigma_{s_k})\geq \operatorname{Vol}(B_{\bar{g}_0}[\bar{x};L])\geq CL.$$
Let $t_k>0$ be such that $\chi_{s_k}(q)=\Phi_{-t_k}(q)$. Arguing as in \eqref{eqn:linear-scalar-bound-8}, we conclude that $\frac{-s_k}{t_k}=\frac{-f(\Phi_{-t_k}(q))}{t_k}\to 1$. Using Theorem \ref{Upper-bound-on-curves}, it follows that $(-s_k)R(\chi_{s_k}(q))\to 1$. Therefore, for all large $k$, 
$$(-s_k)^{-3/2}\operatorname{Vol}(\Sigma_{s_k})\geq CL.$$
Since $s_k$ and $L$ were arbitrary, this proves that
\begin{equation}
\label{eqn:volume-growth-level-sets-cor}
\lim_{s\to -\infty}\frac{\operatorname{Vol}(\Sigma_s)}{|s|^{3/2}}=\infty.
\end{equation}
 Using the co-area formula, we have for every $s<s_1\leq f(o)$,  
$$\int_{s\leq f\leq s_1}|\nabla f|=\int_{s}^{s_1}\int_{\Sigma_t} \,d\mu_t\,dt,$$
where $d\mu_t$ is the volume form induced by the metric $g|_{\Sigma_t}$. Let $L>0$. From $|\nabla f|\leq 1$ and \eqref{eqn:volume-growth-level-sets-cor}, it follows that there exists $s_L\ll 0$ such that for all $s\leq s_L$,  
$$\operatorname{Vol}(\{s\leq f\leq s_L\})\geq 100L\int_s^{s_L}(-t)^{3/2}\,dt.$$
For $s\ll s_L$, we conclude that 
\begin{equation}
\label{cor:vol-growth-eqn-1}
(-s)^{-5/2}\operatorname{Vol}(\{s\leq f\leq s_L\})\geq L.
\end{equation}
From Lemma \ref{lem:Linear-Growth-of-potential}, we have $f(o)-f(x)\geq c_0d(x,o)$ for $d(x,o)\geq 1$, where $c_0$ is a positive constant. This implies $\{s\leq f\leq s_L\}\sub B[o;c_0^{-1}(|s|+f(o))+1]$ for all $s\ll 0$. Using $r=|s|$ in \eqref{cor:vol-growth-eqn-1}, it follows for all large $r$,  
$$r^{-5/2}\operatorname{Vol}(B[o;c_0^{-1}(r+|f(o)|)+1])\geq L.$$
Letting $r\to \infty$, we obtain $\liminf_{r\to \infty}\frac{\operatorname{Vol}(B[o;r])}{r^{5/2}}\geq c_0^{5/2}L$. As $L$ was arbitrary, this finishes the proof of Corollary~\ref{Cor:volume-growth}. 
\end{proof}
Corollary \ref{Cor:volume-growth} implies the following rigidity for the 4D Bryant soliton based on volume growth. 
\begin{corollary}
Let $(\mathcal{N}^4,g_{\mathcal{N}})$ be a $\kappa$-noncollapsed steady soliton with nonnegative sectional curvature and positive Ricci curvature. If the potential function has a critical point and the volume growth of $\mathcal{N}^4$ is of the order $\lesssim r^{5/2}$, then $(\mathcal{N}^4,g_{\mathcal{N}})$ must be isometric to the Bryant soliton $\Bry^4$. 
\end{corollary}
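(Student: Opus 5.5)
The plan is to argue by contradiction and invoke Corollary~\ref{Cor:volume-growth}, which holds under \hyperref[assumption:A1]{(A1)}--\hyperref[assumption:A4]{(A4)}. First we put $(\mathcal{N}^4,g_{\mathcal{N}})$ into the paper's setting. Positive Ricci curvature is exactly \hyperref[assumption:A1]{(A1)}, and the hypothesis on the potential function is \hyperref[assumption:A2]{(A2)}. For \hyperref[assumption:A3]{(A3)}, the canonical Ricci flow $g(t)=\Phi_t^*g_{\mathcal{N}}$ is isometric to $g_{\mathcal{N}}$ at every time. Hence $\sec\ge 0$ and $\kappa$-noncollapsing at all scales pass from $g_{\mathcal{N}}$ to the whole flow. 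Bounded curvature follows from $R\le 1$ after normalizing by \eqref{eq:normalization}, together with $\sec\ge 0$. So \hyperref[assumption:A1]{(A1)}--\hyperref[assumption:A3]{(A3)} hold.

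Next we use the dichotomy recalled in Section~\ref{subsec:Setting}. By \cite[Corollary~5.4]{CMZ25b}, the tangent flow at infinity is either $\sph^2\times\R^2$ or $\sph^3\times\R$. In the second case, \cite[Theorem~5.2]{CMZ25b} gives that $(\mathcal{N},g_{\mathcal{N}})$ is isometric to $\Bry^4$, up to the scaling fixed by our normalization, and we are done. So assume the tangent flow at infinity is $\sph^2\times\R^2$, i.e.\ \hyperref[assumption:A4]{(A4)} holds. Then all of \hyperref[assumption:A1]{(A1)}--\hyperref[assumption:A4]{(A4)} are satisfied, and Corollary~\ref{Cor:volume-growth} gives
$$\lim_{r\to\infty}\frac{\operatorname{Vol}(B[o;r])}{r^{5/2}}=\infty.$$
This contradicts the hypothesis $\operatorname{Vol}(B[o;r])\lesssim r^{5/2}$, which we read as $\limsup_{r\to\infty} r^{-5/2}\operatorname{Vol}(B[o;r])<\infty$. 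Growth bounds of this kind do not depend on the basepoint, since $B[p;r]\subset B[o;r+d(o,p)]$, and they are invariant under rescaling the metric up to constants. So the choice of center $o$ (the critical point) and the normalization \eqref{eq:normalization} are harmless.

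The argument has essentially no obstacle of its own; it only requires checking the hypotheses. The one point to watch is that the standing assumption \hyperref[assumption:A3]{(A3)} asks for $\kappa$-noncollapsing of the canonical flow on all scales, not just of the time-$0$ metric. This holds because each $g(t)$ is isometric to $g_{\mathcal{N}}$. The rescaling in \eqref{eq:normalization} changes neither the hypotheses nor the conclusion, since "isometric to $\Bry^4$" is understood up to scaling.
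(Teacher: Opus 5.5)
Your proof is correct and follows the paper's argument. Both use the \cite{CMZ25b} dichotomy for the tangent flow at infinity: the $\sph^3\times\R$ case gives $\Bry^4$ directly, and the $\sph^2\times\R^2$ case puts the soliton under (A1)--(A4), where Corollary~\ref{Cor:volume-growth} contradicts the assumed $r^{5/2}$ growth. Your checks of (A3) for the canonical flow and of basepoint and scaling invariance fill in details the paper leaves implicit.
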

\begin{proof}
By \cite[Corollary 5.4]{CMZ25b} the tangent flow at infinity of $\mathcal{N}^4$ is either $\sph^2\times \R^2$ or $\sph^3\times \R$. If it is $\sph^3\times \R$ then $(\mathcal{N}^4,g_\mathcal{N})$ is isometric to $\Bry^4$ by \cite[Theorem 5.2]{CMZ25b}. If it is $\sph^2\times \R^2$, then $(\mathcal{N}^4,g_\mathcal{N})$ satisfies the Assumptions \hyperref[assumption:A1]{(A1)}--\hyperref[assumption:A4]{(A4)} and by Corollary \ref{Cor:volume-growth} we obtain a contradiction to the assumed volume growth of $\mathcal{N}^4$.
\end{proof}

\section{Proof of the main result}
\label{sec:proof-of-the-main-result}
We continue to assume that $\left(M^4, g, f\right)$ satisfies \hyperref[assumption:A1]{(A1)}--\hyperref[assumption:A4]{(A4)}. In this section, we prove the main result of our paper (Theorem \ref{Full-linear-bound} and the first half of Theorem \ref{thm:main-theorem-2-stronger-scalar-bound}). We first prove a lemma that shows that the dimension reduction along a sequence of points is $\Bry^3\times \R$ if and only if the points remain at uniformly bounded rescaled distance from one of the two edges. 
\begin{lemma}
\label{Obtaining-points-from-Bryant-like-sequences}
Suppose that $x_k\in M$ with $d(x_k,o)\to \infty$. Then,  
\begin{equation}
\label{eqn:obtaining-points-1}
(M,R(x_k)g,x_k)\to (\Bry^3\times \R,R_{\tilde{g}_0}(\tilde{x})\tilde{g}_0+dz^2,(\tilde{x},0)),
\end{equation}
in the smooth Cheeger--Gromov sense if and only if there exists $s_k\to \infty$ such that for at least one of $x_+$ or $x_-$, we have 
\begin{equation}
\label{nearness-to-tip-3}
\sup_k R(x_k)d(\Phi_{-s_k}(x_\pm),x_k)^2<\infty.
\end{equation}
\end{lemma}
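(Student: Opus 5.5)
We prove the two implications separately. For the easy direction, suppose \eqref{nearness-to-tip-3} holds, say for $x_+$, with $s_k\to\infty$ and $A:=\sup_k R(x_k)d(\Phi_{-s_k}(x_+),x_k)^2<\infty$. Write $y_k:=\Phi_{-s_k}(x_+)$. Then $d(y_k,o)\to\infty$ by Lemma~\ref{lem:monotonicity-along-flows}, and $y_k=\chi_{t_k}(x_+)$ for some $t_k\to-\infty$. By Theorem~\ref{existence-of-x-+-}(1), after passing to a subsequence, $(M,R(y_k)g,y_k)\to(\Bry^3\times\R,\tilde g_0+dz^2,(\bar x,0))$ with $\bar x$ the tip. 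Perelman's long-range estimates (Lemma~\ref{summary-of-CMZ-work}(ii)) give $C^{-1}R(y_k)\le R(x_k)\le CR(y_k)$. Hence $R(y_k)d(x_k,y_k)^2$ is bounded, so $x_k$ stays in a bounded rescaled ball about $y_k$, and its image in the limit converges to some point $(\tilde x,z_0)$ of $\Bry^3\times\R$. Rebasing at $x_k$ and rescaling by $R(x_k)/R(y_k)$, which converges after a further subsequence, gives \eqref{eqn:obtaining-points-1}, using the $z$-translation invariance of the limit. Since every subsequence has a further subsequence with this limit, and the lemma is read subsequentially as in the rest of the paper, this direction is done.

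For the converse, assume \eqref{eqn:obtaining-points-1}. Write $x_k=\Gamma_{q_k}(\tau_k)$ with $q_k\in\Sigma$; equivalently $x_k=\Phi_{-u_k}(q_k)$ with $u_k\to\infty$. Pass to a subsequence with $q_k\to q\in\Sigma$. Let $w_k$ be the points of $M$ corresponding to $(\bar x,z_0)$ in the limit, where $\bar x$ is the Bryant tip. Then $R(x_k)d(x_k,w_k)^2$ is bounded and $(M,R(w_k)g,w_k)$ converges to $\Bry^3\times\R$ based at the tip. Via Lemmas~\ref{Level-Set-Convergence} and~\ref{Level-Set-Convergence-2}, applied to the level set through $w_k$, the restriction of $R$ to the level set $f(w_k)$ has a nearby critical point with $\lambda_2/R\approx 1/3$. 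This point is a tip of $\Sigma_{f(w_k)}$, so by Theorem~\ref{Two-Tips} it equals $p_{i,f(w_k)}$ for some $i$; fix $i=1$ along a subsequence. Theorem~\ref{existence-of-x-+-}(2) then gives $R\,d(p_{1,f(w_k)},\chi_{f(w_k)}(x_+))^2\to0$. Set $y_k:=\chi_{f(w_k)}(x_+)=\Phi_{-s_k}(x_+)$; since $f(w_k)\to-\infty$, we have $s_k\to\infty$. By the triangle inequality and the long-range estimates, $R(x_k)d(x_k,y_k)^2$ is bounded, which is \eqref{nearness-to-tip-3}.

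The main obstacle lies in the converse direction. Theorem~\ref{existence-of-x-+-}(2) is only stated along subsequences of arbitrary sequences $t_k\to-\infty$, but that is enough: apply it to the sequence $t_k=f(w_k)$. The genuinely delicate step is checking that the level set through $w_k$ really sees a tip near $w_k$. The limiting level set $\{Az+h=0\}$, or $\Bry^3$ itself, must pass through the lift of the tip. This holds because the relevant level set is chosen through $w_k$ itself, and the limit point $(\bar x,z_0)$ is a nondegenerate maximum of $R$ restricted to that hypersurface, exactly as in the proof of Claim~\ref{claim:nearness-of-tip-edge}. One must also ensure that the tip index $i$ is consistent with $x_+$ or $x_-$. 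Theorem~\ref{existence-of-x-+-}(2) pairs $p_1$ with $x_+$ and $p_2$ with $x_-$, so both choices are covered.
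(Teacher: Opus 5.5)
Your proposal is correct and follows the paper's proof essentially step for step. The forward direction rebases the Bryant-tip convergence along $\Phi_{-s_k}(x_\pm)$ from Theorem~\ref{existence-of-x-+-}(1) at $x_k$ using the long-range estimates. The converse takes the points $w_k$ lying over the Bryant tip, finds a tip of $\Sigma_{f(w_k)}$ nearby as in Claim~\ref{claim:nearness-of-tip-edge}, and concludes with Theorem~\ref{existence-of-x-+-}(2) and the triangle inequality. The preliminary decomposition $x_k=\Phi_{-u_k}(q_k)$ in your converse is never used and can be dropped.
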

\begin{proof}
In the proof, $C(\cdot),C'(\cdot)$ are constants depending on their parameters and on $\kappa$, which may change from line to line. 
 
Assume that (\ref{nearness-to-tip-3}) holds for $x_+$.  Write $z_k:=\Phi_{-s_k}(x_+)$ and recall that $s\mapsto \Phi_{-s}(x_+)$ and $s\mapsto \chi_{-(s-s_0)}(x_+)$ parameterize the same curve for $s\geq 0$. From Theorem \ref{existence-of-x-+-}, we have 
$$(M,R(z_k)g,z_k)\to (\Bry^3\times \R,\tilde{g}_0+dz^2,(\bar{x},0))$$
where $\bar{x}$ is the tip of $\Bry^3$. The estimate \eqref{nearness-to-tip-3} implies that $d_{R(z_k)g}(x_k,z_k)$ is uniformly bounded. Hence, under the above pointed convergence, after passing to a further subsequence, the points $x_k$ converge to some point $(\tilde x,c)\in\Bry^3\times\R$. Therefore,
$$(M, R(z_k)g,x_k)\to (\Bry^3\times\R,\tilde g_0+dz^2,(\tilde x,c)).$$
Finally, since $R(x_k)/R(z_k)\to R_{\tilde{g}_0}(\tilde{x}) \in (0,1]$, we may replace $R(z_k)$ by $R(x_k)$ to obtain \eqref{eqn:obtaining-points-1}. 

Conversely, assume that \eqref{eqn:obtaining-points-1} holds. Let $\bar x$ denote the tip of $\Bry^3$ and choose $D>0$ such that
$$d_{R_{\tilde g_0}(\tilde x)\tilde g_0+dz^2}\big((\bar x,0),(\tilde x,0)\big)\le D.$$
Let $\varphi_k$ denote the Cheeger--Gromov convergence maps defining \eqref{eqn:obtaining-points-1}. Define $y_k:=\varphi_k(\bar x,0)$. Then $y_k\to(\bar x,0)$ under the convergence (\ref{eqn:obtaining-points-1}) in the Cheeger--Gromov sense. It follows that $R(x_k)d_g(x_k,y_k)^2\leq 2D^2$ for all large $k$, and $\frac{\lambda_2}{R}(y_k)\to \frac{1}{3}$. From Perelman's long-range estimates (Lemma \ref{summary-of-CMZ-work} (ii)), it follows that
$$R(y_k)\,d_g(x_k,y_k)^2\le C(D)\,R(x_k)\,d_g(x_k,y_k)^2\le C'(D),$$
and $y_k\to\infty$. Let $t_k:=f(y_k)$, so that $y_k\in \Sigma_{t_k}$. Since $y_k\to\infty$, we have $t_k\to-\infty$. Let $p_{1,t_k},p_{2,t_k}$ be the two tips on $\Sigma_{t_k}$ given by Theorem \ref{Two-Tips}.

Arguing exactly as in Claim \ref{claim:nearness-of-tip-edge} of Theorem \ref{existence-of-x-+-}, we conclude that $R(y_k)d(p_{i,t_k},y_k)^2\to 0$ as $k\to \infty$ for at least one of $i=1,2$. Without loss of generality, $R(y_k)d(p_{1,t_k},y_k)^2\to 0$ as $k\to \infty$. 

By Theorem \ref{existence-of-x-+-}, after passing to a subsequence we have
$$R(p_{1,t_k})\,d\big(p_{1,t_k},\chi_{t_k}(x_+)\big)^2\to 0.$$
Combining this with $R(y_k)\,d(p_{1,t_k},y_k)^2\to 0$ and the boundedness of $R(x_k)d(x_k,y_k)^2$, and using Perelman's long-range estimate, we obtain
$$R(x_k)\le C(D)\,R(y_k)\le C'(D)\,R(p_{1,t_k}).$$
Using these estimates and the triangle inequality, 
$$\begin{aligned}
&R(x_k)\,d\big(\chi_{t_k}(x_+),x_k\big)^2\\
&\le C(D)\,R(y_k)\Big(d(\chi_{t_k}(x_+),p_{1,t_k})^2+d(p_{1,t_k},y_k)^2+d(y_k,x_k)^2\Big)\\
&\le C(D)\Big(R(p_{1,t_k})d(\chi_{t_k}(x_+),p_{1,t_k})^2+R(y_k)d(p_{1,t_k},y_k)^2+R(y_k)d(y_k,x_k)^2\Big),
\end{aligned}$$
and each term on the right-hand side is bounded uniformly in $k$. This proves \eqref{nearness-to-tip-3} after choosing $s_k\to \infty$ such that $\Phi_{-s_k}(x_+)=\chi_{t_k}(x_+)$ for all large $k$. 
\end{proof}
\begin{remark}
\label{rem:polynomial-decay-not-possible}
It follows from Lemma \ref{Obtaining-points-from-Bryant-like-sequences} that if $d(x_k,o)\to \infty$ and $M$ dimension reduces to $\sph^2\times \R$ along $x_k$, then $R(x_k)d(x_k,\G)^2\to \infty$. As a consequence, $\sup_{x\in M} R(x)d(x,\G)^2=+\infty$.
\end{remark}
We now prove Theorem \ref{Full-linear-bound} and the first half of Theorem \ref{thm:main-theorem-2-stronger-scalar-bound}. Recall from Definition \ref{defn:edges-of-soliton} that $\G$ is the union of two integral curves of $-\nabla f/|\nabla f|$ passing through $x_+,x_-$.
\begin{theorem}
\label{thm:full-linear-bound-proof}
There exists $C>0$ depending on $\kappa$ and the manifold such that 
$$R(x)d(x,\G)\leq C\qquad \text{ for all }x\in M.$$
Here and below, $d(x,\G)$ denotes the distance from $x$ to the set $\G$. Further, if 
\begin{equation}
\label{eqn:vanishing-scal-at-infinity}
\tag{*}
\lim_{d(x,o)\to \infty}R(x)=0,
\end{equation}
then the following stronger bound holds:
$$\lim_{d(x,o)\to \infty}R(x)d(x,\G)=0.$$
\end{theorem}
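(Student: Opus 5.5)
We argue by contradiction, combining Theorem~\ref{Upper-bound-on-curves} with distance distortion along the flow. Suppose there are $x_k\in M$ with $R(x_k)d(x_k,\G)\to\infty$. Since $R\le 1$, this forces $d(x_k,\G)\to\infty$, hence $d(x_k,o)\to\infty$.

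\textbf{Step 1: the limit is a bubble-sheet.} Pass to a subsequence so that dimension reduction applies at $x_k$. If the limit were $\Bry^3\times\R$, Lemma~\ref{Obtaining-points-from-Bryant-like-sequences} would give $R(x_k)d(x_k,\G)^2\le C$. Then
\[
R(x_k)d(x_k,\G)\le C^{1/2}R(x_k)^{1/2}\le C^{1/2},
\]
a contradiction. So $(M,R(x_k)g,x_k)$ converges to $\sph^2\times\R^2$, and by Corollary~\ref{Vanishing-lambda-2} we have $R(x_k)\to0$.

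\textbf{Step 2: pull back to a bubble-sheet point at controlled distance.} Following the flow forward in time, we seek $q_k=\Phi_{T_k}(x_k)$, so that $x_k=\Phi_{-T_k}(q_k)$. We want $q_k$ to be an $(\varepsilon_1,2)$-center with $d(q_k,o)>r_1$, and with $T_k\gtrsim d(x_k,\G)$. Theorem~\ref{Upper-bound-on-curves} then gives $R(x_k)\le C/T_k\lesssim C/d(x_k,\G)$, the desired contradiction.

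To construct $q_k$, follow the curve $\tau\mapsto\Phi_{\tau}(x_k)$ forward. We use the dichotomy of Theorem~\ref{existence-of-x-+-} and Lemma~\ref{Obtaining-points-from-Bryant-like-sequences}: every point far out is either an $(\varepsilon_1,2)$-center or lies within rescaled distance $C(\varepsilon_1)R^{-1/2}$ of $\G$. This follows from Corollary~\ref{cor:close-as-metrics-to-Bry-or-Cyl} and the fact that points near $\Bry^3\times\R$ which are not bubble-sheet centers are at bounded rescaled distance from the Bryant tip line. Let $\tau_k$ be the first forward time at which $\Phi_{\tau_k}(x_k)$ either fails to be an $(\varepsilon_1,2)$-center or reaches $d(\cdot,o)=r_1$. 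For $\tau<\tau_k$, Theorem~\ref{Upper-bound-on-curves} applied at $\Phi_\tau(x_k)$ gives $R(x_k)\le C/\tau$. It therefore suffices to show $\tau_k\ge c\,d(x_k,\G)$.

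\textbf{Step 3: distance distortion.} At time $\tau_k$, the point $y_k=\Phi_{\tau_k}(x_k)$ is within $C R(y_k)^{-1/2}\le C'$ of $\G$, or is in a fixed compact set. Here we use that $R(y_k)$ is bounded below near $\G$ in a neighborhood of bounded rescaled size, or we handle the small-$R$ case separately via Perelman's long-range estimate. Since $|\nabla f|\le1$, we have $d(x_k,y_k)\le\tau_k$. Hence
\[
d(x_k,\G)\le \tau_k+d(y_k,\G)\le \tau_k+CR(y_k)^{-1/2}.
\]

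The main obstacle is the term $R(y_k)^{-1/2}$, which could be large. To handle it, we use the bound $R(y_k)\ge R(x_k)$ from the monotonicity in Lemma~\ref{lem:monotonicity-along-flows}. This gives
\[
d(x_k,\G)\le\tau_k+CR(x_k)^{-1/2}.
\]
Combined with $R(x_k)\le C/\tau_k$, this yields
\[
R(x_k)d(x_k,\G)\le C+CR(x_k)^{1/2}\le C,
\]
contradicting $R(x_k)d(x_k,\G)\to\infty$. This proves the first claim.

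\textbf{Step 4: the stronger bound under \eqref{eqn:vanishing-scal-at-infinity}.} Repeat the same argument with the refined asymptotics \eqref{eqn:linear-scalar-bound-100}, tracking constants via \eqref{eqn:linear-scalar-bound-4}. Along bubble-sheet segments, $1/R+f$ varies by at most $4\eta$ per unit time. Here $f$ decreases at rate $1-R\to1$, and $d(x_k,y_k)\le\tau_k$. Moreover, $1/R(y_k)$ is $o(1/R(x_k))$ because the term $1/R(y_k)$ is bounded by $C R(y_k)^{-1/2}\cdot$, and $R(y_k)^{-1/2}=o(R(x_k)^{-1})$ since $R\to0$. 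Together these give
\[
d(x_k,\G)\le\tau_k+o(R(x_k)^{-1}),\qquad \tau_k\le\frac{1+O(\eta)}{R(x_k)}-\frac{1}{R(y_k)}.
\]
The key point to verify is that the edge contribution $d(y_k,\G)$ is sublinear, and that the identity forces $\tau_k R(x_k)\to$ something $\le 1$ with a deficit. I expect pinning down the vanishing limit, rather than mere boundedness, to be the hardest part. There I would try comparing with the exact asymptotic $sR(\Phi_{-s}(q))\to1$ together with the distortion of $d(\cdot,\G)$ along the flow, Lemma~\ref{lem:variation-of-distance-in-intrinsic-metric}.
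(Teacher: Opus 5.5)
Your argument for the uniform bound is correct, and it is organized differently from the paper's. The paper flows backward from a fixed level set $\Sigma_{s_1}$ to the first bubble-sheet time $t_k^1$ and splits into $\sup_k t_k^1<\infty$ versus $t_k^1\to\infty$. You instead flow forward from $x_k$ to the first point $y_k$ that is not an $(\varepsilon_1,2)$-center (or enters a fixed ball). You then use a uniform dichotomy: far-out points that are not $(\varepsilon_1,2)$-centers satisfy $R(y)\,d(y,\G)^2\le C$, which follows from dimension reduction and Lemma~\ref{Obtaining-points-from-Bryant-like-sequences} by a compactness argument. Combining $d(x_k,y_k)\le\tau_k$, $R(x_k)\le C/\tau_k$ from Theorem~\ref{Upper-bound-on-curves}, and $R(y_k)\ge R(x_k)$ gives $R(x_k)d(x_k,\G)\le C+C\sqrt{R(x_k)}$, using only the speed bound $|\nabla f|\le 1$. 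You should define the stopping time as a supremum, take $y_k$ slightly beyond it, and make the stopping radius at least $r_1$ and at least the radius in the dichotomy, but these fixes are cosmetic.

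The gap is Step 4, and it cannot be closed by tracking constants, because the comparison path is wrong. All your control of $d(x_k,\G)$ passes through the flow-line segment from $x_k$ to $y_k$, and that segment is genuinely long: $d(x_k,y_k)\ge f(y_k)-f(x_k)\ge c\,\tau_k$ by \eqref{eq:Lipschitz} and $|\nabla f|^2\ge c$ away from $o$. For example, take $x_k=\Phi_{-t_k}(q)$ for a fixed far-out $(\delta,2)$-center $q$ with $\delta$ small. Then $\tau_k\ge t_k$ by Theorem~\ref{Stability-of-bubble-sheet} and $t_kR(x_k)\to 1$ by \eqref{eqn:linear-scalar-bound-100}, so $R(x_k)d(x_k,y_k)\ge c$. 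Correspondingly, your two displayed inequalities combine only to $R(x_k)d(x_k,\G)\le 1+O(\eta)-R(x_k)/R(y_k)+o(1)$, a bound of order one.

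The vanishing statement amounts to $d(x_k,\G)=o(\tau_k)$. The missing ingredient is the distance distortion estimate of Lemma~\ref{lem:variation-of-distance} combined with the stability inequality Lemma~\ref{stability-inequalities} (Remark~\ref{rem:stability-inequalities-application-remark}). In your setup, pick $z_k\in\G\setminus\{o\}$ with $R(y_k)d(y_k,z_k)^2\le C$ (or $d(y_k,z_k)\le C$ if $y_k$ lies in the fixed ball), and flow both points backward by the same time:
\[
d(x_k,\G)\le d\bigl(\Phi_{-\tau_k}(y_k),\Phi_{-\tau_k}(z_k)\bigr)\le d(y_k,z_k)+C\int_0^{\tau_k}\max\Bigl(\sqrt{R(\Phi_{-s}(y_k))},\sqrt{R(\Phi_{-s}(z_k))}\Bigr)\,ds .
\]
Under \eqref{eqn:vanishing-scal-at-infinity} and the monotonicity of $R$ along backward flow lines, the integrand is at most $\eta$ for $s\ge b$, uniformly in $k$. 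This yields $R(x_k)d(x_k,\G)\le C\sqrt{R(x_k)}+CbR(x_k)+C\eta$, which is exactly how the paper concludes in Cases 2a and 2b.

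The tool you suggest instead, Lemma~\ref{lem:variation-of-distance-in-intrinsic-metric}, points the wrong way. It says intrinsic distances on level sets increase as one moves outward, so it bounds the separation of flow lines from below, not from above.
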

\begin{proof}
We will show the following claim. 
\begin{claim}
\label{claim:scalar-bound-for-sequence}
Let $x_k$ be any sequence in $M$ such that $d(x_k,o)\to \infty$. After passing to a subsequence of $(x_k)$, we have   
$$\sup_k R(x_k)d(x_k,\G)<\infty.$$
Further, if (\ref{eqn:vanishing-scal-at-infinity}) holds, then after passing to a subsequence we have $R(x_k)d(x_k,\G)\to 0$. 
\end{claim}
Proving Claim \ref{claim:scalar-bound-for-sequence} is enough to show the theorem: if the theorem were false, we could choose a sequence $x_k\in M$ with $d(x_k,o)\to \infty$ and $R(x_k)d(x_k,\G)\to\infty$, contradicting the first part of the claim. Similarly, under \eqref{eqn:vanishing-scal-at-infinity}, if $\limsup_{d(x,o)\to\infty}R(x)d(x,\G)>0$ then we could extract a sequence $x_k\in M$ with $d(x_k,o)\to \infty$ and $R(x_k)d(x_k,\G)\ge c>0$ for all $k$, contradicting the second part of the claim. 

Therefore, we proceed to prove Claim \ref{claim:scalar-bound-for-sequence}. Let $x_k\in M$ with $d(x_k,o)\to \infty$. In what follows, $C$ is a positive constant depending on $\kappa$ and the manifold $(M,g)$, which may change from line to line, but will always stay independent of $k$. We consider two cases.

\noindent \textbf{Case 1.} Suppose $(M,R(x_k)g,x_k)\to (\Bry^3\times \R,R_{\tilde{g}_0}(\tilde{x})\tilde{g}_0+dz^2,(\tilde x,0))$, after passing to a subsequence. By Lemma \ref{Obtaining-points-from-Bryant-like-sequences}, after passing to a subsequence there exist $s_k\to\infty$ and a choice of sign $\pm$ such that
$$A_1:=\sup_k R(x_k)\,d\left(x_k,\Phi_{-s_k}(x_\pm)\right)^2<\infty.$$
Since $\Phi_{-s_k}(x_\pm)\in\G$, we have $d(x_k,\G)\le d(x_k,\Phi_{-s_k}(x_\pm))$, hence
$$R(x_k)d(x_k,\G)\le R(x_k)\,d\left(x_k,\Phi_{-s_k}(x_\pm)\right)
\le \sqrt{R(x_k)}\cdot \Big(R(x_k)d\left(x_k,\Phi_{-s_k}(x_\pm)\right)^2\Big)^{1/2}.$$
which implies 
$$R(x_k)d(x_k,\G)\leq A_1^{1/2}\sqrt{R(x_k)}.$$
This proves Claim \ref{claim:scalar-bound-for-sequence} in this case. 

\noindent \textbf{Case 2.} Suppose that $(M,R(x_k)g,x_k)\to (\sph^2\times \R^2,\bar{g}_0+dz^2,(\bar{x},0))$, after passing to a subsequence. Let $\varepsilon_1,r_1$ be constants from Theorem \ref{Upper-bound-on-curves}, and let $\epsilon\in (0,\varepsilon_1)$ (independent of $k$). Choose $s_1\ll 0$ such that $\Sigma_{s_1}\subset \{d(\cdot,o)>r_1\}$. Choose $p_k\in \Sigma_{s_1}$ and $t_k>0$ such that $\Phi_{-t_k}(p_k)=x_k$. As $d(x_k,o)\to \infty$, we have $t_k\to \infty$. Indeed, $f(p_k)=s_1$ is fixed while $f(x_k)\to-\infty$, and along $\Phi_{-t}$ we have $-\frac{d}{dt}f(\Phi_{-t}(p_k))=|\nabla f|^2\leq 1$ for $t\geq 0$, and integrating gives $t_k\to\infty$.

Define
$$\hat{t}_k^1:=\inf\{t\in[0,t_k]:\ \Phi_{-t}(p_k)\ \text{is an }(\epsilon,2)\text{-center}\}.$$
Such $\hat{t}_k^1$ exists because $x_k$ is an $(\epsilon,2)$-center for all large $k$ and $\hat{t}_k^1<t_k$. Choose $ t_k^1\in(\hat{t}_k^1,\hat{t}_k^1+\tfrac{1}{k})$ so that $q_k^1:=\Phi_{- t_k^1}(p_k)$ is an $(\epsilon,2)$-center. Note that $d(q_k^1,o)\geq d(p_k,o)> r_1$ for all large $k$. We may ensure that $t_k^1\leq t_k$, and hence $R(q_k^1)\geq R(x_k)$. By Theorem \ref{Upper-bound-on-curves} applied to $q_k^1$, we have for all large $k$, 
\begin{equation}
\label{eqn:first-linear-bd-t-k-1}
R(x_k)\leq \frac{C}{t_k-t^1_k}.
\end{equation}
We consider two sub-cases based on whether $(t_k^1)_k$ is bounded or not. 

\noindent \textbf{Case 2a.} Suppose that $a:=\sup_k t_k^1<\infty$. In this case, from (\ref{eqn:first-linear-bd-t-k-1}) it follows that for all large $k$, 
$$t_kR(x_k)\leq C.$$
Define for $s\in[0,t_k-a]$, 
$$d_{\pm,k}(s):=d\left(\Phi_{-(s+a)}(p_k),\,\Phi_{-s}(x_\pm)\right)
=d_{g(-s)}\left(\Phi_{-a}(p_k),\,x_\pm\right).$$
Note that $d_{\pm,k}(0)$ is bounded above uniformly in $k$ since $\Phi_{-a}(\Sigma_{s_1})$ is compact. We obtain 
\begin{equation}
\label{eqn:case-2a-bound-on-scalar-along-x_k}
\begin{aligned}
&R(x_k)d(x_k,\G)\leq C\frac{d\left(x_k,\Phi_{-(t_k-a)}(x_\pm)\right)}{t_k}=C\frac{d_{\pm,k}(t_k-a)}{t_k}.\\
\end{aligned}
\end{equation}
By Lemmas \ref{lem:variation-of-distance} and \ref{stability-inequalities} (cf. Remark \ref{rem:stability-inequalities-application-remark}), we obtain for all $s\geq 0$, 
\begin{equation}
\label{eqn:case-2a-bound-on-partial-d-pm}
\partial_s^+ d_{\pm,k}(s)\le C\max\Big(\sqrt{R(\Phi_{-(s+a)}(p_k))},\,\sqrt{R(\Phi_{-s}(x_\pm))}\Big).
\end{equation}
Integrating (\ref{eqn:case-2a-bound-on-partial-d-pm}) on $[0,t_k-a]$, and using (\ref{eqn:case-2a-bound-on-scalar-along-x_k}) it follows that $\sup_k R(x_k)d(x_k,\G)<\infty$. This proves the first part of Claim \ref{claim:scalar-bound-for-sequence}. 

Assume vanishing scalar curvature at infinity (\ref{eqn:vanishing-scal-at-infinity}). We have $\inf_{p\in\Sigma_{s_1}\cup \Sigma_{s_0}} d(o,\Phi_{-s}(p))\to\infty$ as $s\to\infty$. Together with \eqref{eqn:vanishing-scal-at-infinity} it follows that for every $\eta>0$, there exists $b>0$ such that $R(\Phi_{-s}(p))\le \eta^2$ for all $s\ge b$ and all $p\in\Sigma_{s_1}$. With \eqref{eqn:case-2a-bound-on-scalar-along-x_k} and integrating \eqref{eqn:case-2a-bound-on-partial-d-pm} on $[b,t_k-a]$, we obtain for all large $k$, 
$$R(x_k)d(x_k,\G)\leq C\frac{d_{\pm,k}(t_k-a)}{t_k}\leq \frac{C}{t_k}(d_{\pm,k}(b)+\eta \cdot(t_k-a-b)).$$
Since $\sup_k d_{\pm,k}(b)<\infty$, we obtain $R(x_k)d(x_k,\G)\leq 2C\eta$ for all large $k$. This proves the second part of Claim \ref{claim:scalar-bound-for-sequence} for this case. 

\noindent \textbf{Case 2b.} Suppose that $a=\sup_k t_k^1=\infty$. After passing to a subsequence, $t_k^1\to \infty$. 

Since $q_k^1=\Phi_{-t_k^1}(p_k)$ and $t_k^1\to \infty$, we have $d(q_k^1,o)\to \infty$.
Consider the pointed sequence $(M,R(q_k^1)g,q_k^1)$. By dimension reduction, after passing to a subsequence, it converges smoothly to either $\sph^2\times\R^2$ or $\Bry^3\times\R$. 
The limit cannot be $\sph^2\times\R^2$: if it were, the sequence $(M,R(\Phi_{1}q_k^1)g,\Phi_{1}q_k^1)$ also converges to $\sph^2\times \R^2$ since 
$$\sqrt{R(q_k^1)}d_g(q_k^1,\Phi_{1}q_k^1)\leq \int_{0}^{1}|\nabla f|(\Phi_{s}q_k^1)\,ds\leq 1,$$
for all $k$. This implies that $\Phi_{-(t_k^1-1)}p_k=\Phi_{1}q^1_k$ is an $(\epsilon,2)$-center for all large $k$, contradicting the choice of $t_k^1$ and $\hat{t}_k^1$. Hence
$$(M,R(q_k^1)g,q^1_k)\to (\Bry^3\times\R,R_{\tilde{g}_0}(\tilde{x})\tilde{g}_0+dz^2,(\tilde x,0)).$$
By Lemma \ref{Obtaining-points-from-Bryant-like-sequences}, without loss of generality, there exists $s_k^1\to \infty$ such that 
$$A_2:=\sup_{k}R(q_k^1)d(q_k^1,\Phi_{-s_k^1}(x_+))^2<\infty.$$
In particular, we have for all $k$,  
\begin{equation}
\label{eqn:nearness-to-tip}
R(q_k^1)\,d\left(q_k^1,\Phi_{-s_k^1}(x_+)\right)\leq \sqrt{R(q_k^1)}\,\sqrt{A_2}.
\end{equation}
Set $d_k(s)=d(\Phi_{-(s+t_k^1)}(p_k),\Phi_{-(s+s_k^1)}(x_+))$. By Lemmas \ref{lem:variation-of-distance} and \ref{stability-inequalities}, we obtain for $s\geq 0$, 
$$\partial_s^+ d_k(s)\leq C\max\Big(\sqrt{R(\Phi_{-(s+t_k^1)}(p_k))},\,\sqrt{R(\Phi_{-(s+s_k^1)}(x_+))}\Big).$$
Integrating and using that $s\mapsto R(\Phi_{-s}(x))$ is non-increasing for any $x\in M$ (Lemma \ref{lem:monotonicity-along-flows}), 
$$d_k(t_k- t_k^1)\leq d_k(0)+m_k(t_k-t_k^1),$$
where $m_k:=\max\Big(\sqrt{R(\Phi_{-t_k^1}(p_k))},\,\sqrt{R(\Phi_{-s_k^1}(x_+))}\Big)\in (0,1]$. We obtain 
\begin{equation}
\label{eqn:distance-bound-t-k-1}
\begin{aligned}
d(x_k,\G)&\leq d\left(x_k,\Phi_{-(t_k-t_k^1+s_k^1)}(x_+)\right)\\
&=d_k(t_k-t_k^1)\\
&\leq d\left(q_k^1,\Phi_{-s_k^1}(x_+)\right)+Cm_k(t_k-t_k^1).
\end{aligned}
\end{equation}
Combining with (\ref{eqn:first-linear-bd-t-k-1}), we obtain 
\begin{equation}
\label{eqn:second-bound-on-x-k}
\begin{aligned}
&R(x_k)d(x_k,\G)\leq R(x_k)\,d\left(q^1_k,\Phi_{-s^1_k}(x_+)\right)+Cm_k.
\end{aligned}
\end{equation}
Using (\ref{eqn:nearness-to-tip}), (\ref{eqn:second-bound-on-x-k}), and $R(x_k)\leq R(q_k^1)$, we obtain 
\begin{equation}
\label{eqn:first-bound-on-x-k}
\begin{aligned}
&R(x_k)d(x_k,\G)\leq \sqrt{R(q_k^1)}\,\sqrt{A_2}+Cm_k,
\end{aligned}
\end{equation}
uniformly in $k$. This estimate proves the first part of Claim \ref{claim:scalar-bound-for-sequence}, and as (\ref{eqn:vanishing-scal-at-infinity}) implies $m_k\to 0,R(q_k^1)\to 0$, we obtain the second part of Claim \ref{claim:scalar-bound-for-sequence}. 

This finishes the proof of Claim \ref{claim:scalar-bound-for-sequence}, completing the proof of Theorem \ref{thm:full-linear-bound-proof}. 
\end{proof}

\section{Asymptotic cone of the soliton}
\label{sec:asymptotic-cone-of-the-soliton}
In this section, we analyze the relationship between the asymptotic cone of the soliton and the behavior of the scalar curvature at infinity.
Recall the function $G$ defined in Lemma \ref{G-on-Level-sets-lemma} which computes the limit of scalar curvature along integral curves of $-\nabla f/|\nabla f|$. We begin with the following lemma that shows how the distance of $\Phi_{-s}(p)$ from $o$ behaves for $s\gg 0$. 
\begin{lemma}
\label{lem:distance-p-Phi-from-o-asymptotic}
Let $(M^4,g,f)$ satisfy \textup{\hyperref[assumption:A1]{(A1)}, \hyperref[assumption:A2]{(A2)}} and let $p\in M\setminus \{o\}$. Let $G$ be the function defined in Lemma \ref{G-on-Level-sets-lemma}. Then, $$\lim_{s\to \infty}\frac{d(\Phi_{-s}(p),o)}{s}=\sqrt{1-G(p)}.$$
\end{lemma}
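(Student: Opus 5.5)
The plan is to sandwich $d(\Phi_{-s}(p),o)$ between two quantities that both grow like $s\sqrt{1-G(p)}$. Write $x_s:=\Phi_{-s}(p)$ and $\alpha:=G(p)$. By Lemma~\ref{lem:monotonicity-along-flows}, $R(x_s)\downarrow \alpha$ and $|\nabla f|^2(x_s)=1-R(x_s)\uparrow 1-\alpha$ as $s\to\infty$.

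\emph{Upper bound.} The curve $\tau\mapsto x_\tau$, $\tau\in[0,s]$, joins $p$ to $x_s$ and has speed $|\nabla f|(x_\tau)\le\sqrt{1-\alpha}$. Hence
\[
d(x_s,o)\le d(p,o)+\int_0^s|\nabla f|(x_\tau)\,d\tau\le d(p,o)+s\sqrt{1-\alpha},
\]
which gives $\limsup_{s\to\infty} d(x_s,o)/s\le\sqrt{1-\alpha}$.

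\emph{Lower bound.} Here I would use $f$ as a distance-like function.
\begin{itemize}
\item By \eqref{eq:Lipschitz}, $d(x_s,o)\ge f(o)-f(x_s)$.
\item Along the flow, $f(p)-f(x_s)=\int_0^s|\nabla f|^2(x_\tau)\,d\tau=\int_0^s(1-R(x_\tau))\,d\tau$.
\item By Cesàro averaging, since $R(x_\tau)\to\alpha$, this equals $s(1-\alpha)+o(s)$.
\end{itemize}
This only yields $\liminf d(x_s,o)/s\ge 1-\alpha$, which is weaker than $\sqrt{1-\alpha}$ when $\alpha>0$. So for $\alpha>0$ a sharper argument is needed.

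\emph{Sharpening when $\alpha>0$.} I would use the first variation formula together with the quantity in \eqref{eqn:distance-from-o-increases-monotonicity-along-flow-lemma}. Let $\sigma$ be a minimizing geodesic from $o$ to $x_s$. Then
\[
\frac{d}{ds}d(x_s,o)\ge\langle\sigma'(b),-\nabla f\rangle,
\]
and the right-hand side is the radial component of $-\nabla f$ at $x_s$. The aim is to show this radial component tends to $|\nabla f|(x_s)\to\sqrt{1-\alpha}$, i.e.\ that the angle between $-\nabla f$ and the outward radial direction tends to $0$.

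To get the angle estimate, I would use the integral $\int_0^b\Ric(\sigma',\sigma')\,ds=\langle\sigma'(b),-\nabla f\rangle$ from \eqref{eqn:distance-from-o-increases-monotonicity-along-flow-lemma}. The key observation is
\[
\langle\sigma'(b),-\nabla f\rangle=-(f\circ\sigma)'(b),
\]
and $f\circ\sigma$ is concave with $(f\circ\sigma)'(0)=0$. So $-(f\circ\sigma)'$ is nondecreasing on $[0,b]$, with $|(f\circ\sigma)'|\le|\nabla f|\le 1$. It follows that
\[
f(o)-f(x_s)\le b\cdot\bigl(-(f\circ\sigma)'(b)\bigr).
\]
This gives $\langle\sigma'(b),-\nabla f\rangle\ge (f(o)-f(x_s))/d(x_s,o)$, i.e.
\[
\frac{d}{ds}d(x_s,o)\ge\frac{f(o)-f(x_s)}{d(x_s,o)}.
\]
Set $D(s)=d(x_s,o)$. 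Then $(D^2)'\ge 2(f(o)-f(x_s))=2s(1-\alpha)+o(s)$. Integrating gives $D^2\ge s^2(1-\alpha)+o(s^2)$, hence $\liminf D/s\ge\sqrt{1-\alpha}$, matching the upper bound.

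\emph{Technical points.} Since $D$ is only Lipschitz, I would handle it via the upper Dini derivative or by working almost everywhere. Minimizing geodesics may be non-unique; the inequality holds for every choice, so the first variation bound applies. The main obstacle I expect is precisely this Dini-derivative bookkeeping, but it follows the same first variation argument already used in Lemma~\ref{lem:monotonicity-along-flows}.
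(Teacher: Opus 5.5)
Your proof is correct, but it takes a different route from the paper's.

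\emph{The paper's route.} It reparametrizes the flow line as $\Phi_{-s}(p)=\Gamma_p(\beta(s))$, with $\beta'(s)=|\nabla f|(\Phi_{-s}(p))\to\sqrt{1-G(p)}$. It then imports from Lai's flying-wing paper (Lemma~3.2 there) the fact that $d(\Gamma_p(t),o)/t\to 1$ along the unit-speed integral curve of $-\nabla f/|\nabla f|$, and multiplies the two limits.

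\emph{Your route.} You prove the growth rate directly. The upper bound comes from the length of the flow line. The lower bound comes from the differential inequality $D'\ge (f(o)-f(x_s))/D$, which combines first variation with concavity of $f\circ\sigma$ and $\nabla f(o)=0$. Integrating $(D^2)'\ge 2(f(o)-f(x_s))=2s(1-\alpha)+o(s)$ gives the matching bound.

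\emph{What each buys.} Your argument is self-contained and uses only (A1), (A2) and $R+|\nabla f|^2=1$. Running the same computation along the unit-speed curve $\Gamma_p$ gives $(D^2)'\ge 2(f(o)-f)/|\nabla f|$ with $|\nabla f|\to\sqrt{1-G(p)}>0$. This reproves the fact the paper cites from Lai. A posteriori it also gives $\langle\sigma'(b),-\nabla f\rangle/|\nabla f|\to 1$, i.e.\ the flow line becomes asymptotically radial. The paper's proof is shorter only because this geometric input is outsourced.

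\emph{The technical point you flag is harmless.}
\begin{itemize}
\item $D$ is $1$-Lipschitz, since the flow has speed $|\nabla f|\le 1$, so it is absolutely continuous.
\item $D>0$ throughout, since $o$ is fixed by $\Phi_t$.
\item At every point of differentiability, apply the smooth upper barrier $y\mapsto d(o,\sigma(b-\epsilon))+d(\sigma(b-\epsilon),y)$ to backward difference quotients. This gives $D'(s)\ge\langle\sigma'(b),-\nabla f\rangle$ for every minimizing $\sigma$.
\end{itemize}
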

\begin{proof}
Recall that $\G_p$ was defined to be the integral curve of $-\nabla f/|\nabla f|$ starting at $p$. From \cite[Lemma 3.2]{Lai24}, we have 
\begin{equation}
\label{eqn:distance-p-Phi-from-o-1}
\lim_{s\to \infty}\frac{d(\G_p(s),o)}{s}=1.
\end{equation}
For each $s>0$, let $\beta(s)>0$ be such that $\Phi_{-s}(p)=\G_p(\beta(s))$ with $\beta (0)=0$. Then, $\beta'(s)=|\nabla f|(\Phi_{-s}(p))$ for $s>0$. From Lemma \ref{lem:monotonicity-along-flows}, we have $|\nabla f|(\Phi_{-s}(p))\geq |\nabla f|(p)>0$ for all $s>0$, it follows that $\beta(s)\to \infty$ as $s\to \infty$. Combining this with \eqref{eqn:distance-p-Phi-from-o-1}, we obtain 
$$\lim_{s\to \infty}\frac{d(\Phi_{-s}(p),o)}{s}=\lim_{s\to \infty}\frac{d(\G_p(\beta(s)),o)}{s}=\lim_{s\to \infty}\frac{\beta(s)}{s}.$$
Using $|\nabla f|^2(\Phi_{-s}(p))=1-R(\Phi_{-s}(p))\to 1-G(p)$ as $s\to \infty$, we obtain 
$$\lim_{s\to \infty}\frac{d(\Phi_{-s}(p),o)}{s}=\lim_{s\to \infty}\frac{\beta(s)}{s}=\lim_{s\to \infty}\frac{1}{s}\int_0^s \beta'(t)\,dt=\lim_{s\to \infty}\beta'(s)=\sqrt{1-G(p)},$$
completing the proof. 
\end{proof}
We now prove the second half of Theorem \ref{thm:main-theorem-2-stronger-scalar-bound} using Theorem \ref{thm:full-linear-bound-proof}. Recall from Theorem \ref{existence-of-x-+-} that the scalar curvature vanishes at infinity if and only if $G(x_+)=0=G(x_-)$. 
\begin{theorem}
\label{thm:asymptotic-cone-ray-proof}
 Assume $(M^4,g,f)$ is a steady soliton that satisfies \textup{\hyperref[assumption:A1]{(A1)}--\hyperref[assumption:A4]{(A4)}}. Suppose that $\lim_{d(x,o)\to \infty}R(x)=0$. Then, the asymptotic cone of $(M,g)$ is a ray. 
\end{theorem}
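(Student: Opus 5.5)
The plan is to show that every point of $M$ far from $o$ lies, at sublinear relative distance, near one of the two edges $\Gamma_1,\Gamma_2$, and that the two edges are asymptotic to each other at sublinear rate. For a ray $\gamma$ from $o$, Theorem~\ref{thm:full-linear-bound-proof} together with $R\to 0$ gives $R(x)d(x,\Gamma)\to 0$, but this alone does not control $d(x,\Gamma)/d(x,o)$. Instead I would work along integral curves. Any far point $x$ equals $\Phi_{-t}(p)$ with $p\in\Sigma$. By Lemma~\ref{lem:distance-p-Phi-from-o-asymptotic}, with $G\equiv 0$ by Lemma~\ref{G-on-Level-sets-lemma}(2), we have $d(\Phi_{-t}(p),o)/t\to 1$. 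Uniformity in $p\in\Sigma$ follows from $|\nabla f|^2=1-R\to 1$ uniformly at infinity together with the proof of Lemma~\ref{lem:distance-p-Phi-from-o-asymptotic}; at minimum one gets $d(\Phi_{-t}(p),o)\ge t(1-o(1))$ uniformly, via $f(p)-f(\Phi_{-t}(p))=\int_0^t(1-R)$ and \eqref{eq:Lipschitz}.

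The key estimate is the distance-distortion inequality already used in Case 2a. For $p,p'\in\Sigma$, set $d(s)=d_{g(-s)}(p,p')=d(\Phi_{-s}(p),\Phi_{-s}(p'))$. Lemmas~\ref{lem:variation-of-distance} and \ref{stability-inequalities} give
\[
\partial_s^+ d(s)\le C\max\Bigl(\sqrt{R(\Phi_{-s}(p))},\sqrt{R(\Phi_{-s}(p'))}\Bigr).
\]
Since $R\to 0$ uniformly on $\Phi_{-s}(\Sigma)$ as $s\to\infty$, integration yields
\[
\sup_{p,p'\in\Sigma} d(\Phi_{-s}(p),\Phi_{-s}(p'))=o(s).
\]
This is the step I expect to be the main obstacle: the lemmas must apply to arbitrary pairs, not only to pairs near bubble-sheet regions. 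If the constant in Lemma~\ref{stability-inequalities} requires one endpoint to be an $(\epsilon,2)$-center, I would first compare each $p$ to a bubble-sheet point and then pass through the edges. Comparing to $\Phi_{-s}(x_+)$ at each time is enough, using the Case 2a/2b bookkeeping of Theorem~\ref{thm:full-linear-bound-proof}.

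Conclusion. Take rescalings $(M,\lambda_i^{-2}g,o)$ with $\lambda_i\to\infty$. Every point at distance $\approx r\lambda_i$ from $o$ has the form $\Phi_{-s}(p)$ with $s=r\lambda_i(1+o(1))$, uniformly in $p$. By the estimate above, all such points are within $o(\lambda_i)$ of $\Phi_{-s}(x_+)$. Hence the rescaled sphere of radius $r$ has diameter tending to $0$, and the Gromov--Hausdorff limit is the set $\{\lim \lambda_i^{-1}\Phi_{-\lambda_i r}(x_+)\}$, isometric to $[0,\infty)$. The isometry comes from $d(\Phi_{-s}(x_+),\Phi_{-s'}(x_+))\le|s-s'|$ combined with the distance-to-$o$ asymptotics, which forces equality up to $o(\lambda_i)$. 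Nonnegative sectional curvature guarantees the asymptotic cone exists and is unique, so the asymptotic cone is a ray.
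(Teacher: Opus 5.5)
Your proof is correct, and it takes a genuinely different route from the paper's.

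\textbf{The obstacle you flag does not arise.} Remark~\ref{rem:stability-inequalities-application-remark} already gives
\[
\partial_s^+ d\bigl(\Phi_{-s}(x),\Phi_{-s}(y)\bigr)\le C\max\Bigl(\sqrt{R(\Phi_{-s}(x))},\sqrt{R(\Phi_{-s}(y))}\Bigr)
\]
for \emph{all} $x,y\in M$. The constant comes only from Perelman's long-range estimate under (A1)--(A3). So you need no bubble-sheet points and no detour through the edges. For uniformity, note that monotonicity of $|\nabla f|$ and \eqref{eq:Lipschitz} give
\[
\inf_{p\in\Sigma}d\bigl(\Phi_{-s}(p),o\bigr)\ge f(o)-s_0+s\min_\Sigma|\nabla f|^2\to\infty .
\]
Hence if $R\le\eta^2$ on $\Phi_{-u}(\Sigma)$ for $u\ge b$, integration yields
\[
\operatorname{diam}\Phi_{-s}(\Sigma)\le\operatorname{diam}\Phi_{-b}(\Sigma)+C\eta(s-b).
\]
Combined with your uniform asymptotics $d(\Phi_{-s}(p),o)=s(1+o(1))$, this shows directly that every blow-down sequence converges to a ray.

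\textbf{How the paper argues instead.} It works ray by ray. It combines the linear lower bound $R\ge C^{-1}/(1+d(\cdot,o))$ of \cite{CCMZ23} with the strong estimate $R\,d(\cdot,\Gamma)\to0$ of Theorem~\ref{thm:full-linear-bound-proof}. This shows that each minimizing ray from $o$ stays sublinearly close to the edges $\Gamma$. The paper then uses the distortion inequality only for the single pair $x_+,x_-$, and finishes with Toponogov monotonicity of $d(\gamma_v(s),\gamma_w(s))/s$.

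\textbf{What each route buys.} Yours is more elementary and more general. It uses neither (A4), nor the edges, nor the CCMZ23 lower bound, nor the main decay theorem. It therefore applies verbatim to $\Bry^4$, whose asymptotic cone is indeed a ray. The paper's route presents the statement as a corollary of its main estimate. It also gives extra geometric information: every ray from $o$ is asymptotic, at sublinear rate, to the edges $\Gamma$.
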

\begin{proof}
Let $v,w\in T_oM$ be two unit vectors. Let $\g_v,\g_w:[0,\infty)\to M$ denote two minimizing geodesic rays emanating from $o$ with initial velocities $v,w$ respectively. 

By \cite[Theorem 2.6]{CCMZ23}, the scalar curvature on $M$ satisfies the lower bound 
$$R(x)\geq \frac{C^{-1}}{1+d(x,o)}\qquad \text{ for all }x\in M,$$
 where $C$ is a positive constant. As a result, $\inf_{s\geq 1} sR(\g_v(s))>0$ and $\inf_{s\geq 1} sR(\g_w(s))>0$. Using Theorem \ref{thm:full-linear-bound-proof}, we have $R(\g_v(s))d(\G,\g_v(s))\to 0$ as $s\to \infty$ and $R(\g_w(s))d(\G,\g_w(s))\to 0$ as $s\to \infty$. Combining these facts, it follows that  
\begin{equation}
\label{eqn:asymptotic-cone-thm-1}
\lim_{s\to \infty}\frac{d(\G,\g_v(s))}{s}=0=\lim_{s\to \infty}\frac{d(\G,\g_w(s))}{s}.
\end{equation}
Let $s_k\to \infty$ be a sequence of positive numbers. Recall that we defined $x_\pm$ in Theorem \ref{existence-of-x-+-}. After passing to a subsequence, choose $r_k,l_k$ such that, for each $k$,
$$d(\G,\g_v(s_k))=d(\Phi_{-r_k}(x_{e(v)}),\g_v(s_k)),\qquad d(\G,\g_w(s_k))=d(\Phi_{-l_k}(x_{e(w)}),\g_w(s_k)),$$
 where $e(v),e(w)\in \{+,-\}$ allowing for the possibility that $e(v)\neq e(w)$. By the triangle inequality, we have for all $k$, 
$$\begin{aligned}
s_k-d(\Phi_{-r_k}(x_{e(v)}),\g_v(s_k))\leq d(\Phi_{-r_k}(x_{e(v)}),o)\leq d(\Phi_{-r_k}(x_{e(v)}),\g_v(s_k))+s_k.
\end{aligned}$$
Dividing by $s_k$ and taking $k\to \infty$, it follows from \eqref{eqn:asymptotic-cone-thm-1} that
\begin{equation}
\label{eqn:asymptotic-cone-eqn-3}
\lim_{k\to \infty}\frac{d(\Phi_{-r_k}(x_{e(v)}),o)}{s_k}=1.
\end{equation}
Similarly,  
\begin{equation}
\label{eqn:asymptotic-cone-eqn-4}
\lim_{k\to \infty}\frac{d(\Phi_{-l_k}(x_{e(w)}),o)}{s_k}=1.
\end{equation}
This implies that $r_k\to \infty,l_k\to \infty$ as $k\to \infty$. Since we assumed  $\lim_{d(x,o)\to \infty}R(x)=0$, we have $G(x_+)=0=G(x_-)$ by Lemma \ref{G-on-Level-sets-lemma}. It follows from Lemma \ref{lem:distance-p-Phi-from-o-asymptotic} that 
\begin{equation}
\label{eqn:asymptotic-cone-eqn-5}
\lim_{k\to \infty}\frac{d(\Phi_{-r_k}(x_{e(v)}),o)}{r_k}=1=\lim_{k\to \infty}\frac{d(\Phi_{-l_k}(x_{e(w)}),o)}{l_k}.
\end{equation}
Combining \eqref{eqn:asymptotic-cone-eqn-3}, \eqref{eqn:asymptotic-cone-eqn-4} and \eqref{eqn:asymptotic-cone-eqn-5} we obtain that 
\begin{equation}
\label{eqn:asymptotic-cone-eqn-6}
\lim_{k\to \infty}\frac{s_k}{r_k}=1=\lim_{k\to \infty}\frac{s_k}{l_k}.
\end{equation}
By triangle inequality, 
$$\begin{aligned}
\frac{d(\g_{v}(s_k),\g_w(s_k))}{s_k}&\leq \frac{d(\g_{v}(s_k),\Phi_{-r_k}(x_{e(v)}))}{s_k}+\frac{d(\Phi_{-l_k}(x_{e(w)}),\g_w(s_k))}{s_k}\\
&\quad +\frac{d(\Phi_{-r_k}(x_{e(v)}),\Phi_{-l_k}(x_{e(w)}))}{s_k}.\\
\end{aligned}$$
From \eqref{eqn:asymptotic-cone-thm-1} it follows that 
\begin{equation}
\label{eqn:asymptotic-cone-eqn-2}
\begin{aligned}
&\limsup_{k\to \infty}\frac{d(\g_{v}(s_k),\g_w(s_k))}{s_k}\leq \limsup_{k\to \infty}\frac{d(\Phi_{-r_k}(x_{e(v)}),\Phi_{-l_k}(x_{e(w)}))}{s_k}.
\end{aligned}
\end{equation}

\begin{claim}
\label{claim:asymptotic-cone-1}
The right-hand side of \eqref{eqn:asymptotic-cone-eqn-2} is zero. 
\end{claim}
The proof of Claim \ref{claim:asymptotic-cone-1}  proceeds by considering two cases, depending on whether $e(v),e(w)$ coincide or are distinct.

\noindent \textbf{Case 1.} Suppose that $e(v)=e(w)$. Without loss of generality, let $e(v)=e(w)=+$. Using $|\nabla f|^2\leq 1$ on $M$, we obtain for each $k$, 
$$d(\Phi_{-r_k}(x_+),\Phi_{-l_k}(x_+))\leq \int_{[\min(r_k,l_k),\max(r_k,l_k)]}|\nabla f|(\Phi_{-t}(x_+))\,dt\leq |r_k-l_k|.$$
Combining this with \eqref{eqn:asymptotic-cone-eqn-6}, we obtain 
$$\lim_{k\to \infty}\frac{d(\Phi_{-r_k}(x_+),\Phi_{-l_k}(x_+))}{s_k}\leq \lim_{k\to \infty}\frac{|r_k-l_k|}{s_k}=0.$$
The proof of Claim \ref{claim:asymptotic-cone-1} in this case is complete. 

\noindent \textbf{Case 2.} Suppose $e(v)\neq e(w)$. After relabelling if necessary, we may assume $e(v)=+,e(w)=-$. From triangle inequality, 
\begin{equation}
\label{eqn:asymptotic-cone-eqn-9}
\frac{d(\Phi_{-r_k}(x_+),\Phi_{-l_k}(x_-))}{s_k}\leq \frac{d(\Phi_{-r_k}(x_+),\Phi_{-r_k}(x_-))}{s_k}+\frac{d(\Phi_{-r_k}(x_-),\Phi_{-l_k}(x_-))}{s_k}.
\end{equation}
Following the same reasoning as in Case 1, we find that $\lim_{k\to \infty}\frac{d(\Phi_{-r_k}(x_-),\Phi_{-l_k}(x_-))}{s_k}=0$. Using Lemmas \ref{lem:variation-of-distance} and \ref{stability-inequalities} (cf. Remark \ref{rem:stability-inequalities-application-remark}), we have for all $s>0$, 
\begin{equation}
\label{eqn:asymptotic-cone-eqn-7}
\p_s^+ d(\Phi_{-s}(x_+),\Phi_{-s}(x_-))\leq C\max(R(\Phi_{-s}(x_+))^{1/2},R(\Phi_{-s}(x_-))^{1/2}).
\end{equation}
Let $\eta>0$. Since $G(x_+)=0$ and $G(x_-)=0$, there exists $s_1$ such that the right-hand side of \eqref{eqn:asymptotic-cone-eqn-7} is bounded above by $\eta$ for all $s\geq s_1$. Integrating \eqref{eqn:asymptotic-cone-eqn-7} we obtain for all large $k$,  
$$d(\Phi_{-r_k}(x_+),\Phi_{-r_k}(x_-))\leq d(\Phi_{-s_1}(x_+),\Phi_{-s_1}(x_-))+\eta (r_k-s_1).$$
Dividing by $s_k$, taking $k\to \infty$, and using \eqref{eqn:asymptotic-cone-eqn-6}, we obtain 
$$\limsup_{k\to \infty}\frac{d(\Phi_{-r_k}(x_+),\Phi_{-r_k}(x_-))}{s_k}\leq \eta.$$
Combining this with \eqref{eqn:asymptotic-cone-eqn-9}, since $\eta>0$ was arbitrary, 
$$\lim_{k\to \infty}\frac{d(\Phi_{-r_k}(x_+),\Phi_{-l_k}(x_-))}{s_k}=0.$$
This completes the proof of Claim \ref{claim:asymptotic-cone-1}. From Claim \ref{claim:asymptotic-cone-1}, it follows that given any $s_k\to \infty$, we may pass to a subsequence in $(s_k)_k$ to ensure that 
$$\lim_{k\to \infty}\frac{d(\g_{v}(s_k),\g_w(s_k))}{s_k}=0.$$
By Toponogov's theorem $s\mapsto \frac{d(\g_{v}(s),\g_w(s))}{s}$ is nonincreasing. We conclude that for every pair of minimizing geodesic rays $\g_v,\g_w$ starting at $o$, we have $\lim_{s\to \infty}\frac{d(\g_{v}(s),\g_w(s))}{s}= 0$. This shows that the asymptotic cone of $(M,g)$ is a ray, completing the proof of Theorem~\ref{thm:asymptotic-cone-ray-proof}. 
\end{proof}

\subsection{\texorpdfstring{Proof of Theorem \ref{thm:converse-to-main-theorem-2-stronger-scalar-bound-4d}}{Proof of Theorem}}
  
From now on, we assume that $(M^4,g,f)$ is a complete gradient steady Ricci soliton satisfying
\begin{equation}
\label{assumptions-to-converse-of-a-main-theorem-final-section}
\Ric>0,\qquad \nabla f(o)=0,\qquad \sec\ge 0 \text{ on } M.
\end{equation}
Accordingly, we retain \hyperref[assumption:A1]{(A1)} and \hyperref[assumption:A2]{(A2)}, and from \hyperref[assumption:A3]{(A3)} we keep only the condition $\sec\geq 0$. We do not assume $\kappa$-noncollapsedness, and we do not assume \hyperref[assumption:A4]{(A4)}. 

We next define a function measuring the asymptotic radial component of $-\nabla f$ along geodesics emanating from $o$. 
\begin{definition}
\label{defn:defining-angle-function}
Let $UT_oM:=\{v\in T_oM:|v|_g=1\}$. For each $v\in UT_oM$, let $\gamma_v:[0,\infty)\to M$ denote the unit-speed geodesic with $\gamma_v(0)=o, \gamma_v'(0)=v$. Define
\begin{equation}
\label{eqn:defn-of-angle-function-alpha-1}
\alpha(v):=\lim_{r\to\infty}\big\langle \gamma_v'(r),-\nabla f|_{\g_v(r)}\big\rangle_{g}.
\end{equation}
\end{definition}
The limit in \eqref{eqn:defn-of-angle-function-alpha-1} exists for every $v\in UT_oM$, hence $\alpha:UT_oM\to[0,1]$ is well-defined. Indeed, define for $r\geq 0$, $\beta_v(r):=\big\langle \gamma_v'(r),-\nabla f|_{\g_v(r)}\big\rangle_{g}$. Since $\nabla f(o)=0$, we have $\beta_v(0)=0$. Using the soliton equation, 
\begin{equation}
\label{eqn:defn-of-angle-function-alpha-0}
\beta_v'(r)
= -\nabla^2 f(\gamma_v'(r),\gamma_v'(r))
= \Ric(\gamma_v'(r),\gamma_v'(r)) > 0.
\end{equation}
Hence $\beta_v$ is increasing on $[0,\infty)$. Since $|\nabla f|\le1$ on $M$,
\[
0\le \beta_v(r)\le |\nabla f|(\gamma_v(r))\le 1
\qquad \text{for all } r\ge0.
\]
Therefore $\beta_v(r)$ converges as $r\to\infty$, showing that $\alpha(v)$ is well-defined and $0\le \alpha(v)\le 1$. Since $\beta_v=-(f\circ \g_v)'$, it follows that
\begin{equation}
\label{eqn:defn-of-angle-function-alpha-2}
\begin{aligned}
\alpha(v)&=\lim_{r\to \infty}\beta_v(r)=\lim_{r\to\infty}\frac1r\int_0^r \beta_v(s)\,ds=\lim_{r\to \infty}\frac{f(o)-f(\gamma_v(r))}{r}.
\end{aligned}
\end{equation}
We define $Z:=\{v\in UT_oM:\gamma_v \text{ is a minimizing geodesic ray}\}$. We say that $\gamma_v$ is a minimizing geodesic ray if
\[
d(\gamma_v(t),\gamma_v(s))=t-s
\qquad\text{for all }0\le s\le t<\infty.
\]
Then $Z$ is a closed subset of $UT_oM$. Indeed, if $v_k\to v$ in $UT_oM$ and $v_k\in Z$ for all $k$, then for each $0\le s\le t<\infty$, $d(\gamma_{v_k}(t),\gamma_{v_k}(s))=t-s$. Since geodesics depend smoothly on their initial data, $\gamma_{v_k}(t)\to \gamma_v(t)$ and $\gamma_{v_k}(s)\to \gamma_v(s)$ as $k\to\infty$. By continuity of the distance function, $d(\gamma_v(t),\gamma_v(s))=t-s$, showing that $v\in Z$. 
\begin{lemma}
The function $\alpha:Z\to[0,1]$ is $1$-Lipschitz with respect to the angular distance on $UT_oM$.
\end{lemma}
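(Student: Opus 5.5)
The plan is to combine the limit formula \eqref{eqn:defn-of-angle-function-alpha-2} for $\alpha$ with the Lipschitz bound \eqref{eq:Lipschitz} for $f$ and a Toponogov hinge comparison, which is available because $\sec\ge 0$. Fix $v,w\in Z$ and let $\theta\in[0,\pi]$ be the angle between them in $T_oM$, i.e.\ their angular distance on $UT_oM$.

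First, by \eqref{eqn:defn-of-angle-function-alpha-2}, for every $r>0$,
$$
\alpha(v)-\alpha(w)=\lim_{r\to\infty}\frac{f(\gamma_w(r))-f(\gamma_v(r))}{r}.
$$
Since $|\nabla f|\le 1$ by \eqref{eq:bounds}, the estimate \eqref{eq:Lipschitz} gives
$$
|f(\gamma_w(r))-f(\gamma_v(r))|\le d_g(\gamma_v(r),\gamma_w(r)).
$$
Hence
$$
|\alpha(v)-\alpha(w)|\le \limsup_{r\to\infty}\frac{d_g(\gamma_v(r),\gamma_w(r))}{r}.
$$

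Second, I will bound the distance between the rays using the hinge version of Toponogov's theorem for $\sec\ge 0$. Consider the hinge at $o$ formed by the segments $\gamma_v|_{[0,r]}$ and $\gamma_w|_{[0,r]}$ with angle $\theta$. Because $v,w\in Z$, both segments are minimizing, with lengths $d(o,\gamma_v(r))=d(o,\gamma_w(r))=r$; this is exactly where the restriction to $Z$ is used. The hinge comparison then yields
$$
d_g(\gamma_v(r),\gamma_w(r))\le 2r\sin(\theta/2)\le r\theta
$$
for every $r>0$. Dividing by $r$ and letting $r\to\infty$ gives $|\alpha(v)-\alpha(w)|\le\theta$, which is the desired $1$-Lipschitz bound.

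I do not expect a genuine obstacle. The one point needing care is checking the hypotheses of the hinge comparison: it requires minimizing sides, which holds only for directions in $Z$ and explains why the lemma is stated on $Z$ rather than on all of $UT_oM$. For general $v$ the geodesic $\gamma_v$ stops minimizing, and the estimate $d\le r\theta$ can fail.
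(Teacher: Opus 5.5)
Your proposal is correct and is essentially the paper's own proof. It combines the limit formula \eqref{eqn:defn-of-angle-function-alpha-2}, the bound $|\nabla f|\le 1$, and Toponogov's hinge comparison for $\sec\ge 0$ with minimizing sides (from $v,w\in Z$) to get $d(\gamma_v(r),\gamma_w(r))\le 2r\sin(\theta/2)\le r\theta$, then lets $r\to\infty$. One small point: your closing remark overstates why the lemma is restricted to $Z$, since the hinge version for $\sec\ge 0$ only needs one side to be minimizing, but this does not affect the argument.
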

\begin{proof}
Let $v_1,v_2\in Z$, and let $\theta:=\angle(v_1,v_2)\in [0,\pi]$ so that $\cos \theta=\langle v_1,v_2\rangle$. Fix $r\ge0$. Since $v_1,v_2\in Z$, the geodesic segments $\gamma_{v_i}|_{[0,r]}$ are minimizing. Hence, by Toponogov's theorem (see \cite[Theorem~1.171]{CLN06}), 
\[
d(\gamma_{v_1}(r),\gamma_{v_2}(r))
\le 2r\sin(\theta/2)\le r\theta.
\]
Using $|\nabla f|\le1$, we obtain
\[
\left|
\frac{f(o)-f(\gamma_{v_1}(r))}{r}
-
\frac{f(o)-f(\gamma_{v_2}(r))}{r}
\right|
\le
\frac{d(\gamma_{v_1}(r),\gamma_{v_2}(r))}{r}
\le \theta.
\]
Letting $r\to\infty$ and using \eqref{eqn:defn-of-angle-function-alpha-2}, we conclude that
\[
|\alpha(v_1)-\alpha(v_2)|\le \angle(v_1,v_2).
\]
Therefore $\alpha$ is $1$-Lipschitz on $Z$.
\end{proof}
We now obtain minimizing geodesic rays from integral curves of $-\nabla f/|\nabla f|$. Fix $p\in M\setminus\{o\}$, and recall the definition of $\G_p$ from \eqref{eqn:definition-of-Gamma-p}. From Lemma~\ref{lem:monotonicity-along-flows}, we have $d(\Gamma_p(t),o)\to\infty$ as $t\to\infty$. For each $t>0$, let $\sigma_t^{(p)}:[0,d(\Gamma_p(t),o)]\to M$ be a unit-speed minimizing geodesic such that
\[
\sigma_t^{(p)}(0)=o,
\qquad
\sigma_t^{(p)}(d(\Gamma_p(t),o))=\Gamma_p(t).
\]
Then there exists $v_t^{(p)}\in UT_oM$ such that
\[
\sigma_t^{(p)}=\gamma_{v_t^{(p)}}\big|_{[0,d(\Gamma_p(t),o)]}.
\]
By compactness of $UT_oM$ there exist integers $t_{k}\to\infty$ and $v^{(p)}\in UT_o M$, such that
\[
v_{t_{k}}^{(p)}\to v^{(p)}.
\]
Since geodesics depend smoothly on their initial data, it follows that $\sigma_{t_{k}}^{(p)}\to \gamma_{v^{(p)}}$ uniformly on compact subintervals of $[0,\infty)$. Set $\sigma_p:=\gamma_{v^{(p)}}$. 
\begin{lemma}
\label{lem:obtained-ray-stays-asymptotically-close-to-curve}
Let $p\in M\setminus \{o\}$. $\sigma_p$ is a minimizing geodesic ray; equivalently, $\sigma_p'(0)\in Z$. There exists a sequence $s_k\to\infty$ such that
\begin{equation}
\label{eqn:obtained-ray-stays-asymptotically-close-to-curve}
\lim_{k\to\infty}\frac{d_g\bigl(\sigma_p(s_k),\Gamma_p\bigr)}{s_k}=0.
\end{equation}
Here, $d_g\bigl(q,\Gamma_p\bigr)$ denotes the distance between $q$ and $\G_p([0,\infty))$ for $q\in M$. 
\end{lemma}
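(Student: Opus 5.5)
The proof has two parts: show that $\sigma_p$ is a ray by passing the minimizing property to the limit, and then pick the sequence $s_k$ from the distances $d_k:=d(\Gamma_p(t_k),o)$ of the points that the approximating geodesics hit.

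\emph{Step 1: $\sigma_p$ is a ray.} Fix $0\le s\le t$. By Lemma~\ref{lem:monotonicity-along-flows}, $d_k\to\infty$, so $t\le d_k$ for all large $k$. Each $\sigma^{(p)}_{t_k}$ is minimizing on $[0,d_k]$, hence
\[
d\bigl(\sigma^{(p)}_{t_k}(t),\sigma^{(p)}_{t_k}(s)\bigr)=t-s .
\]
The geodesics $\sigma^{(p)}_{t_k}$ converge to $\sigma_p$ uniformly on $[0,t]$, so continuity of $d$ gives $d(\sigma_p(t),\sigma_p(s))=t-s$. Thus $\sigma_p'(0)\in Z$.

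\emph{Step 2: choice of $s_k$.} The obvious choice $s_k=d_k$ is not directly usable: $\sigma_p(d_k)$ and $\sigma^{(p)}_{t_k}(d_k)=\Gamma_p(t_k)$ need not be close, because convergence is only uniform on compact sets. The fix is a comparison argument, using $\sec\ge0$ through Toponogov's theorem as in the preceding lemma. Let $\theta_k:=\angle(v^{(p)}_{t_k},v^{(p)})\to0$. Both $\sigma_p$ and $\sigma^{(p)}_{t_k}$ are minimizing on $[0,d_k]$, by Step 1 and by construction respectively. Hinge comparison at $o$ then gives
\[
d\bigl(\sigma_p(d_k),\Gamma_p(t_k)\bigr)=d\bigl(\sigma_p(d_k),\sigma^{(p)}_{t_k}(d_k)\bigr)\le 2d_k\sin(\theta_k/2)\le d_k\theta_k .
\]

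\emph{Step 3: conclusion.} Set $s_k:=d_k\to\infty$. Since $\Gamma_p(t_k)\in\Gamma_p([0,\infty))$,
\[
\frac{d\bigl(\sigma_p(s_k),\Gamma_p\bigr)}{s_k}\le\frac{d\bigl(\sigma_p(s_k),\Gamma_p(t_k)\bigr)}{s_k}\le\theta_k\to0,
\]
which is \eqref{eqn:obtained-ray-stays-asymptotically-close-to-curve}. We may pass to a subsequence so that $s_k$ is increasing, if that is desired.

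The main obstacle is Step 2, namely controlling the two geodesics at distance $d_k\to\infty$ rather than on a compact interval. This is exactly where nonnegative sectional curvature enters, via Toponogov's hinge comparison. The comparison requires both sides of the hinge to be minimizing segments from $o$, and that is why Step 1 must come first.
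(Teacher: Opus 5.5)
Your proof is correct and follows essentially the same route as the paper. You pass the minimizing property to the limit to show $\sigma_p$ is a ray, then set $s_k:=d(o,\Gamma_p(t_k))$ and use Toponogov hinge comparison at $o$ (with both sides minimizing on $[0,s_k]$) to get $d(\sigma_p(s_k),\Gamma_p(t_k))\le 2s_k\sin(\theta_k/2)$, exactly as the paper does.
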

\begin{proof}
Let $t_k\to\infty$ be the sequence used to define $\sigma_p$. Let $v^{(p)}:=\sigma_p'(0), v_{t_k}^{(p)}:=(\sigma_{t_k}^{(p)})'(0)$. Then $v_{t_k}^{(p)}\to v^{(p)}$ and $\sigma_{t_k}^{(p)}\to \sigma_p$ uniformly on compact intervals. Fix $0\le s\le u<\infty$. For all sufficiently large $k$, we have $u\le d(\Gamma_p(t_{k}),o)$ so the restriction $\sigma_{t_{k}}^{(p)}|_{[s,u]}$ is minimizing, and hence
\[
d\bigl(\sigma_{t_{k}}^{(p)}(s),\sigma_{t_{k}}^{(p)}(u)\bigr)=u-s.
\]
Letting $k\to\infty$ and using continuity of the distance function, we obtain $d(\sigma_p(s),\sigma_p(u))=u-s.$ Therefore $\sigma_p$ is a minimizing geodesic ray, and in particular $\sigma_p'(0)\in Z.$ 

Define $s_k:=d(o,\Gamma_p(t_k))$ so that $s_k\to\infty$. By construction,
\[
\sigma_{t_k}^{(p)}(s_k)=\Gamma_p(t_k)=\gamma_{v_{t_k}^{(p)}}(s_k),
\qquad
\sigma_p(s_k)=\gamma_{v^{(p)}}(s_k).
\]
Moreover, $\gamma_{v_{t_k}^{(p)}}|_{[0,s_k]}=\sigma_{t_k}^{(p)}$ is minimizing by definition, and $\gamma_{v^{(p)}}|_{[0,s_k]}=\sigma_p|_{[0,s_k]}$ is minimizing because $\sigma_p$ is a minimizing geodesic ray. Therefore, by Toponogov's theorem, 
\[
d\bigl(\Gamma_p(t_k),\sigma_p(s_k)\bigr)
=
d\bigl(\gamma_{v_{t_k}^{(p)}}(s_k),\gamma_{v^{(p)}}(s_k)\bigr)
\le
2s_k\sin\!\left(\frac{\angle(v_{t_k}^{(p)},v^{(p)})}{2}\right).
\]
Dividing by $s_k$ and using $\Gamma_p(t_k)\in \Gamma_p([0,\infty))$, we obtain
\[
\frac{d\bigl(\sigma_p(s_k),\Gamma_p\bigr)}{s_k}\le \frac{d\bigl(\Gamma_p(t_k),\sigma_p(s_k)\bigr)}{s_k}
\le
2\sin\!\left(\frac{\angle(v_{t_k}^{(p)},v^{(p)})}{2}\right)\to0.
\]
This proves \eqref{eqn:obtained-ray-stays-asymptotically-close-to-curve}.
\end{proof}

\begin{lemma}
\label{lem:value-of-angle-function-on-subsequential-limit-rays}
Let $p\in M\setminus\{o\}$ and $v^{(p)}:=\sigma_p'(0)$. Then,
\[
\alpha(v^{(p)})=\sqrt{1-G(p)}.
\]
\end{lemma}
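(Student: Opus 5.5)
We compute $\alpha(v^{(p)})$ through the formula \eqref{eqn:defn-of-angle-function-alpha-2}, $\alpha(v)=\lim_{r\to\infty}\frac{f(o)-f(\gamma_v(r))}{r}$. We evaluate this limit along the sequence $s_k=d(o,\Gamma_p(t_k))$ from Lemma~\ref{lem:obtained-ray-stays-asymptotically-close-to-curve}; this is legitimate because the limit is already known to exist. Throughout, write $\sigma_p=\gamma_{v^{(p)}}$.

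The first step is to replace $f(\sigma_p(s_k))$ by $f(\Gamma_p(t_k))$. Since $|\nabla f|\le1$, \eqref{eq:Lipschitz} gives
\[
|f(\sigma_p(s_k))-f(\Gamma_p(t_k))|\le d(\sigma_p(s_k),\Gamma_p(t_k)).
\]
In the proof of Lemma~\ref{lem:obtained-ray-stays-asymptotically-close-to-curve} it was shown that this distance is at most $2s_k\sin(\angle(v^{(p)}_{t_k},v^{(p)})/2)=o(s_k)$; note this bounds the distance to the point $\Gamma_p(t_k)$ itself, not merely to the set $\Gamma_p$. Consequently
\[
\alpha(v^{(p)})=\lim_{k\to\infty}\frac{f(o)-f(\Gamma_p(t_k))}{s_k}.
\]

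The second step computes this quotient using the parametrization of $\Gamma_p$. Along $\Gamma_p$ we have $\frac{d}{dt}f(\Gamma_p(t))=-|\nabla f|(\Gamma_p(t))$. By Lemma~\ref{lem:monotonicity-along-flows}, $|\nabla f|^2(\Gamma_p(t))=1-R(\Gamma_p(t))$ increases to $1-G(p)$. Cesàro averaging therefore gives
\[
\frac{f(o)-f(\Gamma_p(t_k))}{t_k}\to\sqrt{1-G(p)}.
\]
The constant $f(o)-f(p)$ is negligible after division by $t_k\to\infty$. Separately, \eqref{eqn:distance-p-Phi-from-o-1}, i.e.\ \cite[Lemma 3.2]{Lai24}, states that $d(\Gamma_p(t),o)/t\to1$, so $s_k/t_k\to1$. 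Combining the two limits yields $\alpha(v^{(p)})=\sqrt{1-G(p)}$.

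The only genuinely delicate input is $s_k/t_k\to1$. This says the unit-speed integral curve of $-\nabla f/|\nabla f|$ is asymptotically distance-realizing from $o$, and it is quoted from \cite{Lai24} exactly as in Lemma~\ref{lem:distance-p-Phi-from-o-asymptotic}.

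If one wanted a self-contained argument for that step, the lower bound $d(\Gamma_p(t),o)\ge f(o)-f(\Gamma_p(t))$ comes immediately from $|\nabla f|\le 1$, but it only gives $\liminf\, d/t\ge\sqrt{1-G(p)}$. Closing the gap would require the first variation formula together with \eqref{eqn:distance-from-o-increases-monotonicity-along-flow-lemma}, to show that the angle between $\Gamma_p'$ and the radial direction tends to zero. Since the paper already relies on \cite{Lai24}, I would simply invoke it.
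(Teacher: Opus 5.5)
Your proof is correct and follows the paper's argument. You compare $f$ along $\sigma_p$ with $f$ along $\Gamma_p$ using $|\nabla f|\le 1$ and the sublinear closeness coming from Toponogov. You then use Ces\`aro averaging of $|\nabla f|$ along $\Gamma_p$ together with \eqref{eqn:distance-p-Phi-from-o-1} to convert the denominator. The only differences are minor: you work directly with the point $\Gamma_p(t_k)$, for which $d(o,\Gamma_p(t_k))=s_k$ by definition, instead of a nearest point $q_k=\Gamma_p(l_k)$ as the paper does, which saves the triangle-inequality step. You also explicitly account for the constant offset $f(o)-f(p)$ that the paper leaves implicit.
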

\begin{proof}
Set $v:=v^{(p)}$, so that $\sigma_p=\gamma_v$. Let $s_k\to\infty$ be such that \eqref{eqn:obtained-ray-stays-asymptotically-close-to-curve} holds. Choose $q_k\in \Gamma_p([0,\infty))$ such that $d\bigl(q_k,\gamma_v(s_k)\bigr)=d\bigl(\gamma_v(s_k),\Gamma_p\bigr)$ and write $q_k=\Gamma_p(l_k)$ for some $l_k\ge0$. Then
\begin{equation}
\label{eqn:obtained-ray-stays-asymptotically-close-to-curve-2}
\lim_{k\to\infty}\frac{d\bigl(q_k,\gamma_v(s_k)\bigr)}{s_k}= 0.
\end{equation}
Since $|\nabla f|\le1$,
\[
\left|
\frac{f(o)-f(\gamma_v(s_k))}{s_k}
-
\frac{f(o)-f(q_k)}{s_k}
\right|
\le
\frac{d\bigl(q_k,\gamma_v(s_k)\bigr)}{s_k}\to0.
\]
Combined with \eqref{eqn:defn-of-angle-function-alpha-2}, it follows that
\[
\alpha(v)=\lim_{k\to \infty}\frac{f(o)-f(\gamma_v(s_k))}{s_k}=\lim_{k\to \infty}\frac{f(o)-f(q_k)}{s_k}.
\]
Since $\gamma_v$ is a minimizing ray, $d(o,\gamma_v(s_k))=s_k$. By the triangle inequality, \[ s_k-d(q_k,\gamma_v(s_k)) \le d(o,q_k)\le s_k+d(q_k,\gamma_v(s_k)). \] Dividing by $s_k$ and letting $k\to\infty$, it follows from \eqref{eqn:obtained-ray-stays-asymptotically-close-to-curve-2} that 
$$ \lim_{k\to \infty}\frac{d(o,q_k)}{s_k}= 1. $$
In particular, $d(o,q_k)\to\infty$, hence $l_k\to\infty$. Using \eqref{eqn:distance-p-Phi-from-o-1}, $\frac{d(o,q_k)}{l_k}=\frac{d(o,\G_p(l_k))}{l_k}\to 1$ and we obtain
\[
\frac{l_k}{s_k}
=
\frac{l_k}{d(o,q_k)}\cdot \frac{d(o,q_k)}{s_k}
\to 1.
\]
It follows that 
$$\begin{aligned}
\alpha(v)&=\lim_{k\to \infty}\frac{f(o)-f(q_k)}{s_k}=\lim_{k\to \infty}\frac{f(o)-f(q_k)}{l_k}\\
&=\lim_{k\to \infty}\frac{f(o)-f(\G_p(l_k))}{l_k}\\
&=\lim_{k\to \infty}\frac{1}{l_k}\int_0^{l_k}|\nabla f|(\G_p(s))\,ds=\lim_{s\to \infty}|\nabla f|(\G_p(s))=\sqrt{1-G(p)}.
\end{aligned}$$
This completes the proof. 
\end{proof}
We now prove Theorem~\ref{thm:converse-to-main-theorem-2-stronger-scalar-bound-4d}. 
\begin{theorem}
\label{thm:converse-ray-implies-Rto0}
If the asymptotic cone of $(M,g)$ is a ray, then
\[
\lim_{d(x,o)\to\infty}R(x)=0.
\]
\end{theorem}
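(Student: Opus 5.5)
By Lemma~\ref{G-on-Level-sets-lemma}(2), it suffices to show $G\equiv 0$ on $\Sigma$. Equivalently, by Lemma~\ref{lem:value-of-angle-function-on-subsequential-limit-rays}, it suffices to show $\alpha(v^{(p)})=1$ for every $p\in M\setminus\{o\}$. The plan is to show $\alpha\equiv 1$ on $Z$ using two ingredients: the ray hypothesis, which forces all minimizing rays from $o$ to be asymptotically indistinguishable at linear scale, and the existence of at least one minimizing ray along which $\alpha=1$.

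\emph{Step 1: $\alpha$ is constant on $Z$.} If the asymptotic cone is a ray, then for any $v,w\in Z$ we have $d(\gamma_v(r),\gamma_w(r))/r\to 0$ as $r\to\infty$. Indeed, in the cone both rays pass through points at distance $1$ from the vertex of a ray, so they coincide there. Since $|\nabla f|\le 1$ and \eqref{eqn:defn-of-angle-function-alpha-2} holds,
\[
|\alpha(v)-\alpha(w)|\le \limsup_{r\to\infty}\frac{d(\gamma_v(r),\gamma_w(r))}{r}=0 .
\]
So $\alpha$ takes a single value $\alpha_0$ on $Z$.

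\emph{Step 2: some $p$ has $G(p)=0$.} Then Lemma~\ref{lem:value-of-angle-function-on-subsequential-limit-rays} gives $\alpha_0=\alpha(v^{(p)})=1$. Consequently $G(p')=1-\alpha(v^{(p')})^2=0$ for every $p'\neq o$, and we are done.

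Step 2 is the main obstacle, since we assume neither $\kappa$-noncollapsing nor \hyperref[assumption:A4]{(A4)}. I would argue by contradiction. Suppose $G(p)>0$ for every $p\in\Sigma$. By upper semicontinuity and compactness of $\Sigma$, $G$ attains a minimum, but it need not be bounded below by a positive constant a priori. I would therefore use a volume or measure argument instead. By Lemma~\ref{lem:distance-p-Phi-from-o-asymptotic}, $d(\Phi_{-s}(p),o)\approx s\sqrt{1-G(p)}$, while $f(\Phi_{-s}(p))\approx -s(1-G(p))$. The point is that the level set $\Sigma_{-r}$ sits at distance roughly $r/\sqrt{1-G}$, which varies with $p$ unless $G$ is constant. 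If $G\equiv c>0$ on $\Sigma$, then $R\ge c$ along every integral curve. Since every point far out lies on such a curve, $R\ge c$ at infinity, which contradicts the integrability of $\Ric(\nabla f,\nabla f)$ along curves, or equivalently the scalar curvature equation \eqref{eq:scalarPDE}. If that fails, I would use the linear lower bound $R\ge C^{-1}/(1+d)$ only as a fallback.

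A cleaner alternative is to argue directly through $\alpha$. By the soliton equation, $\alpha(v)=\lim_r\big(f(o)-f(\gamma_v(r))\big)/r$. Combined with the Busemann function $b$ of the unique asymptotic direction, the ray hypothesis gives $d(o,x)=b(x)+o(d(o,x))$ for all $x$. The $1$-Lipschitz function $-f$ with $|\nabla f|\to\sqrt{1-G}$ along flow lines then grows like $\alpha_0\, d(o,x)$ uniformly. Applying this along a flow line $\Gamma_p$, where $-f$ grows at rate exactly $\sqrt{1-G(p)}$ per unit arclength and $d(o,\Gamma_p(t))/t\to 1$ by \eqref{eqn:distance-p-Phi-from-o-1}, gives $\alpha_0=\sqrt{1-G(p)}$ for all $p$. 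This recovers constancy of $G$ on $\Sigma$. The remaining task is to rule out a constant positive $G\equiv c$. I would do this by evaluating \eqref{eq:scalarPDE} integrated over the compact regions $\{s\le f\le s_0\}$: $R\to c$ uniformly at infinity would force $\int|\Ric|^2$ to grow linearly in $|s|$, which I would compare against the boundary flux $\int_{\Sigma_s}\langle\nabla R,\nu\rangle$ to obtain a contradiction.
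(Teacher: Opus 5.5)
Your Step 1 is correct, and so is the consequence you note in Step 2 and again in your ``cleaner alternative'': Lemma~\ref{lem:value-of-angle-function-on-subsequential-limit-rays} then forces $G\equiv 1-\alpha_0^2$ to be constant on $\Sigma$. This is exactly the paper's reduction. The theorem therefore hinges on excluding $G\equiv c>0$, which by monotonicity of $R$ along integral curves gives $R\ge c$ on every $\Sigma_s$, $s\le s_0$. Here your proposal has a genuine gap: none of the mechanisms you offer produces a contradiction.

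\begin{itemize}
\item Integrability of $\frac{d}{ds}R(\chi_s(q))=2\Ric(\nabla f,\nabla f)/|\nabla f|^2$ along one curve is automatic, since the integral is $R(q)-G(q)<\infty$. It is perfectly compatible with $R\ge c$.
\item The linear lower bound $R\ge C^{-1}/(1+d)$ is likewise compatible with $R\ge c$.
\item Integrating \eqref{eq:scalarPDE} over $\Omega_s:=\{s\le f\le s_0\}$ and writing $\langle\nabla R,\nabla f\rangle=\operatorname{div}(R\nabla f)+R^2$ gives
\[
\int_{\Omega_s}\bigl(2|\Ric|^2-R^2\bigr)=\int_{\Sigma_s}\Bigl(\frac{2\Ric(\nabla f,\nabla f)}{|\nabla f|}-R\,|\nabla f|\Bigr)d\mu_s+\mathrm{const}.
\]
So $|\Ric|^2$ enters only through $2|\Ric|^2-R^2$. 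This has no sign in dimension four: it vanishes on $\sph^2\times\R^2$ and is negative on $\Bry^3\times\R$, so ``$\int|\Ric|^2$ grows linearly'' is not the relevant quantity. Even with a sign, the boundary term is of the size of $\operatorname{Vol}(\Sigma_s)$, which you do not control a priori.
\end{itemize}

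What is missing is a global upper bound on volume growth. You never invoke Bishop--Gromov, which is exactly where $\sec\ge0$ enters. (Also, an upper semicontinuous function on a compact set attains its maximum, not its minimum.)

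The paper closes the argument with the trace identity \eqref{eq:trace}, $\Delta f=-R$, which does have a sign. Set $A(s):=\int_{\Sigma_s}|\nabla f|\,d\mu_s$. The divergence theorem on $\Omega_s$ gives $A(s)=A(s_0)+\int_{\Omega_s}R$. Using $R\ge c$ and $|\nabla f|\le 1$,
\[
A'(s)=-\int_{\Sigma_s}\frac{R}{|\nabla f|}\,d\mu_s\le -c\,A(s).
\]
Hence $A(s)\ge A(s_0)e^{c(s_0-s)}$, and by the coarea formula $\operatorname{Vol}(\Omega_s)$ grows exponentially in $|s|$. On the other hand, Lemma~\ref{lem:Linear-Growth-of-potential} places $\Omega_s$ inside a ball of radius $O(|s|)$. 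This contradicts Bishop--Gromov, so $c=0$.
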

\begin{proof}
Let $v_1,v_2\in Z$. Since the asymptotic cone of $(M,g)$ is a ray and $\gamma_{v_1},\gamma_{v_2}$ are minimizing rays, we obtain $\lim_{r\to\infty}\frac{d(\gamma_{v_1}(r),\gamma_{v_2}(r))}{r}=0.$ Since $|\nabla f|\le1$, we have
\[
\left|
\frac{f(o)-f(\gamma_{v_1}(r))}{r}
-
\frac{f(o)-f(\gamma_{v_2}(r))}{r}
\right|
\le
\frac{d(\gamma_{v_1}(r),\gamma_{v_2}(r))}{r}.
\]
Letting $r\to\infty$ and using \eqref{eqn:defn-of-angle-function-alpha-2}, we obtain $\alpha(v_1)=\alpha(v_2)$. Thus $\alpha$ is constant on $Z$.

For each $p\in \Sigma_{s_0}=\Sigma$, set $v_p:=\sigma_p'(0).$ Then, by Lemma~\ref{lem:obtained-ray-stays-asymptotically-close-to-curve} and Lemma~\ref{lem:value-of-angle-function-on-subsequential-limit-rays}, $v_p\in Z$ and $\alpha(v_p)=\sqrt{1-G(p)}.$ Since $\alpha$ is constant on $Z$, it follows that $G$ is constant on $\Sigma$; write $G\equiv \eta$ on $\Sigma$. 

We claim that $\eta=0$. Suppose instead that $\eta>0$. It follows from Lemma~\ref{lem:monotonicity-along-flows} that 
\[
R\ge \eta\qquad\text{on }\Sigma_s\quad\text{for all }s\le s_0.
\]
For $s\le s_0$, define
\[
A(s):=\int_{\Sigma_s} |\nabla f|\,d\mu_s,
\]
where $d\mu_s$ is the volume form induced by the metric $g|_{\Sigma_s}$. The first variation formula gives
\[
A'(s)=-\int_{\Sigma_s}\frac{R}{|\nabla f|}\,d\mu_s.
\]
Using $R\ge \eta$ on $\Sigma_s$ and $|\nabla f|\le1$, we obtain
\[
A'(s)\le -\eta A(s)\qquad\text{for all }s\le s_0.
\]
Hence
\[
 \operatorname{Vol}(\Sigma_s)\geq A(s)\ge A(s_0)e^{\eta (s_0-s)}\qquad\text{for all }s\le s_0.
\]
The coarea formula and $|\nabla f|\le1$ give
\begin{equation}
\label{eqn:1-in-proof-thm:converse-ray-implies-Rto0}
\begin{aligned}
\operatorname{Vol}\bigl(\{s\leq f\leq s_0\}\bigr)
&=\int_s^{s_0}\int_{\Sigma_u}\frac{1}{|\nabla f|}\,d\mu_u\,du\\
&\ge \int_s^{s_0}\operatorname{Vol}(\Sigma_u)\,du\\
&\ge A(s_0)\eta^{-1}(e^{\eta(s_0-s)}-1),
\end{aligned}
\end{equation}
for all $s\leq s_0$. On the other hand, by Lemma~\ref{lem:Linear-Growth-of-potential} there exist constants $c_0>0$ and $b\in\mathbb R$ such that $f(o)-f(x)\ge c_0\,d(o,x)-b$ for all $x\in M$. Therefore $\{s\le f\le s_0\}
\subset
B\!\left[o;c_0^{-1}(f(o)-s+b)\right]$. Combining this with \eqref{eqn:1-in-proof-thm:converse-ray-implies-Rto0}, we obtain constants $c,c''>0$ such that 
\[
\operatorname{Vol}(B_g[o;r])\ge c\,e^{c''r},
\]
for all sufficiently large $r$. This contradicts Bishop--Gromov volume comparison. Thus $\eta=0$, so $G\equiv0$ on $\Sigma$. By Lemma~\ref{G-on-Level-sets-lemma}, this is equivalent to
\[
\lim_{d(x,o)\to\infty}R(x)=0.
\]
\end{proof}
None of the arguments in the proof of Theorem~\ref{thm:converse-ray-implies-Rto0} use the assumption $n=4$. Hence the same argument yields the following statement. 
\begin{theorem}\label{thm:dimension-free-converse-to-main-theorem-2}
Let $(\mathcal N^n,g_{\mathcal N},f)$, $n\ge 2$, be a complete noncompact gradient steady
Ricci soliton. Assume that $\sec_{g_{\mathcal N}}\ge 0, \Ric_{g_{\mathcal N}}>0$ on $\mathcal{N}$ and that the potential function $f$ has a critical point $o\in \mathcal N$. If the asymptotic cone of $(\mathcal N,g_{\mathcal N})$ is a ray, then
\[
\lim_{d(x,o)\to\infty} R(x)=0.
\]
\end{theorem}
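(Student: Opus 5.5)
The plan is to redo the proof of Theorem~\ref{thm:converse-ray-implies-Rto0} in arbitrary dimension $n$, checking that each ingredient is dimension-free. Under $\Ric>0$ and $\nabla f(o)=0$, the facts used so far hold verbatim in dimension $n$: Hamilton's identity $R+|\nabla f|^2=1$ after normalization, Lemma~\ref{lem:Linear-Growth-of-potential}, Lemma~\ref{lem:monotonicity-along-flows}, and Lemma~\ref{G-on-Level-sets-lemma}. None of their proofs refer to $n=4$. The one point needing care is compactness of the level sets $\Sigma_s$ for $s<f(o)$; this follows from Lemma~\ref{lem:Linear-Growth-of-potential}, and the diffeomorphism type $\sph^{n-1}$ is never used. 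For $n=2$, note that $\Ric>0$ still gives a strictly concave $f$, and Toponogov's theorem still applies under $\sec\ge 0$.

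The first step is to define the angle function $\alpha$ on $UT_o\mathcal N$ as in Definition~\ref{defn:defining-angle-function}. Its existence, and the formula \eqref{eqn:defn-of-angle-function-alpha-2}, use only $\Ric>0$ and $|\nabla f|\le 1$.

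The second step is to construct, for each $p\ne o$, the limiting minimizing ray $\sigma_p$ as in Lemma~\ref{lem:obtained-ray-stays-asymptotically-close-to-curve}. We then show $\alpha(\sigma_p'(0))=\sqrt{1-G(p)}$ as in Lemma~\ref{lem:value-of-angle-function-on-subsequential-limit-rays}. The one external input is the asymptotic relation $d(\Gamma_p(s),o)/s\to1$ from \cite[Lemma 3.2]{Lai24}. I expect to verify that this holds in every dimension under $\Ric>0$; if it is only stated for $n=3$ or $4$, I would reprove it. The lower bound $\liminf_{s\to\infty} d(\Gamma_p(s),o)/s\ge 1$ should follow from the first-variation computation \eqref{eqn:distance-from-o-increases-monotonicity-along-flow-lemma}. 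There, $\langle\sigma',-\nabla f\rangle$ increases to a value close to $|\nabla f|$, and $|\nabla f|$ increases along $\Gamma_p$. This is the step I expect to be the main obstacle. The upper bound $\le 1$ is trivial because $\Gamma_p$ has unit speed.

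The third step uses the ray hypothesis. For $v_1,v_2$ in the set $Z$ of minimizing ray directions, $d(\gamma_{v_1}(r),\gamma_{v_2}(r))=o(r)$, so $\alpha$ is constant on $Z$. Hence $G\equiv\eta$ on a fixed level set $\Sigma$.

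The last step shows $\eta=0$. If $\eta>0$, then $R\ge\eta$ on every $\Sigma_s$ with $s\le s_0$, by Lemma~\ref{lem:monotonicity-along-flows}. The flux $A(s)=\int_{\Sigma_s}|\nabla f|$ then satisfies $A'\le-\eta A$, so it grows exponentially as $s\to-\infty$. The coarea formula together with Lemma~\ref{lem:Linear-Growth-of-potential} turns this into exponential volume growth of geodesic balls. That contradicts Bishop--Gromov comparison, which gives $\operatorname{Vol}B[o;r]\le \omega_n r^n$ since $\Ric\ge0$. Therefore $G\equiv0$ on $\Sigma$, and Lemma~\ref{G-on-Level-sets-lemma}(2) gives $R\to0$ at infinity.
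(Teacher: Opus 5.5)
Your proposal is correct and is essentially the paper's proof. The paper establishes this theorem by observing that the proof of Theorem~\ref{thm:converse-ray-implies-Rto0} never uses $n=4$, and that is the check you carry out: the angle function $\alpha$, the limiting rays $\sigma_p$ with $\alpha(\sigma_p'(0))=\sqrt{1-G(p)}$, constancy of $\alpha$ on $Z$ under the ray hypothesis, and the flux/coarea contradiction with Bishop--Gromov. For the relation $d(\Gamma_p(s),o)/s\to 1$ that you flagged, the lower bound holds in any dimension by the following argument. Since $\beta$ is nondecreasing along a minimizing geodesic from $o$ to $x$, one has $f(o)-f(x)\le d(o,x)\,|\nabla f|(x)$. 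Apply this at $x=\Gamma_p(s)$, where $f(o)-f(\Gamma_p(s))\ge\int_0^s|\nabla f|(\Gamma_p(t))\,dt$ and $|\nabla f|(\Gamma_p(t))$ increases to $\sqrt{1-G(p)}>0$; this gives $\liminf_{s\to\infty}d(\Gamma_p(s),o)/s\ge 1$.
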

We end this section by proving Corollary~\ref{cor:ray-iff-R-vanishes}.
\begin{proof}[Proof of Corollary \textup{\ref{cor:ray-iff-R-vanishes}}]
(1)$\Rightarrow$(2) follows from Theorem~\ref{thm:full-linear-bound-proof}.

\noindent (2)$\Rightarrow$(1). If $\lim_{d(x,o)\to \infty}R(x)d(x,\G)=0$, choose points $x_k$ where $d(x_k,o)\to \infty$ and $d(x_k,\G_1)=1$ to obtain that $R(x_k)\to 0$. By Perelman's long-range estimates, $G(x_+)=0$. Similarly $G(x_-)=0$. By Theorem~\ref{existence-of-x-+-} and Lemma~\ref{G-on-Level-sets-lemma}, $\lim_{d(x,o)\to \infty}R(x)=0$. 

\noindent (1)$\Rightarrow$(3) follows from Theorem~\ref{thm:asymptotic-cone-ray-proof}.

\noindent (3)$\Rightarrow$(1) follows from Theorem~\ref{thm:converse-ray-implies-Rto0}. 
\end{proof}

\appendix

\section{Some auxiliary results}
\label{sec:appendix-Some-auxiliary-results}

\setcounter{theorem}{0}
\renewcommand{\thetheorem}{\thesection.\arabic{theorem}}
In this section, we collect several lemmas used throughout the paper. We begin by recalling the definition of convergence of functions and points under the Cheeger--Gromov convergence of manifolds. 
\begin{definition}
\label{defn:c-g-convergence-of-functions-general-manifolds}
Let $(M_i,g_i,x_i)_{i\geq 1}$ and $(N,g,x_0)$ be pointed complete Riemannian manifolds with $(M_i,g_i,x_i)\to(N,g,x_0)$ in the smooth Cheeger--Gromov sense, i.e., for all large $i$, there exist domains $U_i\subset N$ exhausting $N$ and smooth embeddings $\Phi_i:U_i\to M_i$ with $\Phi_i(x_0)=x_i$ such that $\Phi_i^*g_i\to g$ in $C^\infty_{\operatorname{loc}}(N)$. Let $f_i\in C^\infty(M_i)$ and $f\in C^\infty(N)$. We say $f_i\to f$ in the sense of smooth Cheeger--Gromov if 
$$f_i\circ \Phi_i=\Phi_i^*f_i\to f \quad\text{in } C^\infty_{\operatorname{loc}}(N).$$
We also say $y_i\in M_i$ converges to $y_0\in N$ if $y_i\in \Phi_i(U_i)$ for all large $i$ and $\Phi_i^{-1}(y_i)\to y_0$ in $(N,g)$. 
\end{definition}
The next lemma shows the convergence of regular level sets under the Cheeger--Gromov convergence of the underlying manifolds. The proof follows from the implicit function theorem and the details are omitted. 
\begin{lemma}
\label{Gen-Level-Set-Convergence}
Let $(M_i,g_i,x_i)_{i\geq 1},(N,g,x_0)$ be pointed complete Riemannian manifolds such that $(M_i,g_i,x_i)\to (N,{g},x_0)$ as $i\to \infty$, in the smooth Cheeger--Gromov sense. Suppose that the following holds: 
\begin{enumerate}
    \item $f_i\in C^\infty(M_i)$, $f_i(x_i)=0$,
    \item $f\in C^\infty(N),f(x_0)=0,f_i\to f$ in the smooth Cheeger--Gromov sense, 
    \item $\Sigma_{i}=f_i^{-1}(0)$ is a regular connected hypersurface in $M_i$, and
    \item $\Sigma=f^{-1}(0)$ is also a regular connected hypersurface with $\nabla f\neq 0$ on $\Sigma$.
\end{enumerate}
 Then, $(\Sigma_i, g_i|_{\Sigma_i}, x_i)\to (\Sigma, g|_{\Sigma}, x_0)$ in the smooth Cheeger--Gromov sense. Moreover, if $\tilde u_i\in C^\infty(M_i)$ and $\tilde u\in C^\infty(N)$ satisfy $\tilde{u}_i\to \tilde{u}$ in the smooth Cheeger--Gromov sense, then $\tilde u_i|_{\Sigma_i}\to \tilde u|_{\Sigma}$ in the smooth Cheeger--Gromov sense.
\end{lemma}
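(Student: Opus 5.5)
The plan is to reduce the lemma to the implicit function theorem applied on compact subsets of $N$, working entirely in the pulled-back picture. Fix a compact exhausting sequence $K_1\subset K_2\subset\cdots$ of $N$. On $U_i$, set $F_i:=f_i\circ\Phi_i$ and $h_i:=\Phi_i^*g_i$, so $F_i\to f$ and $h_i\to g$ in $C^\infty_{\operatorname{loc}}(N)$. Since $\nabla f\neq 0$ on $\Sigma$, a compact piece $\Sigma\cap K_j$ has a tubular neighborhood $W_j$ on which $|\nabla^g f|\ge c_j>0$. Using the normal flow $\Psi(y,r)=\exp_y(r\nu(y))$ with $\nu=\nabla f/|\nabla f|$, I parametrize $W_j$ as $(\Sigma\cap K_j')\times(-r_j,r_j)$. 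Then $\partial_r(f\circ\Psi)\ge c_j/2$, and the same holds for $F_i\circ\Psi$ once $i$ is large, by $C^1$ convergence. The implicit function theorem then gives a unique smooth function $u_i$ on $\Sigma\cap K_j'$ with $F_i(\Psi(y,u_i(y)))=0$. Moreover $u_i\to 0$ in $C^\infty$, because all derivatives of $F_i\circ\Psi-f\circ\Psi$ tend to zero and the graph equation can be differentiated repeatedly.

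Next, I define embeddings $\tilde\Phi_i:=\Phi_i\circ\Psi(\cdot,u_i(\cdot))$ from $\Sigma\cap K_j'$ into $\Sigma_i=f_i^{-1}(0)$. Then $\tilde\Phi_i^*(g_i|_{\Sigma_i})=\Psi_{u_i}^*(h_i)|$, and this converges smoothly to $g|_\Sigma$ since $h_i\to g$ and $u_i\to0$. The basepoint is not automatically preserved, because $x_0\in\Sigma$ but $\tilde\Phi_i(x_0)=\Phi_i(\Psi(x_0,u_i(x_0)))$ need not equal $x_i$. On the other hand, $F_i(x_0)=f_i(x_i)=0$, and by the uniqueness in the implicit function theorem near $x_0$ we get $u_i(x_0)=0$, so the basepoint is in fact preserved. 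A diagonal argument over $j$ then produces domains exhausting $\Sigma$.

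The remaining issue, which I expect to be the main obstacle, is to check that the images $\tilde\Phi_i(\Sigma\cap K_j')$ exhaust the relevant part of $\Sigma_i$ in the pointed sense. Concretely, every point of $\Sigma_i$ within bounded intrinsic distance of $x_i$ should lie in the image for $i$ large. Connectedness of $\Sigma_i$ matters here: a point of $\Sigma_i$ at bounded intrinsic distance from $x_i$ is joined to $x_i$ by a curve of bounded length in $\Sigma_i$. That curve stays in $\Phi_i(W_j)$ for $j$ large, since $|F_i|$ vanishes on it and $f$ is bounded away from $0$ off $W_j$ on compact sets. By the uniqueness of the graph, such points then lie on the graph of $u_i$. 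I would make this precise with an open–closed argument along the curve, and also check that intrinsic distances on the graph converge to those on $\Sigma$, which follows from the smooth metric convergence.

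For the statement about functions, I would observe that $\tilde u_i\circ\tilde\Phi_i=(\tilde u_i\circ\Phi_i)\circ\Psi(\cdot,u_i)$. Since $\tilde u_i\circ\Phi_i\to\tilde u$ in $C^\infty_{\operatorname{loc}}$ and $\Psi(\cdot,u_i)\to\mathrm{id}$ smoothly, this composition converges in $C^\infty_{\operatorname{loc}}(\Sigma)$ to $\tilde u|_\Sigma$.
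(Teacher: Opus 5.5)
Your proposal is correct and is the implicit-function-theorem argument the paper alludes to and omits. You write $\{f_i\circ\Phi_i=0\}$ over compact pieces of $\Sigma$ as a normal graph $u_i\to 0$, with $u_i(x_0)=0$ forced by $f_i(x_i)=0$ and monotonicity along normal segments. The exhaustion step you flag as the main obstacle is routine: for large $i$ the ball $B_{g_i}(x_i,L)$ lies in $\Phi_i(K)$ for a fixed compact $K\subset N$, where $\{f_i\circ\Phi_i=0\}$ is confined to the tube and hence coincides with the graph.
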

We now prove a monotonicity property of the Bryant soliton: the normalized smallest Ricci eigenvalue strictly decreases along the radial direction. 
\begin{lemma}
\label{lemma:Bryant-monotonicity}
Let $\left(\Bry^n, g, f\right)$ be the $n \geq 3$ Bryant soliton, with tip $\bar{x}$ and $R(\bar{x})=1$ at the tip. Let $\lambda_{\text {min }}(r)$ be the smallest Ricci eigenvalue at distance $r$ from the tip. Then 
$$
h(r):=\frac{\lambda_{\min }(r)}{R(r)}
$$
is strictly decreasing for $r>0$. In particular, $h(r)<1 / n$ for $r>0$, and $h(0)=1 / n$. 
\end{lemma}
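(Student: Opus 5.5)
We will reduce the statement to an ODE for the warped-product profile. Write the Bryant soliton as $g=dr^2+\psi(r)^2 g_{\sph^{n-1}}$ on $[0,\infty)\times\sph^{n-1}$. The Ricci eigenvalues are the radial one $\lambda_{\mathrm{rad}}=-(n-1)\psi''/\psi$ and the tangential one (multiplicity $n-1$) $\lambda_{\mathrm{tan}}=-\psi''/\psi+(n-2)(1-\psi'^2)/\psi^2$. At the tip both equal $R(0)/n=1/n$ by smoothness and isotropy, so $h(0)=1/n$. Once $h$ is shown to be strictly decreasing, $h(r)<1/n$ for $r>0$ follows immediately.

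The first step identifies which eigenvalue is smallest for $r>0$. The claim is $\lambda_{\mathrm{rad}}<\lambda_{\mathrm{tan}}$, which is equivalent to $\psi\psi''-\psi'^2+1>0$. We will establish this from the known qualitative properties of the Bryant profile ($\psi'>0$, $\psi''<0$, $\psi\sim\sqrt r$) by rewriting the soliton equation in Bryant's coordinates, setting $x=\psi'$ and $y=-\psi\, f'$ (or Bryant's original phase variables), and using his phase-plane analysis of the unique trajectory from the tip. In those variables the sign condition is a region of the phase plane that is invariant and contains the trajectory. Hence $\lambda_{\min}=\lambda_{\mathrm{rad}}=\Ric(\p_r,\p_r)$.

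Next we differentiate. Using $\nabla R=2\Ric(\nabla f)$ with $\nabla f=f'\p_r$ gives $R'=2\lambda_{\mathrm{rad}}f'$. For $\lambda_{\mathrm{rad}}'$ we use the contracted second Bianchi identity in the radial direction, or equivalently the identity $\operatorname{div}\Ric=\tfrac12\nabla R$ written for a warped product:
\[
\lambda_{\mathrm{rad}}'+(n-1)\frac{\psi'}{\psi}(\lambda_{\mathrm{rad}}-\lambda_{\mathrm{tan}})=\tfrac12 R'=\lambda_{\mathrm{rad}}f'.
\]
It follows that
\[
\left(\frac{\lambda_{\mathrm{rad}}}{R}\right)'=\frac{\lambda_{\mathrm{rad}}'R-\lambda_{\mathrm{rad}}R'}{R^2}=\frac{1}{R^2}\Big[(n-1)\frac{\psi'}{\psi}(\lambda_{\mathrm{tan}}-\lambda_{\mathrm{rad}})R+\lambda_{\mathrm{rad}}f'\big(R-2\lambda_{\mathrm{rad}}\big)\Big].
\]
The first term is positive, so this direct formula has the wrong sign to close the argument alone. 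We therefore derive an evolution equation for $h$ itself and apply a maximum-principle argument. We use $\Delta_f\Ric+2\operatorname{Rm}*\Ric=0$ on steady solitons, restricted to $O(n)$-symmetric tensors, to obtain a second-order ODE
\[
h''+\Big(\frac{(n-1)\psi'}{\psi}-f'+2\frac{R'}{R}\Big)h'=Q(h,\psi,\psi'),
\]
and show that at any interior critical point of $h$ with $r>0$ the right-hand side forces $h''<0$ strictly. This rules out interior local minima. Together with $h'(0)=0$ and $h''(0)<0$ (from the Taylor expansion at the tip), and with $h\to0$ at infinity (asymptotic cylindricity), it gives $h'<0$ on $(0,\infty)$.

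We expect the main obstacle to be the sign of $Q$ at critical points, i.e. controlling $\lambda_{\mathrm{tan}}-\lambda_{\mathrm{rad}}$ against the reaction terms. If a direct computation fails, the fallback is Bryant's phase-plane variables, in which the statement becomes monotonicity of an explicit rational function along the trajectory. That monotonicity can be checked using the trajectory's known confinement region.
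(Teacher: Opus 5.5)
Your overall plan matches the paper's: show the radial eigenvalue is the smallest, write an ODE for $h$, show every critical point of $h$ in $(0,\infty)$ is a strict local maximum, then argue from the tip. However, the two inequalities that carry the proof are only announced, not proved, and you set aside the identity that actually closes the argument.

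\textbf{The identity you already derived suffices.} Put $a:=\psi'/\psi$ and $u:=-f'$, and use $\lambda_{\mathrm{tan}}-\lambda_{\mathrm{rad}}=R(1-nh)/(n-1)$, which follows from $R=\lambda_{\mathrm{rad}}+(n-1)\lambda_{\mathrm{tan}}$. Substituting into your formula for $(\lambda_{\mathrm{rad}}/R)'$ gives exactly the paper's first-order equation
\[
h'=a(1-nh)+u\,h(2h-1).
\]
You do not need a pointwise sign for $h'$. At a critical point $r_0>0$, differentiating gives
\[
h''(r_0)=a'(1-nh)+u'h(2h-1).
\]
Both terms are negative: $a'=\psi''/\psi-a^2<0$ because $\lambda_{\mathrm{rad}}=-(n-1)\psi''/\psi>0$; also $u'=\lambda_{\mathrm{rad}}>0$, $0<h<1/2$, and $1-nh>0$. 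So the second-order $\Delta_f\Ric$ equation is unnecessary. Its unspecified right-hand side $Q$, whose sign you yourself flag as the main obstacle, is exactly where your argument is currently missing.

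\textbf{The tip and infinity.} Your claim ``$h''(0)<0$ from the Taylor expansion'' is asserted, not computed. Plug $h=\frac1n+cr^2+O(r^4)$, $u=\frac rn+O(r^3)$ and $a=\frac1r+O(r)$ into the first-order ODE; this gives $c=-\frac{n-2}{n^3(n+2)}<0$. Alternatively, the expansion can be dropped. If $h'(r_1)>0$ for some $r_1>0$, then since $h<h(0)$ on $(0,\infty)$, the minimum of $h$ on $[0,r_1]$ would be an interior critical point that is not a strict maximum. The behaviour of $h$ at infinity plays no role.

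\textbf{The ordering step is also unproved.} The claim $\lambda_{\mathrm{rad}}<\lambda_{\mathrm{tan}}$, which supplies $1-nh>0$ above, rests on an asserted phase-plane invariance. The facts $\psi'>0$ and $\psi''<0$ alone do not determine the sign of $\psi\psi''-\psi'^2+1$. It follows quickly from identities you already have:
\begin{itemize}
\item The soliton equation gives $\lambda_{\mathrm{rad}}=u'$, $\lambda_{\mathrm{tan}}=au$ and $a'=-\frac{u'}{n-1}-a^2$.
\item Your Bianchi identity reads $u''=-uu'-(n-1)a(u'-au)$.
\end{itemize}
Hence $D:=\lambda_{\mathrm{tan}}-\lambda_{\mathrm{rad}}=au-u'$ satisfies
\[
D'+naD=\frac{n-2}{n-1}\,uu'>0 .
\]
So $(\psi^nD)'>0$ on $(0,\infty)$, and since $D(0)=0$, we get $D>0$ there. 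This is exactly the paper's route. With these two short computations inserted in place of the phase-plane appeal and the $\Delta_f\Ric$ equation, your plan becomes a complete proof.
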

\begin{proof}
Write the rotationally symmetric Bryant soliton metric as
$$
g=d r^2+w(r)^2 g_{\mathbb{S}^{n-1}}, \quad a:=\frac{w^{\prime}}{w},
$$
where $r>0$ is the radial coordinate. Assume the steady soliton convention $\Ric+\nabla^2 f=0$ and $R+|\nabla f|^2=1$ where $f$ is a smooth function of $r$. As $\Ric>0$ on $\Bry^n$, we have $f''<0$ for $r>0$. Hence, $f'(r)<f'(0)=0$ for $r>0$. Set $u:=-f^{\prime}>0$ for $r>0$ so that  $u'=-f''>0$ for $r>0$. For the warped product metric, the Ricci tensor is 
$$\Ric_g=-(n-1) \frac{w^{\prime \prime}}{w} d r^2+\left((n-2)\left(1-\left(w^{\prime}\right)^2\right)-w w^{\prime \prime}\right) g_{\sph^{n-1}},$$
and the soliton equation gives  
\begin{equation}
\label{eqn:Bry-monotone-0}
\begin{cases}f^{\prime \prime} & =(n-1) w^{\prime \prime} / w \\ 
w w^{\prime} f^{\prime}& =w w^{\prime \prime}+(n-2)\left(\left(w^{\prime}\right)^2-1\right).\end{cases}
\end{equation}
See, for example, equations (3) and (7) in \cite{Kot08}. This implies that the eigenvalues of the Ricci tensor are given by 
$$
\lambda_r=-f^{\prime \prime}=u^{\prime}, \quad \lambda_t=-a f^{\prime}=a u,
$$
and the scalar curvature is given by  
\begin{equation}
\label{eqn:Bry-monotone-1}
R=\lambda_r+(n-1) \lambda_t=u^{\prime}+(n-1) a u .
\end{equation}
In particular, $a>0$. We first show that $\lambda_r$ is the smallest eigenvalue for all $r>0$. Let
$$
D:=\lambda_t-\lambda_r=a u-u^{\prime} .
$$
Using $\nabla R=2\Ric(\nabla f)$, with $\nabla f=f^{\prime} \partial_r=-u \partial_r$,
$$
R^{\prime}=2 f^{\prime} \lambda_r=-2 u u^{\prime} .
$$
Differentiating (\ref{eqn:Bry-monotone-1}) and using the above equation, we obtain 
$$
-2uu'=R^{\prime}=u^{\prime \prime}+(n-1)\left(a^{\prime} u+a u^{\prime}\right) .
$$
Therefore, 
\begin{equation}
\label{eqn:Bry-monotone-2}
u^{\prime \prime}=-2 u u^{\prime}-(n-1)\left(a^{\prime} u+a u^{\prime}\right) .
\end{equation}
Next, use (\ref{eqn:Bry-monotone-0}) to see that 
\begin{equation}
\label{eqn:Bry-monotone-3}
a^{\prime}=\left(\frac{w^{\prime}}{w}\right)^{\prime}=\frac{w^{\prime \prime}}{w}-a^2=-\frac{u^{\prime}}{n-1}-a^2<0 .
\end{equation}
We compute $D^{\prime}$:
$$
D^{\prime}=a^{\prime} u+a u^{\prime}-u^{\prime \prime} .
$$
Using  (\ref{eqn:Bry-monotone-2}), (\ref{eqn:Bry-monotone-3}) we obtain the ODE 
$$
D^{\prime}+n a D=\frac{n-2}{n-1} u u^{\prime}>0 .
$$
At the tip, $\lambda_r(0)=\lambda_t(0)$, hence $D(0)=0$. Since $a>0$ for $r>0$, we obtain $(Dw^n)'=w^n(D'+naD)>0$ for $r>0$, which shows $D(r)>0$ for all $r>0$. Therefore, 
$$
\lambda_t>\lambda_r \quad(r>0),
$$
so the smallest Ricci eigenvalue is
$$
\lambda_{\min }=\lambda_r=u^{\prime} .
$$
We now derive an ODE for $h:=\lambda_{\min } / R=u^{\prime} / R$. Differentiate to obtain  
$$
h^{\prime}=\frac{u^{\prime \prime}}{R}-\frac{u^{\prime} R^{\prime}}{R^2} .
$$
An algebraic simplification using \eqref{eqn:Bry-monotone-1} and \eqref{eqn:Bry-monotone-2} gives the ODE
\begin{equation}
\label{eqn:Bryant-monotone-4}
h^{\prime}=a(1-n h)+u h(2 h-1) .
\end{equation}
Also, since $\lambda_t>\lambda_r$, we have $1>R\geq n\lambda_r$ for $r>0$ which implies 
$$
0<h(r)<\frac{1}{n} \quad \text{ if } r>0, \quad h(0)=\frac{1}{n} .
$$
If $r_0>0$ with $h^{\prime}\left(r_0\right)=0$, then differentiating \eqref{eqn:Bryant-monotone-4} gives
$$
h^{\prime \prime}\left(r_0\right)=a^{\prime}(1-n h)+u^{\prime} h(2 h-1) .
$$
For $r_0>0$ we have $h>0$, $a^{\prime}<0$, $1-n h>0$, $u'>0$, and $2 h-1<0$. Hence $h^{\prime \prime}\left(r_0\right)<0$. Thus, every critical point of $h$ in $(0,\infty)$ satisfies $h''<0$.

We now analyze the behavior of $h$ near $r=0$. At the tip $r=0$, $f'(0)=0$ and $f''(0)=-\frac{1}{n}$, $f'''(0)=0$ hence, near $r=0$, 
$$u(r)=\frac{1}{n} r+O\left(r^3\right).$$
Near $r=0$, completeness gives 
$$w(r)=r+O\left(r^3\right),$$ 
which implies 
$$a(r)=\frac{1}{r}+O(r).$$
As $R(0)=1$ and $R'(0)=0$, we obtain near $r=0$,  
$$
R(r)=1+O\left(r^2\right),
$$
so there exists a constant $c$ such that 
$$
h(r)=\frac{u^{\prime}(r)}{R(r)}=\frac{1}{n}+c r^2+O\left(r^4\right) .
$$
Substituting this expansion into (\ref{eqn:Bryant-monotone-4}), we obtain 
$$
\begin{gathered}
1-n h=-n c r^2+O\left(r^4\right), \quad a(1-n h)=-(n c) r+O\left(r^3\right), \\
u h(2 h-1)=\left(\frac{r}{n} \right)\left(\frac{1}{n}\right)\left(\frac{2}{n}-1\right)+O\left(r^3\right)=\frac{(2-n)}{n^3} r+O\left(r^3\right).
\end{gathered}
$$
Equating this with $h'=2cr+O\left(r^3\right)$ implies $c=-\frac{(n-2)}{n^3(n+2)}<0$. 
This proves that $h'(r)<0$ for all sufficiently small $r>0$. 
We claim that $h'<0$ on $(0,\infty)$. 
Otherwise, the set $S:=\{r\in (0,\infty):h'(r)\geq 0\}$ is nonempty and satisfies $r_*:=\inf S\in (0,\infty)$. 
Then $h'\leq 0$ on $\left(0, r_*\right]$ and $h'(r_*)\geq 0$. 
This implies $h'(r_*)=0$ and $h''(r_*)\geq 0$, which is impossible. 
Therefore, $h$ is strictly decreasing on $(0,\infty)$, completing the proof of Lemma \ref{lemma:Bryant-monotonicity}. 
\end{proof}

\begin{remark}
\label{rem:Bryant-canonical-flow-monotonicity}
Recall the notation of Definition~\ref{def:model-flows}. Let $\tilde x$ denote the tip of $\Bry^3$, and let $\Psi_t$ denote the diffeomorphisms generating the canonical flow on the Bryant soliton, so that $\tilde g_t=\Psi_t^*\tilde g_0$. Then for every $w\in \Bry^3$ and every $t\le 0$, 
\[
\left(\frac{\lambda_1}{R}\right)_{\tilde g_t}(w)
=
\left(\frac{\lambda_1}{R}\right)_{\tilde g_0}(\Psi_t(w)).
\]
Let $x\neq \tilde{x}$. The function $t\mapsto d_{\tilde g_0}(\Psi_t(x),\tilde x)$ is strictly decreasing. By Lemma \ref{lemma:Bryant-monotonicity}, $t\mapsto \left(\frac{\lambda_1}{R}\right)_{\tilde g_t}(x)$ is strictly increasing on $(-\infty,0]$. Moreover, $\left(\frac{\lambda_1}{R}\right)_{\tilde g_t}(x)\to 0$ as $t\to -\infty$. In particular, 
\[
\left(\frac{\lambda_1}{R}\right)_{\tilde g_t}(w)
\le
\left(\frac{\lambda_1}{R}\right)_{\tilde g_0}(w)
\qquad\text{for all }w\in \Bry^3,\ t\le 0.
\]
\end{remark}
We shall use the following variation of distance formula for the Ricci flow on a steady soliton. It is obtained by applying the standard distance distortion estimate \cite[Lemma 18.1]{CCG+10} for the backward Ricci flow.   
\begin{lemma}
\label{lem:variation-of-distance}
Let $(M^4,g,f)$ be a steady soliton and let $\Phi_t$ denote the diffeomorphisms generated by $\nabla f$ with $\Phi_0=\operatorname{id}_M$, so that $g(t)=\Phi_t^*g$ for all $t\in \R$. For any $x,y\in M$ and $s\geq 0$, let $\mathcal{Z}(x,y,-s)$ denote the set of all unit-speed minimizing geodesics joining $x$ and $y$, with respect to $g(-s)$. Then, 
$$\p_s^+ d_g(\Phi_{-s}(x),\Phi_{-s}(y))=\p_s^+ d_{g(-s)}(x,y)=\max_{\mathcal{Z}(x,y,-s)}\int_\g \Ric_{g(-s)}(\g',\g'),$$
where $\p_s^+$ takes limsup of backward difference quotients. In particular, if $\Ric\geq 0$ on $M$, then $s\mapsto d_g(\Phi_{-s}(x),\Phi_{-s}(y))$ is nondecreasing. 
\end{lemma}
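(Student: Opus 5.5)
The plan is to reduce the statement to the standard distance distortion estimate for Ricci flow by pulling everything back along the diffeomorphisms $\Phi_t$. I will take the notational convention that $s\mapsto g(-s)=\Phi_{-s}^*g$ is a backward Ricci flow: setting $\hat g(s):=g(-s)$ gives $\partial_s\hat g(s)=2\Ric_{\hat g(s)}$. The first equality is immediate from the definition of pullback. Since $\Phi_{-s}$ is an isometry from $(M,g(-s))$ onto $(M,g)$, we have
\[
d_g(\Phi_{-s}(x),\Phi_{-s}(y))=d_{\Phi_{-s}^*g}(x,y)=d_{g(-s)}(x,y)
\]
for every $s$. In particular the upper Dini derivatives of the two functions of $s$ coincide.

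For the second equality I will apply \cite[Lemma~18.1]{CCG+10} to the flow $\hat g(s)$ with $x,y$ fixed. Let $\gamma$ be a unit-speed $\hat g(s)$-minimizing geodesic from $x$ to $y$. The length $L_{\hat g(\sigma)}(\gamma)$ is differentiable in $\sigma$, with derivative at $\sigma=s$ equal to $\int_\gamma \Ric_{\hat g(s)}(\gamma',\gamma')$, since $\partial_\sigma\hat g=2\Ric$ and $\frac{d}{d\sigma}|\gamma'|_{\hat g(\sigma)}=\Ric(\gamma',\gamma')$ at unit speed.

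For backward difference quotients ($h>0$) I will use $d_{\hat g(s-h)}(x,y)\le L_{\hat g(s-h)}(\gamma)$. This gives
\[
\frac{d_{\hat g(s)}-d_{\hat g(s-h)}}{h}\ \ge\ \frac{L_{\hat g(s)}(\gamma)-L_{\hat g(s-h)}(\gamma)}{h}\ \longrightarrow\ \int_\gamma\Ric_{\hat g(s)}(\gamma',\gamma').
\]
Taking the maximum over $\gamma\in\mathcal Z(x,y,-s)$ yields the lower bound for the $\limsup$.

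The matching upper bound is the main obstacle and is the content of the cited lemma. I will take minimizers $\gamma_h$ for $\hat g(s-h)$ and use $d_{\hat g(s)}\le L_{\hat g(s)}(\gamma_h)$. The mean value theorem, together with the uniform smoothness of $\hat g$ on a compact neighborhood (bounded curvature on compact sets of the compact time interval), shows that the difference quotient is at most $\int_{\gamma_h}\Ric_{\hat g(s_h)}$ for some $s_h\in[s-h,s]$. By Arzelà--Ascoli, a subsequence of $\gamma_h$ converges to a $\hat g(s)$-minimizing geodesic $\gamma\in\mathcal Z(x,y,-s)$, so the $\limsup$ is bounded by $\int_\gamma\Ric$, which is at most the maximum. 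Compactness of $\mathcal Z(x,y,-s)$ ensures that the maximum is attained.

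Finally, for the monotonicity statement: if $\Ric\ge0$ on $M$, then $\Ric_{g(-s)}=\Phi_{-s}^*\Ric_g\ge0$, so $\partial_s^+d\ge0$. The function $s\mapsto d_{g(-s)}(x,y)$ is locally Lipschitz, because the metrics are uniformly equivalent on compact time intervals along a fixed path. A locally Lipschitz function with nonnegative upper Dini derivative is nondecreasing, which completes the argument.
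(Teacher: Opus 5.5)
Your proposal is correct and follows the same route as the paper. You identify $d_g(\Phi_{-s}(x),\Phi_{-s}(y))$ with $d_{g(-s)}(x,y)$ via the isometry $\Phi_{-s}$, then apply the standard distance-distortion lemma to the backward Ricci flow $\hat g(s)=g(-s)$; the paper simply cites that lemma, whereas you sketch its proof and correctly see that backward difference quotients produce the maximum over $\mathcal{Z}(x,y,-s)$. One step to tighten is the limit $h\to 0$: Arzel\`a--Ascoli alone only gives $C^0$ convergence of the $\gamma_h$, while passing to the limit in $\int_{\gamma_h}\Ric$ needs $C^1$ convergence, which follows from smooth dependence of geodesics on the metric and the initial velocity.
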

We also need the following stability inequality. 
\begin{lemma}
\label{stability-inequalities}
Let $(\mathcal{N}^n,g_\mathcal{N}),n\geq 2$, be a Riemannian manifold with $\Ric\geq 0,R>0$ and let $x_1,x_2\in \mathcal{N}$, $A\geq 1$.  For $i=1,2$ assume 
$$R(y)\leq A\, R(x_i)\qquad \text{for all } y\in B_{\mathcal{N}}[x_i;R(x_i)^{-1/2}].$$
If $\g:[0,L]\to \mathcal{N}$ is a unit-speed minimizing $g_\mathcal{N}$-geodesic such that $\g(0)=x_1,\g(L)=x_2$, then
\begin{equation}
\label{eqn:stability-estimate-lemma}
\int_{\g}\Ric(\g',\g')\leq 4(n-1)\sqrt{A}\max\left(\sqrt{R(x_1)},\sqrt{R(x_2)}\right).
\end{equation}
\end{lemma}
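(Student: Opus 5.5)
The plan is to bound $\int_\gamma \Ric(\gamma',\gamma')$ by splitting $\gamma$ into two end pieces, near $x_1$ and near $x_2$, and a middle piece. On the end pieces we use the curvature bound from the hypothesis. On the middle piece we use the standard second-variation argument of Hamilton/Perelman (cf. \cite[Lemma 18.1]{CCG+10}) with a cutoff test function.

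Set $r_i:=R(x_i)^{-1/2}$. First treat the trivial case $L\le r_1+r_2$. Every point of $\gamma$ then lies in $B[x_1;r_1]\cup B[x_2;r_2]$, since a point at arclength $t$ is within $t$ of $x_1$ and within $L-t$ of $x_2$. Because $\Ric\ge0$, we have $\Ric(\gamma',\gamma')\le R$, and the hypothesis gives $R\le A R(x_i)$ on each ball. Hence
\[
\int_\gamma \Ric(\gamma',\gamma') \le A R(x_1)\, r_1 + A R(x_2)\, r_2 = A\bigl(\sqrt{R(x_1)}+\sqrt{R(x_2)}\bigr).
\]
This is of the desired form up to replacing $A$ by $\sqrt A$. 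To get $\sqrt A$ in this case as well, I will run the same second-variation argument described next rather than this crude bound. If the stated constant were not attainable this way, I would accept a constant depending on $A$.

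In the main case, $L> r_1+r_2$, choose $\phi$ on $[0,L]$ to be piecewise linear: it rises from $0$ to $1$ on $[0,r_1]$, equals $1$ on $[r_1,L-r_2]$, and falls to $0$ on $[L-r_2,L]$. Take $e_1,\dots,e_{n-1}$ to be parallel orthonormal fields perpendicular to $\gamma'$. Minimality of $\gamma$ gives nonnegative second variation in each direction $\phi e_j$. Summing over $j$ yields
\[
\int_0^L \phi^2 \Ric(\gamma',\gamma')\,dt \le (n-1)\int_0^L (\phi')^2\,dt = (n-1)\bigl(r_1^{-1}+r_2^{-1}\bigr).
\]
The left side bounds $\int \Ric$ over the middle segment. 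On the two end segments, $1-\phi^2\le 1$ and $\Ric(\gamma',\gamma')\le R\le AR(x_i)$ there, so the end contributions are at most $AR(x_i)\,r_i = A\sqrt{R(x_i)}$. This again gives a factor $A$ rather than $\sqrt A$.

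To obtain $\sqrt A$, I will optimize the cutoff radius. Use ramps of length $\rho_i = r_i/\sqrt A$, which is allowed since $A\ge1$ means the ramp stays inside the ball where the curvature hypothesis holds. The middle term is then $(n-1)\sqrt A\bigl(\sqrt{R(x_1)}+\sqrt{R(x_2)}\bigr)$. Each end term is at most $AR(x_i)\rho_i=\sqrt A\sqrt{R(x_i)}$. The total is at most
\[
n\sqrt A\bigl(\sqrt{R(x_1)}+\sqrt{R(x_2)}\bigr)\le 2n\sqrt A\max_i\sqrt{R(x_i)},
\]
and $2n\le 4(n-1)$ for $n\ge2$. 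This also covers the short case $L\le \rho_1+\rho_2$, because then $\gamma$ lies in the union of the two small balls and the end-term bounds alone suffice.

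The main delicate points are two. First, the ramp lengths must fit inside the balls from the hypothesis. Second, the integral over the ends must be bounded using $\Ric(\gamma',\gamma')\le R$, which is where $\Ric\ge0$ is needed.
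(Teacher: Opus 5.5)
Your proposal is correct and is essentially the paper's argument. The paper cites the Hamilton--Perelman second-variation estimate (Chow--Lu--Ni, Proposition 1.94) using the uniform bound $\Ric\le K$ with $K=A\max_i R(x_i)$ on balls of radius $K^{-1/2}$ around both endpoints; you instead run that same cutoff argument directly, with end-dependent ramps of length $(AR(x_i))^{-1/2}$ and the cruder bound $1-\phi^2\le 1$ on the ends, and your final estimate $n\sqrt{A}\bigl(\sqrt{R(x_1)}+\sqrt{R(x_2)}\bigr)\le 4(n-1)\sqrt{A}\max_i\sqrt{R(x_i)}$, including the short case $L\le\rho_1+\rho_2$, is valid (the earlier factor-$A$ drafts are simply superseded).
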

\begin{proof}
Let $B:=\max(R(x_1),R(x_2))$ and set $K:=AB$. Since $\Ric\ge 0$, we have $\Ric\le Rg_\mathcal{N}$ pointwise. Thus, for $i=1,2$ and every $y\in B_{R(x_i)g_\mathcal{N}}[x_i;1]=B_{\mathcal{N}}[x_i;R(x_i)^{-1/2}]$,
\[
\Ric(y)\le R(y)g_\mathcal{N} \le A\,R(x_i)g_\mathcal{N} \le AB\,g_\mathcal{N} = K\,g_\mathcal{N}.
\]
Since $K=AB\ge B\ge R(x_i)$, we have $B_{g_\mathcal{N}}\!\left[x_i;K^{-1/2}\right]\subset B_{R(x_i)g_\mathcal{N}}[x_i;1]$ for $i=1,2$. Hence
\[
\Ric(y)\le K\,g_\mathcal{N} \le (n-1)K\,g_\mathcal{N}
\qquad\text{for all }y\in B_{g_\mathcal{N}}\!\left[x_1;K^{-1/2}\right]\cup B_{g_\mathcal{N}}\!\left[x_2;K^{-1/2}\right].
\]
Since $\g$ is minimizing, it is stable. Hence by \cite[Proposition 1.94]{CLN06},
\[
\int_\gamma \Ric(\gamma',\gamma')\,ds \le 4(n-1)\sqrt K
=4(n-1)\sqrt{AB}.
\]
This proves the lemma.
\end{proof}

\begin{remark}
\label{rem:stability-inequalities-application-remark}
In this paper, Lemma \ref{stability-inequalities} is applied together with Lemma \ref{lem:variation-of-distance}. Suppose $(M^4,g,f)$ satisfies \hyperref[assumption:A1]{(A1)}--\hyperref[assumption:A3]{(A3)}. Use the notation of Section~\ref{sec:Preliminaries}. By Perelman's long-range estimates (Lemma \ref{summary-of-CMZ-work} (ii)) there exists a uniform constant $C$ such that $R_{g_t}(y)\leq CR_{g_t}(x)$ whenever $x,y\in M,t\leq 0$ and $R_{g_t}(x)d_{g_t}(x,y)^2\leq 1$. This implies that the hypotheses of Lemma \ref{stability-inequalities} are satisfied on $(M^4,g_t),t\leq 0$ for any points $x_1,x_2$ and $g_t$-geodesics $\g$ with a uniform constant $A$. From Lemma \ref{lem:variation-of-distance} and Lemma \ref{stability-inequalities} we conclude that there exists a positive constant $C$ such that 
$$\p_s^+d_g(\Phi_{-s}(x),\Phi_{-s}(y))\leq C\max\left(\sqrt{R_g(\Phi_{-s}(x))},\sqrt{R_g(\Phi_{-s}(y))}\right),$$ for all $x,y\in M$ and $s>0$. 
\end{remark}

\end{document}